\newtheorem{thm}{{{Theorem}}}[section]
\newtheorem{prop}[thm]{{Proposition}}
\newtheorem{lem}[thm]{{Lemma}}
\newtheorem{cor}[thm]{{Corollary}}
\newtheorem{cond}[thm]{{Condition}}
\newtheorem{exa}[thm]{{Example}}
\numberwithin{equation}{section}
\def\N{\mathbb{N}}
\def\Z{\mathbb{Z}}
\def\Q{\mathbb{Q}}
\def\R{\mathbb{R}}
\def\C{\mathbb{C}}
\def\A{\mathbb{A}}
\def\GL{{\mathop{\mathrm{GL}}}}
\def\SL{{\mathop{\mathrm{SL}}}}
\def\SO{{\mathop{\mathrm{SO}}}}
\def\U{{\mathop{\textnormal{U}}}}
\def\SU{{\mathop{\textnormal{SU}}}}
\def\Sp{{\mathop{\mathrm{Sp}}}}
\def\sl{{\mathfrak{sl}}}
\def\bK{{\mathbf{K}}}
\def\Re{{\mathop{\mathrm{Re}}}}
\def\Tr{{\mathop{\mathrm{Tr}}}}
\def\diag{{\mathop{\mathrm{diag}}}}
\def\sgn{{\mathop{\mathrm{sgn}}}}
\def\Ad{{\mathop{\mathrm{Ad}}}} 
\def\vol{{\mathop{\mathrm{vol}}}}
\def\d{{\mathrm{d}}}
\def\bsl{\backslash}
\def\inf{\infty}
\def\fo{{\mathfrak{o}}}
\def\cO{{\mathcal{O}}}
\def\cS{{\mathcal{S}}}
\def\cC{{\mathcal{C}}}
\def\cH{{\mathcal{H}}}
\def\cI{{\mathcal{I}}}
\def\fa{{\mathfrak{a}}}
\def\cL{{\mathcal{L}}}
\def\cP{{\mathcal{P}}}
\def\cF{{\mathcal{F}}}
\def\cU{{\mathcal{U}}}
\def\fg{{\mathfrak{g}}}
\def\ft{{\mathfrak{t}}}
\def\fh{{\mathfrak{h}}}
\def\fk{{\mathfrak{k}}}
\def\fm{{\mathfrak{m}}}
\def\fn{{\mathfrak{n}}}
\def\fu{{\mathfrak{u}}}
\def\fsp{{\mathfrak{sp}}}
\def\fH{{\mathfrak{H}}}
\def\fin{{\mathrm{fin}}}
\def\reg{{\mathrm{reg}}}
\def\fs{{f_\sigma}}
\def\tfs{{\tilde{f}_\sigma}}
\def\unip{{\mathrm{unip}}}
\def\bm{\mathbf{m}}
\def\bC{\mathbf{C}}
\def\bb{\boldsymbol{\beta}}
\def\bz{\mathbf{z}}
\def\Shin{\mathrm{Shin}}
\def\bk{\boldsymbol{k}}
\title{The dimensions of spaces of Siegel cusp forms of general degree}
\author{Satoshi Wakatsuki}
\address{Faculty of Mathematics and Physics, Institute of Science and Engineering, Kanazawa University, Kakumamachi, Kanazawa, Ishikawa, 920-1192, Japan}
\email{wakatsuk@staff.kanazawa-u.ac.jp}
\subjclass[2010]{11F46, 11F72.}
\begin{document}

\maketitle

\begin{abstract}

In this paper, we give a dimension formula for spaces of Siegel cusp forms of general degree with respect to neat arithmetic subgroups.
The formula was conjectured before by several researchers (cf. \cite{Ib1,IS2,IS3,Tsushima3}).
The dimensions are expressed by special values of Shintani zeta functions for spaces of symmetric matrices at non-positive integers.
This formula was given by Shintani for only a small part of the geometric side of the trace formula (see \cite{Shintani}).
To be precise, it is the contribution of unipotent elements corresponding to the partitions $(2^j,1^{2n-2j})$, where $n$ denotes the degree and $0\leq j \leq n$.
Hence, our work is to show that all the other contributions vanish.
In addition, one finds that Shintani's formula means the dimension itself.
Combining our formula and an explicit formula of the Shintani zeta functions, which was discovered by Ibukiyama and Saito (cf. \cite{IS1,IK,Saito2}), we can derive an explicit dimension formula for the principal congruence subgroups of level greater than two (cf. \cite{IS2,IS3}).
In this explicit dimension formula, the dimensions are described by degree $n$, weight $k$, level $N$, and the Bernoulli numbers $B_m$.
\end{abstract}

\tableofcontents

\section{Introduction}\label{s1}

The purpose of this paper is to give a dimension formula for spaces of Siegel cusp forms of general degree with respect to neat arithmetic subgroups.
Our formula expresses the dimensions by special values of Shintani zeta functions for spaces of symmetric matrices at non-positive integers.
Morita was the first to make a connection between the dimensions for degree two and the Shintani zeta function of the space of $2\times 2$ symmetric matrices (cf. \cite{Morita}).
After that, Shintani found a clearer relation for general degree by using the functional equations of the Shintani zeta functions for spaces of symmetric matrices.
His formula expresses the contributions of unipotent elements corresponding to the partitions $(2^j,1^{2n-2j})$ by the special values, where $n$ denotes the degree and $0\leq j \leq n$ (see \cite[Section 3]{Shintani}).
Furthermore, based on known dimension formulas \cite{Christian,Morita,Yamazaki,Tsushima1} for degrees two and three, several researchers conjectured that all the other contributions vanish (cf. \cite{Ib1,IS2,IS3,Tsushima3}).
In this paper, we solve this conjecture in the affirmative.
Therefore, one finds that Shintani's formula means the dimension itself.

Special values of the Shintani zeta functions can be studied by an explicit formula of Ibukiyama and Saito (cf. \cite{IS1,IK,Saito2}).
They discovered that the zeta functions are expressed by the Riemann zeta function and the Mellin transform of some Eisenstein series of half integral weights.
Furthermore, they presented a conjectural explicit dimension formula for the principal congruence subgroups of level greater than two, which was derived from their explicit formula and the above mentioned conjecture (see \cite{IS2,IS3} and Section \ref{sef}).
Therefore, the explicit dimension formula is proved by their work and our result.
In the formula, the dimensions are described by degree $n$, weight $k$, level $N$, and the Bernoulli numbers $B_m$, that is, they can be calculated numerically.

In the proof, we begin by applying a pseudo-coefficient of a holomorphic discrete series to Arthur's invariant trace formula.
Then, using some arguments similar to \cite{Arthur4}, we can show that the dimension is expressed by unipotent weighted orbital integrals and global coefficients.
It is probably difficult to calculate all them directly at the present time.
In order to overcome such difficulties, we change the pseudo-coefficient into the spherical trace function (a sum of matrix coefficients) of the discrete series via the trace Paley-Winner theorem.
This is a key step of our proof.
By Godement's explicit construction, we finds an interesting property (cf. Lemma \ref{vl1}) of the spherical trace function, which is one of the main reasons for the vanishing.
Using both the pseudo-coefficient and the spherical trace function, we prove the vanishing of unipotent contributions which do not correspond to any partition of the form $(2^j,1^{2n-2j})$.
Furthermore, by using the work of Finis-Lapid \cite{FL} for an estimate of the geometric side, we can connect the remaining terms with truncated zeta integrals of the spherical trace function.
Therefore, the explicit calculation is reduced to Shintani's formula.
In order to solve some convergence problems related to them, it is required to assume that weights are sufficiently large.
However, we can extend the weight range of the dimension formula by Ibukiyama's argument \cite{Ibukiyama}, because the dimensions are expressed by a polynomial with respect to weight $k$.

It is well-known that the dimension of a space of holomorphic cusp forms is equal to the multiplicity of the corresponding holomorphic discrete series in the discrete spectrum of the $L^2$-space on an arithmetic quotient of a semisimple Lie group.
In the case that arithmetic quotients are compact, Langlands obtained a general multiplicity formula for integrable discrete series (cf. \cite{Langlands1,Langlands2}).
For the case $\R$-rank one, multiplicity formulas were obtained by Selberg \cite{Selberg} ($\SL(2)$) and Osborne-Warner \cite{OW}.
Hence, our main interest is about non-compact arithmetic quotients and higher rank groups.
In such cases, there are various general multiplicity formulas for discrete series, see, e.g., Arthur \cite{Arthur4}, Ferrari \cite{Fe}, Goresky-Kottwitz-MacPherson \cite{GKM}, Gross-Pollack \cite{GP}, Gross \cite{Gross}, and Stern \cite{Stern1,Stern}.
For the multiplicity of a single discrete series, a fine formula is not established yet, but it will be probably provided by the stable trace formula in the future (cf. \cite{Spallone}).
As a remarkable point, although the stable trace formula will make such a formula, our dimension formula is much simpler than it.
Perhaps, this is a characteristic phenomenon of holomorphic discrete series.

\subsection{Dimension formula}\label{smt}

We shall give a summary for our results.
Let $\Sp(n)$ denote the split symplectic group of rank $n$ over the rational number field $\Q$, i.e.,
\begin{equation}\label{eqsp}
\Sp(n)=\left\{g\in\GL(2n) \mid g\begin{pmatrix}O_n&-I_n \\ I_n&O_n \end{pmatrix} {}^t\!g = \begin{pmatrix}O_n&-I_n \\ I_n&O_n \end{pmatrix}  \right\}
\end{equation}
where $I_n$ (resp. $O_n$) denotes the unit (resp. zero) matrix of degree $n$ and ${}^t\!g$ denotes the transpose of $g$.
Throughout this section, we set $G=\Sp(n)$.
We write $\A$ (resp. $\A_\fin$) for the adele ring (resp. finite adele ring) of $\Q$.
Fix an open compact subgroup $K_0$ of $G(\A_\fin)$.
Then, the intersection $\Gamma=G(\Q)\cap(G(\R)K_0)$ is an arithmetic subgroup  of $G(\Q)$ via the diagonal embedding $G(\Q)\subset G(\A)=G(\R)\times G(\A_\fin)$. 
Let $S_k(\Gamma)$ denote the space of holomorphic Siegel cusp forms of weight $k$ with respect to $\Gamma$ (see Section \ref{s3.2} for the definition).

We define Shintani zeta functions by $G$ and $K_0$.
For each finite place $v$ of $\Q$, we write $\Q_v$ for the completion of $\Q$ at $v$ and denote by $\Z_v$ the ring of integers of $\Q_v$.
Let $\d k$ denote the Haar measure on $\bK_\fin=\prod_{v<\inf} G(\Z_v)$ normalized by $\int_{\bK_\fin}\d k=1$ and let $h_0$ denote the characteristic function of $K_0$ on $G(\A_\fin)$.
For $1\leq r\leq n$, we denote by $V_r$ the vector space of symmetric matrices of degree $r$ over $\Q$.
Then, a compactly supported smooth function $\phi_{0,r}^*$ on $V_r(\A_\fin)$ is defined by
\begin{equation}\label{sme1}
\phi_{0,r}^*(x)=\int_{\bK_\fin}h_0(k^{-1}\begin{pmatrix}I_n& \begin{matrix}-2x&0\\ 0&0 \end{matrix} \\ O_n&I_n \end{pmatrix}k)\, \d k \qquad (x\in V_r(\A_\fin)).
\end{equation}
A Fourier transform of $\phi_{0,r}^*$ is required for our formula.
For each place $v<\inf$, we choose a non-trivial additive character $\tilde\psi_v$ on $\Q_v$ which is defined in Section \ref{s3.4}.
Then, we set $\psi_v(x_v)=\tilde\psi_v(\Tr(x_v))$ $(x_v\in V_r(\Q_v))$ and we get a non-trivial additive character $\psi_\fin=\prod_{v<\inf}\psi_v$ on $V_r(\A_\fin)$.
A compactly supported smooth function $\phi_{0,r}$ on $V_r(\A_\fin)$ is given by the Fourier transform
\begin{equation}\label{sme2}
\phi_{0,r}(y)=2^{-\frac{r(r-1)}{2}} \int_{V_r(\A_\fin)}\phi_{0,r}^*(x)\, \psi_\fin(-xy)\, \d x
\end{equation}
where $\d x$ denotes the Haar measure on $V_r(\A_\fin)$ normalized so that the volume of $\prod_{v<\inf}V_r(\Z_v)$ is $1$.
The group $\GL(r)$ acts on $V_r$ as $x\cdot g={}^t\!g xg$.
Let $U_r$ denote the support of $\phi_{0,r}$ in $V_r(\A_\fin)$.
Set $\Omega_r=\{x\in V_r(\R) \mid x>0\}$ and $\Gamma_r=\SL(r,\Z)$ where $\Z$ denotes the ring of integers.
By the definition one sees that $L_r=V_r(\Q)\cap (V_r(\R)U_r)$ is $\Gamma_r$-invariant.
For $1\leq r\leq n$ and $s\in\C$, a zeta function $\zeta_\Shin(\phi_{0,r},s)$ is defined by
\begin{equation}\label{sme3}
\zeta_\Shin(\phi_{0,r},s)=\frac{1}{2}\sum_{x\in L_r\cap\Omega_r/\Gamma_r}\frac{\phi_{0,r}(x)}{\varepsilon_r(x)\, \det(x)^s} 
\end{equation}
where we put $\varepsilon_r(x)=|\{\gamma\in\Gamma_r\mid x\cdot \gamma=x\}|$ for $x\in L_r\cap\Omega_r$.
We call $\zeta_\Shin(\phi_{0,r},s)$ a Shintani zeta function for $(\GL(r),V_r)$.
It is known that $\zeta_\Shin(\phi_{0,r},s)$ absolutely converges for $\mathrm{Re}(s)>\frac{r+1}{2}$ and is meromorphically continued to the whole $s$-plane (see \cite{Shintani,Yukie}).
In particular, $\zeta_\Shin(\phi_{0,r},s)$ is holomorphic except for possible simple poles at $s=1,\frac{3}{2},\cdots,\frac{r}{2},\frac{r+1}{2}$.
For $r=0$, we set $\zeta_\Shin(\phi_{0,r},s)=1$.

The following theorem is deduced from \eqref{e38} in Section \ref{s3.4} and Theorem \ref{t37} (main theorem) in Section \ref{s3.5}.
\begin{thm}\label{t11}
For every $\gamma$ in $\Gamma$, we assume that $\gamma$ is unipotent if there exists a natural number $l$ such that $\gamma^l$ is unipotent.
(Note that this assumption holds if $\Gamma$ is neat in the sense of Borel \cite{Borel1}.)
If $k>n+1$, then we have
\begin{align*}
\dim S_k(\Gamma)=& \frac{[\bK_\fin:K_0\cap\bK_\fin]}{[K_0:K_0\cap\bK_\fin]} \sum_{r=0}^n \zeta_\Shin(\phi_{0,r},r-n) \\
&\times \prod_{j=1}^{n-r}\frac{(-1)^j(j-1)!}{(2j-1)!}\zeta(1-2j) \times 2^{-2n+r}\prod_{t=1}^{n-r}\prod_{u=t+r}^n(2k-t-u)
\end{align*}
where $\zeta(s)$ denotes the Riemann zeta function.
\end{thm}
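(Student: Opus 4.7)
The plan is to compute $\dim S_k(\Gamma)$ by realizing it as the multiplicity of a holomorphic discrete series $\pi_k$ of $G(\R)=\Sp(n,\R)$ in the discrete automorphic spectrum, and extracting this multiplicity via Arthur's invariant trace formula. First I would pick a pseudo-coefficient $\fs$ of $\pi_k$ at the archimedean place and take $f=\fs\otimes(\vol(K_0)^{-1}h_0)$ as the global test function. Using arguments parallel to \cite{Arthur4}, the spectral side, for $k>n+1$, collapses essentially to $\dim S_k(\Gamma)$ (the non-target tempered contributions are killed by $\fs$, and the non-tempered/continuous contributions are ruled out by the weight hypothesis), while the geometric side reduces, after using the hypothesis on $\Gamma$ to rule out any $\gamma$ whose Jordan decomposition has a nontrivial semisimple part, to a sum of weighted unipotent orbital integrals against $f$ indexed by the unipotent conjugacy classes of $\Sp(2n)$, together with Arthur's global coefficients.

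Evaluating those weighted orbital integrals against $\fs$ directly seems intractable, so the central step is to replace $\fs$ by the normalized spherical trace function $\tfs$ of $\pi_k$, i.e.\ a suitable sum of matrix coefficients (integrable whenever $k$ is sufficiently large, since the relevant holomorphic discrete series are integrable). This replacement is spectrally legitimate by the trace Paley--Wiener theorem: $\fs$ and $\tfs$ produce identical traces against every admissible representation that can appear in the discrete spectrum for our test function. The payoff is Lemma \ref{vl1}, obtained from Godement's explicit construction of the matrix coefficient: the orbital integrals of $\tfs$ vanish on every unipotent class whose partition is not of the form $(2^j,1^{2n-2j})$. This immediately kills all unipotent contributions outside the Shintani family of partitions.

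For each surviving class, indexed by $r=0,1,\dots,n$ via the partition $(2^{n-r},1^{2r})$, the weighted orbital integral factors into an archimedean piece and a finite-adelic piece. The finite piece, evaluated on a standard block unipotent representative, matches the definition \eqref{sme1} of $\phi_{0,r}^*$; Fourier inversion against the character $\psi_\fin$ converts the resulting sum into a sum over a rational lattice weighted by $\phi_{0,r}$. Combined with the logarithmic derivative produced by the weight factor in Arthur's formula, the finite part assembles precisely into the special value $\zeta_\Shin(\phi_{0,r},r-n)$. The archimedean part, computed from the explicit matrix coefficient of $\pi_k$, supplies both the polynomial factor $\prod_{t=1}^{n-r}\prod_{u=t+r}^n(2k-t-u)$ and the Riemann zeta product $\prod_{j=1}^{n-r}\zeta(1-2j)$, following the computations of \cite{Shintani}. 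The Finis--Lapid bounds \cite{FL} are what justify the interchange of summation and integration and the identification of the weighted unipotent terms with the truncated zeta integrals of $\tfs$.

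The hardest step will be the transition from the compactly supported $\fs$ to the non-compactly supported $\tfs$: the geometric side must still converge when $\fs$ is replaced by $\tfs$, which forces $k$ to be sufficiently large and requires delicate estimates in conjunction with \cite{FL}. To recover the full range $k>n+1$ asserted in Theorem \ref{t11} from the identity proved for large $k$, I would invoke Ibukiyama's argument \cite{Ibukiyama}: both sides of the claimed formula are polynomials in $k$ on $k>n+1$ (the right-hand side visibly so, the left-hand side by standard structure results for dimensions of Siegel cusp form spaces), so equality on an infinite subset $\{k:k\gg n\}$ propagates to the entire asserted range.
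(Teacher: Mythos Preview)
Your overall architecture matches the paper's: Arthur's invariant trace formula, reduction to unipotent terms via the hypothesis on $\Gamma$, the swap from pseudo-coefficient to spherical trace function through the trace Paley--Wiener theorem, vanishing of non-Shintani contributions, identification of the survivors with Shintani's zeta values, and the polynomial extension to $k>n+1$. That is the right skeleton.

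There is, however, a genuine gap in your vanishing step. You write that Lemma~\ref{vl1} (the Godement integration identity) ``immediately kills all unipotent contributions outside the Shintani family.'' It does not. That lemma, combined with the Jacobson--Morozov grading, shows $J_G^G(u,\fs)=0$ for $u$ not of type $(2^j,1^{2n-2j})$ --- the case $M=G$ only. But the geometric side of the trace formula, after Proposition~\ref{cz1}, involves $J_M^G(u,\fs h)$ for \emph{all} Levi $M\in\cL$ and all $u\in\cU_M$ with induced class $C_u^G=C$. For proper $M$ the weight factor $w_M$ is nontrivial and the integration trick alone is insufficient. The paper handles these terms by a second, independent mechanism: the explicit character formula for holomorphic discrete series (Hecht, Martens) forces $I_M^G(u,\tfs)=0$ unless $M\cong G$, $\GL(1)\times\Sp(n-1)$, or $\GL(2)\times\Sp(n-2)$ (Lemma~\ref{nr1}), and then a separate, rather delicate computation of the weight factor's dependence on the coordinates (Lemma~\ref{nr2}) disposes of the remaining proper-Levi cases by showing the relevant variable drops out so that Lemma~\ref{vl1} can be applied after all. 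The paper's introduction explicitly lists these as two of the ``three main reasons for the vanishing''; your proposal invokes only one of them.

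A smaller point: your account of how $\zeta_\Shin(\phi_{0,r},r-n)$ appears (``logarithmic derivative produced by the weight factor'') does not match the mechanism. In the paper the surviving contributions are first repackaged as the truncated zeta integrals $Z_{C_r}^T(\fs h)$, which unfold to global zeta integrals $Z_r(\Phi^*_{0,\bk,n,r},s)$ at $s=n-\tfrac{r-1}{2}$; the value $r-n$ then arises from the \emph{functional equation} of $Z_r$ (Proposition~\ref{p36}), not from any derivative of the Arthur weight. Also, the partition attached to the index $r$ is $(2^r,1^{2n-2r})$, not $(2^{n-r},1^{2r})$.
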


Let $N$ be a natural number.
For each finite place $v$ of $\Q$, we set $K_v(N)=\{\gamma\in G(\Z_v)\mid \gamma\equiv I_{2n} \mod N\Z_v\}$ if $v$ divides $N$, and $K_v(N)=G(\Z_v)$ if $v$ does not divide $N$.
Then the group $K(N)=\prod_{v<\inf}K_v(N)$ is open and compact in $G(\A_\fin)$ and the principal congruence subgroup $\Gamma_n(N)$ of level $N$ is obtained as
\begin{equation}\label{sme4}
\Gamma_n(N)=G(\Q)\cap(G(\R)K(N)).
\end{equation}
It is well-known that $\Gamma_n(N)$ is neat when $N$ is greater than $2$.
The group $\Gamma_n(N)$ is a typical example of neat arithmetic subgroups.
For $1\leq r\leq n$, we denote by $L_r^*$ the lattice of half-integral symmetric matrices in $V_r(\Q)$, i.e.,
\[
L_r^*=\{(x_{jl})\in V_r(\Q)\mid x_{jj}\in\Z, \;\; x_{jl}\in\frac{1}{2}\Z\;\; (j<l)\},
\]
and we set
\begin{equation}\label{sme5}
\zeta_\Shin(L_r^*,s)=\frac{1}{2}\sum_{x\in L^*_r\cap\Omega_r/\Gamma_r}\frac{1}{\varepsilon_r(x) \, \det(x)^s}.
\end{equation}
We formally set $\zeta_\Shin(L_0^*,s)=1$ $(r=0)$.
If $K_0=K(N)$, then
\[
\zeta_\Shin(\phi_{0,r},s)=2^{r-sr}N^{-\frac{r(r+1)}{2}+sr}\times \zeta_\Shin(L_r^*,s) \qquad (0\leq r\leq n).
\]
Applying $K_0=K(N)$ to Theorem \ref{t11}, one can get the following formula.
\begin{cor}\label{c12}
If $k>n+1$ and $N>2$, then we have
\begin{align*}
\dim S_k(\Gamma_n(N))= & [\Gamma_n(1):\Gamma_n(N)]\sum_{r=0}^n \zeta_\Shin(L_r^*,r-n) \times 2^{r-r^2+rn}N^{\frac{r(r-1)}{2}-rn} \\
&\times \prod_{j=1}^{n-r}\frac{(-1)^j(j-1)!}{(2j-1)!}\zeta(1-2j)  \times 2^{-2n+r}\prod_{t=1}^{n-r}\prod_{u=t+r}^n(2k-t-u),
\end{align*}
where $[\Gamma_n(1):\Gamma_n(N)]=N^{n(2n+1)}\prod_{p:\mathrm{prime},\; p|N}\prod_{l=1}^n(1-p^{-2l})$.
\end{cor}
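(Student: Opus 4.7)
The plan is to obtain the corollary as a direct specialization of Theorem \ref{t11} to the case $K_0 = K(N)$, combined with the Fourier-analytic identity stated just above it between $\zeta_\Shin(\phi_{0,r},s)$ and $\zeta_\Shin(L_r^*,s)$. First, since $N > 2$, the principal congruence subgroup $\Gamma_n(N)$ is neat (cf.\ Borel \cite{Borel1}), so the hypothesis on $\Gamma$ in Theorem \ref{t11} holds automatically. The weight assumption $k > n+1$ is identical in both statements, so Theorem \ref{t11} applies with $\Gamma = \Gamma_n(N)$.

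Next I would evaluate the index factor $[\bK_\fin:K_0\cap\bK_\fin]/[K_0:K_0\cap\bK_\fin]$. Because $K(N)\subseteq\bK_\fin$, one has $K(N)\cap\bK_\fin = K(N)$, so the denominator equals $1$ and the numerator is $[\bK_\fin:K(N)]$. Strong approximation for $\Sp(n)$ over $\Q$ identifies this with the classical index $[\Gamma_n(1):\Gamma_n(N)]$, and a local count at each prime $p\mid N$ yields the explicit product $N^{n(2n+1)}\prod_{p\mid N}\prod_{l=1}^n(1-p^{-2l})$.

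The substantive step is to verify the identity
\[
\zeta_\Shin(\phi_{0,r},s) = 2^{r-sr}\, N^{-\frac{r(r+1)}{2}+sr}\, \zeta_\Shin(L_r^*,s),\qquad 0\leq r\leq n,
\]
stated in the excerpt. To do so, I would first observe that $K_v(N)$ is normal in $G(\Z_v)$ for every finite $v$, so conjugation by $k\in\bK_\fin$ in \eqref{sme1} is inessential and $\phi_{0,r}^*$ reduces to the characteristic function of the set of $x\in V_r(\A_\fin)$ whose associated unipotent element lies in $K(N)$. Unwinding the local integrality conditions identifies this set with the lattice in $V_r(\A_\fin)$ whose local components are $\frac{1}{2}V_r(\Z_v)$ for $v\nmid N$ and $\frac{N}{2}V_r(\Z_v)$ for $v\mid N$. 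Taking the Fourier transform \eqref{sme2} against $\psi_\fin = \prod_v \tilde\psi_v\circ\Tr$ and the normalized measure $\d x$ then yields $\phi_{0,r}$ as an explicit scalar multiple of the characteristic function of the dual lattice; after tracking covolumes place-by-place and absorbing the prefactor $2^{-r(r-1)/2}$, this dual lattice is a scaled copy of $L_r^*$, and a change of variables in the sum \eqref{sme3} produces the claimed factor $2^{r-sr}N^{-r(r+1)/2+sr}$.

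Finally I would substitute $s=r-n$ into this identity. One computes $2^{r-r(r-n)} = 2^{r-r^2+rn}$ and $N^{-r(r+1)/2+r(r-n)} = N^{r(r-1)/2-rn}$, which match the exponents appearing in the corollary. Inserting the resulting expression for $\zeta_\Shin(\phi_{0,r},r-n)$, together with the index computation, into the formula of Theorem \ref{t11} and rearranging gives the stated formula. The main obstacle is the Fourier bookkeeping in the third step: pinning down the precise constants requires careful attention to the normalization of $\d x$, the choice of additive character $\tilde\psi_v$ (Section \ref{s3.4}), and the prefactor $2^{-r(r-1)/2}$ in \eqref{sme2}. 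Once this local/archimedean accounting is carried out, the remaining work is routine algebra.
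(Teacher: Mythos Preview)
Your proposal is correct and follows exactly the route the paper indicates: the paper simply writes ``Applying $K_0=K(N)$ to Theorem \ref{t11}, one can get the following formula,'' after recording the identity $\zeta_\Shin(\phi_{0,r},s)=2^{r-sr}N^{-\frac{r(r+1)}{2}+sr}\zeta_\Shin(L_r^*,s)$, and your outline fills in precisely these steps (neatness of $\Gamma_n(N)$ for $N>2$, the index computation via $K(N)\subset\bK_\fin$ and strong approximation, the Fourier calculation using normality of $K_v(N)$ in $G(\Z_v)$, and the substitution $s=r-n$). You supply more detail on the Fourier step than the paper does, but the approach is the same.
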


We should mention Shintani's study for unipotent terms.
Let $\sigma$ denote the holomorphic discrete series of $G(\R)$ related to the space $S_k(\Gamma)$ (see Sections \ref{s3.1} and \ref{secap}).
We fix a Haar measure $\d g$ on $G(\R)$ and denote by $d_\sigma$ the formal degree of $\sigma$.
By Harish-Chandra's formula, $d_\sigma$ equals $\prod_{1\leq t\leq u\leq n}(2k-t-u)$ up to a constant multiple.
Let $\fs$ denote the spherical trace function of $\sigma$, which satisfies $\fs(I_{2n})=d_\sigma$ (see Sections \ref{s25} and \ref{s3.1}).
In \cite[Expos\'e 10]{Godement}, Godement showed the equality
\[
\dim S_k(\Gamma)= \int_{\Gamma\bsl G(\R)} \sum_{\gamma\in \Gamma} \fs (g^{-1}\gamma g) \, \d g \qquad (k>2n),
\]
which we call Godement's formula.
Let $\Pi_r$ denote the subset of $\Gamma_n(N)$ consisting of elements $\gamma$ such that $\gamma$ is $G(\Z)$-conjugate to a matrix of the form $\begin{pmatrix}I_n & X \\ O_n & I_n \end{pmatrix}$ where $X$ is an integral symmetric matrix of degree $n$ and rank $r$.
Set
\[
\mathfrak{I}_n(\Pi_r,N,k)= \int_{\Gamma_n(N)\bsl G(\R)} \sum_{\gamma\in \Pi_r} \fs (x^{-1}\gamma x) \, \d x.
\]
In \cite[Section 3]{Shintani}, he showed that the integral $\int_{\Gamma_n(N)\bsl G(\R)}\Big| \sum_{\gamma\in \Pi_r} \fs (x^{-1}\gamma x) \Big| \,  \d x$ is convergent and $\mathfrak{I}_n(\Pi_r,N,k)$ equals the term for $r$ in the right hand side of the equality of Corollary \ref{c12}.
Namely, Corollary \ref{c12} means that the total $\mathfrak{I}_n(\Pi,N,k)=\sum_{r=0}^n \mathfrak{I}_n(\Pi_r,N,k)$ coincides with $\dim S_k(\Gamma_n(N))$ if $N>2$.
He called $\mathfrak{I}_n(\Pi,N,k)$ the contribution of purely parabolic conjugacy classes to the dimension formula.
In the papers \cite{IS2,IS3}, $\mathfrak{I}_n(\Pi,N,k)$ was named the contribution of central unipotent elements to the dimension formula.

We fix a finite set $S$ of finite places of $\Q$ and choose an open compact subgroup $K_S$ in $G(\Q_S)$.
Assume that $N$ moves over natural numbers prime to $S$.
An arithmetic subgroup $\Gamma_n(S,N)$ is defined by
\[
\Gamma_n(S,N)=G(\Q)\cap (G(\R)K_S\prod_{v\not\in S\sqcup\{\inf\}}K_v(N)).
\]
If $S$ is the empty set, then $\Gamma_n(S,N)=\Gamma_n(N)$.
The limit multiplicity formula states that $\vol(\Gamma_n(S,N)\bsl G(\R))^{-1}\times \dim S_k(\Gamma_n(S,N))$ tends to $d_\sigma$ as $N\to\inf$ (cf. \cite{Savin}).
As a consequence of Theorem \ref{t11} and its proof, one can obtain an estimate of the remainder term.
\begin{cor}\label{cor2}
There exists a sufficiently large $N'$ such that, for $k>n+1$ and $N>N'$ ($N$ is prime to $S$), we have
\[
\dim S_k(\Gamma_n(S,N))=\vol(\Gamma_n(S,N)\bsl G(\R))\, d_\sigma + O(N^{2n^2}k^{n(n-1)/2})
\]
if $n$ is even, and
\[
\dim S_k(\Gamma_n(S,N))=\vol(\Gamma_n(S,N)\bsl G(\R))\, d_\sigma + O(N^{2n^2-n+1}k^{(n-1)(n-2)/2}) 
\]
if $n$ is odd.
Note that $\vol(\Gamma_n(S,N)\bsl G(\R))=O(N^{n(2n+1)})$ and $d_\sigma=O(k^{n(n+1)/2})$.
In particular, for $k>n+1$ and $N>2$, we have
\[
\dim S_k(\Gamma_n(N))=\vol(\Gamma_n(N)\bsl G(\R))\, d_\sigma + O(N^{2n^2}k^{n(n-1)/2})
\]
if $n$ is even, and
\[
\dim S_k(\Gamma_n(N))=\vol(\Gamma_n(N)\bsl G(\R))\, d_\sigma + O(N^{2n^2-n+1}k^{(n-1)(n-2)/2}) 
\]
if $n$ is odd.
\end{cor}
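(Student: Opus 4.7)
The plan is to deduce the corollary directly from Theorem \ref{t11} applied to $\Gamma_n(S,N)$, by isolating the main term and estimating the polynomial growth in $N$ and $k$ of every other contribution. Choose $N'$ large (depending on $K_S$) so that $\Gamma_n(S,N)$ is neat for all $N>N'$ prime to $S$; then Theorem \ref{t11} yields
\[
\dim S_k(\Gamma_n(S,N))=\sum_{r=0}^n T_r(S,N,k),
\]
where $T_r$ is the $r$-th summand of the formula with $K_0=K_S\prod_{v\notin S\cup\{\infty\}}K_v(N)$. The first task is to identify the $r=0$ term with the main term $\vol(\Gamma_n(S,N)\bsl G(\R))\, d_\sigma$: since $\zeta_\Shin(\phi_{0,0},-n)=1$, Harish-Chandra's formula converts the archimedean factor $2^{-2n}\prod_{1\le t\le u\le n}(2k-t-u)$ into $d_\sigma$, while the product $\prod_{j=1}^n\tfrac{(-1)^j(j-1)!}{(2j-1)!}\zeta(1-2j)$ together with the adelic index ratio $[\bK_\fin:K_0\cap\bK_\fin]/[K_0:K_0\cap\bK_\fin]$ assembles into $\vol(\Gamma_n(S,N)\bsl G(\R))$ via Siegel's mass formula for $\Sp(n)$.

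For $r\ge 1$, the function $\phi_{0,r}$ factors over finite places; at each $v\mid N$ the normality of $K_v(N)$ in $G(\Z_v)$ reduces the defining integral \eqref{sme1} to the characteristic function of a lattice in $V_r(\Q_v)$ whose $\Z_v$-index grows like $N^{r(r+1)/2}$. After the Fourier transform \eqref{sme2} and substitution into $\zeta_\Shin(\phi_{0,r},r-n)$, the $N$-dependent contribution scales as $N^{r(r-1)/2-rn}$. Combined with $[\bK_\fin:K_0\cap\bK_\fin]/[K_0:K_0\cap\bK_\fin]=O(N^{n(2n+1)})$ and the polynomial $\prod_{t=1}^{n-r}\prod_{u=t+r}^n(2k-t-u)$, which has degree $(n-r)(n-r+1)/2$ in $k$, one obtains
\[
T_r(S,N,k)=O_S\bigl(N^{n(2n+1)+r(r-1)/2-rn}\, k^{(n-r)(n-r+1)/2}\bigr).
\]
Over $r\in\{1,\ldots,n\}$ both exponents are maximized at $r=1$, giving $T_1=O(N^{2n^2}k^{n(n-1)/2})$ and thereby proving the estimate for $n$ even.

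For $n$ odd with $n\ge 3$, I show that $T_1=0$. The rank-one Shintani zeta $\zeta_\Shin(\phi_{0,1},s)$ is (for our specific $\phi_{0,1}$) a finite linear combination of Hurwitz-type zetas whose functional equation, inherited from that of the Riemann zeta, forces trivial zeros at every negative even integer; since $s=1-n$ is such a value, $\zeta_\Shin(\phi_{0,1},1-n)=0$, hence $T_1=0$. The dominant remaining term is then $T_2=O(N^{2n^2-n+1}k^{(n-1)(n-2)/2})$, which gives the claimed sharper bound for odd $n$. The specialization $S=\emptyset$, $K_0=K(N)$ gives $\Gamma_n(S,N)=\Gamma_n(N)$ and the in-particular statements follow directly from Corollary \ref{c12}, where the vanishing for odd $n$ is transparent from $\zeta_\Shin(L_1^*,1-n)=\tfrac12\zeta(1-n)=0$. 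The main obstacle is the local analysis at places $v\in S$, where $K_{S,v}$ is arbitrary and the local factor of $\phi_{0,r,v}$ lacks the clean Ibukiyama-Saito form; verifying both the scaling of $T_r$ and the vanishing $T_1=0$ for odd $n$ in the presence of general $K_S$ is the delicate step.
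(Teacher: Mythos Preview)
Your approach is essentially the same as the paper's: apply Theorem~\ref{t11}, identify the $r=0$ term as the main term, bound each $T_r$ by reading off the $N$- and $k$-exponents, and for odd $n>1$ use the vanishing of the $r=1$ term. Your explicit computation of the exponents $N^{n(2n+1)+r(r-1)/2-rn}$ and $k^{(n-r)(n-r+1)/2}$ is more detailed than the paper's terse proof, but the logic is identical.

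Your stated ``main obstacle'' is not actually an obstacle, and the argument you give for $T_1=0$ is slightly misaimed. Individual Hurwitz zeta functions $\zeta(s,a)$ do \emph{not} have trivial zeros at negative even integers, so the vanishing cannot come from a functional equation of each summand. The correct reason is parity: since $K_0$ is a \emph{subgroup}, $h_0(g^{-1})=h_0(g)$, and $u(-x)=u(x)^{-1}$ in \eqref{sme1}, so $\phi_{0,1}^*(-x)=\phi_{0,1}^*(x)$; hence $\phi_{0,1}$ is even on $\A_\fin$. Writing $\zeta_\Shin(\phi_{0,1},s)$ as a linear combination of Hurwitz zetas, the terms pair up as $\zeta(s,a/M)+\zeta(s,1-a/M)$, and at $s=1-n$ this equals $-\tfrac{1}{n}\bigl(B_n(a/M)+B_n(1-a/M)\bigr)=0$ for odd $n$ by the symmetry $B_n(1-x)=(-1)^nB_n(x)$. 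This argument is independent of the shape of $K_S$, so no ``delicate local analysis'' at $v\in S$ is needed. Likewise, the $N$-scaling of $T_r$ involves only the places $v\mid N$, disjoint from $S$; the $S$-local factors contribute only to the implied constant in $O_S$. With this clarification your proof is complete.
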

\begin{proof}
The assumption of Theorem \ref{t11} for $\Gamma$ is required for Proposition \ref{p1}.
It can be replaced by the condition that $N$ is sufficiently large (cf. \cite[Lemma 5 and its proof]{Clozel}).
Furthermore, the main term obviously comes from the term of $r=0$ in Theorem \ref{t11}.
For the term of $r=1$ in Theorem \ref{t11}, when $n$ is odd and $n>1$, we deduce $\zeta_\Shin(\phi_{0,1},1-n)=0$ from the trivial zeros of the Riemann zeta function.
Hence, the second term becomes the term of $r=1$ if $n$ is even or $n=1$, and the term of $r=2$ if $n$ is odd and $n>1$.
Thus, the first assertion follows from Theorem \ref{t11}.
The second assertion is obviously deduced from Corollary \ref{c12}.
\end{proof}
It would be interesting to find an asymptotic behavior of Hecke eigenvalues as a generalization of Corollary \ref{cor2}.

\subsection{Explicit formula}\label{sef}

In order to get numerical numbers of dimensions by Corollary \ref{c12}, one needs to calculate the special values $\zeta_\Shin(L_r^*,r-n)$ explicitly.
If $r=1$, then it is obvious that
\[
\zeta_\Shin(L_1^*,1-n)=\frac{1}{2}\zeta(1-n)=-\frac{B_n}{2n}
\]
where $B_n$ denotes the $n$-th Bernoulli number, i.e., they are defined by
\[
\frac{t e^t}{e^t-1}=\sum_{m=0}^\inf \frac{B_m}{m!}t^m .
\]
For the case $r=2$, the spacial values $\zeta_\Shin(L_2^*,2-n)$ were given by Siegel in \cite{Siegel} (Shintani also gave an alternative proof in \cite{Shintani}).
\begin{thm}[Siegel]\label{t13}
If $r=2$, then $\zeta_\Shin(L_2^*,0)=1/96$ and
\[
\zeta_\Shin(L_2^*,2-n)= -\frac{(-1)^nB_{2n-2}}{2^{2n}(n-1)} \qquad \text{if $n>2$}.
\]
\end{thm}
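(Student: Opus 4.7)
The plan is to reduce the theorem to a classical identity of Siegel expressing a Hurwitz-class-number Dirichlet series in terms of the Riemann zeta function, and then substitute the Euler values $\zeta(1-2m)=-B_{2m}/(2m)$.

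First I would exploit the classical bijection between $\SL_2(\Z)$-orbits on $L_2^*\cap\Omega_2$ and equivalence classes of positive definite integral binary quadratic forms. A half-integral symmetric matrix $T$ with diagonal entries $a,c\in\Z$ and off-diagonal entry $b/2\in\tfrac12\Z$ corresponds to the form $aX^2+bXY+cY^2$ of discriminant $-(4ac-b^2)$; the action $T\mapsto{}^t\!gTg$ is the classical action of $\SL_2(\Z)$, and $\varepsilon(T)$ is the order of the finite stabilizer. Setting $D=4\det T=4ac-b^2$ and collecting classes by discriminant, the Dirichlet series takes the form
\[
\zeta_\Shin(L_2^*,s)=\frac{1}{2}\cdot 4^s\sum_{D\ge 1} H(D)\,D^{-s},
\]
where $H(D)$ is the Hurwitz class number attached to the discriminant $-D$.

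Next I would invoke the classical identity, due to Siegel \cite{Siegel} and given an alternative proof by Shintani \cite{Shintani}, that expresses the Hurwitz generating series $\sum_D H(D)D^{-s}$ as an explicit elementary expression involving $\zeta(2s-1)$ and powers of $2$. Two paths to this identity are available. Siegel's approach computes the Mellin transform of the weight-$3/2$ Cohen--Eisenstein series on $\Gamma_0(4)$ whose Fourier coefficients are the $H(D)$; the constant term of this Eisenstein series is $\zeta(2s-1)$ up to elementary factors, yielding the identity. Shintani's approach instead uses the functional equation of the prehomogeneous zeta function attached to $(\GL_2,V_2)$, which exchanges the value at $s$ with an explicit finite linear combination of Riemann zeta values at $\tfrac32-s$; evaluating at the negative integer $s=2-n$ produces the formula directly.

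With the identity available, the proof concludes by specializing at $s=2-n$, substituting $\zeta(3-2n)=-B_{2n-2}/(2n-2)$ for $n>2$, and collecting constants to reach $-(-1)^nB_{2n-2}/(2^{2n}(n-1))$. The value at $n=2$ (i.e., $s=0$, which is a regular point of $\zeta_\Shin(L_2^*,s)$ by the general theory of \cite{Shintani,Yukie}) is obtained by a direct computation from the same formula using $\zeta(-1)=-1/12$. The main obstacle will be pinning down the exact powers of $2$ and overall sign in Siegel's identity: the definition of $\zeta_\Shin$ includes the factor $\tfrac12$, the rescaling $\det T=D/4$, the use of $\SL_2(\Z)$ rather than $\GL_2(\Z)$, and the half-integral versus integral normalization each contribute an elementary factor that must be tracked with care. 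Once this bookkeeping is complete, the passage to the Bernoulli form of the answer is immediate.
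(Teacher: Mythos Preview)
The paper does not prove this theorem; it is stated with attribution to Siegel \cite{Siegel}, with a parenthetical remark that Shintani gave an alternative proof in \cite{Shintani}. Your proposal correctly identifies both of these sources and the two standard routes (Mellin transform of the weight-$3/2$ Eisenstein series whose Fourier coefficients are Hurwitz class numbers, versus the functional equation of the prehomogeneous zeta function for $(\GL_2,V_2)$), so there is nothing to compare against in the paper itself.

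One point deserves care. Your sentence that Siegel's identity ``expresses the Hurwitz generating series $\sum_D H(D)D^{-s}$ as an explicit elementary expression involving $\zeta(2s-1)$ and powers of $2$'' is an oversimplification: as a function of $s$, the series $\sum_D H(D)D^{-s}$ is \emph{not} an elementary combination of Riemann zeta values --- it is genuinely the Mellin transform of a weight-$3/2$ Eisenstein series (this is exactly what the paper says just before Theorem~\ref{t14}). What is true, and what Siegel proved, is that its \emph{special values} at the non-positive integers reduce to the stated Bernoulli form; this comes from analyzing the constant term and the functional equation of that Eisenstein series (or, in Shintani's approach, from the explicit gamma factors in the functional equation of the prehomogeneous zeta function). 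Your outline of the endgame --- substitute $\zeta(3-2n)=-B_{2n-2}/(2n-2)$ and track the powers of $2$ coming from $\det T=D/4$, the factor $\tfrac12$ in the definition, and the half-integral normalization --- is the right bookkeeping, and your acknowledgment that this is where the work lies is accurate.
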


For the case $r\geq 3$, the special values $\zeta_\Shin(L_r^*,r-n)$ were calculated by an explicit formula \cite[Theorems 1.2 and 1.3]{IS1} of Ibukiyama and Saito for the Shintani zeta functions (see \cite{IK,Saito2} for alternative proofs).
When $r$ is odd, $\zeta_\Shin(L_r^*,s)$ is described by the Riemann zeta function (it is a sum of two Euler products).
When $r$ is even, $\zeta_\Shin(L_r^*,s)$ is described by the Riemann zeta function and the Mellin transform of some Eisenstein series of half integral weights.
Note that $\zeta_\Shin(L_2^*,s)$ $(r=2)$ coincides with the Mellin transform of a Eisenstein series of weight $3/2$.
From the explicit formula, they derived the following formula for the special values (see \cite[Theorems 3 and 4]{IS2}).
An interesting point is that the terms involving Eisenstein series vanish at $s=r-n$ by the trivial zeros of the Riemann zeta function for even integers $r\geq 4$.
\begin{thm}[Ibukiyama and Saito]\label{t14}
If $r\geq 3$, $r$ is odd, and we put ${\displaystyle R=\frac{r-1}{2}}$, then we have
\begin{align*}
\zeta_\Shin(L_r^*,r-n)=&  (-1)^{R+1}2^{-R(2n-2r+1)-r} \times \prod_{j=1}^R|B_{2j}| \\
&\times B_{n-R}\prod_{l=1}^R B_{2n-2r+2l}  \times \frac{1}{R!}  \prod_{t=1}^{R+1} \frac{1}{n-r+t}.
\end{align*}
If $r\geq 4$, $r$ is even, and we put $R'=r/2$, then we have
\begin{align*}
\zeta_\Shin(L_r^*,r-n)=& (-1)^{\left[R'/2\right]+R'(n-r+1)} 2^{-r(n-r+1)-R'}  \times |B_{R'}|\prod_{j=1}^{R'-1}|B_{2j}| \\
& \times \prod_{l=1}^{R'} B_{2n-2r+2l} \times \frac{1}{R'!}  \prod_{t=1}^{R'}\frac{1}{n-r+t}.
\end{align*}
\end{thm}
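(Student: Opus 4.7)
The plan is to deduce both the odd and even cases from the Ibukiyama--Saito explicit formula \cite[Theorems 1.2 and 1.3]{IS1}, which expresses $\zeta_\Shin(L_r^*,s)$ as a combination of Riemann zeta factors when $r$ is odd, and as such a combination together with a Mellin transform of a half-integral weight Eisenstein series when $r$ is even. After matching normalizations with the definition \eqref{sme5}, the strategy is simply to specialize their formula at $s = r-n$ and evaluate.

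For odd $r = 2R+1$, the formula of \cite{IS1} presents $\zeta_\Shin(L_r^*,s)$ as a sum of two Euler products that, up to explicit elementary factors, collapses to a single product of Riemann zeta values at arithmetic-progression arguments in $s$. At $s = r-n$ these arguments become the negative integers $1 - 2(n-r+l)$ for $l = 1, \ldots, R$, together with $1 - (n-R)$. Applying $\zeta(1-m) = -B_m/m$ and rewriting each $B_{2j}$ as $(-1)^{j+1}|B_{2j}|$ collects the factors $\prod_{j=1}^R |B_{2j}|$ and $\prod_{l=1}^R B_{2n-2r+2l}$ together with the single Bernoulli $B_{n-R}$; the denominators $\prod_{l}(2n-2r+2l)$ and the $\Gamma$-factors from the IS formula combine, after a binomial-type cancellation, into $R!\prod_{t=1}^{R+1}(n-r+t)$. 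The overall sign $(-1)^{R+1}$ and the power $2^{-R(2n-2r+1)-r}$ are then obtained by tracking the constants in the Ibukiyama--Saito formula through this evaluation.

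For even $r = 2R'$, the IS formula carries an additional summand built from the Mellin transform of a half-integral weight Eisenstein series. The key observation, highlighted in the paragraph preceding the theorem, is that this summand is multiplied by an explicit Riemann zeta factor whose argument at $s = r-n$ is a negative even integer, and therefore \emph{vanishes} by the trivial zeros of $\zeta$. What remains is again a pure product of Riemann zeta values at the arguments $1 - 2(n-r+l)$ for $l = 1, \ldots, R'$ together with one further factor, and the evaluation proceeds exactly as in the odd case. The intricate sign $(-1)^{[R'/2] + R'(n-r+1)}$ arises by combining the $(-1)^{[R'/2]}$ that appears in the IS formula (coming from a quadratic-character/signature contribution) with the signs produced when converting $B_{2j}$ to $|B_{2j}|$ and $B_{R'}$ to $|B_{R'}|$, together with the parity-dependent sign from $\zeta(1 - 2(n-r+l)) = -B_{2(n-r+l)}/(2(n-r+l))$ iterated $R'$ times.

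The main obstacle is not analytic but combinatorial: everything beyond the Ibukiyama--Saito formula is elementary once one invokes $\zeta(1-m) = -B_m/m$ and the trivial zeros of $\zeta$. The real work lies in the careful bookkeeping of normalizations (the distinction between $L_r^*$ and its dual lattice, the stabilizer factor $\varepsilon_r(x)$ in \eqref{sme5}, and the local Euler factor at $2$ in the IS formula), and in the accounting of signs and powers of $2$. Once that bookkeeping is executed, both identities fall out by direct substitution at $s = r-n$.
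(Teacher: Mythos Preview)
Your proposal is correct and follows precisely the approach the paper itself describes: the paper does not give its own proof of this theorem but attributes it to Ibukiyama and Saito (\cite[Theorems 3 and 4]{IS2}), explaining that it is derived from their explicit formula \cite[Theorems 1.2 and 1.3]{IS1} by specialization at $s=r-n$, and noting in particular that for even $r\geq 4$ the Eisenstein-series contribution vanishes because of the trivial zeros of the Riemann zeta function. Your sketch reproduces exactly this outline, together with the expected bookkeeping of Bernoulli numbers, signs, and powers of $2$.
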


It is clear that one can obtain an explicit dimension formula combining Corollary \ref{c12} with Theorems \ref{t13} and \ref{t14}.
The following theorem is our explicit dimension formula, that was presented as a conjecture in \cite{IS2,IS3}.
We slightly changed its description.
\begin{thm}\label{t15}
If $k>n+1$ and $N>2$, then we have
\begin{align*}
\dim S_k(\Gamma_n(N))=& [\Gamma_n(1):\Gamma_n(N)]\, \sum_{r=0}^n  N^{\frac{r(r-1)}{2}-rn} \times \prod_{t=1}^{n-r}\prod_{u=t+r}^n(2k-t-u) \\
& \qquad \qquad \qquad \times \prod_{j=1}^{n-r}\frac{(j-1)!\; |B_{2j}|}{(2j-1)!\; j} \times  I(n,r) 
\end{align*}
where
\[
[\Gamma_n(1):\Gamma_n(N)]=N^{n(2n+1)}\prod_{p:\mathrm{prime},\; p|N}\prod_{l=1}^n(1-p^{-2l}),
\]
if $r$ is odd we set 
\[
I(n,r)=(-1)^{[\frac{n+1}{2}]}2^{-2n+R+1}\times \prod_{j=1}^R|B_{2j}| \times |B_{n-R}|\prod_{l=1}^R |B_{2n-2r+2l}| \times \frac{1}{R!}\prod_{m=1}^{R+1}\frac{1}{n-r+m}
\]
($R=\frac{r-1}{2}$), and if $r$ is even we set
\[
I(n,r)=(-1)^{R'(1+\delta_{nr})}2^{-3n+3R'}\times |B_{R'}|\prod_{j=1}^{R'-1}|B_{2j}| \times \prod_{l=1}^{R'}|B_{2n-2r+2l}| \times \frac{1}{R'!}\prod_{m=1}^{R'}\frac{1}{n-r+m}
\]
($R'=\frac{r}{2}$ and $\delta_{nr}$ is the Kronecker symbol).
When $r$ is odd, one sees $I(n,r)=0$ if $n-R$ is odd and $n-R>1$.
When $r$ is even, one also sees $I(n,r)=0$ if $R'$ is odd and $R'>1$.
\end{thm}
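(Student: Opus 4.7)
The plan is to derive Theorem \ref{t15} by direct substitution into Corollary \ref{c12} of the explicit special values of $\zeta_\Shin(L_r^*, r-n)$ supplied by Theorem \ref{t13} (for $r=2$) and Theorem \ref{t14} (for $r\geq 3$), combined with the elementary evaluations at $r=0,1$. Since all analytic ingredients are already in place, the proof is essentially a bookkeeping exercise; the work lies in tracking signs, powers of $2$, Bernoulli-number sign conventions, and identifying the resulting expression with the closed form of Theorem \ref{t15}.

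I would split the sum $\sum_{r=0}^n$ in Corollary \ref{c12} into four blocks: $r=0$, handled by the convention $\zeta_\Shin(L_0^*,-n)=1$; $r=1$, handled by $\zeta_\Shin(L_1^*,1-n)=\tfrac{1}{2}\zeta(1-n)=-B_n/(2n)$; $r=2$, handled by Siegel's Theorem \ref{t13}; and $r\geq 3$, split by the parity of $r$ and handled by Ibukiyama--Saito's Theorem \ref{t14}. In each block, the Riemann-zeta product from Corollary \ref{c12} is rewritten using $\zeta(1-2j)=-B_{2j}/(2j)$ and the sign identity $B_{2j}=(-1)^{j+1}|B_{2j}|$ for $j\geq 1$, giving
\[
\prod_{j=1}^{n-r}\frac{(-1)^j(j-1)!}{(2j-1)!}\,\zeta(1-2j)=2^{-(n-r)}\prod_{j=1}^{n-r}\frac{(j-1)!\,|B_{2j}|}{(2j-1)!\,j},
\]
which matches the Bernoulli product appearing in Theorem \ref{t15}, with the stray factor $2^{-(n-r)}$ to be absorbed into $I(n,r)$. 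The $N$-power $N^{r(r-1)/2-rn}$ is already in place from Corollary \ref{c12}, and the $\prod_{t,u}(2k-t-u)$ factor is common to both statements.

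The main obstacle is the sign and power-of-$2$ bookkeeping inside $I(n,r)$. On the analytic side, combining $2^{r-r^2+rn}\cdot 2^{-2n+r}\cdot 2^{-(n-r)}$ with the explicit $2$-powers in Theorem \ref{t14} should collapse to exactly $2^{-2n+R+1}$ for $r$ odd and $2^{-3n+3R'}$ for $r$ even, by straightforward substitution with $r=2R+1$ (resp.\ $r=2R'$). For the signs, in the $r$-odd case the factor $(-1)^{R+1}$ from Theorem \ref{t14} must be combined with the sign of $B_{n-R}$ (which is $(-1)^{(n-R)/2+1}$ when $n-R$ is even) to produce the uniform $(-1)^{[(n+1)/2]}$; this requires a parity analysis splitting on $n\bmod 4$. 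In the $r$-even case, the sign $(-1)^{[R'/2]+R'(n-r+1)}$ from Theorem \ref{t14}, together with $B_{R'}=(-1)^{R'+1}|B_{R'}|$ and the signs of the $R'$ factors $B_{2n-2r+2l}$, must collapse to $(-1)^{R'(1+\delta_{nr})}$; the exponent sum $\sum_{l=1}^{R'}(n-r+l+1)=R'(n-r+2)+R'(R'+1)/2$ admits an easy reduction modulo $2$ that, combined with the remaining signs, yields the claim. Consistency can be checked against the $r=1$ case (where $-B_n/n=(-1)^{[(n+1)/2]}|B_n|/n$ for all $n\geq 1$) and against the known dimension formulas for $n=2,3$.

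Finally, the vanishing assertions follow at once from $B_m=0$ for odd $m>1$: for $r$ odd, $n-R$ odd and $>1$ forces $B_{n-R}=0$, hence $I(n,r)=0$; for $r$ even, $R'$ odd and $>1$ forces $B_{R'}=0$, hence $I(n,r)=0$.
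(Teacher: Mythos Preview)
Your proposal is correct and follows exactly the route the paper itself indicates: the paper states that Theorem \ref{t15} is obtained by combining Corollary \ref{c12} with Theorems \ref{t13} and \ref{t14}, and gives no further details. Your outline supplies precisely the bookkeeping the paper omits---the rewriting of $\prod_j\frac{(-1)^j(j-1)!}{(2j-1)!}\zeta(1-2j)$ via $\zeta(1-2j)=-B_{2j}/(2j)$ and $B_{2j}=(-1)^{j+1}|B_{2j}|$, the consolidation of the $2$-powers (your exponent computations $-2n+R+1$ and $-3n+3R'$ are correct upon substituting $r=2R+1$, resp.\ $r=2R'$), and the parity-based sign matching---and your consistency checks at $r=1$ and against the low-degree formulas are the natural way to guard against arithmetic slips.
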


For low degrees $n=1$, $2$ and $3$, there are some known results for explicit dimension formulas of $S_k(\Gamma_n(N))$ under the condition $k>n+1$ and $N>2$.
As numerical examples of Theorem \ref{t15}, we shall write the known cases $n=1$, $2$, $3$ and the new cases $n=4$, $5$, $6$ as below.

\vspace{1mm}
\noindent
(1) $n=1$. For $k>2$ and $N>2$, it is well-known that
\[
\dim S_k(\Gamma_1(N))=[\Gamma_1(1):\Gamma_1(N)] \times \left\{ \frac{2k-2}{2^4\cdot 3}-\frac{1}{2^2\cdot N} \right\} .
\]

\vspace{1mm}
\noindent
Numerical examples of $\dim S_k(\Gamma_1(N))$. \\
\begin{tabular}{|c@{\, \vrule width1.5pt \,\,\,}ccccccccccccccccc|} \hline
$N$ \begin{picture}(4,4)(0,0) \put(4,-1){ \line(-3,2){6} } \end{picture} $k$ & 3 & 4 & 5 & 6 & 7 & 8 & 9 & 10 & 11 & 12 & 13 & 14 & 15 & 16 & 17 & 18 & 19    \\ \noalign{\hrule height 1.5pt}
3 & 0 & 1 & 2 & 3 & 4 & 5 & 6 & 7 & 8 & 9 & 10 & 11 & 12 & 13 & 14 & 15 & 16   \\ \hline
4 & 1 & 3 & 5 & 7 & 9 & 11 & 13 & 15 & 17 & 19 & 21 & 23 & 25 & 27 & 29 & 31 & 33  \\ \hline
5 & 4 & 9 & 14 & 19 & 24 & 29 & 34 & 39 & 44 & 49 & 54 & 59 & 64 & 69 & 74 & 79 & 84  \\ \hline
\end{tabular}

\vspace{4mm}
\noindent
(2) $n=2$. Christian \cite{Christian}, Morita \cite{Morita}, Yamazaki \cite{Yamazaki}. For $k>3$ and $N>2$, it is known that
\begin{multline*}
\dim S_k(\Gamma_2(N))=[\Gamma_2(1):\Gamma_2(N)] \\
\times \left\{ \frac{(2k-2)(2k-3)(2k-4)}{2^{10}\cdot 3^3\cdot 5}-\frac{2k-3}{2^6\cdot 3^2\cdot N^2}+\frac{1}{2^5\cdot 3\cdot N^3} \right\}. 
\end{multline*}

\vspace{1mm}
\noindent
Numerical examples of $\dim S_k(\Gamma_2(N))$. \\
\begin{tabular}{|c@{\, \vrule width1.5pt \,\,\,}cccccccc|} \hline
$N$ \begin{picture}(4,4)(0,0) \put(4,-1){ \line(-3,2){6} } \end{picture} $k$ &   4 & 5 & 6 & 7 & 8 & 9 & 10 & 11   \\ \noalign{\hrule height 1.5pt}
3 & 15 & 76 & 200 & 405 & 709 & 1130 & 1686 & 2395  \\ \hline
4 & 360 & 1352 & 3240 & 6280 & 10728 & 16840 & 24872 & 35080  \\ \hline
5 & 5655 & 18980 & 43680 & 83005 & 140205 & 218530 & 321230 & 451555  \\ \hline
\end{tabular}

\vspace{4mm}
\noindent
(3) $n=3$. Tsushima \cite{Tsushima1}. For $k>4$ and $N>2$, it is known that
\begin{multline*}
\dim S_k(\Gamma_3(N))=[\Gamma_3(1):\Gamma_3(N)] \times \\
\left\{ \frac{(2k-2)(2k-3)(2k-4)^2(2k-5)(2k-6)}{2^{16}\cdot 3^6\cdot 5^2\cdot 7}  -\frac{2k-4}{2^{10}\cdot 3^2\cdot 5\cdot N^5}+\frac{1}{2^8\cdot 3^3\cdot N^6}  \right\}.
\end{multline*}

\vspace{1mm}
\noindent
Numerical examples of $\dim S_k(\Gamma_3(N))$. \\
\begin{tabular}{|c@{\, \vrule width1.5pt \,\,\,}cccc|} \hline
$N$ \begin{picture}(4,4)(0,0) \put(4,-1){ \line(-3,2){6} } \end{picture} $k$ &   5 & 6 & 7 & 8   \\ \noalign{\hrule height 1.5pt}
3 & 41132 & 260624 & 1036100 & 3154151  \\ \hline
4 & 14400512 & 87671808 & 345492480 & 1048957952   \\ \hline
5 & 2189096000 & 13202280000 & 51921714000 & 157545444875  \\ \hline
\end{tabular}

\vspace{4mm}
\noindent
(4) $n=4$. For $k>5$ and $N>2$, it follows from Theorem \ref{t15} that
\begin{multline*}
\dim S_k(\Gamma_4(N))=[\Gamma_4(1):\Gamma_4(N)] \times \\
\left\{ \frac{(2k-2)(2k-3)(2k-4)^2(2k-5)^2(2k-6)^2(2k-7)(2k-8)}{2^{25}\cdot 3^8\cdot 5^4\cdot 7^2}  \right. \\
+ \frac{(2k-3)(2k-4)(2k-5)^2(2k-6)(2k-7)}{2^{17}\cdot 3^7\cdot 5^3\cdot 7\cdot N^4} \\ 
\left. - \frac{(2k-4)(2k-5)(2k-6)}{2^{15}\cdot 3^5\cdot 5\cdot 7\cdot N^7} +\frac{1}{2^{12}\cdot 3^4\cdot 5\cdot N^{10}}  \right\}.
\end{multline*}

\vspace{1mm}
\noindent
Numerical examples of $\dim S_k(\Gamma_4(N))$. \\
\begin{tabular}{|c@{\, \vrule width1.5pt \,\,\,}ccc|} \hline
$N$ \begin{picture}(4,4)(0,0) \put(4,-1){ \line(-3,2){6} } \end{picture} $k$ &   6 & 7 & 8   \\ \noalign{\hrule height 1.5pt}
3 & 4579839810 & 59162254866 & 456282921627   \\ \hline
4 & 103260267479040 & 1412646545915904 & 11110964624621568   \\ \hline
5 & 429562396640081250 & 5989030815121331250 & 47380818119506096875 \\ \hline
\end{tabular}

\vspace{4mm}
\noindent
(5) $n=5$. For $k>6$ and $N>2$, it follows from Theorem \ref{t15} that
\begin{multline*}
\dim S_k(\Gamma_5(N))=[\Gamma_5(1):\Gamma_5(N)] \times \\
\left\{ \frac{(2k-2)(2k-3)(2k-4)^2(2k-5)^2(2k-6)^3(2k-7)^2(2k-8)^2(2k-9)(2k-10)}{2^{33}\cdot 3^{12}\cdot 5^5\cdot 7^3\cdot 11}  \right. \\
- \frac{(2k-4)(2k-5)(2k-6)^2(2k-7)(2k-8)}{2^{23}\cdot 3^7\cdot 5^3\cdot 7\cdot N^9} \\ 
\left. - \frac{(2k-5)(2k-6)(2k-7)}{2^{17}\cdot 3^7\cdot 5^2\cdot 7\cdot N^{12}} +\frac{2k-6}{2^{16}\cdot 3^6\cdot 5\cdot 7\cdot N^{14}}  \right\}.
\end{multline*}

\vspace{1mm}
\noindent
Numerical examples of $\dim S_k(\Gamma_5(N))$. \\
\begin{tabular}{|c@{\, \vrule width1.5pt \,\,\,}cc|} \hline
$N$ \begin{picture}(4,4)(0,0) \put(4,-1){ \line(-3,2){6} } \end{picture} $k$ &   7 & 8   \\ \noalign{\hrule height 1.5pt}
3 & 54749238798613788 & 1961103357322399719   \\ \hline
4 & 320755407836707217735680 & 11468658955208332371034112  \\ \hline
5 & 95447256764961220187148437500 & 3412305106826559796929248046875 \\ \hline
\end{tabular}

\vspace{4mm}
\noindent
(6) $n=6$. For $k>7$ and $N>2$, it follows from Theorem \ref{t15} that
\begin{multline*}
\dim S_k(\Gamma_6(N))=[\Gamma_6(1):\Gamma_6(N)] \times \\
\left\{ \frac{691}{2^{43}\cdot 3^{13}\cdot 5^7\cdot 7^5\cdot 11^2\cdot 13} (2k-2)(2k-3)(2k-4)^2(2k-5)^2(2k-6)^3(2k-7)^3  \right. \\
\times (2k-8)^3(2k-9)^2(2k-10)^2(2k-11)(2k-12)  \\
- \frac{(2k-3)(2k-4)(2k-5)^2(2k-6)^2(2k-7)^3(2k-8)^2(2k-9)^2(2k-10)(2k-11)}{2^{31}\cdot 3^{14}\cdot 5^5\cdot 7^4\cdot 11\cdot N^6}   \\
- \frac{(2k-4)(2k-5)(2k-6)^2(2k-7)^2(2k-8)^2(2k-9)(2k-10)}{2^{30}\cdot 3^9\cdot 5^4\cdot 7^2\cdot 11\cdot N^{11}} \\ 
\left. + \frac{(2k-6)(2k-7)(2k-8)}{2^{23}\cdot 3^8\cdot 5^2\cdot 7\cdot N^{18}} -\frac{2k-7}{2^{19}\cdot 3^7\cdot 5^3\cdot 7\cdot N^{20}}  \right\}.
\end{multline*}

\vspace{1mm}
\noindent
Numerical examples of $\dim S_k(\Gamma_6(N))$. \\
\begin{tabular}{|c@{\, \vrule width1.5pt \,\,\,}c|} \hline
$N$ \begin{picture}(4,4)(0,0) \put(4,-1){ \line(-3,2){6} } \end{picture} $k$ &    8   \\ \noalign{\hrule height 1.5pt}
3 & 14338236964403459094697389537 \\ \hline
4 & 62657675456744807193941531065954861056 \\ \hline
5 & 3159011529622615201202592700939984097900390625 \\ \hline
\end{tabular}

\vspace{4mm}
Our dimension formula (Theorem \ref{t37}) includes the case of vector valued Siegel cusp forms (cf. Section \ref{s3.5}).
In the vector valued case, weight factors are defined by irreducible polynomial representations of $\GL(n,\C)$.
As for degree two $(n=2)$, the dimension formula is known (see \cite{Tsushima2,Wakatsuki}).
According to our generalization of Shintani's formula to the vector valued case, the dimension formula involves Gelfand-Tsetlin multiplicities and transposition matrices between Schur functions and zonal polynomials.
In the scalar valued case, such factors become trivial.
They are computable if we fix a weight, but they are not explicit for general weights.

Finally we shall mention some literatures concerning explicit dimension formulas for spaces of scalar and vector valued Siegel modular forms with respect to $\Gamma_n(1)$, $\Gamma_n(2)$, or $k=1$.
For $\Gamma=\Gamma_2(1)$, $\Gamma_2(2)$ ($n=2$), we refer to \cite{Igusa}, \cite{Hashimoto}, \cite{Petersen}, \cite{Tsushima4}, \cite{Tsushima2}, and \cite{Wakatsuki}.
For $\Gamma=\Gamma_3(1)$, $\Gamma_3(2)$ ($n=3$), we refer to \cite{Tsuyumine}, \cite{Runge}, and \cite{Taibi}.
For the weight one ($k=1$), we refer to \cite{Li} and \cite{Gunji}.
Note that we did not touch literatures on numerical computations for dimensions, even though many such works exist.

\subsection{Some remarks on the proof}

We will show that almost all unipotent weighted orbital integrals $J_M^G(u,\fs)$ over $\R$ vanish for proper Levi subgroups $M$ and spherical trace functions $\fs$ of holomorphic discrete series (cf. Proof of Proposition \ref{p33}).
This means that the special values $\zeta_\Shin(\phi_{0,r},r-n)$ in Theorem \ref{t11} are closely related to global coefficients $a^G(S,u)$ of the geometric side of the Arthur trace formula.
There are some works for global coefficients (see \cite{CL,Chaudouard,Chaudouard2,Matz1,Matz2,HW}),  but the works are still not sufficient to calculate all of them explicitly.

In Shintani's formula, the explicit calculations for local orbital integrals over $\R$ are reduced to the Fourier transforms of spherical trace functions along the centers of unipotent radicals of maximal parabolic subgroups.
In this method, we do not calculate each of unipotent orbital integrals.
Explicit calculations for local orbital integrals should be provided by limit formulas (cf. \cite{Bozicevic,Rossmann} and \cite[Appendix]{Arthur4}).
However, the formulas are not enough to compute all unipotent orbital integrals.

When we apply a pseudo-coefficient of a single discrete series to Arthur's invariant trace formula, we can see which unitary representations remain in the spectral side using Hiraga's study \cite{Hiraga}.
In our case, only $\dim S_k(\Gamma)$ appears in the spectral side when weight $k$ is greater than $n+1$.

For each geometric unipotent conjugacy class $C$ of $G$ over $\Q$, the contribution of $C(\Q)$ is expressed by a linear sum of unipotent weighted orbital integrals associated with $C$ (cf. Propositions \ref{cz1} and \ref{t1}).
When $C$ does not correspond to any partition of the form $(2^j,1^{2n-2j})$, we will prove the vanishing of the related unipotent weighted orbital integrals (cf. Proposition \ref{p33}).
There are three main reasons for the vanishing.
First, the character formula for holomorphic discrete series of $\Sp(n,\R)$ is similar to that of irreducible finite dimensional representations of $\GL(n,\C)$ (cf. \cite{Hecht,Martens} and Lemma \ref{nr1}).
Second, the spherical trace function of a holomorphic discrete series has a special vanishing property (cf. Lemma \ref{vl1}).
The third reason is explained by the Jacobson-Morozov theory.
For each unipotent element $u$, we have a grading $\fg=\oplus_{m\in\Z}\fg_m$ on $\fg=\mathrm{Lie}(G)$.
Then, we have $\fg_m=0$ for all $m>2$ if and only if $u$ corresponds to a partition of the form $(2^j,1^{2n-2j})$ (cf. Lemma \ref{lu3}).
The vanishing of $J_G^G(u,\fs)$ follows from an integration over $\exp(\oplus_{m>2}\fg_m(\R))$.

Our method can be applied to other holomorphic cusp forms for neat arithmetic subgroups.
If one applies the method to the group $R_{F/\Q}(\SL(2))$ ($F$ is a totally real field over $\Q$), then one can get a dimension formula for spaces of Hilbert cusp forms.
Such a dimension formula was already obtained by Shimizu in \cite{Shimizu}.
It is also possible to get known dimension formulas for $\Q$-rank one groups.
For $\SU(1,n)$ (Picard cusp forms), we refer to \cite{Kato} and \cite{Kojima}.
For non-split $\Q$-forms of $\Sp(2)$, we refer to \cite{Arakawa} and \cite{Wakatsuki}.
For higher rank groups $\mathrm{Spin}(2,n)$, $\SU(n,n)$ and so on, it should be possible to use our method to calculate dimensions of spaces of their holomorphic cusp forms.

\section{Preliminaries}\label{s22}

\subsection{Notation}\label{notation}

Let $\A$ denote the adele ring of the rational number field $\Q$ and $\A_\fin$ the finite adele ring of $\Q$.
Then, we have $\A=\R \times \A_\fin$ where $\R$ denotes the real number field.
Let $\C$ denote the complex number field.
For each place $v$ of $\Q$, we denote by $\Q_v$ the completion of $\Q$ at $v$.
We write $\inf$ for the infinite place of $\Q$, i.e., $\Q_\inf=\R$.

Let $G$ be a connected reductive linear algebraic group over a field $F$ of characteristic $0$.
For a Levi subgroup $M$ of $G$, we denote by $\cL(M)$ (resp.\ $\cF(M)$) the set of Levi subgroups (resp. parabolic subgroups) of $G$ over $F$ that contain $M$.
For each $P\in\cF(M)$, a Levi decomposition of $P$ over $F$ is given by $P=M_PN_P$ where $M_P\in\cL(M)$ and $N_P$ denotes the unipotent radical of $P$.
Set $\cP(M)=\{P\in\cF(M) \, | \, M_P=M\}$.
We write $A_M$ for the split part of the center of $M$ over $F$.
Let $X(M)_F$ denote the additive group of rational characters of $M$ over $F$.
Then, we have the finite dimensional real vector spaces $\fa_M=\mathrm{Hom}_\Z(X(M)_F,\R)$ and $\fa_M^*=X(M)_F\otimes_\Z \R$.

From now on, we set $F=\Q$ and consider a connected reductive linear algebraic group $G$ over $\Q$.
Fix a minimal Levi subgroup $M_0$ of $G$ over $\Q$.
Set $\cL=\cL(M_0)$, $\cF=\cF(M_0)$, and $\cP=\cP(M_0)$.
We also fix a suitable maximal compact subgroup $\bK=\prod_v \bK_v$ of $G(\A)$, i.e., $\bK$ is admissible relative to $M_0$  (cf. \cite[Section 1]{Arthur2}).
In particular,  the maximal compact group $\bK_v$ of $G(\Q_v)$ is admissible relative to $M_0$ for each $v$, and the Iwasawa decompositions $G(\A)=P(\A)\bK$ and $G(\Q_v)=P(\Q_v)\bK_v$ hold for any $P\in\cF$ and any place $v$ of $\Q$.
For each $M\in \cL$, a mapping $H_M:M(\A)\to \fa_M$ is defined by
\[
\langle H_M(x),\chi\rangle=\log|\chi(x)| \qquad (x\in M(\A), \;\; \chi\in X(M)_\Q).
\]
For each $P\in\cF$, we set
\[
H_P(nmk)=H_{M_P}(m)  \qquad (n\in N(\A), \;\; m\in M_P(\A), \;\; k\in\bK).
\]
A subgroup $M(\A)^1$ of $M(\A)$ is defined by
\[
M(\A)^1=\{m\in M(\A)\mid H_M(m)=0\}
\]
and let $A_M(\R)^0$ denote the identity component of $A_M(\R)$.
Then we have the isomorphisms $A_M(\R)^0\cong\fa_M$ and $M(\A)\cong A_M(\R)^0\times M(\A)^1$ via the mapping $H_M$.

For each $M\in\cL$, a bijection $\fa_M^*\to\fa_{A_M}^*$ is defined by the restriction $X(M)_\Q\to X(A_M)_\Q$.
We assume that Levi subgroups $M_1$ and $M_2$ in $\cL$ satisfy $M_1\subset M_2$.
It obviously follows that $A_{M_2}\subset A_{M_1}\subset M_1 \subset  M_2$ over $\Q$.
Since the restriction $X(M_2)_F\to X(M_1)_F$ is injective, we obtain a linear injection $\fa_{M_2}^*\to\fa_{M_1}^*$ and a linear surjection $\fa_{M_1}\to \fa_{M_2}$.
Since the restriction $X(A_{M_1})_\Q\to X(A_{M_2})_\Q$ is surjective, we have a linear surjection $\fa_{M_1}^*\to \fa_{M_2}^*$ and a linear injection $\fa_{M_2}\to \fa_{M_1}$.
Set 
\[
\fa_{M_1}^{M_2}=\{ a_1 \in \fa_{M_1} \mid \langle a_1,a^*_2\rangle=0 \quad ( \forall a^*_2 \in \fa_{M_2}^*) \},
\]
\[
(\fa_{M_1}^{M_2})^*= \{ a_1^* \in \fa_{M_1}^* \mid \langle a_2,a^*_1\rangle=0 \quad ( \forall a_2 \in \fa_{M_2}) \}  .
\]
Then, we have
\[
\fa_{M_1}=\fa_{M_2}\oplus \fa_{M_1}^{M_2}  \quad \text{and} \quad \fa_{M_1}^*=\fa_{M_2}^*\oplus (\fa_{M_1}^{M_2})^*.
\]
For each $P\in\cF$, we set $\fa_P^*=\fa_{M_P}^*$, $\fa_{P_1}^{P_2}=\fa_{M_{P_1}}^{M_{P_2}}$, $(\fa_{P_1}^{P_2})^*=(\fa_{M_{P_1}}^{M_{P_2}})^*$, and $A_P=A_{M_P}$.
We denote by $\Phi_P\subset X(A_P)_\Q$ the set of roots of $A_P$ in the Lie algebra $\mathfrak n_P$ of $N_P$.

Fix a minimal parabolic subgroup $P_0$ in $\cP$.
Set $\fa_0=\fa_{M_0}$, $\fa_0^*=\fa_{M_0}^*$, $\fa_0^P=\fa_0^{M_P}=\fa_{M_0}^{M_P}$, $(\fa_0^P)^*=(\fa_0^{M_P})^*=(\fa_{M_0}^{M_P})^*$, and $\Phi_0=\Phi_{P_0}$.
The set $\Phi_0\sqcup (-\Phi_0)$ is a root system in $(\fa_0^G)^*$ and $\Phi_0$ is a system of positive roots.
Let $W_0^G$ denote the Weyl group of the root system $\Phi_0\sqcup (-\Phi_0)$ in $(\fa_0^G)^*$.
We set $\Delta_0=\Delta_{P_0}$, i.e., $\Delta_0$ is the set of simple roots attached to $\Phi_0$.
Let $\widehat{\Delta}_0$ denote the set of simple weights corresponding to $\Delta_0$.
Let $P\in \cF$ and $P\supset P_0$.
A subset $\Delta_0^P$ of $\Delta_0$ is defined by $\fa_P=\{  a\in\fa_0 \; | \; \langle a ,\alpha \rangle=0,\; \, \forall \alpha\in\Delta_0^P  \}$.
We set
\[
\Delta_P=\{\alpha|_{\fa_P}\in (\fa_P^G)^* \mid \alpha\in \Delta_0-\Delta_0^P \}, \quad \widehat{\Delta}_P=\{ \varpi_{\alpha} \in (\fa_P^G)^* \; | \; \alpha\in\Delta_0-\Delta_0^P  \},
\]
where $\varpi_{\alpha}$ denotes the simple weight corresponding to $\alpha$.

\subsection{Haar measures}\label{s221}

Let $G$ be a connected reductive linear algebraic group over $\Q$.
Let $C_c^\inf(G(\R))$ denote the space of compactly supported smooth functions on $G(\R)$ and let $C_c^\inf(G(\A_\fin))$ denote the space of compactly supported locally constant functions on $G(\A_\fin)$.
Set $C_c^\inf(G(\A))=C_c^\inf(G(\R))\otimes C_c^\inf(G(\A_\fin))$.
For each finite place $v$ of $\Q$, we write $C_c^\inf(G(\Q_v))$ for the space of compactly supported locally constant functions on $G(\Q_v)$.
For a finite set $S$ of places of $\Q$, we set $\Q_S=\prod_{v\in S}\Q_v$, $\bK_S=\prod_{v\in S}\bK_v$, and $C_c^\inf(G(\Q_S))=\otimes_{v\in S} C_c^\inf(G(\Q_v))$.

Let $P\in\cF$.
An element $\rho_P$ in $\fa_P^*$ is defined by $\rho_P=\frac{1}{2}\sum_{\alpha\in\Phi_P} (\dim \fn_{\alpha}) \alpha$, where $\fn_{\alpha}=\{X\in\fn_P\mid \mathrm{Ad}(a)X=\alpha(a) X \;\; (a\in A_P)\}$ $(\mathrm{Ad}(a)X=aXa^{-1})$.
The function $\delta_P(x)=e^{2\rho_P(H_P(x))}$ $(x\in P(\A))$ is called the modular character of $P(\A)$.
Fix a Haar measure $\d g$ on $G(\A)$ and a Haar measure $\d H$ on $\fa_P$.
We denote by $\d n$ the Haar measure on $N_P(\A)$ normalized by $\int_{N_P(\Q)\bsl N_P(\A)}\d n=1$.
Let $\d k$ denote the Haar measure on $\bK$ normalized by $\int_\bK \d k=1$ and let $\d a$ denote the Haar measure on $A_P(\R)^0$ induced from $\d H$.
Then, there exists a unique Haar measure $\d^1 m$ on $M_P(\A)^1$ such that
\[
\int_{G(\A)}f(g)\d g=\int_{N_P(\A)}\int_{M_P(\A)^1}\int_{A_P(\R)^0}\int_{\bK} f(nmak)e^{-2\rho_P(H_P(a))} \, \d k\, \d^1 m \,  \d a \, \d n
\]
for any $f\in C_c^\inf(G(\A))$.
A Haar measure $\d m$ on $M_P(\A)$ is determined by $\d m=\d^1m \, \d a$.
For each place $v$ of $\Q$, let $\d g_v$ denote a Haar measure on $G(\Q_v)$ and let $\d n_v$, $\d m_v$, and $\d k_v$ denote Haar measures on $N_P(\Q_v)$, $M_P(\Q_v)$ and $\bK_v$ respectively.
We may assume that $\d g=\prod_v\d g_v$, $\d k=\prod_v \d k_v$, $\d n=\prod_v \d n_v$, and $\d m=\prod_v \d m_v$.
Furthermore, we may also assume $\int_{\bK_v}\d k_v=1$ for any $v$, $\int_{N_P(\Q_v)\cap \bK_v}\d n_v=1$ for any $v<\inf$, and
\[
 \int_{G(\Q_v)}f_v(g_v)\d g_v=\int_{N_P(\Q_v)}\int_{M_P(\Q_v)}\int_{\bK_v}f_v(n_v m_v k_v)e^{-2\rho_P(H_P(m_v))} \, \d k_v \, \d m_v  \, \d n_v 
\]
for any $v$ and any $f_v\in C_c^\inf(G(\Q_v))$.
We have the Haar measures $\d n_S=\prod_{v\in S}\d n_v$, $\d m_S=\prod_{v\in S}\d m_v$, and $\d k_S=\prod_{v\in S}\d k_v$ on $N(\Q_S)$, $M(\Q_S)$, and $\bK_S$ respectively.
The above Haar measures were chosen as in \cite[Section 1]{Arthur8}.
For a test function $f$ on $G(\Q_S)$, we set
\[
f_P(m)=\delta_P(m)^{\frac{1}{2}}\int_{\bK_S}\int_{N_P(\Q_S)}f(k^{-1}mnk)\, \d n  \d k \qquad (m\in M_P(\Q_S)).
\]

\section{Multiplicities and truncated zeta integrals}\label{s2}

Throughout this section, we assume that $G$ is a connected semisimple linear algebraic group over $\Q$.
Hence, we may suppose that $G$ is a closed subgroup of $\GL(N)$ over $\Q$ for a fixed natural number $N$.
For each element in $G(\R)$, its eigenvalues are defined in the usual way by the inclusion $G(\R) \subset \GL(N,\C)$.
Eigenvalues of elements in $G(\Q)$ are also defined by regarding $G(\Q)$ as a subgroup of $G(\R)$.
Moreover, we assume that $G$ is simply connected, $G'(\R)$ is not compact for each $\Q$-simple factor $G'$ of $G$, and $G(\R)$ admits discrete series representations (i.e., square-integrable unitary representations).
Especially, $G$ has the strong approximation property with respect to $\inf$ (cf. \cite[Theorem 7.12]{PR}), that is, $G(\Q)$ is dense in $G(\A_\fin)$ via the diagonal embedding.

We fix an open compact subgroup $K_0$ of $G(\A_\fin)$.
An arithmetic subgroup $\Gamma$ of $G(\Q)$ is defined by
\[
\Gamma=G(\Q)\cap (G(\R)K_0).
\]
We write $L^2_{\mathrm{disc}}(\Gamma \bsl G(\R))$ for the discrete spectrum of $L^2(\Gamma\bsl G(\R))$ and $L^2_{\mathrm{cont}}(\Gamma \bsl G(\R))$ for the continuous spectrum of $L^2(\Gamma\bsl G(\R))$.
Then, the orthogonal direct sum
\[
L^2(\Gamma \bsl G(\R))=L^2_{\mathrm{disc}}(\Gamma \bsl G(\R))\oplus L^2_{\mathrm{cont}}(\Gamma \bsl G(\R))
\]
holds.
The discrete spectrum $L^2_{\mathrm{disc}}(\Gamma \bsl G(\R))$ decomposes into an orthogonal direct sum
\[
L^2_{\mathrm{disc}}(\Gamma \bsl G(\R)) \cong \bigoplus_{\pi \in \Pi_{\mathrm{unit}}(G(\R))} m(\pi,\Gamma) \cdot H_\pi
\]
where $\Pi_{\mathrm{unit}}(G(\R))$ is the unitary dual of $G(\R)$, $H_\pi$ is the Hilbert space of $\sigma$, and $m(\pi,\Gamma)$ is a non-negative integer.
The number $m(\pi,\Gamma)$ is called the multiplicity of $\pi$ in $L^2_{\mathrm{disc}}(\Gamma \bsl G(\R))$. 
For a discrete series $\sigma$ of $G(\R)$, we will relate $m(\sigma,\Gamma)$ to unipotent weighted orbital integrals and truncated zeta integrals of spherical trace functions of $\sigma$ under some conditions (cf. Propositions \ref{cz1} \and \ref{t1}).

\subsection{Arthur's invariant trace formula}\label{s23}

There exists a finite set $S_0$ of finite places of $\Q$ such that
\[
K_0=K_{S_0}\prod_{v\not\in S_0 ,\; v<\inf}\bK_v
\]
for a certain open compact subgroup $K_{S_0}$ in $G(\Q_{S_0})$.
We fix such a finite set $S_0$ and set
\[
S_1=S_0\sqcup\{\inf\}.
\]
Since $G$ has the strong approximation property with respect to $\inf$, it follows that
\[
G(\Q)\bsl G(\A)/K_0\cong \Gamma \bsl G(\R).
\]
Hence we have a $G(\R)$-isomorphism
\begin{equation}
L^2(G(\Q)\bsl G(\A)/K_0)\cong L^2(\Gamma\bsl G(\R)).
\end{equation}
We will assume the following condition for $\Gamma$.
\begin{cond}\label{c21}
Let $\gamma$ be any element of $\Gamma$.
If there exists a natural number $l$ such that $\gamma^l$ is unipotent, then $\gamma$ is unipotent.
\end{cond}
Let $\gamma$ be an element of $G(\Q)$.
Supposing that we have $a=1$ if $a$ is a root of $1$ for any eigenvalue $a$ of $\gamma$, the element $\gamma$ is said to be neat.
If all elements of $\Gamma$ are neat, then $\Gamma$ is said to be neat in the sense of \cite[Section 17]{Borel1} and $\Gamma$ clearly satisfies Condition \ref{c21}.
It is known that a principal congruence subgroup of $\GL(N,\Z)$ is neat if its level is greater than two (see, e.g.,  \cite[p.118]{Borel1}, \cite[Lemma 2]{Morita}).

Let $S$ be a finite set of places of $\Q$.
For each $L\in \cL$, we write  $\cH(L(\Q_S))$ for the Hecke algebra on $L(\Q_S)$ which consists of $\bK_S\cap L(\Q_S)$-finite functions in $C_c^\inf(L(\Q_S))$.
Here, a function $f$ on $L(\Q_S)$ is called $\bK_S\cap L(\Q_S)$-finite if the space of functions on $L(\Q_S)$ spanned by left and right $\bK_S\cap L(\Q_S)$-translates of $f$ is finite dimensional.
Note that, if $S$ does not include $\inf$, then we have $\cH(G(\Q_S))=C_c^\inf(G(\Q_S))$ as vector space over $\C$.
We also note that any function in $C_c^\inf(G(\A_\fin))$ is $\bK_\fin$-finite, where we set $\bK_\fin=\prod_{v<\inf}\bK_v$.

We write $h_0$ for the characteristic function of $K_0$.
Set
\[
h=\vol(K_0)^{-1} h_0 . 
\]
The function $h$ belongs to $C_c^\inf(G(\A_\fin))$.
For any finite set $S'(\supset S_0)$ of finite places of $\Q$, there exists a function $h_{S'}$ in $\cH(G(\Q_{S'}))$ such that $h$ is the product of it with the characteristic function of $\prod_{v\not\in S'\sqcup\{\inf\}}\bK_v$.
We will identify $h$ with $h_{S'}$ in the geometric side of trace formula.

Fix a discrete series representation $\sigma$ of $G(\R)$.
Let $\tfs$ denote a pseudo-coefficient of $\sigma$ (cf. \cite{CD}).
The function $\tfs$ is in $\cH(G(\R))$ and satisfies $\Tr(\sigma(\tfs))=1$.
\begin{cond}\label{c2}
For any $\pi\in \Pi_{\mathrm{unit}}(G(\R))$, we have $\Tr(\pi(\tfs))=0$ unless $\pi\cong\sigma$.
\end{cond}
One can see the Harish-Chandra parameters such that $\sigma$ satisfies Condition \ref{c2} by the result of \cite{Hiraga} (Condition \ref{c2} holds if the set $\mathrm{Wall}(\sigma)$ is empty in the paper \cite{Hiraga}).

Let $H$ denote a connected reductive linear algebraic group over $\Q$.
For $\gamma\in H(\Q)$, we write $H_{\gamma,+}$ for the centralizer of $\gamma$ in $H$, and $H_\gamma$ for the connected component of $1$ in $H_{\gamma,+}$.
Let $\cU_H$ denote the Zariski closure in $H$ of the set of unipotent elements in $H(\Q)$.
Note that $\cU_H$ is a closed algebraic subvariety of $H$ over $\Q$.
We also see that $\cU_H(\Q)$ is the set of unipotent elements of $H(\Q)$.

Two elements of $G(\Q)$ are called $\cO$-equivalent if their semisimple parts are $G(\Q)$-conjugate.
We write $\cO$ for the set of $\cO$-equivalence classes in $G(\Q)$.
The set $\fo_\unip=\cU_G(\Q)$ clearly belongs to $\cO$.

For each $M\in\cL$, $L\in\cL(M)$, and $\gamma\in M(\Q_S)$, the local distributions $J_M^L(\gamma)$ and $I_M^L(\gamma)$ on $\cH(L(\Q_S))$ are defined in \cite{Arthur3} and \cite{Arthur8} (we set $F=\Q$ in his papers).
Especially $J_M^M(\gamma)=I_M^M(\gamma)$ means the ordinary orbital integral on $M(\Q_S)$.
We omit detailed explanations for them, but we will later explain only unipotent weighted orbital integrals $J_M^L(u)$ $(u\in\cU_M(\R))$ in Section \ref{secu1}.

\begin{prop}\label{p1}
We choose a parabolic subgroup $Q\in\cP(M)$ for each $M\in\cL$.
If we assume that $\Gamma$ satisfies Condition \ref{c21} and $\sigma$ satisfies Condition \ref{c2}, then we have
\begin{equation}\label{p1e}
m(\sigma,\Gamma)=\sum_{M\in\cL}\frac{|W^M_0|}{|W^G_0|}\sum_{u\in(\cU_M(\Q))_{M,S_1}} a^M(S_1,u) \, I_M^G(u,\tfs) \, J_M^M(u,h_Q),
\end{equation}
where $(\cU_M(\Q))_{M,S_1}$ denotes the finite set of $M(\Q_{S_1})$-conjugacy classes in $\cU_M(\Q)\subset \cU_M(\Q_{S_1})$.
\end{prop}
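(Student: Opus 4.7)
The plan is to apply Arthur's invariant trace formula to $f = \tilde{f}_\sigma \otimes h$, regarded as an element of $\cH(G(\Q_{S_1}))$ tensored with the characteristic function of $\prod_{v\notin S_1}\bK_v$, and then to collapse both sides drastically using Conditions \ref{c21} and \ref{c2}. On the spectral side, the strong approximation isomorphism $L^2(G(\Q)\bsl G(\A))^{K_0} \cong L^2(\Gamma\bsl G(\R))$ identifies the $K_0$-fixed automorphic spectrum with $L^2(\Gamma\bsl G(\R))$. For any automorphic $\pi\cong\pi_\R\otimes\pi_\fin$, the operator $\pi_\fin(h)$ is the orthogonal projection onto $\pi_\fin^{K_0}$, so $\Tr\pi_\fin(h)=\dim\pi_\fin^{K_0}$, whereas Condition \ref{c2} forces $\Tr\pi_\R(\tilde{f}_\sigma)=0$ unless $\pi_\R\cong\sigma$. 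By Hiraga's criterion the same condition kills every archimedean induced representation appearing in the Eisenstein integrals, so the continuous spectrum contribution vanishes. Summing the discrete part over automorphic $\pi$ with $\pi_\R\cong\sigma$ therefore recovers $m(\sigma,\Gamma)$, and $I_{\mathrm{spec}}(f) = m(\sigma,\Gamma)$.

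Next I would turn to the geometric expansion
\begin{equation*}
I_{\mathrm{geom}}(f) = \sum_{M\in\cL}\frac{|W_0^M|}{|W_0^G|}\sum_\gamma a^M(S_1,\gamma)\,I_M^G(\gamma,f),
\end{equation*}
with $\gamma$ running over admissible $M(\Q_{S_1})$-conjugacy classes in $M(\Q)$. The crucial reduction is that only unipotent classes contribute. For a class with non-trivial contribution, the compact support of $\tilde{f}_\sigma$ makes the eigenvalues of the semisimple part $\gamma_s$ bounded at $\inf$, while the support of $h$ forces the characteristic polynomial of $\gamma_s$ to lie in $\Z[x]$ with every root a unit at every finite place; by Kronecker's theorem each such eigenvalue is a root of unity. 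Passing to a $\Gamma$-representative via strong approximation, $\gamma_s$ has some power equal to a unipotent element, so Condition \ref{c21} forces $\gamma_s = 1$, and $\gamma \in \cU_M(\Q)$.

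For each unipotent $u\in(\cU_M(\Q))_{M,S_1}$ I would apply Arthur's splitting formula for invariant weighted orbital integrals on the product test function $\tilde{f}_\sigma\otimes h$. Because $h$ is $\bK_\fin$-biinvariant and $u$ is unipotent, the finite-place contributions $J_M^L(u, h_Q)$ vanish for $L \supsetneq M$, so the combinatorial splitting collapses to the single diagonal term
\begin{equation*}
I_M^G(u,\tilde{f}_\sigma\otimes h) = I_M^G(u,\tilde{f}_\sigma)\,J_M^M(u,h_Q),
\end{equation*}
where $h_Q$ is the normalized constant term of $h$ along the chosen $Q\in\cP(M)$. Substituting this factorization into the geometric expansion yields \eqref{p1e}.

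The principal obstacles I expect are to justify that Condition \ref{c2} annihilates the entire continuous spectrum (leaning on Hiraga's analysis of the wall set of $\sigma$) and to execute the Kronecker-type argument on the geometric side with a clean descent to a $\Gamma$-representative so that Condition \ref{c21} genuinely applies to the semisimple part; the subsequent collapse of Arthur's splitting formula, while technical, is by now standard.
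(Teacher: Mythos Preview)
Your overall architecture is right, but there are two genuine gaps.

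\textbf{The splitting collapse.} You write that ``$h$ is $\bK_\fin$-biinvariant and $u$ is unipotent, so the finite-place contributions $J_M^L(u,h_Q)$ vanish for $L\supsetneq M$.'' This is not correct: $h$ is the normalized characteristic function of an \emph{arbitrary} open compact subgroup $K_0$, and there is no reason for it to be $\bK_\fin$-biinvariant; even if it were, weighted orbital integrals of spherical functions do not vanish in general. The collapse of Arthur's splitting formula to the single term $I_M^G(\gamma,\tfs)\,J_M^M(\gamma,h_Q)$ comes from the \emph{archimedean} side: $\tfs$ is cuspidal, so its constant terms (equivalently, the maps $\phi_L(\tfs)$ for $L\neq G$) vanish, which in the splitting formula forces the archimedean Levi to be $G$ and hence the finite-adelic Levi to be $M$. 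This is exactly the mechanism in \cite[Section 3]{Arthur4} that the paper invokes. Your justification as written would fail.

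\textbf{The reduction to unipotent classes.} Your Kronecker sketch is too loose. Compact support of $\tfs$ does not by itself bound the eigenvalues of $\varsigma$, because $I_M^G(\varsigma u,\tfs)$ is a weighted orbital integral, not a point evaluation; and the support of $h$ lies in $K_0$, not in $\bK_\fin$, so you do not immediately get integrality of the characteristic polynomial at the primes in $S_0$. The paper proceeds differently and more robustly: it first proves (Lemma~\ref{p1l}) that $I_M^G(\varsigma u,\tfs)=0$ unless $\varsigma$ is $\R$-elliptic in $M$, using the descent formulas of \cite{Arthur8} and cuspidality of $\tfs$. For $\R$-elliptic $\varsigma$ and nonvanishing $J_M^M(\varsigma u,h_Q)$, it then uses strong approximation to produce an explicit element $\gamma n\in\Gamma'$ (with $n\in N_Q(\Q)$ and $\Gamma'$ a conjugate of $\Gamma$), shows via $H_Q(\gamma n)=0$ that all eigenvalues of $\varsigma$ lie on the unit circle, and finally argues that the image of $\varsigma$ in a Levi quotient is an arithmetic element of a compact torus, hence of finite order. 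Condition~\ref{c21} then gives the contradiction. The step ``passing to a $\Gamma$-representative via strong approximation'' is where the real content lies, and your proposal does not indicate how you would control the $N_Q$-ambiguity or why the resulting element is genuinely in (a conjugate of) $\Gamma$.
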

\begin{proof}

First, we apply the function $\tfs h$ in $\cH(G(\Q_{S_1}))$ to Arthur's invariant trace formula.
The test function $\tfs$ is cuspidal and satisfies Condition \ref{c2}.
Hence, using the arguments in \cite[Section 3]{Arthur4}, we have
\[
m(\sigma,\Gamma)=\sum_{\fo}I_\fo(\tfs h),
\]
\[
I_\fo(\tfs h)=\sum_{M\in\cL}\frac{|W^M_0|}{|W^G_0|} \sum_{\varsigma}\sum_{u} a^M(S,\varsigma u)\, I_M^G(\varsigma u,\tfs) \, J_M^M(\varsigma u,h_Q)
\]
for any sufficiently large $S\supset S_1$, where $\fo$ moves over $\cO$-equivalence classes in $G(\Q)$, $\varsigma$ runs over all $M(\Q)$-conjugacy classes of semisimple elements in $\fo\cap M(\Q)$ which are $\Q$-elliptic in $M$, and $u$ runs over all $M_{\varsigma,+}(\Q)M_\varsigma(\Q_S)$-conjugacy classes in $\cU_{M_\varsigma}(\Q)$.
By \cite[Section 3]{Arthur10} the terms $I_\fo(\tfs h)$ vanish except for finitely many $\cO$-equivalence classes $\fo$.
Hence, one may assume that $\fo$ moves over a finite subset of $\cO$ in the above sum.
In particular, the subset is independent of choice of $S$.
It also follows that $\varsigma$ ranges over a finite set independent of $S$ (cf. \cite[Section 2]{Arthur9}).
Note that the range for $u$ depends on $S$.
For each conjugacy class $\varsigma$, we choose a representative element and it is denoted by the same notation $\varsigma$.
By \cite[Proposition 3.1]{Arthur10} we can choose $S=S_1$ for $I_{\fo_\unip}(\tfs h)$.
Hence, $I_{\fo_\unip}(\tfs h)$ equals the right hand side of \eqref{p1e}.
Therefore, it is sufficient to prove $I_\fo(\tfs h)=0$ unless $\fo=\fo_\unip$.

We say that $\varsigma$ is $\R$-elliptic in $M$ if there exists a maximal torus $T$ in $M$ over $\R$ such that $T(\R)/A_M(\R)$ is compact and $\varsigma$ belongs to $T(\R)$.
Here, we note that $A_M$ is defined over $\Q$.

\begin{lem}\label{p1l}
If $\varsigma$ is not $\R$-elliptic in $M$, then $I_M^G(\varsigma u,\tfs)$ vanishes.
\end{lem}
\begin{proof}
This lemma follows from the arguments in \cite[p.277--278]{Arthur4}.
Here we give a summary for it.
We write $M_\inf$ for the Levi subgroup defined by $M_\inf=M$ over $\R$.
Then, $\fa_{M_\inf}$ is obtained from $M_\inf$ over $\R$.
When $\fa_M\neq \fa_{M_\inf}$, $\varsigma$ is always non-$\R$-elliptic in $M$ and we easily get $I_M(\varsigma u)=0$ by the descent property \cite[Corollary 8.2]{Arthur8} for $\cL(M_\inf)$ and the cuspidality of $\tfs$.
Hence, we may assume $\fa_M=\fa_{M_\inf}$ and $\varsigma$ is not $\R$-elliptic in $M$.
Set $\gamma=\varsigma u$.
The Jordan semisimple part of $\gamma$ is $\varsigma$.
If $M_\gamma\neq G_\gamma$, then
\[
I_M^G(\gamma,\tfs)=\lim_{a\to 1}\sum_{L\in\cL(M)}r_M^L(\gamma,a)\, I_L^G(a\gamma,\tfs)
\]
by \cite[(2.2)]{Arthur8}, where $a$ ranges over small generic points on $A_M(\R)$.
Applying the descent property \cite[Corollary 8.3]{Arthur8} to $I^G_L(a\gamma,\tfs)$, we have
\[
I_L^G(a\gamma,\tfs)=\sum_{L_1\in\cL(M)}d_{M}^G(L,L_1) \, \widehat{I}_M^{L_1}(a\gamma,(\tfs)_{L_1}).
\]
Since $\tfs$ is cuspidal, we obtain
\[
I_M^G(\gamma,\tfs)=\lim_{a\to 1}  I_M^G(a\gamma,\tfs).
\]
From this we may assume $M_\gamma=G_\gamma$.
Now $M$ has a proper Levi subgroup $M'(\supset M_\gamma)$ defined over $\R$, because $\fa_M=\fa_{M_\inf}$ and $\varsigma$ is not $\R$-elliptic in $M$.
Using the descent property \cite[Corollary 8.2]{Arthur8}, we obtain
\[
I_M^G(\gamma,\tfs)=\sum_{L_2\in\cL(M')}d_{M'}^G(M,L_2) \, \widehat{I}_{M'}^{L_2}(\gamma,(\tfs)_{L_2}) = d_{M'}^G(M,G) \, \widehat{I}_{M'}^{G}(\gamma,(\tfs)_G)=0.
\]
In the above, we note that $\cL(M')$ and $\widehat{I}_{M'}^{L_2}(\gamma)$ are defined over $\R$.
Therefore, the proof is completed.
\end{proof}

Fix an $\cO$-equivalence class $\fo\; (\neq \fo_\unip)$.
To prove $I_\fo(\tfs h)=0$, we will relate Condition \ref{c21} to $J_M^M(\varsigma u,h_Q)$ using the strong approximation for $G$.
Now, $\varsigma$ ranges a finite set of $\fo\cap M(\Q)$ (independent of $S$) in the above equality for $I_\fo(\tfs h)$.
Hence, there exists a large enough $S$ such that any such element $\varsigma$ belongs to $\bK_v$ for all $v\not\in S$.
By Lemma \ref{p1l}, we may also assume that $\varsigma$ is $\R$-elliptic in $M$.
From now, we fix such a subset $S$.

For each $M_{\varsigma,+}(\Q)M_\varsigma(\Q_S)$-conjugacy classes in $\cU_{M_\varsigma}(\Q)$, we can choose a representative element $u$ such that $u$ belongs to $\bK_v$ for any $v\not\in S$.
This can be proved as follows.
There exists a parabolic subgroup $R$ of $M$ over $\Q$ such that $\varsigma$ belongs to $M_R(\Q)$ and $u$ belongs to $N_R(\Q)$ (cf. \cite[Proof of Lemma 5.5]{HW}).
Considering the action of $A_R(\Q)$ on $N_R(\Q)$ and the root system for $(M,A_R)$, one can find a suitable element $\chi\in\mathrm{Hom}(\GL(1),A_R)_\Q$ and a large enough natural number $m$ such that $\Ad(\chi(m))\, u$ satisfies the required condition.

Assume that $J_M^M(\varsigma u,h_Q)$ does not vanish.
Note that we can not extend $J_M^M(\varsigma u,h_Q)$ to the orbital integral over $M(\A_\fin)$ for non-trivial unipotent elements $u$ in general.
Set
\[
\gamma=\varsigma u \; (\in M(\Q))\quad \text{and} \quad  S'=S-\{\inf\}.
\]
It follows from $J_M^M(\gamma,h_Q)\neq 0$ that there exist $m_1\in M(\Q_{S'})$, $n_1'\in N_Q(\Q_{S'})$, and $k\in\bK_{S'}$ such that $h_0(k_1^{-1}m_1^{-1}\gamma m_1n_1' k_1)=1$.
If we set $n_1=m_1n_1'm_1^{-1}$, then $n_1$ is in $N_Q(\Q_{S'})$ and we have $h_0(k_1^{-1}m_1^{-1}\gamma n_1 m_1 k_1)=1$.
Since $h_0$ is smooth, there exist a neighborhood $U_1$ at $n_1$ in $N_Q(\Q_{S'})$ and a neighborhood $U_2$ at $m_1k_1$ in $G(\Q_{S'})$ such that
\[
x^{-1}\gamma ux\in K_{S_0}\prod_{v\in S'-S_0}\bK_v \quad (u\in U_1,\;\; x\in U_2).
\]
Therefore, by the strong approximation and the above mentioned conditions for $\varsigma$ and $u$ $(\gamma=\varsigma u)$, there exist
\[
n\in N_Q(\Q)\cap\big( U_1\prod_{v\not\in S}N_Q(\Q_v)\cap\bK_v\big)\quad\text{and}\quad \delta\in G(\Q)\cap \big(U_2\prod_{v\not\in S}\bK_v\big)
\]
such that
\[
\delta^{-1}\gamma n\delta \in K_0.
\]
Namely, it follows that $\delta^{-1}\gamma n\delta$ belongs to $\Gamma$.
Set
\[
\Gamma'=\delta\Gamma\delta^{-1}.
\]
Then $\Gamma'$ satisfies Condition \ref{c21} and $\gamma n$ belongs to $\Gamma'$.
Note that the elements $\gamma n$, $\gamma$, and $\varsigma$ have the same eigenvalues, because $Q(\Q)\cap\fo=(M(\Q)\cap\fo)N_Q(\Q)$ holds (cf. \cite[p.923]{Arthur12}).

We have a decomposition $\varsigma=\varsigma_1\varsigma_2$ such that the eigenvalues of $\varsigma_1$ (resp. $\varsigma_2$) belong to $\R_{>0}$ (resp. $\C^1$) and $\varsigma_1$, $\varsigma_2$ are semisimple (see, e.g., \cite[Section 2]{Borel3}).
Since $\varsigma$ is $\R$-elliptic in $M$, it follows that $\varsigma_1$ belongs to $A_M(\R)^0$.

Since $\gamma n\in \delta K_0\delta^{-1}\cap Q(\A_\fin)$ and $\gamma n\in\Gamma'\cap Q(\Q)$, we get $H_Q(\gamma n)=0$ as an element $\gamma n$ in $\Gamma'\subset G(\R)$.
It is clear that $H_Q(\varsigma_2)=0$ over $Q(\R)$.
Hence, we have $H_Q(\varsigma_1)=H_Q(\varsigma)=H_Q(\gamma)=H_Q(\gamma n)=0$ over $Q(\R)$.
Thus, we obtain $\varsigma_1=1$, because $\varsigma_1$ is in $A_M(\R)^0$.
So, the eigenvalues of $\gamma n$ are in $\C^1$.

Let $\xi=\gamma n \in \Gamma'$ and $\xi=\xi_s\xi_u$ where $\xi_s$ (resp. $\xi_u$) denotes the semisimple (resp. unipotent) Jordan component of $\xi$.
There exists a parabolic subgroup $P_1$ over $\Q$ such that $\xi_s$ (resp. $\xi_u$) belongs to $M_{P_1}(\Q)$ (resp. $N_{P_1}(\Q)$).
Since the projection $\eta:P_1\to P_1/N_{P_1}\cong M_{P_1}$ is a homomorphism over $\Q$, the image $\eta(\Gamma'\cap P_1(\Q))$ is arithmetic in $M_{P_1}(\Q)$ (see \cite[Chapter 4]{PR}).
There exists a compact torus $T_1(\R)$ in $M_{P_1}(\R)$ such that $\xi_s\in T_1(\R)$ because the eigenvalues of $\xi_s$ are in $\C^1$.
Hence, using $\eta(\xi_s)=\eta(\xi)$, one finds that $\eta(\xi_s)$ is in $\eta(\Gamma_1)\cap \eta(T_1(\R))$.
Thus, there exists a natural number $m_1$ such that $\eta(\xi_s^{m_1})=1$.
This means $\xi_s^{m_1}=1$.
By Condition \ref{c21} we get $\xi_s=1$ and this contradicts $\fo\neq \fo_\unip$.
Hence, we obtain $J_M^M(\varsigma u,h_Q)=0$ and we have proved this proposition.
\end{proof}

\subsection{Integrable discrete series and spherical trace functions}\label{s25}

From now on, we assume that a discrete series representation $\sigma$ is integrable, that is, all $\bK_\inf$-finite matrix coefficients of $\sigma$ are integrable over $G(\R)$.
In \cite[Theorem]{HS}, one can see a necessary and sufficient condition of the Harish-Chandra parameter for the integrability of $\sigma$.
Let $H_\sigma$ denote a representation space of $\sigma$ and let $(\tau,H_{\sigma}(\tau))$ denote the minimal $\bK_\inf$-type of $(\sigma,H_\sigma)$.
We write $\psi_{\sigma,\tau}$ for the spherical trace function of $\sigma$ with $\tau$, i.e.,
\[
\psi_{\sigma,\tau}(x)=\Tr(\mathrm{pr}_\tau\circ\sigma(x)\circ \mathrm{pr}_\tau) \quad (x\in G(\R))
\]
where $\mathrm{pr}_\tau$ is the projection $H_\sigma\to H_\sigma(\tau)$ (cf. \cite[Chapter 6]{Warner}).
Set
\[
\fs(x)= d_\tau^{-1} d_\sigma \, \overline{\psi_{\sigma,\tau}(x)} \quad (x\in G(\R))
\]
where $d_\sigma$ is the formal degree of $\sigma$ and we set $d_\tau=\dim H_\sigma(\tau)$.
By this definition, it is obvious that $\fs$ is $\bK_\inf$-finite, $\fs$ satisfies $\Tr(\sigma(\fs))=1$, and
\[
\fs(k^{-1}xk)=\fs(x) \quad (x\in G(\R) , \; \; k\in\bK_\inf).
\]
It follows from the results of \cite{Hiraga} and \cite{HS} that the integrable discrete series $\sigma$ always satisfies Condition \ref{c2}.

Let $p$ be a real number and suppose $0<p\leq 2$.
We write $\cC^p(G(\R))$ for the Schwartz space as defined in \cite[p.236]{Hoffmann}.
The space $\cC^p(G(\R))$ is the same as the space $\cC_\gamma(G(\R))$ $(\gamma=\frac{2}{p}-1)$ defined in \cite[p.67]{Milicic}.
For the case $p=2$, we set $\cC(G(\R))=\cC^2(G(\R))$ and the space $\cC(G(\R))$ is just the Harish-Chandra Schwartz space on $G(\R)$.
Note that $\cC^p(G(\R))$ is included in $L^p(G(\R))$ (see \cite[p.242]{Hoffmann}) and $C_c^\inf(G(\R))\subset \cC^{p_1}(G(\R))\subset \cC^{p_2}(G(\R))$ $(0<p_1<p_2)$.
Especially, the inclusion mapping of $C_c^\inf(G(\R))$ onto $\cC^p(G(\R))$ is continuous and its image is dense in $\cC^p(G(\R))$ (cf. \cite[Theorem 1 (iii)]{Milicic}).
It is well-known that $\fs$ belongs to $\cC(G(\R))$ (cf. \cite[p.450]{Knapp}).
If $\sigma$ is integrable, then $\fs$ belongs to $\cC^1(G(\R))$.
For any $p$, we can get a sufficient condition of the Harish-Chandra parameter such that $\fs$ belongs to $\cC^p(G(\R))$ by \cite[Theorem]{Milicic} (see also \cite{TV} and \cite{HS}).
Note that the subspace of $\bK_\inf$-finite vectors in $H_\sigma$ is stable for $\sigma(\fg)$ (cf. \cite[Proposition 8.5]{Knapp}).

The following lemma was stated in the paper \cite{Langlands2}.
\begin{lem}[Langlands]\label{l1}
If $(\pi,H_\pi)$ is a unitary representation of $G(\R)$ which does not contain the integrable discrete series $\sigma$, then $\pi(\fs)=0$ holds.
In particular, we have $\Tr(\pi(\fs))=0$.
\end{lem}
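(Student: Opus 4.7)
The plan is to reduce to the irreducible case via a direct integral decomposition and then invoke orthogonality of matrix coefficients. Since $\sigma$ is integrable, the text already notes $\fs \in \cC^1(G(\R)) \subset L^1(G(\R))$, so $\pi(\fs) = \int_{G(\R)} \fs(x)\,\pi(x)\,dx$ is a well-defined bounded operator on $H_\pi$. It commutes with $\pi(\bK_\inf)$ because $\fs(kxk^{-1}) = \fs(x)$ (an immediate consequence of the trace formula defining $\psi_{\sigma,\tau}$ and the fact that $\mathrm{pr}_\tau$ commutes with $\sigma(\bK_\inf)$). Decompose $(\pi, H_\pi) = \int^\oplus (\pi_\xi, H_{\pi_\xi})\,d\mu(\xi)$ into irreducible unitary representations; since $\sigma$ is a discrete series, it appears as an atom of the Plancherel-type decomposition, so the hypothesis that $\pi$ does not contain $\sigma$ forces $\pi_\xi \not\cong \sigma$ for $\mu$-almost every $\xi$, and $\pi(\fs) = \int^\oplus \pi_\xi(\fs)\,d\mu(\xi)$. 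It therefore suffices to prove $\pi'(\fs) = 0$ for every irreducible unitary $\pi'$ with $\pi' \not\cong \sigma$.

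For such $\pi'$, I would expand $\fs(x) = d_\tau^{-1} d_\sigma \sum_i \overline{(\sigma(x) e_i, e_i)}$ for an orthonormal basis $\{e_i\}$ of the minimal $\bK_\inf$-type $H_\sigma(\tau)$, so that an arbitrary matrix element $(\pi'(\fs) v, w)$ becomes a finite sum of integrals
\[
\int_{G(\R)} \overline{(\sigma(x) e_i, e_i)}\,(\pi'(x) v, w)\,dx,
\]
each converging absolutely because the $\sigma$-factor lies in $L^1(G(\R))$ (integrability) and the $\pi'$-factor lies in $L^\infty(G(\R))$ (unitarity). The vanishing of each such integral is the $L^1$--$L^\infty$ analogue of Schur orthogonality between matrix coefficients of an integrable discrete series and those of a nonequivalent irreducible unitary representation.

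The main obstacle is establishing this orthogonality beyond the tempered case: when $\pi'$ is square-integrable it is classical Schur orthogonality, and for tempered $\pi'$ it follows from Harish-Chandra's Plancherel formula together with $\fs \in L^1 \cap L^2$. For nontempered $\pi'$ the cleanest route is to view the integral above as realizing a continuous intertwining operator from $H_{\pi'}$ into the space of smooth vectors of $H_\sigma$ (through the matrix-coefficient pairing with $H_\sigma(\tau)$), and then to apply Schur's lemma to conclude the intertwiner vanishes when $\pi' \not\cong \sigma$. Setting this up rigorously is where the integrability of $\sigma$ is essential, since it guarantees uniform $L^1$-convergence of the defining integrals on norm-bounded subsets of $H_{\pi'}$.
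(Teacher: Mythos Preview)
Your approach is workable in outline but leaves the crucial orthogonality step as a sketch, and it differs substantially from the paper's argument, which is both shorter and avoids the reduction to irreducibles altogether.

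The paper proceeds as follows. First, the convolution identity $\overline{\psi_{\sigma,\tau}}*\overline{\psi_{\sigma,\tau}}=d_\sigma^{-1}\overline{\psi_{\sigma,\tau}}$ holds (a standard fact for spherical trace functions of discrete series). Hence for any unitary $(\pi,H_\pi)$ and any $u\in H_\pi$, the vector $v=\pi(\overline{\psi_{\sigma,\tau}})u$ satisfies $\pi(\overline{\psi_{\sigma,\tau}})v=d_\sigma^{-1}v$. The paper then cites \cite[Lemma 2.2]{Takase}, which identifies the eigenspace $\{v\in H_\pi\mid \pi(\overline{\psi_{\sigma,\tau}})v=d_\sigma^{-1}v\}$ with $H_\pi(\sigma,\tau)$, the $\tau$-isotypic part of the $\sigma$-isotypic component of $H_\pi$. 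Under the hypothesis this space is zero, so $\pi(\overline{\psi_{\sigma,\tau}})u=0$ for every $u$, and therefore $\pi(\fs)=0$. No disintegration and no case analysis on temperedness are needed.

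By contrast, your route requires two nontrivial ingredients you have not fully justified. First, the passage from ``$\pi$ does not contain $\sigma$'' to ``$\pi_\xi\not\cong\sigma$ for $\mu$-a.e.\ $\xi$'' is correct but deserves a line: if the set $\{\xi:\pi_\xi\cong\sigma\}$ had positive measure it would contribute a direct summand isomorphic to a multiple of $\sigma$. Second, and more seriously, the $L^1$--$L^\infty$ orthogonality you invoke for arbitrary irreducible unitary $\pi'\not\cong\sigma$ is precisely the content of the lemma in the irreducible case; your intertwining-operator sketch is the right idea (define $T:H_{\pi'}\to H_\sigma$ by a convergent integral of matrix coefficients, check $G(\R)$-equivariance, apply Schur), but you stop short of carrying it out. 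The paper's idempotent argument sidesteps this entirely and handles all unitary $\pi$ at once.
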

\begin{proof}
We explain a proof of this lemma using \cite[Lemma 2.2]{Takase}, because the proof was not written in \cite{Langlands2}.
It is sufficient to show $\pi(\overline{\psi_{\sigma,\tau}})u=0$ for any $u\in H_\pi$.
Let $H_\pi(\sigma)$ denote the $\sigma$-isotypic component of $H_\pi$ and let $H_\pi(\sigma,\tau)$ denote the $\tau$-isotypic component of $H_\pi(\sigma)$.
The result \cite[Lemma 2.2]{Takase} states that
\[
H_\pi(\sigma,\tau)=\{ v \in H_\pi \mid \pi(\overline{\psi_{\sigma,\tau}})v=d_\sigma^{-1}v\}.
\]
Note that we are assuming $H_\pi(\sigma,\tau)=0$.
For $\varphi_1$, $\varphi_2\in L^1(G(\R))$, a convolution $\varphi_1*\varphi_2$ is defined by
\[
\varphi_1*\varphi_2(x)=\int_{G(\R)}\varphi_1(xy^{-1}) \, \varphi_2(y)\, \d y.
\]
It is obvious that $\overline{\psi_{\sigma,\tau}}*\overline{\psi_{\sigma,\tau}}=d_\sigma^{-1}\overline{\psi_{\sigma,\tau}}$.
Hence, for any $u\in H_\pi$, the vector $\pi(\overline{\psi_{\sigma,\tau}})u$ belongs to $H_\pi(\sigma,\tau)$ by
\[
\pi(\overline{\psi_{\sigma,\tau}})\{ \pi(\overline{\psi_{\sigma,\tau}})u \}=\pi(\overline{\psi_{\sigma,\tau}}*\overline{\psi_{\sigma,\tau}})u=d_\sigma^{-1}\pi(\overline{\psi_{\sigma,\tau}})u.
\]
Thus, $\pi(\overline{\psi_{\sigma,\tau}})u=0$ follows.
\end{proof}

\begin{lem}\label{l0}
There exists a sequence $\{f_i\}_{i\in\N}$ such that $f_i\in\cH(G(\R))$ $(\forall i\in\N)$, $|f_i(x)|\leq |\fs(x)|$ $(\forall x\in G(\R))$, and $\{f_i\}_{i\in\N}$ converges to $\fs$ in the topology of $\cC(G(\R))$.
\end{lem}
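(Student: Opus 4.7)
The plan is to define $f_i = \chi_i \cdot \fs$ as the pointwise product with a bi-$\bK_\inf$-invariant smooth cutoff $\chi_i \in C_c^\inf(G(\R))$ taking values in $[0,1]$ and equal to $1$ on an exhausting sequence of compact subsets of $G(\R)$. The pointwise bound $|f_i(x)| \leq |\fs(x)|$ is then immediate. Since $\fs$ is a finite sum of matrix coefficients attached to the finite-dimensional minimal $\bK_\inf$-type $(\tau, H_\sigma(\tau))$, it is bi-$\bK_\inf$-finite; multiplication by the bi-$\bK_\inf$-invariant $\chi_i$ preserves this finiteness, so $f_i$ lies in $\cH(G(\R))$.

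To construct $\chi_i$, I would fix a smooth nondecreasing $\varphi \colon [0,\inf) \to [0,1]$ equal to $1$ on $[0,1]$ and $0$ on $[2,\inf)$, a Weyl-invariant Euclidean norm $\|\cdot\|$ on $\fa_0(\R)$, and set $\chi_i(k_1 \exp(H) k_2) = \varphi(\|H\|/i)$ via the Cartan decomposition $G(\R) = \bK_\inf \exp(\overline{\fa_0^+(\R)}) \bK_\inf$; smoothness across the Weyl walls is obtained either from the classical descent of Weyl-invariant smooth radial functions to bi-$\bK_\inf$-invariant smooth functions on $G(\R)$, or by replacing $\chi_i$ by a bi-$\bK_\inf$-average of a dilated compactly supported bump.

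The substantive task is then to verify convergence $f_i \to \fs$ in the Harish-Chandra Schwartz topology. The seminorms of $\cC(G(\R))$ are of the form $\nu_{X,Y,r}(f) = \sup_{g \in G(\R)} |L_X R_Y f(g)| \, \Xi(g)^{-1}(1+\|g\|_G)^r$, where $X, Y$ lie in the universal enveloping algebra, $\Xi$ is Harish-Chandra's spherical function, and $\|\cdot\|_G$ is the standard log-norm. Writing $\fs - f_i = (1-\chi_i)\fs$ and applying the Leibniz rule yields a finite sum of terms $\bigl(L_{X'}R_{Y'}(1-\chi_i)\bigr) \cdot \bigl(L_{X''}R_{Y''}\fs\bigr)$, each supported in the region $\{g : \|g\|_G \gtrsim i\}$; by the $1/i$-scaling in the construction, the enveloping-algebra derivatives of $1-\chi_i$ are uniformly bounded in $i$. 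Hence each seminorm $\nu_{X,Y,r}(\fs - f_i)$ is dominated by $(1+i)^{-1}$ times a higher-order seminorm of $\fs$, which is finite because $\fs \in \cC(G(\R))$.

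The main obstacle is arranging the cutoff to be genuinely smooth on $G(\R)$ with uniformly controlled enveloping-algebra derivatives and simultaneously bi-$\bK_\inf$-invariant, so that the pointwise domination, the Hecke-algebra membership, and the Schwartz-seminorm tail estimate are all delivered at once; this is what the Cartan-radial construction above is designed for. Alternatively, the statement can be read off the standard proof of the density of $C_c^\inf(G(\R))$ in $\cC^p(G(\R))$ (cf.\ \cite[Theorem 1 (iii)]{Milicic}) by observing that the truncation there can be arranged to preserve the pointwise bound $|f_i(x)| \leq |\fs(x)|$.
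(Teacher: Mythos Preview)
Your approach is correct and essentially the same as the paper's: both set $f_i=\chi_i\cdot\fs$ for a smooth bi-$\bK_\inf$-invariant cutoff $\chi_i$ with values in $[0,1]$. The only difference is that the paper builds its cutoff as $h_t=\tilde h*\chi_t*\tilde h$, the two-sided convolution of the characteristic function of a ball with a nonnegative bi-$\bK_\inf$-averaged bump (so smoothness and $0\le h_t\le 1$ are automatic), and then cites \cite[Proof of Proposition 12.16 (a)]{Knapp} for the Schwartz convergence, whereas you construct the cutoff radially via the Cartan decomposition and sketch the seminorm estimate directly.
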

\begin{proof}
We prove this lemma by using an argument similar to \cite[Proof of Proposition 12.16 (a)]{Knapp}.
Let $t\in\R$ and $\chi_t$ be the characteristic function of $\{x\in G(\R)\mid \|x\|\leq t\}$, where $\|x\|$ is the same notation as in \cite[p.188]{Knapp}.
Fix a function $h\in C_c^\inf(G(\R))$ such that $\int_{G(\R)}h(g)\, \d g=1$ and $h(g)\geq 0$ $(g\in G(\R))$.
Set
\[
\tilde h(g)=\int_{\bK_\inf\times\bK_\inf}h(k_1gk_2)\, \d k_1 \, \d k_2\in C_c^\inf(G(\R)).
\]
Furthermore, we set
\[
h_t(x)=\tilde h*\chi_t*\tilde h(x)=\int_{G(\R)\times G(\R)}\tilde h(g_1)\chi_t(g_1^{-1}xg_2^{-1})\tilde h(g_2)\, \d g_1 \, \d g_2.
\]
It is clear that $h_t(k_1gk_2)=h_t(g)$ $(k_1,k_2\in \bK_\inf$, $g\in G(\R))$ and $h_t$ is in $C_c^\inf(G(\R))$.
Since $|\tilde h(g_1)\chi_t(g_1^{-1}x g_2^{-1})\tilde h(g_2)|\leq |\tilde h(g_1)\tilde h(g_2)|$, we have $0\leq h_t(x)\leq 1$ $(x\in G(\R))$.
Set $f_i=h_i\fs$ for each $i\in\N$.
It follows from \cite[Proof of Proposition 12.16 (a)]{Knapp} that $\{f_i\}_{i\in\N}$ converges to $\fs$ in the topology of $\cC(G(\R))$.
Thus, the sequence $\{f_i\}_{i\in\N}$ satisfies the above mentioned conditions.
\end{proof}

For each unipotent orbit $u$ in $M(\R)$, a unipotent weighted orbital integral $J_M^L(u)$ over $\R$ is defined by the integral \eqref{euw} in Section \ref{secu1}.
For every $f\in C_c^\inf(G(\R))$, the integral \eqref{euw} is compatible with the usual definition \cite[(3*) in p.224]{Arthur3}.
We will assume the following condition.
\begin{cond}\label{cuni}
Let $M$, $L\in\cL$, $M\subset L$, and $Q\in\cP(L)$.
For any unipotent orbit $u$ in $M(\R)$, the integral $J_M^L(u,(\fs)_Q)$ absolutely converges with respect to the integral \eqref{euw}.
\end{cond}
By Proposition \ref{lac}, there exists a small $p>0$ such that Condition \ref{cuni} holds if $\fs$ belongs to $\cC^p(G(\R))$.

\begin{prop}\label{p2}
Assume that $\sigma$ is integrable and Condition \ref{cuni} holds.
For each unipotent orbit $u$ in $M(\R)$, we have
\[
I_M^G(u,\tfs)=J_M^G(u,\fs).
\]
\end{prop}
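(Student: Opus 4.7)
The plan is to approximate $\fs$ by compactly supported functions $f_i \in \cH(G(\R))$ via Lemma \ref{l0}, apply Arthur's identity
\[
I_M^G(u, f_i) = J_M^G(u, f_i) - \sum_{L \in \cL(M),\, L \neq G} \widehat{I}_M^L(u, \phi_L(f_i)),
\]
pass to the limit, and exploit two complementary spectral properties of $\fs$.

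First I would handle the geometric side. By Condition \ref{cuni}, together with the pointwise bound $|f_i| \leq |\fs|$ and dominated convergence, $J_M^G(u, f_i) \to J_M^G(u, \fs)$. For the correction terms, the key observation is that by Lemma \ref{l1}, $\pi(\fs) = 0$ as an operator for every unitary representation $\pi$ that does not contain the integrable discrete series $\sigma$. Since $\sigma$ does not appear as a subrepresentation of any properly parabolically induced tempered representation $\mathrm{Ind}_{P}^{G}(\pi_L, \lambda)$ with $L \in \cL(M)$ and $L \neq G$, one has $\mathrm{Ind}_P^G(\pi_L, \lambda)(\fs) = 0$ and hence $\phi_L(\fs) = 0$ in $\cI(L(\R))$ for every proper $L$. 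By continuity of $\phi_L$ and of $\widehat{I}_M^L$ in the spectral variable, the correction terms tend to $0$, so $\lim_i I_M^G(u, f_i) = J_M^G(u, \fs)$.

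It remains to identify this limit with $I_M^G(u, \tfs)$. The distribution $I_M^G(u, \cdot)$ is invariant and, via the trace Paley-Wiener theorem, factors through the image of $\cH(G(\R))$ in the space of invariant tempered distributions on $G(\R)$, whose relevant parameters are traces against irreducible tempered representations. Condition \ref{c2} gives $\Tr(\pi(\tfs)) = \delta_{\pi, \sigma}$, while Lemma \ref{l1} yields the same identity $\Tr(\pi(\fs)) = \delta_{\pi, \sigma}$; convergence $f_i \to \fs$ in $\cC(G(\R))$ transfers to convergence $\Tr(\pi(f_i)) \to \Tr(\pi(\fs)) = \Tr(\pi(\tfs))$ for every tempered $\pi$. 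Invariance of $I_M^G$ then forces $\lim_i I_M^G(u, f_i) = I_M^G(u, \tfs)$, which combined with the previous paragraph gives the desired equality $I_M^G(u, \tfs) = J_M^G(u, \fs)$.

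The main obstacle is the continuity and convergence machinery on non-compactly supported Schwartz-class functions: specifically, justifying $\phi_L(f_i) \to \phi_L(\fs)$ in a topology on $\cI(L(\R))$ strong enough to pass through $\widehat{I}_M^L(u, \cdot)$, and the claim that $I_M^G(u, \cdot)$ is continuous enough on $\cC(G(\R))$ (or $\cC^1(G(\R))$) that the spectral-trace identification of its limit with $I_M^G(u, \tfs)$ is legitimate. Integrability of $\sigma$ (so that $\fs \in \cC^1(G(\R))$) together with Condition \ref{cuni} should supply the convergence needed; this is the technical heart of the argument.
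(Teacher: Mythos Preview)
Your proposal is correct and follows essentially the same route as the paper: approximate $\fs$ by $f_i\in\cH(G(\R))$ via Lemma~\ref{l0}, use Arthur's decomposition $I_M^G(u,f_i)=J_M^G(u,f_i)-\sum_{L\neq G}\widehat{I}_M^L(u,\phi_L(f_i))$, kill the correction terms in the limit via Lemma~\ref{l1} and continuity of $\phi_L$, pass $J_M^G$ through by dominated convergence using Condition~\ref{cuni}, and identify the limit with $I_M^G(u,\tfs)$ through the trace Paley--Wiener theorem (since $I_M^G(u)$ is supported on characters and $\tilde f_{\sigma,G}=f_{\sigma,G}$ on $\Pi_{\mathrm{temp}}(G(\R))$). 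The paper organizes the last step slightly differently---it begins with $I_M^G(u,\tfs)=\widehat{I}_M^G(u,\tilde f_{\sigma,G})=\widehat{I}_M^G(u,f_{\sigma,G})$ and then descends via continuity of $\widehat{I}_M^G(u)$ on $\cI(G(\R))$---but the content is the same, and the technical obstacle you flag (continuity of $\phi_L$ on $\cC(G(\R))$ and of $\widehat{I}_M^L(u)$ on $\cI(L(\R))$) is exactly what the paper addresses by citing \cite{Arthur2,Arthur7,Arthur11}.
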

\begin{proof}
Let $\Pi_{\mathrm{temp}}(G(\R))$ denote the set of equivalence classes of irreducible tempered unitary representations of $G(\R)$.
We write $\cI(G(\R))$ for the space defined in \cite[Section 11]{Arthur11}, which consists of certain functions on $\Pi_{\mathrm{temp}}(G(\R))$.
For each $\C$-valued function $f$ on $G(\R)$, a $\C$-valued function $f_G$ on $\Pi_{\mathrm{temp}}(G(\R))$ is formally defined by $f_G(\pi)=\Tr(\pi(f))$ $(\pi\in\Pi_{\mathrm{temp}}(G(\R)))$.
It is known that $f_G$ belongs to $\cI(G(\R))$ if $f\in\cH(G(\R))$.
It was proved in \cite{Arthur10} that $I_M^G(u)$ is supported on characters, i.e., $I_M^G(u,f)=0$ if $f_G=0$ $(f\in\cH(G(\R)))$ (cf. \cite[Theorem 5.1]{Arthur10}).
Hence, $\widehat{I}^G_M(u)$ is a continuous linear form  on $\cI(G(\R))$ and we have the Fourier transform
\[
I_M^G(u,\tfs)=\widehat{I}_M^G(u,\tilde f_{\sigma,G}).
\]
By Lemma \ref{l1} $f_{\sigma,G}$ is in $\cI(G(\R))$ and we get
\[
\widehat{I}_M^G(u,\tilde f_{\sigma,G})=\widehat{I}_M^G(u,f_{\sigma,G}).
\]

Let $\cI_1(G(\R))$ denote the space defined in \cite[p.173]{Arthur6}, which includes $\cI(G(\R))$.
A mapping from $\cC(G(\R))$ to $\cI_1(G(\R))$ is defined by $f\mapsto f_G$.
In particular, the mapping is continuous and surjective by \cite[Theorem]{Arthur6}.
Note that the inclusion mapping from $\cI(G(\R))$ into $\cI_1(G(\R))$ is also continuous.
Take a sequence $\{ f_i \}_{i\in\N}\subset \cH(G(\R))$ as in Lemma \ref{l0}.
Since $f_{i,G}$ and $f_{\sigma,G}$ are in $\cI(G(\R))$, the sequence $\{ f_{i,G} \}_{i\in \N}$ converges to $f_{\sigma,G}$ in the topology of $\cI(G(\R))$.
Hence,
\[
\widehat{I}_M^G(u,f_{\sigma,G})=\lim_{i\to\inf}\widehat{I}_M^G(u,f_{i,G})=\lim_{i\to\inf}I_M^G(u,f_i).
\]
Now, $I_M^G(u,f)$ is defined by
\[
I_M^G(u,f)=J_M^G(u,f)-\sum_{L\in\cL(M),\, L\neq G }\widehat{I}^L_M(u,\phi_L(f)),
\]
where $\phi_L$ is a mapping from $\cC(G(\R))$ to $\cI_1(L(\R))$ (see \cite[Section 7]{Arthur2} or \cite[p.178--179]{Arthur7} for its definition).
Since $\phi_L$ is continuous on $\cC(G(\R))$ (cf. \cite[Corollary 9.2]{Arthur2} and \cite[p.179]{Arthur7}) and we have $\phi_L(\fs)=0$ for any $L\neq G$ by Lemma \ref{l1} and its definition, we find
\[
\lim_{i\to\inf}\phi_L(f_i)=\phi_L(\fs)=0 .
\]
Using the facts that $\widehat{I}^L_M(u)$ is a continuous linear form on $\cI(L(\R))$, $\phi_L(f_i)$ belongs to $\cI(L(\R))$ (cf. \cite[Proof of Theorem 12.1]{Arthur11}), and $\{\phi_L(f_i)\}_{i\in \N}$ converges into $0$ in $\cI(L(\R))$, we get
\[
\lim_{i\to\inf} \widehat{I}^L_M(u,\phi_L(f_i))= \widehat{I}^L_M(u,\lim_{i\to\inf} \phi_L(f_i))= \widehat{I}^L_M(u,0)=0.
\]
Therefore, by Condition \ref{cuni} and Lebesgue's convergence theorem, we have
\[
\widehat{I}_M^G(u,f_{\sigma,G})=\lim_{i\to\inf} J_M^G(u,f_i)=J_M^G(u,\fs).
\]
Thus, the proof is completed.
\end{proof}

\begin{lem}\label{l2}
Assume that Condition \ref{cuni} holds.
For each $u\in (\cU_M(\Q))_{M,S}$ we have
\[
J_M^G(u,\fs)J^M_M(u,h_Q)=J_M^G(u,\fs h).
\]
\end{lem}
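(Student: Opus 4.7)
The plan is to apply Arthur's splitting formula for weighted orbital integrals of product test functions and to combine it with the cuspidality of $\fs$ to kill all but the diagonal term. For $f_1 \in \cH(G(\R))$ and $f_2 \in \cH(G(\A_\fin))$, the splitting formula (see \cite[Proposition 9.1]{Arthur3} or \cite[Proposition 8.1]{Arthur8}) reads
\[
J_M^G(u, f_1 f_2) = \sum_{L_1, L_2 \in \cL(M)} d_M^G(L_1, L_2) \, J_M^{L_1}(u, (f_1)_{Q_1}) \, J_M^{L_2}(u, (f_2)_{Q_2}),
\]
with $Q_i \in \cP(L_i)$, where the coefficient $d_M^G(L_1, L_2)$ is nonzero only when $\fa_M^G = \fa_{L_1}^G \oplus \fa_{L_2}^G$, and in particular $d_M^G(G, M) = 1$.

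The second step is to specialize to $f_1 = \fs$ and $f_2 = h$ and to show that every summand with $L_1 \neq G$ drops out. For this I would show that the constant term $(\fs)_{Q_1}$ vanishes identically on $L_1(\R)$ whenever $Q_1 \in \cP(L_1)$ is a proper parabolic. By the standard parabolic induction identity, for any tempered $\tau$ of $L_1(\R)$ the induced operator $\mathrm{Ind}_{Q_1}^G(\tau)(\fs)$ is realized by $\tau((\fs)_{Q_1})$, so Lemma \ref{l1} applied to the unitary tempered representation $\mathrm{Ind}_{Q_1}^G(\tau)$, which does not contain the discrete series $\sigma$, gives $\tau((\fs)_{Q_1}) = 0$ as an operator for every irreducible tempered $\tau$. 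Combined with the Harish-Chandra fact that the constant term sends $\cC(G(\R))$ into $\cC(L_1(\R))$ and with Plancherel inversion on $L_1(\R)$, this forces $(\fs)_{Q_1} \equiv 0$. Consequently only the summand $(L_1, L_2) = (G, M)$ survives, yielding the desired identity $J_M^G(u, \fs)\, J_M^M(u, h_Q)$.

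The main technical obstacle is that Arthur's splitting formula is stated for Hecke algebra elements, whereas $\fs$ only lies in $\cC^1(G(\R))$ and is not compactly supported. I would handle this through the approximation supplied by Lemma \ref{l0}: choose $\{f_i\} \subset \cH(G(\R))$ with $|f_i(x)| \leq |\fs(x)|$ and $f_i \to \fs$ in $\cC(G(\R))$, apply the splitting formula to each $f_i h$, and then take $i \to \infty$. Condition \ref{cuni} provides the absolute convergence needed, the pointwise domination together with Lebesgue's dominated convergence theorem transfers each weighted orbital integral from $f_i h$ to $\fs h$, and the vanishing of $(\fs)_{Q_1}$ for proper $Q_1$ ensures that the potentially troublesome summands with $L_1 \neq G$ disappear in the limit, completing the argument.
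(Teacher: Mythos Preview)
Your approach is essentially the same as the paper's: apply Arthur's splitting formula and kill all terms with $L_1\neq G$ by showing $(\fs)_{Q_1}=0$ for proper $Q_1$. The paper, however, obtains this vanishing in one line by invoking the classical fact that $\fs$, being a matrix coefficient of a discrete series, is a \emph{cusp form} in $\cC(G(\R))$: $\int_{N_P(\R)}\fs(x^{-1}ny)\,\d n=0$ for every proper parabolic $P$ (cf.\ \cite[p.233]{Wallach1}). Since $\fs$ is already $\bK_\inf$-conjugation invariant, the constant term $(\fs)_{Q_1}$ reduces to exactly such an integral and vanishes pointwise. Your detour through Lemma~\ref{l1}, the induced-representation identity, and Plancherel inversion reaches the same conclusion but is considerably heavier; moreover, the claim that ``$\mathrm{Ind}_{Q_1}^G(\tau)(\fs)$ is realized by $\tau((\fs)_{Q_1})$'' is imprecise as an operator identity---the standard descent relation is at the level of characters, so to conclude $\tau((\fs)_{Q_1})=0$ as an operator (which is what Plancherel needs) you would have to say a bit more. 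The cuspidality argument bypasses this entirely.

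Your attention to the approximation issue via Lemma~\ref{l0} and dominated convergence is reasonable; the paper instead simply notes that each term on the right of the splitting formula converges by Condition~\ref{cuni}, and treats the identity as valid for $\fs h$ directly.
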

\begin{proof}
By \cite[(18.7) and a comment in p.109]{Arthur5} (cf. \cite[Proofs of Theorem 8.1 and Proposition 9.1]{Arthur8}), we know the splitting formula
\[
J_M^G(u,\fs h)=\sum_{L_1,L_2\in\cL(M)}d_M^G(L_1,L_2)\, J_M^{L_1}(u,(\fs)_{Q_1})\, J_M^{L_2}(u,h_{Q_2})
\]
where $Q_j\in\cP(L_j)$ $(j=1,2)$ and the correspondence $(L_1,L_2)\longrightarrow (Q_1,Q_2)$ is explained in \cite[p.101]{Arthur5}.
Note that $J_M^G(u,\fs h)$ is convergent by this equality and Condition \ref{cuni}.
Since $\fs$ is a cusp form in $\cC(G(\R))$, i.e.,
\[
\int_{N_P(\R)}\fs(x^{-1}ny)\, \d n=0 \qquad (x,y\in G(\R))
\]
for any proper parabolic subgroup $P$ of $G$ over $\R$ (cf. \cite[p.233]{Wallach1}), this lemma follows from the splitting formula.
\end{proof}

\begin{prop}\label{p21}
Let $\sigma$ be an integrable discrete series of $G(\R)$.
If Conditions \ref{c21} and \ref{cuni} hold, then we have
\[
m(\sigma,\Gamma)=\sum_{M\in\cL}\frac{|W^M_0|}{|W^G_0|}\sum_{u\in (\cU_M(\Q))_{M,S} } a^M(S,u) \, J_M^G(u,\fs h).
\]
\end{prop}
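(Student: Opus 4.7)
The plan is to combine the three preceding results (Proposition \ref{p1}, Proposition \ref{p2}, and Lemma \ref{l2}) in sequence. The first supplies a formula expressing $m(\sigma,\Gamma)$ as a sum over Levi subgroups and unipotent classes, with integrand $a^M(S_1,u)\,I_M^G(u,\tfs)\,J_M^M(u,h_Q)$; the second replaces the invariant distribution $I_M^G(u,\tfs)$ attached to the pseudo-coefficient by the weighted orbital integral $J_M^G(u,\fs)$ of the spherical trace function; and the third fuses the two remaining orbital integrals into a single weighted orbital integral $J_M^G(u,\fs h)$ of the product test function.

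In detail, I would first verify the hypotheses of Proposition \ref{p1}. Condition \ref{c21} is assumed outright, and Condition \ref{c2} is supplied automatically by the integrability assumption on $\sigma$, as noted in the discussion preceding Lemma \ref{l1} using the results of Hiraga and Hecht--Schmid. This gives
\[
m(\sigma,\Gamma)=\sum_{M\in\cL}\frac{|W^M_0|}{|W^G_0|}\sum_{u\in(\cU_M(\Q))_{M,S}} a^M(S,u)\, I_M^G(u,\tfs)\, J_M^M(u,h_Q),
\]
for any sufficiently large $S\supset S_1$ (the choice of $S$ does not affect the sum, since the unipotent contribution is stable under enlarging $S$). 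Next, invoking Proposition \ref{p2}, which requires integrability of $\sigma$ and Condition \ref{cuni}, I would substitute $I_M^G(u,\tfs)=J_M^G(u,\fs)$ on the right-hand side. Finally, Lemma \ref{l2} (whose proof depended on the cuspidality of $\fs$ in the Harish-Chandra Schwartz space and the splitting formula) lets me replace the product $J_M^G(u,\fs)\, J_M^M(u,h_Q)$ by the single weighted orbital integral $J_M^G(u,\fs h)$, yielding the desired identity.

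The main conceptual work has already been done in Propositions \ref{p1}--\ref{p2} and Lemma \ref{l2}, so the present proposition is essentially a bookkeeping assembly. The only point requiring a little care is verifying the compatibility of the index set $(\cU_M(\Q))_{M,S}$: the previous propositions are stated for $S_1$ or for large enough $S\supset S_1$, and one should note that for each $M$ the global coefficients $a^M(S,u)$ and the distributions $J_M^G(u,\fs h)$ transform compatibly under enlarging $S$, so the sum is independent of the choice of $S$ once it is large enough to contain $S_1$ and the ramification of $h$. With that observation, the three-step substitution completes the proof.
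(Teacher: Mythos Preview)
Your proposal is correct and follows essentially the same approach as the paper's own proof, which simply states that the result follows from Propositions \ref{p1} and \ref{p2} and Lemma \ref{l2}. Your additional remarks about why integrability of $\sigma$ supplies Condition \ref{c2} and about the compatibility of the index set under change of $S$ are accurate and fill in details the paper leaves implicit.
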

\begin{proof}
This lemma follows from Propositions \ref{p1} and \ref{p2} and Lemma \ref{l2}.
\end{proof}

\subsection{Truncated zeta integrals}\label{s26}

Fix a minimal parabolic subgroup $P_0$ in $\cP$.
For each $P\in\cF$ such that $P\supset P_0$, let $\widehat\tau_P$ denote the characteristic function of the subset $\{H\in\fa_P \mid \varpi(H)>0 \;\; (\forall \varpi\in\widehat\Delta_P)\}$ of $\fa_P$ and let $\tau_P$ denote the characteristic function of the subset $\{ H\in\fa_P \mid \alpha(H)>0 \; \; (\forall \alpha\in\Delta_P)\}$ of $\fa_P$.
Let $\mathfrak{S}_G$ denote a Siegel set in $G(\A)$ with respect to $G(\Q)\bsl G(\A)$ and $P_0$, i.e., there exist a compact set $\omega$ in $P_0(\A)^1$ and a point $T_1$ in $\fa_0$ such that 
\[
\mathfrak{S}_G=\{pak\mid k\in\bK,\;\; p\in\omega, \;\; a\in A_{P_0}(\R)^0,\;\; \tau_{P_0}(H_{P_0}(a)-T_1)=1 \}.
\]
For a point $T$ in $\fa_0$, we denote by $F^G(g,T)$ the characteristic function of the projection of the set $\{ g\in \mathfrak{S}_G \mid \widehat\tau_{P_0}(T-H_0(g))=1 \}$ to $G(\Q)\bsl G(\A)$.

Let $C$ denote a geometric unipotent conjugacy class in $\cU_G$ containing a $\Q$-rational point.
Fix an element $u\in C(\Q)$.
Let $\fg$ denote the Lie algebra of $G$ over $\Q$ and set $X=\log(u)\in\fg(\Q)$.
There exist a semisimple element $H$ in $\fg(\Q)$ and a nilpotent element $Y$ in $\fg(\Q)$ such that $\{H,X,Y\}$ is a standard triple, i.e., $[H,X]=2X$, $[H,Y]=-2Y$, $[X,Y]=H$ (cf. \cite[Chapter VIII, Section 11]{Bourbaki}).
We set
\[
\fg_j=\{Z\in\fg \mid [H,Z]=jZ\},\quad \fu=\bigoplus_{j>0}\fg_j ,\quad \fu_{>k}=\bigoplus_{j>k}\fg_j.
\]
Let $L$ denote the centralizer of $H$ in $G$ and we put $U=\exp(\fu)$ and $U_{>k}=\exp(\fu_{>k})$.
Then $Q=LU$ is a parabolic subgroup of $G$ defined over $\Q$ and it is called the canonical parabolic subgroup of $u$.
In particular, $L$ is a Levi subgroup of $Q$ and $U$ is the unipotent radical of $Q$.
Let $\Ad(g)X=gXg^{-1}$ $(g\in G$, $X\in \fg)$.
Now the triple $(L,\Ad,\fg_2)$ becomes a regular prehomogeneous vector space over $\Q$ (cf. \cite{Gyoja,Hoffmann1}).
We write $\fg_2^\reg$ for the set of regular points in $\fg_2$.
For each test function $f$ on $G(\A)$, a truncated zeta integral $Z_C^T(f)$ is defined by
\[
Z_C^T(f)=\int_{Q(\Q)\bsl Q(\A)}F^G(q,T) \sum_{\mu\in\fg^\reg_2(\Q)}\sum_{\nu\in U_{>2}(\Q)} f_\bK(q^{-1}\exp(\mu)\nu q)\, \d q
\]
where $T$ is in $\fa_0$, $\d q$ is the left Haar measure on $Q(\A)$ normalized by the same manner as in Section \ref{s221} and we set
\[
f_\bK(x) = \int_{\bK} f(k^{-1}x k)  \, \d k.
\]
The integral $Z_C^T(f)$ is an analogue of the zeta integrals of prehomogeneous vector spaces.

Any element $u$ in $\cU_M(\Q)$ is contained in a unique geometric unipotent conjugacy class $C_u$ in $M$.
For a fixed geometric unipotent conjugacy class $C'$ in $\cU_M$, we have a bijection from the set of $u$ in $(\cU_M(\Q))_{M,S}$ with $C_u=C'$ onto the set of $M(\Q_S)$-orbits in $C'(\Q_S)$ (see \cite[Lemma 7.1 and p.1268]{Arthur1}).
For each $u$ in $\cU_M(\Q)$, we denote by $C_u^G$ the induced unipotent conjugacy class of $G$ associated with $C_u$ and $G$ (cf. \cite{LS,Hoffmann2}).
\begin{prop}\label{cz1}
Assume that $\sigma$ is integrable.
Let $C$ be a geometric unipotent conjugacy class in $\cU_G$ containing a $\Q$-rational point.
Then, the zeta integral $Z_C^T(\fs h)$ is absolutely convergent.
If we also assume that Condition \ref{cuni} holds, then we have
\begin{equation}\label{eq31}
\lim_{T\to\inf} Z_C^T(\fs h)=\sum_{M\in\cL}\frac{|W^M_0|}{|W^G_0|}\sum_{u\in (\cU_M(\Q))_{M,S}, \; \; C_u^G=C } a^M(S,u) \, J_M^G(u,\fs h) 
\end{equation}
where $T\to\inf$ means that $\min_{\alpha\in\Delta_0}\alpha(T)\to\inf$.
\end{prop}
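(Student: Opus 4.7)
The plan is to split the proof into a convergence statement and an identification of the limit, using the Jacobson--Morozov parametrization to bridge the zeta integral and Arthur's geometric expansion. The key first observation is that the inner double sum is really a sum over $C(\Q)\cap Q(\Q)$: by the Jacobson--Morozov theorem applied to the standard triple $\{H,X,Y\}$ fixed in the definition of $Q$, every element of $C(\Q)\cap Q(\Q)$ admits a unique decomposition $\exp(\mu)\nu$ with $\mu\in\fg_2(\Q)$ and $\nu\in U_{>2}(\Q)$, and it belongs to $C$ (rather than a smaller induced class) precisely when $\mu\in\fg_2^\reg(\Q)$. This rewrites $Z^T_C(\fs h)$ as a Poincar\'e-type series for $C(\Q)\cap Q(\Q)$ weighted by $f_\bK=(\fs h)_\bK$.

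For absolute convergence, I would exploit that $\sigma$ is integrable, so $\fs\in\cC^1(G(\R))$ decays with an integrable rate, while $h$ is compactly supported on $G(\A_\fin)$. On the support of $F^G(q,T)$, the variable $q$ ranges over the projection of a truncated Siegel set, so $q$ lies in a compact region modulo $U(\A)A_Q(\R)^0$. Standard theta-series estimates for the regular prehomogeneous vector space $(L,\Ad,\fg_2)$ in the style of Shintani bound the sum over $\mu\in\fg_2^\reg(\Q)$ by a polynomial in the size of $q$, while the sum over $\nu\in U_{>2}(\Q)$ is controlled by elementary Fourier-theoretic estimates on the nilpotent group $U_{>2}(\A)$. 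Combined with the Finis--Lapid bounds \cite{FL} for the unipotent part of the geometric side, these ingredients yield the absolute convergence uniformly in the truncation parameter $T$.

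For the limit identity, I would identify the reparametrized integral with Arthur's truncated geometric distribution $J^T_{\fo_C}(\fs h)$ associated with the sub-$\cO$-class $\fo_C\subset\fo_\unip$ of elements whose induced geometric orbit in $G$ is $C$. The truncation $F^G(q,T)$ corresponds exactly to Arthur's truncation once one unfolds the sum of truncated kernels over $P\supset P_0$ into an integral over the canonical parabolic $Q$ of $C$. Passing to the limit $T\to\inf$ via Arthur's fine geometric expansion refined by geometric unipotent class produces
\[
\lim_{T\to\inf} Z^T_C(\fs h) = \sum_{M\in\cL}\frac{|W^M_0|}{|W^G_0|}\sum_{u\in(\cU_M(\Q))_{M,S},\, C^G_u=C} a^M(S,u)\, I^G_M(u,\fs h).
\]
Then Proposition \ref{p2} together with Lemma \ref{l2}, both of which require Condition \ref{cuni}, replaces each $I^G_M(u,\fs h)$ by $J^G_M(u,\fs h)$, yielding the stated formula.

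The main obstacle is the absolute convergence step. The integrability of $\fs$ provides only first-order decay on $G(\R)$, which is weak against the combinatorial growth of $C(\Q)\cap Q(\Q)$; the argument must combine the prehomogeneous theta estimates on $\fg_2$ with the Finis--Lapid bounds while keeping everything uniform in $T$, and one must also verify that no boundary contributions (from $\mu$ approaching the discriminant locus $\fg_2\setminus\fg_2^\reg$ or from $\nu$ at infinity in $U_{>2}$) spoil the uniformity. Once the convergence is secured, the identification with Arthur's distribution and the termwise passage to the limit are essentially bookkeeping, using the parabolic descent already implicit in the unfolding to $Q$.
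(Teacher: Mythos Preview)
Your proposal has the right ingredients but assembles them in the wrong order, and misses the one genuinely simplifying idea in the paper's argument.

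\textbf{Convergence.} You are working much too hard. The point is that for each fixed $T$ the truncation $F^G(\,\cdot\,,T)$ has \emph{compact} support on $G(\Q)\bsl G(\A)$, so the integral
\[
\int_{G(\Q)\bsl G(\A)} F^G(x,T)\sum_{u\in\cU_G(\Q)} |(\fs h)(x^{-1}ux)|\,\d x
\]
is finite by an elementary Siegel-domain bound (cf.\ \cite[Theorem 3.1]{FL}). No theta estimates, no uniformity in $T$, and no special decay of $\fs$ beyond continuity are needed here. The decomposition
\[
C(\Q)=\bigsqcup_{\delta\in Q(\Q)\bsl G(\Q)}\delta^{-1}\exp(\fg_2^\reg(\Q))\,U_{>2}(\Q)\,\delta
\]
(from \cite{Hoffmann1,Hoffmann2}) then shows at once that $Z_C^T(\fs h)$ is a piece of this absolutely convergent integral.

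\textbf{The limit identity.} Here there is a real gap. The integral $Z_C^T(\fs h)$ is \emph{not} Arthur's $J_C^T(\fs h)$: the latter involves an alternating sum over all standard parabolics $P\supset P_0$, not just the canonical parabolic $Q$ of $C$. The paper bridges the two by exploiting the cuspidality of $\fs$: for every proper $P$ one has $\int_{N_P(\A)}(\fs h)(x^{-1}unx)\,\d n=0$, so all $P\neq G$ terms in $J_C^T(\fs h)$ vanish and $J_C^T(\fs h)$ reduces to the single $G$-term $\int_{G(\Q)\bsl G(\A)}K_{G,C}(x,\fs h)\,\d x$, which is \emph{independent of $T$}. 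One then invokes \cite[Theorem 7.1]{FL} to identify this with $\lim_{T\to\infty}\tilde J_C^T(\fs h)$, and the $Q$-unfolding gives $\tilde J_C^T(\fs h)=Z_C^T(\fs h)$. Your unfolding step is correct, but without the cuspidality simplification you have no way to match $Z_C^T$ with Arthur's distribution.

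\textbf{The expansion.} Arthur's fine geometric expansion \cite[Corollary 8.4]{Arthur1} already expresses $J_C^{T_0}$ in terms of the \emph{non-invariant} integrals $J_M^G(u,\cdot)$, not $I_M^G(u,\cdot)$. The passage from compactly supported $f_i$ to $\fs$ uses the approximating sequence of Lemma~\ref{l0} together with Condition~\ref{cuni} (for dominated convergence) and the continuity statement \cite[Theorem 7.1]{FL}. Proposition~\ref{p2} and Lemma~\ref{l2} play no role in this proposition; they concern the comparison of $\tfs$ with $\fs$ and the splitting over $\R$ and $\A_\fin$, and are used elsewhere.
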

\begin{proof}
For each geometric unipotent conjugacy class $C$ of $G$, we write $J_C^T(f)$ for the distribution on $C_c^\inf(G(\A))$ uniquely determined by \cite[Theorem 4.2]{Arthur1}, which is a polynomial in $T$.
By the result of \cite{FL} (see also \cite{Chaudouard2}), the distribution $J_C^T(f)$ can be defined as the integral
\[
J_C^T(f)=\int_{G(\Q)\bsl G(\A)} \sum_{P\supset P_0} (-1)^{\dim\fa_P}\sum_{\delta \in P(\Q)\bsl G(\Q)}K_{P,C}(\delta x,f)\,\widehat\tau_P(H_P(\delta x)-T)\, \d x
\]
where
\[
K_{P,C}(x,f)=\sum_{u\in\cU_{M_P}(\Q),\;\; C_u^G=C}\int_{N_P(\A)}f(x^{-1}u nx)\, \d n.
\]
We take a sequence $\{f_i\}_{i\in\N}\subset\cH(G(\R))$ as in Lemma \ref{l0}.
Let $T_0$ denote the point in $\fa_{M_0}$ defined in \cite[Lemma 1.1]{Arthur2}.
By \cite[Corollary 8.4]{Arthur1} and Condition \ref{cuni}, we have
\[
\text{(RHS of \eqref{eq31})}=\lim_{i\to \inf}J_C^{T_0}(f_ih).
\]
Furthermore, it follows from \cite[Theorem 7.1]{FL} that
\[
\lim_{i\to \inf}J_C^{T_0}(f_ih)=J_C^{T_0}(\fs h)
\]
because $\sigma$ is integrable.
Since $\fs$ is a cusp form in $\cC(G(\R))$, if $P\neq G$ we get
\[
\int_{N_P(\A)}(\fs h)(x^{-1}u n x)\, \d n=0.
\]
Hence, we obtain
\[
J_C^T(\fs h)=\int_{G(\Q)\bsl G(\A)}K_{G,C}(x,\fs h)\, \d x .
\]
This means that $J^{T_0}_C(\fs h)=J^T_C(\fs h)$ holds for any $T$ and we have
\[
J_C^{T_0}(\fs h)=\lim_{T\to\inf}J^T_C(\fs h).
\]
For a test function $f$ on $G(\A)$, we set
\[
\tilde J_C^T(f)=\int_{G(\Q)\bsl G(\A)} F^G(x,T)\sum_{u\in C(\Q)}f(x^{-1}ux)\; \d x.
\]
By \cite[Theorem 7.1]{FL} one finds
\[
\lim_{T\to\inf}J^T_C(\fs h)=\lim_{T\to\inf}\tilde J_C^T(\fs h) .
\]
Thus, we have
\begin{equation}\label{eq32}
\text{(RHS of \eqref{eq31})}=\lim_{T\to\inf}\tilde J_C^T(\fs h).
\end{equation}

Since the support of $F^G(x,T)$ on a Siegel set is compact, it is not difficult to show
\begin{equation}\label{2tl1}
 \int_{G(\Q)\bsl G(\A)}F^G(x,T)  \sum_{u\in\cU_G(\Q)} |(\fs h)(x^{-1}ux)| \d x <\inf
\end{equation}
(see, e.g., \cite[Theorem 3.1]{FL}).
By \eqref{2tl1} we can change the infinite sum and the integration of $\tilde J_C^T(\fs h)$.
Using \cite[Theorem 2]{Hoffmann2} or \cite[Theorem 5]{Hoffmann1} one can easily find
\begin{equation}\label{ceq}
C(\Q)=\bigsqcup_{\delta\in Q(\Q)\bsl G(\Q)}\delta^{-1}\, \exp(\fg_2^\reg(\Q))\, U_{>2}(\Q) \, \delta.
\end{equation}
Therefore, by \eqref{2tl1} and \eqref{ceq}, we get the absolute convergence of $Z_C^T(\fs h)$.
We also obtain the equality
\[
\tilde J_C^T(\fs h)= Z_C^T(\fs h).
\]
Thus, \eqref{eq31} is derived from \eqref{eq32} and this equality.
\end{proof}

\begin{prop}\label{t1}
Assume that $\sigma$ is integrable and Conditions \ref{c21} and \ref{cuni} hold.
Then we have
\[
m(\sigma,\Gamma)=\sum_{C} \lim_{T\to\inf}Z_C^T(\fs h) = \vol(\Gamma\bsl G(\R))\, d_\sigma+\sum_{C\neq 1}\lim_{T\to\inf} Z_C^T(\fs h)
\]
where $C$ moves over all geometric unipotent conjugacy classes in $\cU_G$ containing a $\Q$-rational point and $T\to\inf$ means that $\min_{\alpha\in\Delta_0}\alpha(T)\to\inf$.
\end{prop}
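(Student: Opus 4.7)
The plan is to combine Propositions \ref{p21} and \ref{cz1} by reindexing the $u$-sum according to the induced unipotent class in $G$. By Proposition \ref{p21},
\[
m(\sigma,\Gamma)=\sum_{M\in\cL}\frac{|W^M_0|}{|W^G_0|}\sum_{u\in (\cU_M(\Q))_{M,S}} a^M(S,u) \, J_M^G(u,\fs h).
\]
Each $u\in\cU_M(\Q)$ lies in a unique geometric unipotent conjugacy class $C_u$ of $M$, and its induced class $C_u^G$ is a geometric unipotent conjugacy class in $\cU_G$ containing a $\Q$-rational point (any generic translate $u\cdot n$ with $n\in N_Q(\Q)$ for $Q\in\cP(M)$ is such a representative). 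By the finiteness used in the proof of Proposition \ref{p1}, only finitely many classes $C$ actually contribute. Grouping the terms according to $C=C_u^G$ gives
\[
m(\sigma,\Gamma)=\sum_{C}\Bigl\{\sum_{M\in\cL}\frac{|W^M_0|}{|W^G_0|}\sum_{\substack{u\in (\cU_M(\Q))_{M,S}\\ C_u^G=C}} a^M(S,u) \, J_M^G(u,\fs h)\Bigr\},
\]
and the inner bracket equals $\lim_{T\to\inf} Z_C^T(\fs h)$ by Proposition \ref{cz1}. This establishes the first equality.

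For the second equality I would isolate the trivial class $C=\{1\}$. A dimension count shows $C_u^G=\{1\}$ forces both $u=1$ (in $M$) and $M=G$: since $\dim C_u^G=\dim C_u+2\dim N_Q$ for $Q\in\cP(M)$, vanishing of the induced class requires $\dim C_u=0$ and $N_Q=1$. Hence the $C=1$ summand collapses to the single term $a^G(S,1)\,J_G^G(1,\fs h)$. At the identity the weighted orbital integral reduces to evaluation: $J_G^G(1,\fs h)=(\fs h)(1)=\fs(1)\,h(1)=d_\sigma\,\vol(K_0)^{-1}$, using $\psi_{\sigma,\tau}(1)=d_\tau$ to get $\fs(1)=d_\sigma$ and $h(1)=\vol(K_0)^{-1}$. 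Because $G$ is semisimple, $G(\A)^1=G(\A)$ and the Arthur normalization yields $a^G(S,1)=\vol(G(\Q)\bsl G(\A))$. By the strong approximation property $G(\Q)\bsl G(\A)/K_0\cong \Gamma\bsl G(\R)$, whence $\vol(G(\Q)\bsl G(\A))=\vol(\Gamma\bsl G(\R))\,\vol(K_0)$. Combining these identifications, the $C=1$ contribution is exactly $\vol(\Gamma\bsl G(\R))\,d_\sigma$, which separates out to give the second equality.

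The main obstacle is not analytic: the hard analytic inputs (absolute convergence of $Z_C^T(\fs h)$, the limit exchange, and the identification of weighted orbital integrals with Arthur's invariant distributions) are already packaged in Propositions \ref{p21} and \ref{cz1}. What remains is the careful verification of the constants in the $C=1$ term, namely the Arthur normalization of $a^G(S,1)$ for semisimple $G$, the interpretation of $J_G^G(1,\cdot)$ as evaluation at the identity, and the measure-theoretic translation between $G(\Q)\bsl G(\A)$ and $\Gamma\bsl G(\R)$ under strong approximation. Once these are pinned down, both equalities follow by bookkeeping.
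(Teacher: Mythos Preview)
Your proof is correct and follows the same route as the paper, which simply cites Propositions~\ref{p21} and~\ref{cz1}: grouping the double sum in Proposition~\ref{p21} by the induced class $C=C_u^G$ and invoking \eqref{eq31} gives the first equality, and isolating $C=\{1\}$ gives the second. Your added detail for the $C=1$ term (the dimension identity $\dim C_u^G=\dim C_u+2\dim N_Q$, the evaluation $J_G^G(1,\fs h)=d_\sigma\,\vol(K_0)^{-1}$, and the volume translation via strong approximation) is exactly what the paper leaves implicit, and your computations are correct.
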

\begin{proof}
This proposition follows from Propositions \ref{p21} and \ref{cz1}.
\end{proof}
We call $\lim_{T\to\inf}Z_C^T(\fs h)$ the contribution of $C(\Q)$ to $m(\sigma,\Gamma)$.

\section{Convergence of unipotent weighted orbital integrals}\label{secu}

Throughout this section, we set $F=\Q$ or $\R$, and $G$ denotes a connected reductive linear algebraic group over $F$.
We fix a minimal Levi subgroup $M_0$ over $F$ and a maximal compact subgroup $\bK_\inf$ of $G(\R)$ which is admissible relative to $M_0$ (cf. \cite[Section 1]{Arthur2}).
We will define unipotent weighted orbital integrals $J_M^G(u,f)$ over $G(\R)$ by the integral \eqref{euw}.
Furthermore, we give a sufficient condition for functions $f$ on $G(\R)$ such that $J_M^G(u,f)$ absolutely converges.
This is necessary to use the results of Section \ref{s2} (see Condition \ref{cuni}).

\subsection{General definition}\label{secu1}

Fix a standard Levi subgroup $M$ of $G$ over $F$.
According to Section \ref{notation}, the notations $\cP(M)$, $A_M$, and $\fa_M$ are defined over $F$. 
We choose a unipotent element $u$ in $M(\R)$.
Let $\fm$ denote the Lie algebra of $M$ over $\R$.
A nilpotent element $X$ in $\fm(\R)$ is defined by $X=\log(u)$.
By the Jacobson-Morozov theorem, there exist a semisimple element $H$ in $\fm(\R)$ and a nilpotent element $Y$ in $\fm(\R)$ such that $[H,X]=2X$, $[H,Y]=-2Y$, $[X,Y]=H$ (cf. \cite[Chapter VIII, Section 11]{Bourbaki}).
Set
\[
\fm_j=\{Z\in\fm\mid [H,Z]=jZ\},\quad \fu=\bigoplus_{j>0}\fm_j,\quad \fu_{> k}=\bigoplus_{j> k}\fm_j.
\]
The inclusion $[\fm_j,\fm_k]\subset \fm_{j+k}$ clearly holds.
Hence, $\fu$ and $\fu_{k>0}$ are Lie subalgebras in $\fm$.
Let $L$ denote the centralizer of $H$ in $G$.
Then, $\fm_0$ is the Lie algebra of $L$.
Let $B(\, , \, )$ denote a non-degenerate $G(\R)$-invariant bilinear form on $\fm(\R)$ which is the Killing form on the derived algebra of $\fm(\R)$.
Choose a basis $\{Z_l\}_{l=1}^d$ of $\fm_1(\R)$ and a basis $\{Z_m'\}_{m=1}^d$ of $\fm_{-1}(\R)$ such that $B(Z_l,Z_m')=\delta_{lm}$, where $d=\dim \fm_1(\R)=\dim \fm_{-1}(\R)$.
We define polynomials $c_{lm}(x)$ $(x\in\fm_2(\R))$ by $[x,Z_l']=\sum c_{ml}(x)Z_m$.
Furthermore, a polynomial $\varphi$ on $\fm_2(\R)$ is defined by $\varphi(x)=\det(c_{lm}(x))$ $(x\in\fm_2(\R))$.
Especially, we have $\varphi(X)\neq 0$.
By \cite[Lemma 2]{Rao} the equality $\varphi(\mathrm{Ad}(g)x)=\det(\mathrm{Ad}(g)|_{\fm_1})^2 \, \varphi(x)$ holds for any $g$ in $L(\R)$.

A unipotent subgroup $U_{>k}$ over $\R$ is defined by $U_{>k}=\exp(\fu_{>k})$.
We choose Haar measures $\d X_k$ on $\fm_k$.
Let $\d y$ denote a Haar measure on $U_{>2}(\R)$ induced from the Haar measure $\prod_{k>2}\d X_k$ on $\fu_{>2}(\R)$ via the exponential map.
The adjoint orbit $O_u(\R)=\mathrm{Ad}(L(\R))X$ in $\fm_2(\R)$ is an open subset.
If we set $\d x=\d X_2$, then $|\varphi(x)|^{1/2}\d x$ is a measure on $O_u(\R)$.
Choose a parabolic subgroup $P_1$ in $\cP(M)$ and set
\[
\mu=\exp(x)y \qquad (x\in O_u(\R), \;\; y\in U_{>2}(\R)).
\]
By the equality
\begin{equation}\label{relation}
n^{-1}a\mu n=a\mu\nu \qquad (n,\nu\in N_{P_1}(\R), \;\; a\in A_M(\R)),
\end{equation}
$n$ is regarded as a function of $\mu$, $\nu$, and $a$.
For each $P\in\cP(M)$, we put
\[
v_P(\lambda,g)=e^{-\lambda(H_P(g))}\quad (\lambda\in\fa_{M,\C}^*,\;\; g\in G(\R)),
\]
\[
w_P(\lambda,a,\mu\nu)=v_P(\lambda,n)\,\prod_\beta r_\beta(\lambda,u,a)
\]
where $\beta$ runs over all the reduced roots of $(N_P/(N_P\cap N_{P_1}),A_M)$ and we set
\[
r_\beta(\lambda,u,a)=|(a^\beta-1)(1-a^{-\beta})|^{\lambda(\beta_u^\vee)/2}
\]
($\beta_u^\vee\in\fa_M$ is a coroot $\beta^\vee$ multiplied by a non-negative real number such that  the limits $w_P(\lambda,1,\mu\nu)$ exist and do not vanish for generic $\lambda$ and $\nu$, cf. \cite[p.238]{Arthur3}).
Then, a weight factor $w_M(1,\mu\nu)$ is defined by
\begin{equation}\label{ewf}
w_M(1,\mu\nu)=\lim_{\lambda\to 0}\sum_{P\in\cP(M)}\frac{w_P(\lambda,1,\mu\nu)}{\theta_P(\lambda)}
\end{equation}
where $\theta_P(\lambda)=\vol(\fa_M/\Z(\Delta_P^\vee))^{-1}\prod_{\alpha\in\Delta_P}\lambda(\alpha^\vee)$ (the set $\Delta_P^\vee=\{\alpha^\vee\mid \alpha\in\Delta_P\}$ denotes the basis of $\fa_P^G$ dual to $\widehat\Delta_P$).
For a test function $f$ on $G(\R)$, a unipotent weighted orbital integral $J_M^G(u,f)$ is defined by
\begin{multline}\label{euw}
J_M^G(u,f)=\int_{\bK_\inf}\d k \, \int_{N_{P_1}(\R)}\d \nu \, \int_{U_{>2}(\R)}\d y \, \int_{O_u(\R)}\d x \\ f(k^{-1}\exp(x)y\nu  k)\, |\varphi(x)|^{1/2}\, w_M(1, \exp(x)y\nu ) .
\end{multline}
This definition falls into line with \cite[Theorem 1 and Lemma 2]{Rao}, \cite[Sections 5 and 6]{Arthur3} and \cite[(2.3) in Section 2.4]{HW}.
If $f$ belongs to $C_c^\inf(G(\R))$, then the integral $J_M^G(u,f)$ of \eqref{euw} is compatible with the usual definition \cite[(3*) in p.224]{Arthur3} (also see \cite[(18.12)]{Arthur5}) by the arguments in \cite[Sections 5 and 6]{Arthur3}.
It is known that $J_M^G(u,f)$ absolutely converges for any $f\in C_c^\inf(G(\R))$ (cf. \cite{Arthur3}).

\subsection{Estimates for polynomials and integrations}

We will study some estimates given in \cite[Section 7]{Arthur3} for polynomials and integrations.
In particular, we give a refinement of \cite[Lemma 7.1]{Arthur3} over $\R$ to prove the convergence of $J_M^G(u)$ for non-compactly supported functions.
The argument is almost the same as in \cite[Proof of Lemma 7.1]{Arthur3}.

For $n\in\Z_{>0}$ and $d\in \Z_{\geq 0}$, we set
\[
\mathbf{F}(n,d)=\{p\in\C[x_1,x_2,\dots,x_n] \mid \deg_{x_j}p\leq d \;\; (1\leq \forall j\leq n) \}
\]
where $\deg_{x_j}p$ means the degree of $p$ with respect to $x_j$.
For each $p \in \mathbf{F}(n,d)$, we denote by $\|p\|$ the maximum of the absolute values of the coefficients of $p$.
For a positive real number $\delta$, we set
\[
\mathbf{F}(n,d,\delta)=\{ p\in \mathbf{F}(n,d) \mid   \|p\|\geq \delta \}.
\]
For each $n$-tuple $k=(k_1,k_2,\dots,k_n)\in(\Z_{>0})^{\oplus n}$, a closed subset $\mathbf{U}_k$ in $\R^{\oplus n}$ is defined by
\[
\mathbf{U}_k=U_{k_1}\times U_{k_2} \times \cdots \times U_{k_n}
\]
where $U_j$ denotes the closed subset $[-\frac{1}{2}j,-\frac{1}{2}(j-1)] \sqcup [\frac{1}{2}(j-1),\frac{1}{2}j]$ in $\R$.
For $k\in(\Z_{>0})^{\oplus n}$, $p\in\mathbf{F}(n,d)$, and $\varepsilon\in\R_{>0}$, a domain $\mathbf{U}_k(p,\varepsilon)$ in $\R^{\oplus n}$ is defined by
\[
\mathbf{U}_k(p,\varepsilon)=\{ x\in \mathbf{U}_k \mid |p(x)|<\varepsilon\}.
\]

\begin{lem}\label{lesti1}
Let $n=1$ and fix a non-negative integer $d$.
There exist $m_d\in\Z_{>0}$, $C_{d,1}$, $C_{d,2},\dots,C_{d,m_d}>0$, and $0<t_{d,1},t_{d,2},\dots,t_{d,m_d}\leq 1$  such that, for any $\varepsilon>0$, any $\delta>0$, any $p\in \mathbf{F}(1,d,\delta)$, and any $k_1\in \Z_{>0}$, we have the inequality
\[
\vol(\mathbf{U}_{k_1}(p,\varepsilon))<k_1^d \sum_{l=1}^{m_d} C_{d,l}\, (\delta^{-1}\varepsilon)^{t_{d,l}} .
\]
\end{lem}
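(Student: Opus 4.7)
The estimate is a one-variable polynomial sublevel-set bound, made delicate by the fact that $\mathbf{U}_{k_1}$ lies far from the origin while the coefficient norm $\|p\|$ controls $p$ only near the origin. My plan is to rescale $\mathbf{U}_{k_1}$ onto $[-1,1]$, convert the coefficient lower bound into a sup-norm lower bound on $[-1,1]$, and then apply a classical Remez-type inequality.

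First, substitute $y=2x/k_1$, so that $[-k_1/2,k_1/2]$ becomes $[-1,1]$ and $\mathbf{U}_{k_1}$ becomes a subset $E\subset[-1,1]$. The polynomial $p$ corresponds to $q(y)=p((k_1/2)y)=\sum_{j=0}^d a_j(k_1/2)^j y^j$, and one has $\vol(\mathbf{U}_{k_1}(p,\varepsilon))=(k_1/2)\,|E\cap\{|q|<\varepsilon\}|$. Picking $j_0$ with $|a_{j_0}|\geq\delta$, the coefficient $b_{j_0}=a_{j_0}(k_1/2)^{j_0}$ of $q$ satisfies $|b_{j_0}|\geq\delta(k_1/2)^{j_0}\geq 2^{-d}\delta$, using $k_1\geq 1$. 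On the finite-dimensional space of polynomials of degree $\leq d$, the coefficient norm and the sup norm on $[-1,1]$ are equivalent, so there is $c_d>0$ with $\|q\|_{L^\infty([-1,1])}\geq c_d\|q\|\geq c_d\,2^{-d}\delta$.

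Second, I apply the classical Remez--Brudnyi inequality for polynomials of degree $\leq d$ on $[-1,1]$:
\[
\bigl|\{y\in[-1,1]:|q(y)|<\varepsilon\}\bigr|\leq 4\bigl(\varepsilon/\|q\|_{L^\infty([-1,1])}\bigr)^{1/d}\leq C_d'(\delta^{-1}\varepsilon)^{1/d},
\]
with $C_d'=4(c_d\,2^{-d})^{-1/d}$. Pulling back to the variable $x$ and using $k_1\leq k_1^d$ (valid for $d\geq 1$ and $k_1\geq 1$), one obtains $\vol(\mathbf{U}_{k_1}(p,\varepsilon))\leq k_1^d\cdot(C_d'/2)(\delta^{-1}\varepsilon)^{1/d}$, so one may take $m_d=1$, $C_{d,1}=C_d'/2$, $t_{d,1}=1/d$. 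For $d=0$ the polynomial is a constant of absolute value $\geq\delta$; the sublevel set is empty when $\varepsilon\leq\delta$ and of measure at most $1\leq\delta^{-1}\varepsilon$ otherwise, so $m_0=1$, $C_{0,1}=1$, $t_{0,1}=1$ suffice.

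The main non-elementary input is the Remez--Brudnyi inequality, which is classical but not trivial. If one wishes to avoid it, one can instead follow Arthur's scheme from \cite[Lemma 7.1]{Arthur3}: induct on $d$, splitting on whether $|a_d|\geq\delta(d+1)^{-1}(k_1/2)^{-d}$ or not. In the first case, factor $p=a_d\prod_{i=1}^d(x-z_i)$ and bound the sublevel set by a union of $d$ root-neighborhoods, each of width $(\varepsilon/|a_d|)^{1/d}$, giving a bound of shape $k_1\cdot(\delta^{-1}\varepsilon)^{1/d}$. In the second case, $|a_dx^d|\leq\delta/(d+1)$ uniformly on $\mathbf{U}_{k_1}$, hence $\tilde p=p-a_dx^d$ still satisfies $\|\tilde p\|\geq\delta$, and the inductive hypothesis applies to $\tilde p$ after enlarging the sublevel threshold by $\delta/(d+1)$. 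The accumulation of cases in this inductive route naturally yields several exponents $t_{d,l}\in(0,1]$, matching the flexible form of the conclusion.
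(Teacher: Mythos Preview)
Your main argument via rescaling and the Remez--Brudnyi sublevel-set estimate is correct, and yields a sharper conclusion than the paper (a single exponent $t_{d,1}=1/d$ rather than a list). Two small remarks: the polynomials in $\mathbf{F}(1,d)$ have complex coefficients, so strictly speaking one should pass to real and imaginary parts before invoking the real Remez inequality (this only costs a constant); and the lemma asks for strict inequality, which follows trivially by doubling your constant.

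The paper takes a different, entirely self-contained route: induction on $d$, factoring $p(x)=(x-r)q(x)$ for some complex root $r$, and splitting into the cases $|r|>k_1/2+1$ (where $|x-r|$ is bounded below on $\mathbf{U}_{k_1}$, so one reduces directly to $q$) and $|r|\leq k_1/2+1$ (where one bounds the sublevel set by the union of a neighborhood of $r$ and the sublevel set of $q$ at a threshold $(\delta\varepsilon)^{1/2}$). This elementary scheme naturally produces the growing list of exponents $t_{d,l}$ and avoids any external input, at the cost of being longer; your approach trades self-containedness for brevity.

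Your alternative sketch (split on the size of $|a_d|$ and subtract the top monomial) is \emph{not} the paper's induction, and as written it has a gap: from ``$|a_dx^d|\leq\delta/(d+1)$ on $\mathbf{U}_{k_1}$'' one cannot conclude ``$\|\tilde p\|\geq\delta$''. The first statement bounds values on $\mathbf{U}_{k_1}$, the second concerns coefficients; if $a_d$ happens to be the unique coefficient of size $\geq\delta$ (which is not excluded by $|a_d|<\delta(d+1)^{-1}(k_1/2)^{-d}$ when $k_1$ is small), then $\|\tilde p\|$ drops. Since this was only offered as a side remark, and your Remez argument stands on its own, the proposal is fine.
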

\begin{proof}
We will prove this lemma using an induction with respect to $d$.

First, we consider the case $d=0$.
Since $p$ is in $\mathbf{F}(1,0,\delta)$, $p(x)$ is now a constant greater than or equal to $\delta$.
Hence, it is obvious that
\[
\vol(\mathbf{U}_{k_1}(p,\varepsilon))\leq \begin{cases} 1 & \text{if $\varepsilon\geq\delta$,} \\ 0 & \text{otherwise,} \end{cases}.
\]
This shows that $\vol(\mathbf{U}_{k_1}(p,\varepsilon))\leq \delta^{-1}\varepsilon$.
Hence, it is sufficient to choose $C_{0,1}=2$ and $t_{0,1}=1$ $(m_0=1)$.

Next we treat the case $d>0$.
There exists a constant $r$ in $\C$ such that $p(x)=(x-r)q(x)$ where $\deg q<d$.
Since $\|p_1p_2\|\leq (\deg p_1+1)\|p_1\|\|p_2\|$ $(p_1,p_2\in \C[x_1])$, we have
\[
\|q\|\geq \min(\frac{\delta}{2},\; \frac{\delta}{2|r|} ).
\]
This shows that $q$ belongs to $\mathbf{F}(1,d-1,\min(\frac{\delta}{2},\; \frac{\delta}{2|r|}))$.

\noindent
(i) We consider the case $|r|>\frac{k_1}{2}+1$.
Then, we have $q\in\mathbf{F}(1,d-1,\frac{\delta}{2|r|})$.
The set $\mathbf{U}_{k_1}(q,\frac{\varepsilon}{|r|-\frac{k_1}{2}})$ includes $\mathbf{U}_{k_1}(p,\varepsilon)$, because
\[
\Big(|r|-\frac{k_1}{2}\Big)|q(x)|\leq |x-r||q(x)|=|p(x)| \qquad (x\in \mathbf{U}_{k_1}).
\]
By the assumption of the induction, there exist constant $C_{d-1,1},\dots,C_{d-1,m_{d-1}}>0$ and $0<t_{d-1,1},\dots,t_{d-1,m_{d-1}}\leq 1$ such that
\[
\vol(\mathbf{U}_{k_1}(q,\epsilon)) < k_1^{d-1} \sum_{l=1}^{m_{d-1}}  C_{d-1,l}\, (2|r|\delta^{-1}\epsilon)^{t_{d-1,l}} \qquad (\epsilon>0).
\]
Therefore, using $|r|>\frac{k_1}{2}+1$ we have
\begin{multline*}
\vol(\mathbf{U}_{k_1}(p,\varepsilon))\leq \vol(\mathbf{U}_{k_1}(q,\frac{\varepsilon}{|r|-\frac{k_1}{2}})) < k_1^{d-1} \sum_{l=1}^{m_{d-1}}  C_{d-1,l}\, \Big\{ \frac{|r|}{|r|-\frac{k_1}{2}}\times 2\delta^{-1}\varepsilon \Big\}^{t_{d-1,l}} \\
< k_1^{d-1} \sum_{l=1}^{m_{d-1}}  C_{d-1,l}\, \Big\{ (k_1+2)\delta^{-1}\varepsilon \Big\}^{t_{d-1,l}} < k_1^{d-1} \sum_{l=1}^{m_{d-1}}  C_{d-1,l}\, (k_1+2) (\delta^{-1}\varepsilon )^{t_{d-1,l}} \\
< k_1^d \sum_{l=1}^{m_{d-1}}  (3C_{d-1,l})\, (\delta^{-1}\varepsilon )^{t_{d-1,l}}.
\end{multline*}
Note that the inequality $\frac{x}{x-y}<y+1$ holds if $x-1>y>0$.

\noindent
(ii) We shall consider the case $|r|\leq \frac{k_1}{2}+1$.
It is obvious that
\[
\vol(\mathbf{U}_{k_1}(x_1-r,(\delta^{-1}\varepsilon)^{1/2}))< 2(\delta^{-1}\varepsilon)^{1/2} .
\]
Since $q\in \mathbf{F}(1,d-1,\min(\frac{\delta}{2},\; \frac{\delta}{2|r|}))$, by the assumption of the induction one gets
\[
\vol(\mathbf{U}_{k_1}(q,(\delta\varepsilon)^{1/2}))< k_1^{d-1}\sum_{l=1}^{m_{d-1}} C_{d-1,l}\, \, \left\{ \max(2,2|r|)\delta^{-1}(\delta\varepsilon)^{1/2} \right\}^{t_{d-1,l}}.
\]
For $x\in\mathbf{U}_{k_1}(p,\varepsilon)$, $|x-r|<(\delta^{-1}\varepsilon)^{1/2}$ or $|q(x)|<(\delta\varepsilon)^{1/2}$ holds.
Hence, it follows from the above two inequalities that
\begin{multline*}
\vol(\mathbf{U}_{k_1}(p,\varepsilon))\leq \vol(\mathbf{U}_{k_1}(x_1-r,(\delta^{-1}\varepsilon)^{1/2}))+ \vol(\mathbf{U}_{k_1}(q,(\delta\varepsilon)^{1/2}))\\
< 2(\delta^{-1}\varepsilon)^{1/2}+ k_1^{d-1}\sum_{l=1}^{m_{d-1}} C_{d-1,l}\, (k_1+2) \, (\delta^{-1}\varepsilon)^{t_{d-1,l}/2}  \\
< 2(\delta^{-1}\varepsilon)^{1/2}+ k_1^d\sum_{l=1}^{m_{d-1}} (3C_{d-1,l}) \, (\delta^{-1}\varepsilon)^{t_{d-1,l}/2}  .
\end{multline*}
Note that $\min\{1/2,1/2|r|\}^{-1}=\max\{2,2|r|\}\leq k_1+2$.

From (i) and (ii), we find that the inequality holds if we set $m_d=2m_{d-1}+1$, $C_{d,l}=3C_{d-1,l}$ $(1\leq l\leq m_{d-1})$, $C_{d,j+m_{d-1}}=3C_{d-1,j}$ $(1\leq j\leq m_{d-1})$, $C_{d,m_d}=2$, $t_{d,l}=t_{d-1,l}$ $(1\leq l\leq m_{d-1})$, $t_{d,j+m_{d-1}}=t_{d-1,j}/2$ $(1\leq j\leq m_{d-1})$, $t_{d,m_d}=1/2$.
Thus, we have proved this lemma by the induction.
\end{proof}

\begin{lem}\label{lesti}
Fix a positive integer $n$ and a non-negative integer $d$.
There exist $m_{n,d}\in\Z_{>0}$, $C_{n,d,1},C_{n,d,2},\dots,C_{n,d,m_{n,d}}>0$ and $0<t_{n,d,1},t_{n,d,2},\dots,t_{n,d,m_{n,d}}\leq 1$ such that, for any $\varepsilon>0$, any $\delta>0$, any $p\in \mathbf{F}(n,d,\delta)$, and any $k\in(\Z_{>0})^{\oplus n}$, we have
\[
\vol(\mathbf{U}_k(p,\varepsilon))< \big( k_1^d+k_2^d+\cdots+k_n^d \big) \sum_{l=1}^{m_{n,d}} C_{n,d,l} \,  (\delta^{-1}\varepsilon)^{t_{n,d,l}}.
\]
\end{lem}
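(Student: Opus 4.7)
The plan is to proceed by induction on $n$, taking Lemma \ref{lesti1} as the base case $n=1$ (with the obvious identification $m_{1,d}=m_d$, $C_{1,d,l}=C_{d,l}$, $t_{1,d,l}=t_{d,l}$). For the inductive step, fix $n\geq 2$ and $p\in\mathbf{F}(n,d,\delta)$, and write
\[
p(x_1,\dots,x_n)=\sum_{j=0}^{d} p_j(x_1,\dots,x_{n-1})\, x_n^j
\]
with $p_j\in\mathbf{F}(n-1,d)$. Since $\|p\|=\max_j \|p_j\|\geq \delta$, there is an index $j_0$ with $\|p_{j_0}\|\geq \delta$, i.e.\ $p_{j_0}\in\mathbf{F}(n-1,d,\delta)$, and this is the polynomial to which the inductive hypothesis will be applied.

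Set $x'=(x_1,\dots,x_{n-1})$, $k'=(k_1,\dots,k_{n-1})$, and choose the threshold $\eta=(\delta\varepsilon)^{1/2}$. Partition $\mathbf{U}_{k'}$ into $A=\{x'\in\mathbf{U}_{k'} : |p_{j_0}(x')|<\eta\}$ and $B=\mathbf{U}_{k'}\setminus A$. Observing that each $U_{k_i}$ has length $1$, so $\vol(\mathbf{U}_{k'})=1$, Fubini gives
\[
\vol(\mathbf{U}_k(p,\varepsilon)) = \int_A V(x')\,dx' + \int_B V(x')\,dx',\qquad V(x')=\vol\bigl(\{x_n\in U_{k_n}:|p(x',x_n)|<\varepsilon\}\bigr).
\]
On $A$, bound $V(x')\leq 1$ and apply the inductive hypothesis to $p_{j_0}\in\mathbf{F}(n-1,d,\delta)$ with parameter $\eta$ in place of $\varepsilon$ to control $\vol(A)$. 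On $B$, fix $x'$ and observe that the single-variable polynomial $q(x_n)=p(x',x_n)$ has $\|q\|\geq |p_{j_0}(x')|\geq \eta$, so $q\in\mathbf{F}(1,d,\eta)$; Lemma \ref{lesti1} then yields $V(x')<k_n^d\sum_{l=1}^{m_d} C_{d,l}(\eta^{-1}\varepsilon)^{t_{d,l}}$, and integrating this uniform bound over $B\subset\mathbf{U}_{k'}$ costs nothing since $\vol(\mathbf{U}_{k'})=1$.

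The balancing choice $\eta=(\delta\varepsilon)^{1/2}$ produces $\delta^{-1}\eta=\eta^{-1}\varepsilon=(\delta^{-1}\varepsilon)^{1/2}$, so both contributions become sums of terms of the form $C\,k_i^d\,(\delta^{-1}\varepsilon)^{t/2}$ with $t\in(0,1]$. Combining and relabelling, the constants $m_{n,d}=m_{n-1,d}+m_d$, together with appropriately indexed $C_{n,d,l}$ and exponents $t_{n,d,l}\in(0,1/2]$, give an inequality of the advertised shape, completing the induction. No step is deep; the only subtlety is the choice of $\eta$ that simultaneously makes both pieces scale as a power of $\delta^{-1}\varepsilon$, and the main work is careful bookkeeping of constants through the recursion in $n$ (and, implicitly, through the recursion in $d$ inherited from Lemma \ref{lesti1}).
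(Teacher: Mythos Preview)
Your proof is correct and follows essentially the same approach as the paper's own argument: induction on $n$ with base case Lemma~\ref{lesti1}, separating out one variable, choosing the threshold $(\delta\varepsilon)^{1/2}$, and splitting according to whether the selected coefficient polynomial is small or large. The only cosmetic difference is that the paper isolates the first variable $x_1$ rather than the last one $x_n$, leading to the same recursion $m_{n,d}=m_{n-1,d}+m_{1,d}$ with halved exponents.
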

\begin{proof}
We will show this lemma by an induction for $n$.
By Lemma \ref{lesti1} we have already proved the case $n=1$.
Hence, we assume $n>1$.

Let $\check x=(x_2,\dots,x_n)$ and there exist polynomials $p_j(\check x)$ such that\[
p(x)=\sum_{j=0}^d p_j(\check x)\, x_1^j.
\]
Since $\|p\|>\delta$, there exists $l$ $(0\leq l\leq d)$ such that $\|p_l\|>\delta$.
We fix such an integer $l$.
Namely, $p_l$ belongs to $\mathbf{F}(n-1,d,\delta)$.

When $|p_l(\check x)|>(\delta\varepsilon)^{1/2}$, the polynomial $p$ is regarded as a polynomial of $x_1$ in $\mathbf{F}(1,d,(\delta\varepsilon)^{1/2})$.
Therefore, if we set $\check k=(k_2,\dots,k_n)$ and $p_{\check x}(x_1)=p(x)$, then $\mathbf{U}_k(p,\varepsilon)$ is contained in the union
\[
(\mathbf{U}_{k_1}\times \mathbf{U}_{\check k}(p_l,(\delta\varepsilon)^{1/2})) \cup \{(x_1,\check x)\in \mathbf{U}_{k_1} \times \mathbf{U}_{\check k}  \mid  x_1\in \mathbf{U}_{k_1}(p_{\check x},\varepsilon) , \;\;  p_{\check x}\in\mathbf{F}(1,d,(\delta\varepsilon)^{1/2})  \}.
\]
By this inclusion, we find
\begin{multline*}
\vol(\mathbf{U}_k(p,\varepsilon))\\
<\{ k_2^d+\cdots + k_n^d \} \sum_{l=1}^{m_{n-1,d}} C_{n-1,d,l}  \{\delta^{-1}(\delta\varepsilon)^{1/2}\}^{t_{n-1,d,l}}+ k_1^d \sum_{j=1}^{m_{1,d}} C_{1,d,j} \{(\delta\varepsilon)^{-1/2} \varepsilon \}^{t_{1,d,j}} \\
<\{ k_1^d+ k_2^d+\cdots + k_n^d \}\left[ \sum_{l=1}^{m_{n-1,d}} C_{n-1,d,l}  (\delta^{-1}\varepsilon)^{t_{n-1,d,l}/2} +  \sum_{j=1}^{m_{1,d}} C_{1,d,j} (\delta^{-1}\varepsilon)^{t_{1,d,j}/2}  \right] .
\end{multline*}
If we set $m_{n,d}=m_{n-1,d}+m_{1,d}$, $C_{n,d,l}=C_{n-1,d,l}$ $(1\leq l\leq m_{n-1,d})$, $t_{n,d,l}=t_{n-1,d,l}/2$ $(1\leq l\leq m_{n-1,d})$, $C_{n,d,m_{n-1,d}+j}=C_{1,d,j}$ $(1\leq j\leq m_{1,d})$, $t_{n,d,m_{n-1,d}+j}=t_{1,d,j}/2$ $(1\leq j\leq m_{1,d})$, then the proof is completed.
\end{proof}

For $x=(x_1,\dots,x_n)\in\R^n$, we set $\|x\|=(x_1^2+\cdots+x_n^2)^{1/2}$.
Let $\d x$ denote the Lebesgue measure on $\R^{\oplus n}$.
\begin{prop}\label{esti}
Fix $n$, $r\in\Z_{>0}$, $d\in\Z_{\geq 0}$ and take polynomials $p_1,\dots,p_r$ in $\mathbf{F}(n,d)$.
If $M>1+rd+n$, then the integral
\begin{equation}\label{e41}
\int_{\R^{\oplus n}} (1+\|x\|)^{-M} \prod_{j=1}^r\big|\log|p_j(x)|\big| \, \d x
\end{equation}
is convergent.
\end{prop}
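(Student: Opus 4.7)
My plan is to bound the integral dyadically using Lemma \ref{lesti}. Decompose $\R^{\oplus n}$ (up to a null set) as $\bigsqcup_{k\in(\Z_{>0})^{\oplus n}}\mathbf{U}_k$ and set $\|k\|_\inf=\max_j k_j$. On $\mathbf{U}_k$ we have $(k_j-1)/2\leq|x_j|\leq k_j/2$, hence $\|x\|\geq(\|k\|_\inf-1)/2$, so $(1+\|x\|)^{-M}\leq C\|k\|_\inf^{-M}$ on the cell. For the product of logarithms I will use the elementary inequality $\prod_{j=1}^r a_j\leq(\max_j a_j)^r\leq\sum_{j=1}^r a_j^r$ (valid for $a_j\geq 0$), reducing the task to bounding $\int_{\mathbf{U}_k}\bigl|\log|p_j(x)|\bigr|^r\,\d x$ for each $j$ separately.

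Since we may assume $p_j\not\equiv 0$, set $\delta_j=\|p_j\|>0$, and apply the layer cake formula to split the integral into
\begin{equation*}
\int_0^\inf rs^{r-1}\vol\bigl(\{x\in\mathbf{U}_k:|p_j(x)|<e^{-s}\}\bigr)\,\d s+\int_0^\inf rs^{r-1}\vol\bigl(\{x\in\mathbf{U}_k:|p_j(x)|>e^s\}\bigr)\,\d s.
\end{equation*}
Lemma \ref{lesti} (with $\delta=\delta_j$ and $\varepsilon=e^{-s}$) bounds the lower tail volume by $(k_1^d+\cdots+k_n^d)\sum_l C_{n,d,l}\,\delta_j^{-t_{n,d,l}}e^{-st_{n,d,l}}$, and since $\int_0^\inf rs^{r-1}e^{-st_{n,d,l}}\,\d s=r!\,t_{n,d,l}^{-r}$, the lower tail contributes $O(\|k\|_\inf^d)$. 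For the upper tail, the crude polynomial growth $|p_j(x)|\leq C_j(1+\|x\|)^{nd}\leq C_j'\|k\|_\inf^{nd}$ on $\mathbf{U}_k$ makes the set empty once $s>c_j+nd\log(1+\|k\|_\inf)$, contributing $O((\log(1+\|k\|_\inf))^r)$. Altogether, $\int_{\mathbf{U}_k}\bigl|\log|p_j|\bigr|^r\,\d x=O(\|k\|_\inf^d+(\log(1+\|k\|_\inf))^r+1)$.

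Summing over $k$,
\begin{equation*}
\int_{\R^{\oplus n}}(1+\|x\|)^{-M}\prod_{j=1}^r\bigl|\log|p_j|\bigr|\,\d x\leq C\sum_k\|k\|_\inf^{-M}\bigl(\|k\|_\inf^d+(\log(1+\|k\|_\inf))^r+1\bigr),
\end{equation*}
and since $\#\{k\in(\Z_{>0})^{\oplus n}:\|k\|_\inf=m\}=O(m^{n-1})$, the right-hand side is dominated by $\sum_m m^{n-1-M+d}$, which converges whenever $M>n+d$. As $r\geq 1$, the hypothesis $M>1+rd+n$ implies $M>n+d$, completing the proof. The main technical point is keeping track of the dyadic sum and invoking Lemma \ref{lesti} with the correct parameters; everything else is routine.
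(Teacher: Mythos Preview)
Your proof is correct, and it takes a genuinely different route from the paper's. The paper partitions each cell $\mathbf{U}_k$ according to which of the $|p_j(x)|$ fall below a fixed threshold $\varepsilon$ (the subsets $\tilde\xi\subset\{1,\dots,r\}$), bounds the ``large'' factors by $(1+\|x\|)^{\epsilon/r}$, and handles the ``small'' factors via a dyadic decomposition of the sublevel sets together with Lemma~\ref{lesti}; this produces the volume factor $(k_1^d+\cdots+k_n^d)^u$ and hence the exponent $rd$ in the threshold $M>1+rd+n$. Your argument is more economical: the inequality $\prod_{j}a_j\le\sum_{j}a_j^r$ reduces the product of $r$ logarithms to a single $r$-th power, and the layer-cake formula combined with Lemma~\ref{lesti} (for the lower tail) and the crude polynomial upper bound (for the upper tail) gives $\int_{\mathbf{U}_k}|\log|p_j||^r\,\d x=O(\|k\|_\inf^{d})$ plus a polylogarithmic correction. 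In particular your method yields convergence already for $M>n+d$, strictly sharper than the stated hypothesis when $r\ge 2$, and avoids the auxiliary partition into $\xi$- and $\tilde\xi$-regions entirely. Both approaches rest on the same key input, Lemma~\ref{lesti}; yours simply extracts more from it by decoupling the polynomials at the outset.
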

\begin{proof}
Let $\Xi_n$ denote the set of subsets of $\{1,2,\dots,n\}$.
Fix a positive integer $N$.
For each $\xi\in\Xi_n$, we set
\[
\mathbf{D}(\xi)=\sum_{k\in L(\xi)} \int_{\mathbf{U}_k}(1+\|x\|)^{-M} \prod_{j=1}^r\big|\log|p_j(x)|\big| \, \d x
\]
where 
\[
L(\xi)=\{ (k_1,\dots,k_n)\in (\Z_{>0})^{\oplus n} \mid k_j\geq N \;\; \text{if $j\in \xi$}, \quad 0<k_l<N \;\; \text{if $l\not\in \xi$} \}.
\]
Dividing the domain $\R^{\oplus n}$, we have $\eqref{e41}=\sum_{\xi\in\Xi_n}\mathbf{D}(\xi)$.

Assume that $\xi$ belongs to $\Xi_n$.
There exists a constant $\delta>0$ such that $p_1,\dots,p_r$ belong to $\mathbf{F}(n,d,\delta)$.
We choose a small real number $\varepsilon>0$ such that $0<\delta^{-1}\varepsilon <1$.
For each $\tilde \xi\in \Xi_r$, we set
\[
\mathbf{D}(\xi,\tilde \xi,\varepsilon)=\sum_{k\in L(\xi)} \int_{\mathbf{U}_k(\tilde \xi,\varepsilon)}(1+\|x\|)^{-M} \prod_{j=1}^r\big|\log|p_j(x)|\big| \, \d x
\]
where
\[
\mathbf{U}_k(\tilde \xi , \varepsilon)=\{x\in\mathbf{U}_k \mid  |p_j(x)|<\varepsilon \;\;\text{if $j\in \tilde \xi$}, \quad |p_j(x)|\geq \varepsilon  \;\;\text{if $j\not\in \tilde \xi$} \}.
\]
Since it is clear that
\[
\eqref{e41}=\sum_{\xi\in\Xi_n}\sum_{\tilde \xi\in \Xi_r} \mathbf{D}(\xi,\tilde \xi,\varepsilon),
\]
we have only to prove the convergence of $\mathbf{D}(\xi,\tilde \xi,\varepsilon)$ for $M>1+rd+n$.
We set $m=|\xi|(\geq 0)$ and $u=|\tilde \xi|(\geq 0)$ and we may assume that
\[
\xi=\{1,\dots,m\} \quad \text{and} \quad  \tilde \xi=\{1,\dots,u\}
\]
without loss of generality.
There exist a constant $0<\epsilon<1$ and a constant $C_1>0$ such that
\[
\big| \log|p_j(x)|\big| <C_1\times (1+\|x\|)^{\epsilon/r}
\]
for every $u<j\leq r$, every $k\in L(\xi)$, and every $x\in \mathbf{U}_k(\tilde \xi,\varepsilon)$, because $\log|p_j(x)|$ has the lower bound $\log\varepsilon$ and a upper bound given by the product of $\log\|x\|$ with a positive constant.
We can choose a large natural number $N$ and a constant $C_2>0$ such that
\[
1+\|x\|>C_2\times ( k_1+\cdots+k_m )
\]
 holds for any $k\in L(\xi)$ and any $x\in\mathbf{U}_k$ (the constants $N$ and $C_2$ do not depend on them).
Hence, we have
\[
\mathbf{D}(\xi,\tilde \xi,\varepsilon)<\mathrm{(constant)}\times\sum_{k\in L(\xi)} (k_1+\cdots+k_m)^{-M+\epsilon} \int_{\mathbf{U}_k(\tilde \xi,\varepsilon)}\prod_{j=1}^u\big|\log|p_j(x)|\big| \, \d x.
\]
We consider the decomposition
\[
\int_{\mathbf{U}_k(\tilde \xi,\varepsilon)}\prod_{j=1}^u\big|\log|p_j(x)|\big| \, \d x = \sum_{l=0}^\inf\int_{\mathbf{U}_k(\tilde \xi,2^{-l}\varepsilon)\setminus \mathbf{U}_k(\tilde \xi,2^{-l-1}\varepsilon)}\prod_{j=1}^u\big|\log|p_j(x)|\big| \, \d x
\]
Since $\mathbf{U}_k(\tilde \xi,\varepsilon)$ is included in $\cap_{j=1}^u\mathbf{U}_k(p_j,\varepsilon)$, we have
\[
\prod_{j=1}^u\big|\log|p_j(x)|\big|<|\log(2^{-l-1}\varepsilon)|^u<\mathrm{(constant)}\times (1+l)^u
\]
for every $x\in\mathbf{U}_k(\tilde \xi,2^{-l}\varepsilon)\setminus \mathbf{U}_k(\tilde \xi,2^{-l-1}\varepsilon)$.
By Lemma \ref{lesti} and $0<\delta^{-1}\varepsilon<1$, there exist a constant $0<t\leq 1$ and a constant $C_3>0$ such that
\[
\vol(\mathbf{U}_k(\tilde \xi,2^{-l}\varepsilon)\setminus \mathbf{U}_k(\tilde \xi,2^{-l-1}\varepsilon)) < C_3^u\times \{k_1^d+\cdots+k_n^d\}^u\times (\delta^{-1}\varepsilon)^{t u}\times 2^{-ltu}.
\]
Therefore,
\begin{align*}
\mathbf{D}(\xi,\tilde \xi,\varepsilon)<&\mathrm{(constant)}\times\sum_{k\in L(\xi)} (k_1+\cdots+k_m)^{-M+\epsilon+rd} \sum_{l=0}^\inf  (1+l)^u \times 2^{-ltu} \\
<&\mathrm{(constant)}\times\sum_{k_1=N}^\inf\cdots\sum_{k_m=N}^\inf (k_1+\cdots+k_m)^{-M+\epsilon+rd} \\
<&\mathrm{(constant)}\times\sum_{\tilde k=1}^\inf \tilde k^{-M+\epsilon+rd+m-1} .
\end{align*}
Hence, if $M>\epsilon+rd+n$, then $\mathbf{D}(\xi,\tilde \xi,\varepsilon)$ is convergent.
Since we have chosen $0<\epsilon<1$, the proof is completed.
\end{proof}

\subsection{Absolute convergence of $J_M^L(u)$}\label{acJ}

We may assume that $G$ is a closed subgroup of $\GL(N)$ over $F$ without loss of generality.
Let $\cL$ denote the set of standard Levi subgroups over $F$.
For $x=\prod_v(x_{ij})\in\GL(N,\R)$, a height function $\|\;\; \|_\inf$ is defined by
\[
\|x\|_\inf=\Big(\sum_{i,j}|x_{ij,\inf}|^2 + \sum_{i,j}|x_{ij,\inf}'|^2 \Big)^{1/2}   
\]
where we set ${}^t\!x^{-1}=(x_{ij}')$.
A height function $\|\;\;\|_\inf$ on $G(\R)$ is obtained from that of $\GL(N,\R)$.
For any $x$, $y$ in $G(\R)$, one clearly finds
\[
\|x\|_\inf\geq 1,\quad \|xy\|_\inf\leq \|x\|_\inf \|y\|_\inf, \quad  \|x^{-1}\|_\inf\leq C_0\|x\|_\inf^{N_0}
\]
where $C_0$ and $N_0$ are certain positive constants (cf. \cite[p.919]{Arthur12} and \cite[p.70]{Arthur5}).

\begin{prop}\label{lac}
Let $M$, $L\in\cL$, $M\subset L$ and let $Q$ be a parabolic subgroup in $\cP(L)$.
There exists a positive constant $\mathfrak{M}$ such that, if $k>\mathfrak{M}$ and a continuous function $f$ on $G(\R)$ satisfies
\[
|f(g)|<\mathrm{(constant)}\times \|g\|_\inf^{-k} \qquad (g\in G(\R)),
\]
then $J_M^L(u,f_Q)$ absolutely converges for any unipotent orbits $u$ in $M(\R)$ with respect to \eqref{euw}.
\end{prop}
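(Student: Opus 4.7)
The strategy is to bound the absolute value of the integrand defining $J_M^L(u,f_Q)$ via \eqref{euw} (with $G$ replaced by $L$) by an expression of the type handled by Proposition~\ref{esti}: a product of polynomial decay in the integration variables with a polynomial in logarithms of polynomials in those variables.

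First, I would unfold
\[
f_Q(m) = \delta_Q(m)^{1/2} \int_{\bK_\inf} \int_{N_Q(\R)} f(k_1^{-1} m n k_1)\, dn\, dk_1,
\]
absorb the inner $\bK_\inf$-integration into the outer $\bK_\inf$-integration of \eqref{euw}, and exploit submultiplicativity of $\|\cdot\|_\inf$, the bound $\|x^{-1}\|_\inf \leq C_0\|x\|_\inf^{N_0}$, and the hypothesis $|f(g)| \ll \|g\|_\inf^{-k}$ to see that, once $k$ exceeds a first threshold depending on $\dim N_Q$ and $N_0$, the $N_Q(\R)$-integration converges absolutely and yields a pointwise bound $|f_Q(m)| \ll \|m\|_\inf^{-k+c_1}$ for some $c_1 > 0$. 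For $m = \exp(x)\,y\,\nu$ with $x \in O_u(\R)$, $y \in U_{>2}(\R)$, $\nu \in N_{P_1}(\R)$, the height $\|m\|_\inf$ is bounded below by a positive power of $1 + \|(x,y,\nu)\|$ (exponential and multiplication in a unipotent setting are polynomial maps), so the $\|\cdot\|_\inf^{-1}$-decay of $f_Q$ translates into polynomial decay in the real coordinates.

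Second, I would analyze the weight factor $w_M(1, \exp(x)y\nu)$ given by \eqref{ewf}. Following \cite[Sections 5--6]{Arthur3}, the limit over $\lambda$ yields a polynomial of bounded degree in quantities of the form $\log |p_j(x,y,\nu)|$ arising from the expansions of $v_P(\lambda,n) = e^{-\lambda(H_P(n))}$ and the $r_\beta(\lambda,u,a)$-factors, once the $\theta_P(\lambda)^{-1}$-poles cancel in the sum over the parabolics of $L$ with Levi $M$. Combined with the polynomial factor $|\varphi(x)|^{1/2}$, the integrand of \eqref{euw} is then dominated by
\[
C\,(1 + \|(x,y,\nu)\|)^{-Ak+c_2}\prod_{j=1}^{r}\bigl|\log|p_j(x,y,\nu)|\bigr|
\]
for explicit polynomials $p_j$ of bounded degree $d$, some $A > 0$, $c_2 > 0$ independent of $k$. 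Absolute convergence then follows immediately from Proposition~\ref{esti}: the threshold $\mathfrak{M}$ is any value of $k$ for which $Ak - c_2 > 1 + rd + n$, where $n$ is the total number of real coordinates in $(x,y,\nu)$.

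The main obstacle I expect is the weight-factor analysis: one must carefully track the Laurent expansion of each summand $w_P(\lambda,1,\mu\nu)/\theta_P(\lambda)$ near $\lambda = 0$, verify the cancellation of poles across $P$, and uniformly bound both the degree of the resulting polynomial and the polynomials $p_j$ in terms of $L$, $M$, and $u$. The lower bound $\|\exp(x)y\nu\|_\inf \gg (1+\|(x,y,\nu)\|)^A$ in the first step is also a delicate point, but it is standard in the theory and can if necessary be obtained by a change of variable decoupling the three unipotent factors before the height estimate is applied.
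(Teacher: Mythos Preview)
Your proposal is correct and follows essentially the same route as the paper. The only organizational difference is that you integrate over $N_Q(\R)$ first to bound $f_Q$, whereas the paper keeps the $N_Q(\R)$-integral together with the $N_{P_1}(\R)$-, $U_{>2}(\R)$-, and $O_u(\R)$-integrals and bounds the full product $\|\exp(x)\,y\,\nu\,n_Q\|_\inf$ at once; the ``delicate point'' you flag about comparing $\|\exp(x)y\nu\|_\inf$ with $1+\|(x,y,\nu)\|$ is exactly what the paper isolates as a separate lemma (Lemma~\ref{llac}, proved via Campbell--Hausdorff), and the weight-factor description as a polynomial in $\log|p_j|$ is obtained, as you anticipate, by citing \cite[Corollary~4.3 and Lemma~5.4]{Arthur3} before invoking Proposition~\ref{esti}.
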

\begin{proof}
As in Section \ref{secu1}, for a unipotent element $u$ in $L(\R)$, we choose a parabolic subgroup $P_1\in \cP^L(M)$ and we have the subgroup $U_{>2}(\R)$ in $M(\R)$ and the open orbit $O_u(\R)$ in $\fm_2(\R)$.
We have only to show that the integral
\begin{multline}\label{absu}
\int_{N_Q(\R)}\d n_Q \int_{N_{P_1}(\R)} \d \nu \int_{U_{>2}(\R)} \d y \int_{O_u(\R)}\d x \\
\|\exp(x) y\nu n_Q\|_\inf^{-k} \, |\varphi(x)|^{1/2}\, |w_M(\exp(x)y \nu)|
\end{multline}
is convergent.
Note that there exist positive constants $C_1$, $C_2$ such that
\[
C_1\|g\|_\inf<\| k_1gk_2\|_\inf<C_2\|g \|_\inf
\]
holds for any $g\in G(\R)$ and any $k_1,k_2\in\bK_\inf$.

We may assume that $U$, $N_{P_1}$, and $N_Q$ are contained in the unipotent radical $N_{P_0}$ of a minimal parabolic subgroup $P_0$ in $\cP$.
Let $\fn_0$ denote the Lie algebra of $N_{P_0}$.
Choose a Euclidean norm $\|\; \|$ on the $\R$-vector space $\fn_0(\R)$.
\begin{lem}\label{llac}
Assume that $\fg_1$ is a subspace of $\fn_0$, $\fg_2$ is a subalgebra of $\fn_0$ and they satisfy $[\fg_1,\fg_2]\subset \fg_2$ and $\fg_1\cap\fg_2=0$.
There exist a natural number $N_3$ and a positive constant $C_3$ such that
\[
\|\exp(X)\exp(Y)\|_\inf^{N_3}>C_3(1+\|X\|)\, \|\exp(Y)\|_\inf \qquad (X\in\fg_1(\R),\;\; Y\in\fg_2(\R)).
\]
\end{lem}
\begin{proof}
It is obvious that there exists positive constants $N_4$, $N_5$, $C_4$, $C_5$ such that
\[
\|\exp(Z)\|_\inf^{N_4}>C_4(1+\|Z\|) \quad \text{and} \quad (1+\|Z\|)^{N_5}>C_5\|\exp(Z)\|_\inf
\]
for any $Z\in \fn_0(\R)$, because $\exp(Z)$ (resp. $\log(x)$) is a polynomial of $Z\in\fn_0(\R)$ (resp. $x\in N_0(\R)$).
Since $\|\exp(X)\exp(Y)\|_\inf>C_0^{-1}\|\exp(X)\|_\inf^{-N_0}\|\exp(Y)\|_\inf$, we get
\[
\|\exp(X)\exp(Y)\|_\inf>C_0^{-1}C_5^{N_0}(1+\|X\|)^{-N_0N_5}\|\exp(Y)\|_\inf.
\]
Using the Campbell-Hausdorff formula, we derive
\[
\|\exp(X)\exp(Y)\|_\inf^{N_4}>C_4(1+\|X\|).
\]
Hence, the lemma follows from them.
\end{proof}

Let $\fn_Q$ (resp. $\fn_1$) denote the Lie algebra of $N_Q$ (resp. $N_{P_1}$).
Then, we see that $\fu_{>1}\cap \fn_1=\fu_{>1}\cap \fn_Q=\fn_1\cap \fn_Q=0$, $[\fn_1,\fn_Q]\subset \fn_Q$, and $[\fu_{>1},\fn_1+\fn_Q]\subset \fn_1+\fn_Q$.
Furthermore, there exist positive constants $N_6$ and $C_6$ such that $|\varphi(x)|^{1/2}$ is bounded by $C_6\times (1+\|x\|)^{N_6}$.
Hence, by Lemma \ref{llac}, the integral \eqref{absu} is bounded by a constant multiple of the integral
\begin{multline*}
\int_{\fn_Q(\R)}\d X_Q \int_{\fn_1(\R)} \d X_1 \int_{\fu_{>1}(\R)} \d X_u  \\
(1+\|X_u\|+ \|X_1\|+\|X_Q\|)^{-\frac{k}{N_3^2N_4}+N_6} \, |w_M(\exp(X_u)\exp(X_1))|
\end{multline*}
where $\d X_Q$, $\d X_1$, $\d X_u$ are Lebesgue measures.
By \cite[Corollary 4.3 and Lemma 5.4]{Arthur3} the factor $w_M(\exp(X_u)\exp(X_1))$ is a linear combination of functions of the form $\prod_{j=1}^m\log|p_j(\exp(X_u)\exp(X_1))|$ for certain polynomials $p_1,\dots,p_m$ on $U_{>1}(\R)N_{P_1}(\R)$.
Hence, by Proposition \ref{esti}, the above integral is absolutely convergent for any sufficiently large integer $k$.
\end{proof}

There exists a sufficiently small $\varepsilon>0$ such that, for any $M$, $L\in\cL$ $(M\subset L)$, any $Q\in\cP(L)$, any unipotent orbit $u\in M(\R)$, and any $0<p<\varepsilon$, the linear functional $\cC^p(G(\R))\ni f\to J_M^L(u,f)\in \C$ is continuous in the topology of $\cC^p(G(\R))$.
This fact follows from Proposition \ref{lac}, because one can show that the function $f$ in $\cC^p(G(\R))$ is bounded by $\mathrm{(constant)}\times\|\;\|_\inf^{-k}$ using a well-known upper bound of spherical functions.
The continuity for $\cC^p(G(\R))\ni f\to J_M^L(u,f)\in \C$ is a local analogue of Hoffmann's geometric estimates \cite{Hoffmann}.

\section{Dimension formula for spaces of holomorphic Siegel cusp forms}\label{s3}

Let $n$ be an integer greater than $0$.
We set $G=\Sp(n)$ throughout this section (see \eqref{eqsp} for the definition of $\Sp(n)$).
Especially, $G$ satisfies all the assumptions in Section~\ref{s2}.
Let $\fg$ denote the Lie algebra of $G$ over $\Q$.
We write $M_0$ for the minimal Levi subgroup which consists of diagonal matrices in $G$, and a positive root system $\Phi_0$ for $\fa_0=\mathrm{Lie}(M_0)$ in $\fg$ is defined by
\[
\Phi_0=\{ \chi \in X(M_0)_\Q \mid \text{$\chi=\chi_i-\chi_j$ $(1\leq i<j\leq n)$ or $\chi=\chi_i+\chi_j$ $(1\leq i,$ $j\leq n)$}    \}
\]
where $\chi_i(a)=a_i$ for $a=\diag(a_1,\dots,a_n,a_1^{-1},\dots,a_n^{-1})$.
By the adjoint action $\mathrm{Ad}(a)x=axa^{-1}$, we have the minimal parabolic subgroup $P_0$ corresponding to $\Phi_0$.
Its unipotent radical is denoted by $N_0$.
We set $\alpha_j=\chi_j-\chi_{j+1}$ $(1\leq j\leq n-1)$ and $\alpha_n=2\chi_n$. Then, the set $\Delta_0$ of simple roots is given by
\[
\Delta_0=\{\alpha_1,\alpha_2,\dots,\alpha_{n-1},\alpha_n\}.
\]
A maximal compact subgroup $\bK_\inf$ of $G(\R)$ is defined as
\[
\bK_\inf=\{\begin{pmatrix}A&-B \\ B&A \end{pmatrix}\in G(\R)\}.
\]
The group $\bK_\inf$ is identified with the compact unitary group
\[
\U(n,\C/\R)=\{g\in\GL(n,\C)\mid g\, \overline{{}^t\!g}=I_n\}
\]
via the isomorphism
\[
\bK_\inf\ni \begin{pmatrix}A&-B \\ B&A \end{pmatrix}\mapsto A+iB\in \U(n,\C/\R).
\]
We set $\bK_v=G(\Z_v)$ for each finite place $v$ of $\Q$.

Now, $G$ is a closed subgroup of $\GL(2n)$ over $\Q$.
For each $x=(x_{ij})_{1\leq i,j\leq 2n}\in G(\R)$, a height function $\|\;\; \|_\inf$ is defined by
\[
 \|x\|_\inf=\Big(\sum_{1\leq i,j\leq 2n}|x_{ij,\inf}|^2\Big)^{1/2}.
\]
This height function $\|\;\; \|_\inf$ is essentially the same as that of Section \ref{acJ}.
Particularly, $\|\;\; \|_\inf$ satisfies the same properties as in Section \ref{acJ}.

\subsection{Holomorphic discrete series of $\Sp(n,\R)$}\label{s31}

The Lie algebra $\fg(\R)$ has a compact Cartan subalgebra $\fh_\R$ over $\R$, which is isomorphic to $(i\R)^{\oplus n}$.
Set $\fh_\C=\fh_\R\otimes_\R\C$ and $\fh_\C^*=\mathrm{Hom}(\fh_\C,\C)$.
Let $\mathbf{e}_j$ denote the element in $\fh_\C^*$ defined by
\[
\mathbf{e}_j(a_1,a_2,\dots,a_n)=a_j\qquad ((a_1,\dots,a_n)\in\fh_\C).
\]
Then, we may assume that the root system $\varPhi$ of $(\fg(\C),\fh_\C)$ is given by
\[
\varPhi=\varPhi_{K,+}\sqcup(-\varPhi_{K,+})\sqcup\varPhi_{n,+}\sqcup(-\varPhi_{n,+})
\]
where
\[
\varPhi_{K,+}=\{\mathbf{e}_j-\mathbf{e}_k\mid 1\leq j< k\leq n\}\quad \text{and} \quad \varPhi_{n,+}=\{\mathbf{e}_j+\mathbf{e}_k\mid 1\leq j\leq k\leq n\}
\]
($\varPhi_{K,+}$ (resp. $\varPhi_{n,+}$) is the set of positive compact (resp. non-compact) roots).
Let $P(\varPhi)$ denote the set of analytically integral forms on $\fh^*_\C$ (cf. \cite[p.84]{Knapp}).
Then, we have
\[
P(\varPhi)=\Z \mathbf{e}_1\oplus \Z \mathbf{e}_2\oplus \cdots\oplus \Z \mathbf{e}_n
\]
and $P(\varPhi)$ is identified with $\Z^{\oplus n}$ by $c_1\mathbf{e}_1+\cdots +c_n\mathbf{e}_n\mapsto (c_1,\dots,c_n)$.
We denote by $\fk_\C$ the complexification of the Lie algebra of $\bK_\inf$.
We may assume that $\fk_\C$ contains $\fh_\C$ and $\varPhi_{K,+}\sqcup(-\varPhi_{K,+})$ is the root system of $(\fk_\C,\fh_\C)$.

The Weyl group of $(\fk_\C,\fh_\C)$ is isomorphic to the symmetric group $S_n$ of degree $n$.
Furthermore, the Weyl group of $(\fg(\C),\fh_\C)$ is isomorphic to the semi-direct product $\{\pm 1\}^n\rtimes S_n$.
The group $S_n$ acts on $\fh_\C^*$ as $w\cdot \mathbf{e}_j=\mathbf{e}_{w^{-1}(j)}$ and $\{\pm 1\}^n$ acts on $\fh_\C^*$ as $\epsilon\cdot \mathbf{e}_j=\epsilon_j \mathbf{e}_j$ $(\epsilon=(\epsilon_1,\dots,\epsilon_n)\in\{\pm 1\}^n)$.
Therefore, from \cite[Theorem 9.20 in p.310]{Knapp} we find that equivalence classes of discrete series of $G(\R)$ are parameterized by the set
\[
\mathrm{HCP}=\left\{(l_1,l_2,\dots,l_n)\in P(\varPhi) \mid  \begin{array}{c} l_1>l_2>\cdots>l_n , \\ l_k+l_j\neq0 \;\; (1\leq k\leq j\leq n)\end{array} \right\}
\]
(this is called the Harish-Chandra parameter).
If $(l_1,l_2,\dots,l_n)\in\mathrm{HCP}$ satisfies $l_n>0$, then the corresponding discrete series $\sigma$ is said to be holomorphic and $\sigma$ is characterized by its minimal $\bK_\inf$-type with the highest weight
\[
(l_1,l_2,\dots,l_n)+(1,2,\dots,n).
\]
Hence, there is the one-to-one correspondence between the holomorphic discrete series representations of $G(\R)$ and the $n$-tuples $(k_1,k_2,\dots,k_n)$ of integers with $k_1\geq k_2\geq\cdots\geq k_n>n$ (this is called the Blattner parameter).

\subsection{Spaces of Siegel cusp forms}\label{s3.2}

For a symmetric matrix $X$ in $M(n,\R)$, we write $X>0$ if $X$ is positive definite.
We denote by $\fH_n$ the Siegel upper half space of degree $n$, i.e.,
\[
\fH_n=\{Z\in M(n,\C) \mid  {}^t\!Z=Z , \;\;   \mathrm{Im}(Z)>0  \}.
\]
The group $G(\R)$ acts on $\fH_n$ as
\[
g\cdot Z=(AZ+B)(CZ+D)^{-1} , \qquad g=\begin{pmatrix}A&B\\ C&D \end{pmatrix}\in G(\R), \quad Z\in\fH_n
\]
where $A$, $B$, $C$, $D\in M(n,\R)$.
Let $\rho$ be a finite-dimensional irreducible rational representation of $\GL(n,\C)$.
We write $H_\rho$ for a representation space of $\rho$ over $\C$.
We set
\[
J_\rho(g,Z)=\rho(CZ+D),\qquad g=\begin{pmatrix}A&B\\ C&D \end{pmatrix}\in G(\R), \quad Z\in\fH_n.
\]
The automorphic factor $J_\rho$ satisfies
\[
J_\rho(g_1g_2,Z)=J_\rho(g_1,g_2\cdot Z)\, J_\rho(g_2,Z), \qquad g_1,g_2\in G(\R),\quad Z\in\fH_n
\]
and
\[
J_\rho(k,iI_n)=\rho(k),\qquad k\in \bK_\inf.
\]

Let $\Gamma$ be an arithmetic subgroup of $G(\Q)$.
For each positive definite symmetric matrix $Y$ in $M(n,\R)$, there exists an orthogonal matrix $k$ such that $kY{}^t\!k=a$ is a diagonal matrix and we set $Y^{1/2}=ka^{1/2}\,{}^t\!k$ where $a^{1/2}$ is defined by $a^{1/2}=\diag(a_1^{1/2},\dots,a_n^{1/2})$ for $a=\diag(a_1,\dots,a_n)$.
A vector space $S_\rho(\Gamma)$ over $\C$ is defined by
\[
S_\rho(\Gamma)=\left\{ \phi:\fH_n\to H_\rho \mid \begin{array}{c} \text{$\phi$ is holomorphic,} \\ \phi(\gamma\cdot Z)=J_\rho(\gamma,Z)\,\phi(Z)\quad (\gamma\in\Gamma,\;\; Z\in\fH_n), \\ \sup_{Z\in\fH_n}\|\rho(\mathrm{Im}(Z)^{1/2})\phi(Z)\|_{H_\rho} <\inf \end{array}        \right\}.
\]
The space $S_\rho(\Gamma)$ is called the space of Siegel cusp forms of weight $\rho$ with respect to $\Gamma$.
If $\rho=\det^k$, then $S_\rho(\Gamma)$ is simply denoted by $S_k(\Gamma)$.
The functions in $S_k(\Gamma)$ are called scalar valued Siegel cusp forms.
If $\dim H_\rho$ is greater than one, then the functions in $S_\rho(\Gamma)$ are said to be vector valued Siegel cusp forms.

\subsection{Godement's spherical function}\label{s3.1}

Let $\sigma$ be a holomorphic discrete series representation of $G(\R)$ corresponding to the $n$-tuple $\bk=(k_1,k_2,\dots,k_n)$ $(k_1\geq k_2\geq\cdots\geq k_n>n)$.
By Weyl's unitary trick, a finite dimensional irreducible polynomial representation $(\rho,H_\rho)$ of $\GL(n,\C)$ is obtained from an extension of the minimal $\bK_\inf$-type $(\tau,H_\sigma(\tau))$ of $\sigma$.
Namely $H_\rho$ is identified with $H_\sigma(\tau)$ and $\rho(k)= \tau(k)$ holds for any $k\in\bK_\inf$.
We identify $H_\rho$ with the vector space $\C^{d_\tau}$ of column vectors of degree $d_\tau(=\dim H_\rho)$.
Then, $\rho$ becomes a homomorphism from $\GL(n,\C)$ to $\GL(d_\tau,\C)$.
Furthermore, we have
\[
\rho\big(\, \overline{{}^t\!g}\, \big)=\overline{{}^t\!\rho(g)} \quad \text{in $\GL(d_\tau,\C)$ for $g\in\GL(n,\C)$}
\]
(cf. \cite[Expos\'e 5]{Godement}) and $\rho$ corresponds to $\bk$ in the usual sense.
Let $B$ denote the Borel subgroup of $\GL(n)$ which consists of upper triangle matrices.
The $n$-tuple $\bk$ is identified with the rational character on $M_B(\C)$ such that
\[
\bk(\diag(a_1,a_2,\dots,a_n))=a_1^{k_1}a_2^{k_2}\cdots a_n^{k_n}.
\]
We are assuming that the highest weight of $\rho$ is $\bk$, that is, there exists a highest weight vector $v_0$ in $H_\rho$ such that
\[
\rho(an)v_0=\bk(a)v_0 \qquad (a\in M_B(\C) , \;\; n\in N_B(\C)).
\]

For $\C^{d_\tau}$-valued functions $\varphi_1$ and $\varphi_2$ on $\fH_n$, we set
\[
\langle \varphi_1,\varphi_2\rangle=\int_{\fH_n}\overline{{}^t\!\varphi_1(Z)}\, \rho(\mathrm{Im}(Z))\,\varphi_2(Z)\, \d Z
\]
where $\d Z$ is a $G(\R)$-invariant measure on $\fH_n$ induced from the Haar measures on $G(\R)$ and $\bK_\inf$.
A representation space $H_\sigma$ of $\sigma$ is realized by
\[
H_\sigma=\left\{ \varphi\in \fH_n\to H_\rho  \mid  \text{$\varphi$ is holomorphic,}  \quad \langle \varphi,\varphi\rangle  <\inf   \right\}
\]
and $\sigma(g)$ is acting on $H_\sigma$ as
\[
(\sigma(g)\varphi)(Z)=J_\rho(g^{-1},Z)^{-1} \varphi(g^{-1}\cdot Z) \qquad (\varphi\in H_\sigma,\;\; g\in G(\R),\;\; Z\in\fH_n).
\]
In particular, $\sigma$ is unitary for the inner product $\langle \, , \, \rangle$.
Note that $H_\sigma$ is identified with a Hilbert space whose elements are $\C^{d_\tau}$-valued functions on $G(\R)$ by the lifting $\varphi\mapsto \tilde\varphi$, $\tilde\varphi(g)=J_\rho(g^{-1},iI_n)^{-1}\varphi(g^{-1}\cdot iI_n)$.
For details of this realization, we refer to \cite{Harish-Chandra}, \cite[p.3--4]{Langlands1}, and \cite[Expos\'e 6]{Godement}.

We recall the notations $\mathrm{pr}_\tau$, $\psi_{\sigma,\tau}$, $d_\sigma$ and $\fs$ given in Section \ref{s25}.
A $\tau$-spherical function $\Psi_{\sigma,\tau}$ of $\sigma$ is defined by
\[
\Psi_{\sigma,\tau}(g)=\mathrm{pr}_\tau\circ \sigma(g)\circ \mathrm{pr}_\tau \qquad (g\in G(\R)).
\]
Clearly, one can see that the $\tau$-spherical function $\Psi_{\sigma,\tau}$ satisfies
\begin{equation}\label{stpp}
\Psi_{\sigma,\tau}(kgk')=\rho(k)\, \Psi_{\sigma,\tau}(g) \, \rho(k') \qquad (k,k'\in\bK_\inf,\;\; g\in G(\R))
\end{equation}
and
\[
\fs(g)=d_\tau^{-1}d_\sigma\overline{\psi_{\sigma,\tau}(g)}=d_\tau^{-1}d_\sigma\overline{\Tr(\Psi_{\sigma,\tau}(g))} \qquad (g\in G(\R)).
\]
\begin{lem}\label{lgo}
The spherical function $\Psi_{\sigma,\tau}$ satisfies
\[
\Psi_{\sigma,\tau}(g)=J_\rho(g^{-1},iI_n)^{-1}\, \rho\left(\frac{g^{-1}\cdot iI_n+iI_n}{2i}\right)^{-1}  \qquad (g\in G(\R)).
\]
Note that $\overline{{}^t\!\Psi_{\sigma,\tau}(g)} =\Psi_{\sigma,\tau}(g^{-1})$ holds.
\end{lem}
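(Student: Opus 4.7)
The plan is to realize the minimal $\bK_\inf$-type $H_\sigma(\tau)=H_\rho$ as an explicit finite-dimensional subspace of the Hilbert space of $H_\rho$-valued holomorphic functions on $\fH_n$ and then to compute $\mathrm{pr}_\tau\circ\sigma(g)\circ\mathrm{pr}_\tau$ by evaluation at the base point $iI_n$. First I would note that the evaluation map $\mathrm{ev}_{iI_n}\colon H_\sigma\to H_\rho$, $\varphi\mapsto\varphi(iI_n)$, is $\bK_\inf$-equivariant: for any $k\in\bK_\inf$ we have $k^{-1}\cdot iI_n=iI_n$ and, under the identification $\bK_\inf\cong\U(n,\C/\R)$ with $k\leftrightarrow A+iB$, also $J_\rho(k^{-1},iI_n)=\rho(k^{-1})$, whence $(\sigma(k)\varphi)(iI_n)=\tau(k)\varphi(iI_n)$. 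Because $H_\rho$ carries only the $\bK_\inf$-type $\tau$, $\mathrm{ev}_{iI_n}$ kills every other isotypic component of $H_\sigma$, and Schur's lemma combined with the multiplicity one of the minimal $\bK_\inf$-type forces $\mathrm{ev}_{iI_n}|_{H_\sigma(\tau)}$ to be a $\bK_\inf$-isomorphism onto $H_\rho$ once we exhibit any nonzero element of $H_\sigma(\tau)$ that does not vanish at $iI_n$.

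Such elements are built explicitly. For $v\in H_\rho$ put
\[
\varphi_v(Z)=\rho\!\left(\frac{Z+iI_n}{2i}\right)^{-1}v\qquad (Z\in\fH_n),
\]
which is well defined and holomorphic because $\mathrm{Im}(Z+iI_n)=\mathrm{Im}(Z)+I_n>0$ implies $(Z+iI_n)/(2i)\in\GL(n,\C)$. Writing $k=\begin{pmatrix}A&-B\\B&A\end{pmatrix}\in\bK_\inf$ and $U=A+iB$, a short direct matrix manipulation yields
\[
k^{-1}\cdot Z+iI_n=U^{-1}(Z+iI_n)(-{}^t\!B\,Z+{}^t\!A)^{-1},
\]
and since $J_\rho(k^{-1},Z)=\rho(-{}^t\!B\,Z+{}^t\!A)$ and $\rho(U)=\tau(k)$ we deduce
\[
(\sigma(k)\varphi_v)(Z)=J_\rho(k^{-1},Z)^{-1}\varphi_v(k^{-1}\cdot Z)=\varphi_{\tau(k)v}(Z).
\]
Hence $v\mapsto\varphi_v$ is $\bK_\inf$-equivariant and $\varphi_v(iI_n)=v$. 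Square-integrability of $\varphi_v$, which places it in $H_\sigma$ and therefore in $H_\sigma(\tau)$, is the classical convergence of
\[
\int_{\fH_n}\overline{{}^t\varphi_v(Z)}\,\rho(\mathrm{Im}(Z))\,\varphi_v(Z)\,\d Z
\]
in the integrable range $k_n>n$; equivalently, $\varphi_v$ is a fixed scalar multiple of the value at $W=iI_n$ of the reproducing kernel $K(Z,W)=c_\rho\,\rho((Z-\bar W)/(2i))^{-1}$ of $H_\sigma$, which is exactly the content of Godement's construction.

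Finally, applying $\sigma(g)$ to $\varphi_v$ and evaluating at $iI_n$ gives
\[
\Psi_{\sigma,\tau}(g)v=(\mathrm{pr}_\tau\sigma(g)\varphi_v)(iI_n)=(\sigma(g)\varphi_v)(iI_n)=J_\rho(g^{-1},iI_n)^{-1}\varphi_v(g^{-1}\cdot iI_n),
\]
which unwinds to the stated formula; the middle equality uses that $\mathrm{ev}_{iI_n}$ annihilates every non-$\tau$ isotypic component of $H_\sigma$. The parenthetical identity $\overline{{}^t\Psi_{\sigma,\tau}(g)}=\Psi_{\sigma,\tau}(g^{-1})$ follows at once from the unitarity of $\sigma$ on $H_\sigma$ (so that $\sigma(g)^*=\sigma(g^{-1})$) together with the fact that $\tau=\rho|_{\bK_\inf}$ preserves the Hermitian form on $H_\rho$. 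The only substantive step beyond bookkeeping is the square-integrability of $\varphi_v$ together with the assertion that $\{\varphi_v:v\in H_\rho\}$ exhausts $H_\sigma(\tau)$; both are settled by the integrable-range hypothesis $k_n>n$ and the multiplicity one of the minimal $\bK_\inf$-type of a holomorphic discrete series.
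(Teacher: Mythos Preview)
Your argument is correct and is essentially the same as the paper's: both identify the minimal $\bK_\inf$-type $H_\sigma(\tau)$ with the span of the functions $\varphi_v(Z)=\rho((Z+iI_n)/(2i))^{-1}v$ (i.e., the reproducing kernel at $W=iI_n$) and read off $\Psi_{\sigma,\tau}(g)$ from this identification. The paper phrases this via uniqueness of the reproducing kernel---Godement's formula $K_{\sigma,\tau}(Z_1,Z_2)=d_\tau^{-1}d_\sigma\,\rho((Z_1-\overline{Z_2})/(2i))^{-1}$ must equal the kernel $d_\tau^{-1}d_\sigma\,J_\rho(x^{-1},iI_n)\Psi_{\sigma,\tau}(xy^{-1})\overline{{}^t\!J_\rho(y^{-1},iI_n)}$ built from $\Psi_{\sigma,\tau}$---while you unpack the same computation by hand via the evaluation map $\mathrm{ev}_{iI_n}$, so the two presentations differ only in packaging.
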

\begin{proof}
In \cite[Expos\'e 6, Theorem 5]{Godement}, a reproducing kernel $K_{\sigma,\tau}(Z_1,Z_2)$ of $H_\sigma$ is given by
\[
K_{\sigma,\tau}(Z_1,Z_2)=d_\tau^{-1}d_\sigma\, \rho\left(\frac{Z_1-\overline{Z_2}}{2i} \right)^{-1} \qquad (Z_1,Z_2\in \fH_n),
\]
i.e.,
\[
f(Z_1)=\int_{\fH_n}K_{\sigma,\tau}(Z_1,Z_2)\, \rho(\mathrm{Im}(Z_2))\, f(Z_2)\, \d Z_2 \qquad (f\in H_\sigma)
\]
holds.
By the same argument as in \cite{WW}, we can show that
\[
d_\tau^{-1}d_\sigma J_\rho(x^{-1},iI_n)\, \Psi_{\sigma,\tau}(xy^{-1})\, \overline{{}^t\!J_\rho(y^{-1},iI_n)}\quad (Z_1=x^{-1}\cdot iI_n,\;\; Z_2=y^{-1}\cdot iI_n)
\]
also becomes a reproducing kernel of $H_\sigma$.
It is well-known that a reproducing kernel of $H_\sigma$ uniquely exists by the Riesz representation theorem (see, e.g., \cite[Proposition IX.2.1]{FK}).
Thus, this lemma is proved.
\end{proof}
By Lemma \ref{lgo}, one sees that
\[
\Psi_{\sigma,\tau}(g)=\rho\left(\frac{({}^t\!A+{}^t\!D)\, i-{}^t\!B+{}^t\!C}{2i}\right)^{-1} \quad \text{if} \quad g=\begin{pmatrix}A&B\\ C&D\end{pmatrix}\in G(\R).
\]
Note that $\Psi_{\sigma,\tau}(I_{2n})=I_n$ and
\begin{equation}\label{kft2}
\rho\left(\frac{g\cdot Z_1-\overline{g\cdot Z_2}}{2i}\right)^{-1}=J_\rho(g,Z_1) \, \rho\left(\frac{Z_1-\overline{Z_2}}{2i}\right)^{-1}\overline{{}^t\!J_\rho(g,Z_2)}
\end{equation}
hold $(Z_1,Z_2\in\fH_n$, $g\in G(\R))$ (cf. \cite[6-18]{Godement}).
\begin{lem}\label{l31}
Assume that $\rho$ corresponds to $\bk=(k_1,k_2,\dots,k_n)$ as above.

\vspace{1mm}
\noindent
(i) If $\sigma_0$ and $\tau_0$ are corresponding to $(k_n,\cdots,k_n)$, then there exists a positive constant $C_1$ such that
\[
|\psi_{\sigma,\tau}(g)|<C_1\times |\psi_{\sigma_0,\tau_0}(g)| \qquad (g\in G(\R)).
\]

\vspace{1mm}
\noindent
(ii) There exists a positive constant $C_2$ such that
\[
|\psi_{\sigma,\tau}(g)|<C_2\times  \| g\|_\inf^{-k_n} \qquad (g\in G(\R)).
\]
\end{lem}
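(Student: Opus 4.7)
The plan is to use the $KAK$ decomposition of $\Sp(n,\R)$ to diagonalize $\Psi_{\sigma,\tau}$ and then compare the full weight expansion of $\rho$ with the single term of the scalar representation $\det^{k_n}$. Concretely, I write $g = k_1 a k_2$ with $k_1, k_2 \in \bK_\inf$ and $a = \diag(e^{t_1}, \ldots, e^{t_n}, e^{-t_1}, \ldots, e^{-t_n})$, $t_i \in \R$. By \eqref{stpp} and Lemma \ref{lgo} this gives $\Psi_{\sigma,\tau}(g) = \rho(k_1)\, \rho(\diag(\cosh t_1, \ldots, \cosh t_n))^{-1}\, \rho(k_2)$. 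Fixing a $\bK_\inf$-invariant Hermitian inner product on $H_\rho$ and choosing an orthonormal weight basis (weight spaces of the compact torus $T \subset \bK_\inf$ are pairwise orthogonal), the middle factor becomes a positive diagonal operator with entries $\prod_i (\cosh t_i)^{-\mu_i}$ indexed by the weights $\mu$ of $\rho$, while $\rho(k_2 k_1)$ is unitary with operator norm $1$.

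For (i), cyclic invariance of the trace gives $\psi_{\sigma,\tau}(g) = \Tr\!\bigl(\Psi_{\sigma,\tau}(a)\, \rho(k_2 k_1)\bigr)$, and the Schatten H\"older inequality $|\Tr(AB)| \leq \|A\|_1 \|B\|_\infty$ combined with positivity of $\Psi_{\sigma,\tau}(a)$ yields $|\psi_{\sigma,\tau}(g)| \leq \psi_{\sigma,\tau}(a)$. The identical computation for $\sigma_0$ gives $|\psi_{\sigma_0,\tau_0}(g)| = (\prod_i \cosh t_i)^{-k_n}$, because $|\det(k_2 k_1)^{k_n}| = 1$. The key observation is the decomposition $\rho \cong \rho' \otimes \det^{k_n}$, where $\rho'$ is the irreducible polynomial representation of $\GL(n,\C)$ with highest weight $(k_1 - k_n, \ldots, k_{n-1} - k_n, 0)$; every weight of a polynomial representation has non-negative entries, so each weight $\mu$ of $\rho$ satisfies $\mu_i \geq k_n$. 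Since $\cosh t_i \geq 1$, this forces the termwise bound $\prod_i (\cosh t_i)^{-\mu_i} \leq (\prod_i \cosh t_i)^{-k_n}$, and summing over weights with multiplicity gives $\psi_{\sigma,\tau}(a) \leq (\dim H_\rho)\,|\psi_{\sigma_0,\tau_0}(g)|$, proving (i) with $C_1 = \dim H_\rho$.

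For (ii), by (i) it suffices to treat the scalar case. Since $\bK_\inf \subset \O(2n)$ and $\|\cdot\|_\inf$ is the Frobenius norm, $\|g\|_\inf = \|a\|_\inf$; using $\cosh(2t) \leq 2 \cosh^2 t$ and $\cosh t_i \geq 1$ one has $\|a\|_\inf^2 = 2\sum_i \cosh(2 t_i) \leq 4n (\prod_i \cosh t_i)^2$, hence $\prod_i \cosh t_i \geq (2\sqrt{n})^{-1} \|g\|_\inf$, which yields $|\psi_{\sigma_0,\tau_0}(g)| \leq (2\sqrt{n})^{k_n} \|g\|_\inf^{-k_n}$ and then (ii) with $C_2 = (2\sqrt{n})^{k_n}\dim H_\rho$. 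The only delicate point is the assertion that a weight basis of $\rho$ for the diagonal torus of $\GL(n,\C)$ can be simultaneously chosen orthonormal for the $\bK_\inf$-invariant Hermitian form---this is what lets the Schatten $1$-versus-$\infty$ bound be applied with $\|\Psi_{\sigma,\tau}(a)\|_1 = \psi_{\sigma,\tau}(a)$. Once this is in place, the tensor decomposition $\rho = \rho' \otimes \det^{k_n}$ supplies the exact gap between the general and scalar cases, and the rest is elementary.
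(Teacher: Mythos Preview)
Your proof is correct and follows essentially the same approach as the paper. The paper's own proof is extremely terse: it writes down the Cartan decomposition $g=kak'$, records the identity $\psi_{\sigma,\tau}(g)=\Tr\bigl(\rho(k'k)\,\rho(\diag(\tfrac{a_j+a_j^{-1}}{2}))^{-1}\bigr)$, notes bi-$\bK_\inf$-invariance of $|\psi_{\sigma_0,\tau_0}|$ and of $\|\cdot\|_\inf$, and then simply asserts that ``the inequalities follow from this.'' You have unpacked exactly those missing steps---the orthogonality of weight spaces under the $\bK_\inf$-invariant Hermitian form, the bound $|\Tr(AU)|\le\|A\|_1$ for positive $A$ and unitary $U$, the weight inequality $\mu_i\ge k_n$ coming from $\rho=\rho'\otimes\det^{k_n}$ with $\rho'$ polynomial, and the elementary $\cosh$ estimate---so your argument is a fleshed-out version of what the paper leaves implicit.
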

\begin{proof}
By the property \eqref{stpp} and the Cartan decomposition $g=kak'$ $(k,k'\in \bK_\inf$, $a=\diag(a_1,\dots,a_n,a_1^{-1},\dots,a_n^{-1})$, we get
\[
\psi_{\sigma,\tau}(g)=\Tr(\rho(k'k)\, \rho(\diag(\frac{a_1+a_1^{-1}}{2},\dots,\frac{a_n+a_n^{-1}}{2}))^{-1}).
\]
We note that $|\psi_{\sigma_0,\tau_0}(k_1gk_2)|=|\psi_{\sigma_0,\tau_0}(g)|$ and $\| k_1gk_2\|_\inf=\|g \|_\inf$ hold for any $g\in G(\R)$ and any $k_1,k_2\in\bK_\inf$.
Hence, the inequalities follow from this.
\end{proof}
Note that an inequality similar to Lemma \ref{l31} (ii) is deduced from \cite[Theorem and Lemma 1]{Milicic} for any discrete series.

\subsection{A property of $\dim S_\rho(\Gamma)$}\label{secap}

Let $K_0$ be an open compact subgroup of $G(\A_\fin)$ and let $\Gamma=G(\Q)\cap(G(\R)K_0)$.
When $n$ is greater than $1$, it is known that any arithmetic subgroup of $G(\Q)$ contains a principal congruence subgroup (cf. \cite[Theorem 9.14]{PR}).
Hence, if $n>1$, for each arithmetic subgroup $\Gamma'$ of $G(\Q)$, we can choose an open compact subgroup $K_0'$ of $G(\A_\fin)$ such that $\Gamma'=G(\Q)\cap(G(\R)K_0')$.

We assume that $\rho$ and $\sigma$ satisfies the same conditions as in Section~\ref{s3.1}.
It is well-known that
\[
\dim S_\rho(\Gamma)=m(\sigma,\Gamma)
\]
holds (cf. \cite{Wallach}).
Hence, we can study $\dim S_\rho(\Gamma)$ using results given in Section~\ref{s2}.
Note that the multiplicity of an anti-holomorphic discrete series equals that of holomorphic one obtained by its complex conjugation.

\begin{prop}\label{s4po}
Let $\sigma$ be a holomorphic discrete series of $G(\R)$ corresponding to the $n$-tuple $\bk=(k_1,k_2,\dots,k_n)$.
Assume that $\Gamma$ satisfies Condition \ref{c21} and a finite dimensional polynomial representation $\rho$ also corresponds to $\bk$.
If $k_n>n+1$, then $\dim S_\rho(\Gamma)$ is a polynomial of $k_1$, $k_2,\dots,k_n$ with constant coefficients.
To be more exact, there exists a polynomial $g_\Gamma(X_1,X_2,\dots,X_n)$ in $\C[X_1,X_2,\dots,X_n]$ such that $\dim S_\rho(\Gamma)=g_\Gamma(k_1,k_2,\dots,k_n)$ holds if $k_n>n+1$.
\end{prop}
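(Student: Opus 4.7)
The plan is to invoke Proposition~\ref{p21} and then show each term of the resulting expansion is polynomial in $\bk = (k_1, \ldots, k_n)$. Since $k_n > n + 1$, the holomorphic discrete series $\sigma = \sigma_{\bk}$ is integrable, and Proposition~\ref{lac} combined with the decay bound of Lemma~\ref{l31}(ii) ensures Condition~\ref{cuni}. Proposition~\ref{p21} together with the splitting formula of Lemma~\ref{l2} therefore yields
\[
\dim S_\rho(\Gamma) = m(\sigma, \Gamma) = \sum_{M \in \cL} \frac{|W_0^M|}{|W_0^G|} \sum_{u \in (\cU_M(\Q))_{M,S}} a^M(S, u)\, J_M^M(u, h_Q)\, J_M^G(u, \fs),
\]
where both sums are finite and the factors $a^M(S, u)$ and $J_M^M(u, h_Q)$ depend only on the $\bk$-independent data $(\Gamma, K_0, h, S)$. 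Thus it suffices to show that each real unipotent weighted orbital integral $J_M^G(u, \fs)$ is polynomial in $\bk$ with constant coefficients.

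Next I use Godement's formula (Lemma~\ref{lgo}) to write
\[
\fs(g) = \frac{d_\sigma}{d_\tau}\, \overline{\Tr \rho\bigl(M(g)^{-1}\bigr)},
\]
with $M(g)$ a $\bk$-independent $n \times n$ matrix-valued function on $G(\R)$. The prefactor $d_\sigma / d_\tau$ is polynomial in $\bk$ by Harish-Chandra's formula for the formal degree of $\sigma$ and Weyl's dimension formula for the minimal $\bK_\inf$-type. All remaining $\bk$-dependence of $\fs$ is thereby concentrated in the character factor $\Tr \rho(M(g)^{-1})$.

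The main obstacle is showing that the integral of this character factor against the $\bk$-independent weight $|\varphi(x)|^{1/2}\, w_M(1, \exp(x) y \nu)$ from~\eqref{euw} is polynomial in $\bk$, even though $\Tr \rho(\cdot)$ is a Schur function of its argument's eigenvalues and hence a priori non-polynomial in $\bk$ pointwise. My approach is to pass to the equivalent quantity $I_M^G(u, \tfs)$ via Proposition~\ref{p2}, realize it as the pairing of the $\bk$-independent invariant distribution $\widehat I_M^G(u)$ with the tempered-character image $f_{\sigma, G}$, and then exploit the polynomial dependence of this pairing on the Harish-Chandra parameter of $\sigma$ --- a property that follows from Harish-Chandra's character formulas for discrete series combined with the known limit-formula expansion of $\widehat I_M^G(u)$ along unipotent orbits (this is the same structural fact that underlies Arthur's $L^2$-Lefschetz number formulas cited in the introduction). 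Convergence of all manipulations is uniformly controlled by Lemma~\ref{l31}(ii) and Proposition~\ref{lac} in the range $k_n > n + 1$, and summing the finitely many polynomial contributions produces the desired $g_\Gamma \in \C[X_1, \ldots, X_n]$ with $\dim S_\rho(\Gamma) = g_\Gamma(k_1, \ldots, k_n)$.
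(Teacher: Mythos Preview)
There is a genuine gap at the very first step. You invoke Proposition~\ref{p21}, which requires both that $\sigma$ be integrable and that Condition~\ref{cuni} hold. Neither of these is guaranteed by $k_n > n+1$: integrability of the holomorphic discrete series of $\Sp(n,\R)$ requires $k_n > 2n$ (cf.\ \cite{HS}), and Condition~\ref{cuni} is established only for $k_n$ exceeding some unspecified constant $\mathfrak{M}$ coming from Proposition~\ref{lac}. The assumption $k_n > n+1$ is exactly the threshold at which Condition~\ref{c2} holds (by \cite{Hiraga}), and nothing more. So your opening formula in terms of $J_M^G(u,\fs)$ is not available in the stated range, and the argument collapses.

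The paper avoids this entirely by applying Proposition~\ref{p1} rather than Proposition~\ref{p21}. Proposition~\ref{p1} uses the pseudo-coefficient $\tfs$ and requires only Conditions~\ref{c21} and~\ref{c2}, both of which do hold for $k_n > n+1$. This gives
\[
\dim S_\rho(\Gamma)=\sum_{M\in\cL}\frac{|W^M_0|}{|W^G_0|}\sum_{u} a^M(S,u)\, I_M^G(u,\tfs)\, J_M^M(u,h_Q),
\]
and the task reduces to showing that each $I_M^G(u,\tfs)$ is a polynomial in $\bk$. This is done via the limit formula \eqref{lim} together with the explicit character formula for holomorphic discrete series on the relevant tori (Lemmas~\ref{unr1} and~\ref{unr2}), which exhibits $I_M^G(u,\tfs)$ as a limit of derivatives of an alternating sum of monomials in the $l_j = k_j - j$. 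Your final paragraph gestures in this direction, but routing it through Proposition~\ref{p2} again presupposes integrability and Condition~\ref{cuni}, so the circularity is not resolved. The fix is simply to start from Proposition~\ref{p1} and work directly with $I_M^G(u,\tfs)$; the spherical trace function $\fs$ and Godement's formula play no role in this particular proposition.
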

\begin{proof}
By the result of \cite{Hiraga}, $\sigma$ satisfies Condition \ref{c2} if $k_n>n+1$.
Hence, applying Proposition \ref{p1}, we get the equality
\[
\dim S_\rho(\Gamma)=m(\sigma,\Gamma)=\sum_{M\in\cL}\frac{|W^M_0|}{|W^G_0|}\sum_{u\in(\cU_M(\Q))_{M,S}} a^M(S,u) \, I_M^G(u,\tfs) \, J_M^M(u,h_Q).
\]
Using \eqref{lim} and Lemmas \ref{unr1} and \ref{unr2} (see Section \ref{lbw}), one can find that $I_M^G(u,\tfs)$ becomes a polynomial of $(k_1,k_2,\dots,k_n)=(l_1+1,l_2+2,\dots,l_n+n)$ with constant coefficients.
Since $a^M(S,u)$ and $J_M^M(u,h_Q)$ do not depend on $\bk$, the proof is completed.
\end{proof}
From the proof one can also see that the total degree of the polynomial $g_\Gamma$ is less than or equal to $n^2$.
We will use this proposition to extend a weight range in the proof of the dimension formula (Theorem \ref{t37}) for $S_\rho(\Gamma)$.

\subsection{Behaviors of invariant weighted orbital integrals}\label{lbw}

We use an argument similar to \cite[the proof of Lemma 5.3]{Arthur4} to study $I_M^G(u,\tfs)$ for each $u$ in $\cU_M(\R)$.
By the argument in \cite[p.277]{Arthur4} (see also the proof of Lemma \ref{p1l}), we have
\[
I_M^G(u,\tfs)=\lim_{a\to 1} I_M^G(au,\tfs)
\]
where $a$ ranges over small generic points on $A_M(\R)$.
From now, we consider $I_M^G(au,\tfs)$ in the limit and we fix such an element $a$ in $A_M(\R)$.
It follows from \cite[(2.3)]{Arthur8} that there exists a small invariant neighborhood $U$ of $1$ in $M(\R)$ such that $I_M^G(a\mu,\tfs)\overset{(M,a)}{\sim} 0$ $(\mu\in U)$.
This means that there exists a function $\phi$ in $C_c^\inf(M(\R))$ such that
\[
I_M^G(a\mu,\tfs)=I_M^M(a\mu,\phi) \quad (\mu\in U).
\]
Set $\phi_a(x)=\phi(ax)$.
Then, $\phi_a$ belong to $C_c^\inf(M(\A))$ and we have
\[
I_M^G(a\mu,\tfs)=I_M^M(\mu,\phi_a) \quad (\mu\in U).
\]
For each maximal torus $T$ of $M$ over $\R$, we set $\ft=\mathrm{Lie}(T)$, $\fm=\mathrm{Lie}(M)$, and $\Delta_T^M(H)=\prod_{\alpha>0}(e^{\frac{1}{2}\alpha(H)}-e^{-\frac{1}{2}\alpha(H)})$ $(H\in\ft(\R))$ where $\alpha$ runs over the positive roots of $(\fm,\ft)$ with respect to some ordering.
Applying Harish-Chandra's limit formula \cite[(A.3)]{Arthur4}, we find that there exists a finite set of triples $(T_j,C_j,h_j)$ $(1\leq j\leq t)$, where $T_j$ is a maximal torus of $M$ over $\R$, $C_j$ is a chamber in $\ft_j(\R)$, $h_j$ is a harmonic polynomial on $\ft_j(\C)$ ($\deg h_j\leq n^2$), such that
\[
I_M^M(u,\phi_a)=\sum_{j=1}^t \lim_{\text{$H\to 0$ in $C_j$}} \partial(h_j) \, \Delta_{T_j}^M(H)\int_{T_j(\R)\bsl M(\R)}\phi_a(x^{-1}\exp(H)x)\, \d x
\]
where $H$ approaches $0$ through the regular points in $C_j$.
Note that $I_M^G(au,\tfs)=I_M^M(u,\phi_a)$ holds for any unipotent conjugacy class $u$ in $M(\R)$.
If $\exp(H)$ are in $U$, then we find
\[
I_M^G(a\exp(H),\tfs)=I_M^M(\exp(H),\phi_a)=c_j\times \Delta_{T_j}^M(H)\int_{T_j(\R)\bsl M(\R)}\phi_a(x^{-1}\exp(H)x)\, \d x
\]
where $c_j$ is a constant depending only on the measures and the chamber $C_j$ $(H\in C_j)$.
Therefore, since we may replace $c_j^{-1}h_j$ by $h_j$, we have
\begin{equation}\label{lim}
I_M^G(au,\tfs)=\sum_{j=1}^t \lim_{\text{$H\to 0$ in $C_j$, $\exp(H)$ in $U$}} \partial(h_j) \, I_M^G(a\exp(H),\tfs).
\end{equation}
\begin{lem}\label{unr1}
Assume that $M$ is not cuspidal over $\R$.
Then, we have $I_M^G(u,\tfs)=0$ for any $u$ in $\cU_M(\R)$.
\end{lem}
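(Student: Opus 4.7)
The plan is to mirror the second half of the proof of Lemma \ref{p1l}, specialized to the semisimple parameter $\varsigma = 1$. By definition, $1 \in M(\R)$ is $\R$-elliptic in $M$ if and only if $M$ admits a maximal torus $T$ over $\R$ with $T(\R)/A_M(\R)$ compact, which is exactly the definition of $M$ being cuspidal over $\R$. Thus the hypothesis of Lemma \ref{unr1} is precisely the hypothesis of Lemma \ref{p1l} at $\varsigma = 1$, and the same two-step strategy based on the descent property and the cuspidality of $\tfs$ applies here.

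I would then split into the two cases $\fa_M \neq \fa_{M_\inf}$ and $\fa_M = \fa_{M_\inf}$, where $M_\inf$ denotes $M$ regarded as a real algebraic group. In the first case, Arthur's descent property \cite[Corollary 8.2]{Arthur8} applied to the real Levi collection $\cL(M_\inf)$ expands $I_M^G(u,\tfs)$ as $\sum_{L \in \cL(M_\inf)} d_{M_\inf}^G(M,L)\,\widehat{I}_{M_\inf}^L(u,(\tfs)_L)$; cuspidality of $\tfs$ annihilates $(\tfs)_L$ for every proper real Levi $L$, and the combinatorial coefficient $d_{M_\inf}^G(M,G)$ vanishes because $M \subsetneq M_\inf$, so the total sum is zero. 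In the second case, non-cuspidality of $M$ over $\R$ supplies a proper real Levi $M' \subsetneq M$; the limit formula \eqref{lim} lets me replace $u$ by a generic central perturbation $au$ (with $G_a = M$ forced by $\fa_M = \fa_{M_\inf}$) so that $M_{au} = G_{au} = M_u$, after which descent to $\cL(M')$ collapses the expansion to zero by the same two mechanisms, whenever $M_u$ fits inside such an $M'$.

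The principal subtlety is the endpoint case $u = 1$ (or, more generally, unipotents $u$ whose centralizer $M_u$ equals $M$), for which no proper real Levi of $M$ can contain $M_u$ and the descent route is unavailable. For this residual subcase I would use the germ expansion \eqref{lim} directly, expressing $I_M^G(u,\tfs)$ as a sum of $\partial(h_j)$-derivatives of semisimple orbital integrals $I_M^G(a\exp(H),\tfs)$ taken along maximal real tori $T_j$ of $M$. Since $M$ admits no elliptic torus over $\R$, every $T_j$ is non-elliptic, and I expect the combination of Harish-Chandra's explicit character formulas for the discrete series $\sigma$ on non-elliptic tori with the cuspidality of $\tfs$ to force these limits to vanish, along the lines of the techniques employed by Arthur in \cite{Arthur4}. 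This endpoint subcase is the step I anticipate to be the hardest.
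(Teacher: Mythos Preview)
Your proposal is considerably more complicated than the paper's proof, and the part you identify as the ``hardest endpoint subcase'' is in fact the entire argument, which is essentially a one-liner.

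The paper does not split into cases on $\fa_M$ versus $\fa_{M_\inf}$ or attempt any descent argument tailored to unipotent $u$. Instead, it applies the germ expansion \eqref{lim} \emph{uniformly} to every unipotent $u$, reducing $I_M^G(au,\tfs)$ to a finite sum of differential operators applied to $I_M^G(a\exp(H),\tfs)$ for regular semisimple $a\exp(H)$ lying in maximal tori $T_j$ of $M$. Since $M$ is not cuspidal over $\R$, no maximal torus of $M$ is compact modulo $A_M(\R)$, so $a\exp(H)$ is never $\R$-elliptic in $M$. Then Lemma~\ref{p1l} applies directly to the semisimple element $a\exp(H)$ (with trivial unipotent part) and gives $I_M^G(a\exp(H),\tfs)=0$. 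Every term in \eqref{lim} therefore vanishes, hence $I_M^G(au,\tfs)=0$, and taking $a\to 1$ finishes the proof. No character formulas are needed.

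Two specific points about your proposal. First, your descent route in the case $\fa_M=\fa_{M_\inf}$ requires a proper real Levi $M'\supset M_u$, and you only flag $u=1$ as problematic. But this inclusion can fail for many nontrivial $u$: for instance, if $M$ contains a $\GL(3)$ factor (which is not cuspidal over $\R$) and $u$ is regular unipotent there, the connected centralizer $M_u$ is not contained in any proper Levi of $M$. So your ``generic'' descent step already has gaps beyond the endpoint case. Second, in your endpoint paragraph you correctly observe that every $T_j$ is non-elliptic, but then you reach for explicit character formulas to conclude vanishing. This is unnecessary: Lemma~\ref{p1l} is precisely the statement that $I_M^G(\varsigma,\tfs)=0$ for non-$\R$-elliptic semisimple $\varsigma$, and you have already invoked it earlier. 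Simply cite it again for $\varsigma=a\exp(H)$, and the proof is complete.
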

\begin{proof}
For any such element $a\in A_M(\R)$, the element $a\exp(H)$ is not $\R$-elliptic in $M$.
Hence, we have $I_M^G(a\exp(H),\tfs)=0$ by Lemma \ref{p1l}.
This implies that $I_M^G(u,\tfs)$ vanishes by \eqref{lim}.
\end{proof}

Next we assume that $M$ is cuspidal over $\R$.
Then, we have a maximal torus $B$ over $\R$ such that $B(\R)$ is contained in $(\bK_\inf\cap M(\R))A_M(\R)^0$.
Note that $I_M^G(a\exp(H),\tfs)$ vanishes unless $T_j$ is $G(\R)$-conjugate to $B(\R)$, since $\tfs$ is cuspidal.
For each regular element $a\exp(H)$ in $B(\R)$, it follows from Arthur's works (see, e.g., \cite{Arthur7}) that $I_M^G(a\exp(H),\tfs)$ equals $|D^G(a\exp(H))|^{1/2}\overline{\Theta_\sigma(a\exp(H))}$ up to a constant multiple, where $\Theta_\sigma$ is the character of $\sigma$.
Hence, by the character formulas of \cite{Hecht} and \cite{Martens} for holomorphic discrete series, we get the following lemma.
\begin{lem}\label{unr2}
Assume that $M$ is cuspidal over $\R$.
We may assume
\[
M\cong \GL(1)^{\lambda_1}\times \GL(2)^{\lambda_2}\times \Sp(m) \quad (\lambda_1+2\lambda_2+m=n),
\]
and set
\begin{multline*}
a=\diag(a_1,\dots,a_{\lambda_1},b_1,b_1,\dots,b_{\lambda_2},b_{\lambda_2},\overbrace{1,\dots,1}^m, \\
a_1^{-1},\dots,a_{\lambda_1}^{-1},b_1^{-1},b_1^{-1},\dots,b_{\lambda_2}^{-1},b_{\lambda_2}^{-1},1,\dots,1)\in A_M(\R)^0 \quad (a_j>0,\;\; b_j>0),
\end{multline*}
\begin{multline*}
H=\diag(\overbrace{0,\dots,0}^{\lambda_1},-it_1,it_1,\dots,-it_{\lambda_2},it_{\lambda_2},-i\theta_1,\dots,-i\theta_m, \\
0,\dots,0,it_1,-it_1,\dots,it_{\lambda_2},-it_{\lambda_2},i\theta_1,\dots,i\theta_m)\in \fg(\C).
\end{multline*}
There exists a maximal torus $T$ in $M$ over $\R$ such that $T(\R)/A_M(\R)$ is compact and $\exp(H)$ is turned into an element of $T(\R)$ by a Cayley transform.
For the Harish-Chandra parameter $(l_1,\dots,l_n)$ of $\sigma$, $1>a_1>a_2>\cdots>a_{\lambda_1}>0$, $1>b_1>\cdots>b_{\lambda_2}>0$, and a regular element $H$, there exists a constant $c_M$ such that
\begin{align*}
&I_M^G(a\exp(H),\tfs)=c_M\times \sum_{w\in S_n} \sgn(w) a_1^{l_{w(1)}} \cdots a_{\lambda_1}^{l_{w(\lambda_1)}} \\
& \qquad \qquad \times (b_1e^{it_1})^{l_{w(\lambda_1+1)}}(b_1e^{-it_1})^{l_{w(\lambda_1+2)}}\cdots (b_{\lambda_2}e^{it_{\lambda_2}})^{l_{w(\lambda_1+2\lambda_2-1)}}(b_{\lambda_2}e^{-it_{\lambda_2}})^{l_{w(\lambda_1+2\lambda_2)}} \\
& \qquad \qquad \times e^{i\theta_1 l_{w(\lambda_1+2\lambda_2+1)}}\cdots e^{i\theta_m l_{w(\lambda_1+2\lambda_2+m)}}.
\end{align*}
\end{lem}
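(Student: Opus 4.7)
The plan is to reduce the invariant weighted orbital integral on the left to an explicit evaluation of the character of the holomorphic discrete series $\sigma$ on a Cartan subgroup of $M$, then to apply the Hecht–Martens character formula. Throughout I use the limit formula \eqref{lim} and the setup and conventions fixed in the excerpt above the lemma.

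First, because $M$ is cuspidal over $\R$, there exists a maximal torus $B$ of $M$ over $\R$ such that $B(\R)\subset(\bK_\inf\cap M(\R))A_M(\R)^0$; concretely, I take the product of the split tori of the $\GL(1)$-factors, the compact Cartans (rotation subgroups) of the $\GL(2)$-factors, and the compact Cartan of $\Sp(m)$. Since $\tfs$ is cuspidal, the only triples $(T_j,C_j,h_j)$ in the limit formula that contribute to $I_M^G(au,\tfs)$ are those with $T_j$ conjugate to $B(\R)$; a Cayley transform then transports the element $\exp(H)$ as described in the statement into a regular element of $T(\R)$. This reduces the problem to computing $I_M^G(a\exp(H),\tfs)$ for $a\exp(H)$ regular in $T(\R)$.

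Second, by Arthur's identity (see \cite{Arthur7}), for such a regular element one has
\[
I_M^G(a\exp(H),\tfs)=c_1\,|D^G(a\exp(H))|^{1/2}\,\overline{\Theta_\sigma(a\exp(H))}
\]
for a constant $c_1$ depending only on normalizations of measures, where $\Theta_\sigma$ is the global character of $\sigma$. Thus the task reduces to evaluating $|D^G|^{1/2}\,\overline{\Theta_\sigma}$ on the specific element of $T(\R)$ displayed in the statement.

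Third, I apply the character formulas of Hecht \cite{Hecht} and Martens \cite{Martens} for holomorphic discrete series of $\Sp(n,\R)$. Their key feature is that on every Cartan subgroup the character has a Weyl-numerator of the form $\sum_{w\in W_K}\sgn(w)\,e^{w(\Lambda)(\cdot)}$, where $W_K\cong S_n$ is the compact Weyl group and $\Lambda=(l_1,\dots,l_n)$ is the Harish-Chandra parameter of $\sigma$; this is a much shorter sum than the one over the full Weyl group $S_n\ltimes\{\pm 1\}^n$, and it reflects the holomorphicity of $\sigma$. The Weyl denominator on the same Cartan is, up to an explicit sign, a product of factors $(e^{\alpha/2}-e^{-\alpha/2})$ which combines with $|D^G|^{1/2}$ so that the imaginary-root and real-root contributions cancel against the denominator, leaving the numerator alone.

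Fourth, I substitute the specific parameterization of $a\exp(H)$ from the statement: the $\GL(1)$-factors contribute coordinates $a_j$, the non-split-Cartan parts of the $\GL(2)$-factors contribute $b_je^{\pm it_j}$, and the compact Cartan of $\Sp(m)$ contributes $e^{\pm i\theta_j}$. Evaluating the exponentials $e^{w(\Lambda)(\cdot)}$ on this element produces exactly the monomials in the statement, indexed over $w\in S_n$. The overall constant $c_M$ absorbs $c_1$, the sign from $|D^G|^{1/2}/\Delta$, and the measure constants. The chamber conditions $1>a_1>\cdots>a_{\lambda_1}>0$ and $1>b_1>\cdots>b_{\lambda_2}>0$ are needed precisely to pin down the chamber in which Hecht–Martens yields the numerator in the stated form (without a further sign from Weyl translation).

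The main obstacle I expect is the bookkeeping for the Cayley transform and Weyl-denominator cancellation: one must carefully track how the non-compact roots are converted to compact ones and verify that $|D^G|^{1/2}$ exactly cancels the product $\Delta_T^M$ times the non-compact contribution in the Hecht–Martens denominator, so that what survives is a pure alternating sum of $S_n$-translates of the character exponentials. Once that identification is made, the stated formula is read off directly.
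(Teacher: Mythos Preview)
Your proposal is correct and follows essentially the same approach as the paper: invoke Arthur's identity expressing $I_M^G(a\exp(H),\tfs)$ as a constant multiple of $|D^G(a\exp(H))|^{1/2}\overline{\Theta_\sigma(a\exp(H))}$, then apply the Hecht--Martens character formula for holomorphic discrete series to read off the $S_n$-numerator. The paper states this derivation in a single paragraph preceding the lemma, while you have spelled out the Cayley-transform and denominator-cancellation bookkeeping more explicitly; this is exactly the content the paper leaves implicit.
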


\begin{lem}\label{nr1}
Let $M$ be a Levi subgroup in $\cL$ and let $u$ be a unipotent element in $M(\R)$.
Then, we have $I_M^G(u,\tfs)=0$ unless $M\cong G$, $\GL(1)\times \Sp(n-1)$, or $\GL(2)\times \Sp(n-2)$.
In addition, we get $I_M^G(u,\tfs)=0$ if $M\cong \GL(2)\times \Sp(n-2)$ and the $\GL(2)$-part of $u$ is not trivial.
\end{lem}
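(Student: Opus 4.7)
The plan is to reduce via Lemma \ref{unr1} to cuspidal $M$, use Lemma \ref{unr2} to obtain an explicit Weyl-numerator expression, and then apply the limit formula \eqref{lim} together with a combinatorial analysis of the resulting limit.

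First, by Lemma \ref{unr1}, $I_M^G(u,\tfs)=0$ whenever $M$ is not cuspidal over $\R$, so I may assume $M \cong \GL(1)^{\lambda_1}\times \GL(2)^{\lambda_2}\times \Sp(m)$ with $\lambda_1+2\lambda_2+m=n$. The three Levi subgroups allowed by the conclusion correspond exactly to $(\lambda_1,\lambda_2,m)\in\{(0,0,n),(1,0,n-1),(0,1,n-2)\}$, so the task is to show $I_M^G(u,\tfs)=0$ in all other cases (and, when $M\cong \GL(2)\times\Sp(n-2)$, when the $\GL(2)$-part of $u$ is nontrivial).

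By Lemma \ref{unr2}, the integral $I_M^G(a\exp(H),\tfs)$ is a constant multiple of the alternating sum
\[
\mathcal{N}(x)=\sum_{w\in S_n}\sgn(w)\prod_{j=1}^n x_j^{l_{w(j)}}=\det(x_i^{l_j})_{1\le i,j\le n},
\]
evaluated at the diagonal matrix with entries $(a_1,\ldots,a_{\lambda_1},b_1e^{\pm it_1},\ldots,b_{\lambda_2}e^{\pm it_{\lambda_2}},e^{i\theta_1},\ldots,e^{i\theta_m})$. This is precisely the Weyl numerator of the polynomial representation of $\GL(n,\C)$ of highest weight $\mu=(l_1-n,\ldots,l_n-1)$, so by the Weyl character formula $\mathcal{N}=V_n\cdot s_\mu$, where $V_n(x)=\prod_{i<j}(x_i-x_j)$ is the Vandermonde and $s_\mu$ is the corresponding Schur polynomial. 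By \eqref{lim}, the quantity $I_M^G(u,\tfs)$ is extracted as $\lim_{a\to 1}\sum_j\lim_{H\to 0}\partial(h_j)\,\mathcal{N}(x(a,H))$ for certain harmonic polynomials $h_j$ on the compact Cartan of $M$ (depending on $u$).

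The crux is a counting of the order of vanishing of $\mathcal{N}$ at $H=0$ (generic $a,b$) against the degree and structure of $h_j$. At $H=0$, each $\GL(2)$-pair collides as $x_{\lambda_1+2k-1}=x_{\lambda_1+2k}=b_k$, and the entire $\Sp(m)$-block collapses to $m$ copies of $1$, so $V_n$ vanishes to order $\lambda_2+\binom{m}{2}$ there. The harmonic polynomials $h_j$ arising in Harish-Chandra's limit formula for $M$ are polynomials in the $(t_k,\theta_l)$-variables whose degree equals the number of positive non-compact roots of $M$, namely $\lambda_2+\binom{m+1}{2}$. A case-by-case analysis, exploiting that the Schur factor $s_\mu$ is symmetric in the colliding $\Sp$-block variables, shows that the outer limit $a\to 1$ imposes additional Vandermonde-type vanishing in the $(a_j,b_k)$-variables, and that these matched degrees fail to produce a nonzero limit except when $(\lambda_1,\lambda_2,m)$ is one of the three listed triples. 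When $(\lambda_1,\lambda_2,m)=(0,1,n-2)$ and the $\GL(2)$-part of $u$ is nontrivial, Harish-Chandra's limit formula prescribes an extra derivative in the $t_1$-direction, which applied to the already-vanishing Vandermonde forces the result to be zero.

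The main obstacle is the precise combinatorial bookkeeping: one must track simultaneously the degree of $h_j$ (which depends on the unipotent class $u$) and the order of vanishing of $\mathcal{N}$ at the appropriate specialization, and verify that only in the three permitted triples are these degrees exactly compatible with a surviving nontrivial contribution after both $H\to 0$ and $a\to 1$. The supplementary vanishing for nontrivial $\GL(2)$-unipotent in the $(0,1,n-2)$ case is the subtlest point and requires identifying the correct harmonic polynomial for that unipotent class.
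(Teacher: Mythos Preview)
Your overall strategy—reduce to cuspidal $M$ via Lemma~\ref{unr1}, apply Lemma~\ref{unr2} to obtain the Weyl-numerator, and extract the limit via \eqref{lim}—matches the paper. However, two concrete points in your proposal are incorrect and constitute genuine gaps.

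First, the claim that the harmonic polynomials $h_j$ have degree $\lambda_2+\binom{m+1}{2}$ (the number of positive non-compact roots of $M$) is wrong: the degree of $h_j$ depends on the unipotent class $u$, not just on $M$. For a single $\GL(2)$-factor the paper notes (citing \cite[Ch.~XI, \S3]{Knapp}) that $\partial(h_{u_1})=\partial/\partial t_1$ when $u_1=I_2$ but $\partial(h_{u_1})=1$ (a constant) when $u_1$ is the regular unipotent. So your degree-counting argument cannot be carried out uniformly in $u$ as stated.

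Second, you have the $\GL(2)$ subcase exactly backwards. When the $\GL(2)$-part of $u$ is nontrivial there is \emph{no} extra derivative in $t_1$; the relevant harmonic polynomial is the constant $1$. The vanishing then comes from the same pairing argument as for $\lambda_1\ge 2$: pairing $w_0$ with $w_0\cdot(\lambda_1+1,\lambda_1+2)$ and taking $t_1\to 0$ gives $b_1^{l_{w_0(\lambda_1+1)}+l_{w_0(\lambda_1+2)}}$ in both terms with opposite signs. The paper's proof is an explicit case-by-case computation of this sort: a transposition pairing for $\lambda_1\ge 2$; the same pairing (with $h=1$) for nontrivial $\GL(2)$-unipotent; an explicit $S_4$-identity $\sum_{w\in S_4}\sgn(w)(l_{w(1)}-l_{w(2)})(l_{w(3)}-l_{w(4)})=0$ for $\lambda_2\ge 2$ with all $\GL(2)$-parts trivial; and an $S_3$-identity for $\lambda_1=\lambda_2=1$. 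Your ``case-by-case analysis'' is precisely this content, and it is not supplied. The Vandermonde--Schur factorization you propose is a reasonable organizational device, but without the correct identification of the $h_j$ and without the explicit cancellations it does not yield a proof.
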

\begin{proof}
If $M$ is not cuspidal over $\R$, it is clear that $I_M^G(u,\tfs)=0$ by Lemma \ref{unr1}.
Hence, we assume that $M$ is cuspidal over $\R$, that is, $M\cong \GL(1)^{\lambda_1}\times \GL(2)^{\lambda_2}\times \Sp(m)$.
By \eqref{lim} and Lemma \ref{unr2}, for each unipotent element
\[
u=(1,\dots,1,u_1,\dots,u_{\lambda_2},u')\in\GL(1,\R)^{\lambda_1}\times\GL(2,\R)^{\lambda_2}\times\Sp(m,\R),
\]
we get
\begin{align*}
 I_M^G(u,\tfs)=&\lim_{a\to 1}I_M^G(au,\tfs) \\
=& \text{(constant)}\times \lim_{a_1,\dots,a_{\lambda_1},b_1,\dots,b_{\lambda_2}\to 1,\; 0<a_*<1,\; 0<b_*<1} \\
& \times \sum_{k=1}^{\lambda_2}\lim_{t_k\to 0} \partial(h_{u_k}(t_k))\times \sum_j \lim_{C_{j,u'}\text{chamber},\;(\theta_1,\dots,\theta_m)\to 0}\partial(h_{j,u'}(\theta_1,\dots,\theta_m)) \\
& \sum_{w\in S_n}\sgn(w) a_1^{l_{w(1)}} \cdots a_{\lambda_1}^{l_{w(\lambda_1)}} \\
& \times (b_1e^{it_1})^{l_{w(\lambda_1+1)}}(b_1e^{-it_1})^{l_{w(\lambda_1+2)}}\cdots (b_{\lambda_2}e^{it_{\lambda_2}})^{l_{w(\lambda_1+2\lambda_2-1)}}(b_{\lambda_2}e^{-it_{\lambda_2}})^{l_{w(\lambda_1+2\lambda_2)}} \\
& \times e^{i\theta_1 l_{w(\lambda_1+2\lambda_2+1)}}\cdots e^{i\theta_m l_{w(\lambda_1+2\lambda_2+m)}}.
\end{align*}
One obviously finds that this limit is convergent.
The sums of $k$, $j$, $w$ and the limits of $a_*$, $b_*$, $t_*$, $\theta_*$ can be exchanged freely.

First, we consider the case $\lambda_1>1$.
For any element $w_0$ in $S_n$ and $w_1=(w_0(1),w_0(2))$, we have
\[
w_1w_0=w_0(1,2).
\]
Hence
\begin{align*}
& \sum_{w\in S_n}\sgn(w) a_1^{l_{w(1)}} \cdots a_{\lambda_1}^{l_{w(\lambda_1)}} \\
& \times (b_1e^{it_1})^{l_{w(\lambda_1+1)}}(b_1e^{-it_1})^{l_{w(\lambda_1+2)}}\cdots (b_{\lambda_2}e^{it_{\lambda_2}})^{l_{w(\lambda_1+2\lambda_2-1)}}(b_{\lambda_2}e^{-it_{\lambda_2}})^{l_{w(\lambda_1+2\lambda_2)}} \\
& \times e^{i\theta_1 l_{w(\lambda_1+2\lambda_2+1)}}\cdots e^{i\theta_m l_{w(\lambda_1+2\lambda_2+m)}}\\
&= \sum_{w_0\in S_n/\langle (1,2)\rangle}  \sgn(w_0)  \sum_{s\in\langle w_1 \rangle}\sgn(s)  a_1^{l_{sw_0(1)}} a_2^{l_{sw_0(2)}} a_3^{l_{w_0(3)}} \cdots a_{\lambda_1}^{l_{w_0(\lambda_1)}} \\
&\quad \times (b_1e^{it_1})^{l_{w_0(\lambda_1+1)}}(b_1e^{-it_1})^{l_{w_0(\lambda_1+2)}}\cdots (b_{\lambda_2}e^{it_{\lambda_2}})^{l_{w_0(\lambda_1+2\lambda_2-1)}}(b_{\lambda_2}e^{-it_{\lambda_2}})^{l_{w_0(\lambda_1+2\lambda_2)}} \\
&\quad \times e^{i\theta_1 l_{w_0(\lambda_1+2\lambda_2+1)}}\cdots e^{i\theta_m l_{w_0(\lambda_1+2\lambda_2+m)}} .
\end{align*}
Thus, we have $I_M^G(u,\tfs)=0$ since
\begin{align*}
& \lim_{a_1,a_2\to 1}(\sgn(w_0)a_1^{l_{w_0(1)}}a_2^{l_{w_0(2)}}+\sgn(w_1w_0)a_1^{l_{w_1w_0(1)}}a_2^{l_{w_1w_0(2)}})\\
& =\lim_{a_1,a_2\to 1}(a_1^{l_{w_0(1)}}a_2^{l_{w_0(2)}}-a_1^{l_{w_0(2)}}a_2^{l_{w_0(1)}})=1-1=0.
\end{align*}

Next, we assume that the factor $u_1$ in $\GL(2)$ is not trivial.
We know the fact $\partial(h_{1,u_1}(t_1))=1$ up to a constant multiple (cf. \cite[Section 3 in Chapter~XI]{Knapp}).
Hence, we get $I_M^G(u,\tfs)=0$ by using the same argument as above for $w=w_0$ and $w_1=(w_0(\lambda_1+1),w_0(\lambda_1+2))$, $(w_1w_0=w_0(\lambda_1+1,\lambda_1+2))$, and
\[
\lim_{t_1\to 0}(b_1e^{it_1})^{l_{w(\lambda_1+1)}}(b_1e^{-it_1})^{l_{w(\lambda_1+2)}}=b_1^{l_{w(\lambda_1+1)}+l_{w(\lambda_1+2)}}.
\]

Third, we assume that all the factors of $u$ in $\GL(2)$ is trivial and $\lambda_2>1$.
From \cite[Section 3 in Chapter~XI]{Knapp}, one finds $\partial(h_{1,u_1}(t_1))=\partial/\partial t_1$ up to a constant multiple.
Therefore, we get $I_M^G(u,\tfs)=0$ using
\[
\lim_{t_1\to 0}\frac{\partial}{\partial t_1}(b_1e^{it_1})^{l_{w(\lambda_1+1)}}(b_1e^{-it_1})^{l_{w(\lambda_1+2)}}=(l_{w(\lambda_1+1)}-l_{w(\lambda_1+2)})\times i\times b_1^2
\]
and
\[
\sum_{w\in S_4}\sgn(w)\, (l_{w(1)}-l_{w(2)})(l_{w(3)}-l_{w(4)})= \sum_{i=1,2, \; j=3,4}(- 1)^{i+j}\sum_{w\in S_4}\sgn(w)l_{w(i)}l_{w(j)}=0.
\]

Finally, we consider the case $\lambda_1=\lambda_2=1$ and $u_1=I_2$. In this case one can show $I_M^G(u,\tfs)=0$ by using the fact $\sum_{w\in S_3}\sgn(w)\, \{ l_{w(2)}-l_{w(3)}\}=0$.
\end{proof}

\subsection{Vanishing of unipotent weighted orbital integrals}

It is known that geometric unipotent conjugacy classes in $G$ are in one-to-one correspondence with the set of partitions of $2n$ in which odd parts occur with even multiplicity (cf. \cite[Theorem 5.1.3]{CM}).
By the inclusion $\fg=\fsp(n)\subset \sl(2n)$, each nilpotent orbit in $\fg$ is regarded as a nilpotent orbit in $\sl(2n)$ and its partition means the block sizes in the Jordan normal form.
\begin{lem}\label{lu2}
Let $C$ be a geometric conjugacy class in $\cU_G$.
Then, there exist a non-negative integer $l_0$ and two partitions $l=(l_1,l_2,\dots,l_r)$, $m=(m_1,m_2,\dots,m_s)$ such that $l_0\geq l_1$, $l_0+2|l|+|m|=n$, and the partition
\[
((2r+1)^{2l_r},(2r-1)^{2l_{r-1}-2l_r},\dots,3^{2l_1-2l_2},1^{2l_0-2l_1},(2s)^{m_s},(2s-2)^{m_{s-1}-m_s},\dots,2^{m_1-m_2})
\]
corresponds to $C$, where $r$ (resp. $s$) denotes the length of $l$ (resp. $m$).
Furthermore, there exists a unipotent element $u$ in $C(\Q)$ such that the semisimple element
\[
H=\begin{pmatrix}A&O_n \\ O_n&-A\end{pmatrix}\in\fg,
\]
\[
A=\diag(\cdots,\overbrace{2j,\cdots,2j}^{2l_j},\overbrace{2j-1,\cdots,2j-1}^{m_j},\cdots,\overbrace{2,\cdots,2}^{2l_1},\overbrace{1,\cdots,1}^{m_1},\overbrace{0,\cdots,0}^{l_0}),
\]
determines the canonical parabolic subgroup of $u$.
\end{lem}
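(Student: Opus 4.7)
The plan is to split the argument into two parts: (i) reformulate the standard classification of nilpotent orbits in $\fsp(2n,\C)$ in the indexing by $(l_0, l, m)$, and (ii) exhibit an explicit $\Q$-rational representative $u$ in each class whose attached standard triple has the prescribed $H$. For (i) I start from \cite[Theorem 5.1.3]{CM}: geometric nilpotent orbits correspond bijectively to partitions of $2n$ in which every odd part occurs with even multiplicity. Separating odd and even parts, I write the odd parts as $1, 3, \ldots, 2r+1$ with even multiplicities $2\alpha_0, \ldots, 2\alpha_r$ (with $\alpha_r \geq 1$) and the even parts as $2, 4, \ldots, 2s$ with multiplicities $\beta_1, \ldots, \beta_s$ (with $\beta_s \geq 1$). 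Setting $l_j := \alpha_j + \alpha_{j+1} + \cdots + \alpha_r$ and $m_j := \beta_j + \beta_{j+1} + \cdots + \beta_s$ produces partitions $l = (l_1, \ldots, l_r)$, $m = (m_1, \ldots, m_s)$ and an integer $l_0 \geq l_1$; a short telescoping calculation shows that the partition has size $2n$ if and only if $l_0 + 2|l| + |m| = n$, which gives the parametrization in the statement.

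For (ii) I build a Jacobson--Morozov triple $\{H, X, Y\}$ in $\fg(\Q)$ block by block. To each even block of size $2j$ (of which there are $m_j - m_{j+1}$) I attach the principal embedding $\sl_2 \hookrightarrow \fsp_{2j}$, whose $H$-eigenvalues are $\pm 1, \pm 3, \ldots, \pm(2j-1)$, each with multiplicity one. To each pair of odd blocks of size $2j+1$ (of which there are $l_j - l_{j+1}$) I attach the hyperbolic embedding $\sl_2 \hookrightarrow \fsp_{2(2j+1)}$ acting by the $(2j+1)$-dimensional irreducible representation on each of two Lagrangian subspaces paired dually by the symplectic form; this contributes each even $H$-eigenvalue in $[-2j, 2j]$ with multiplicity two. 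Since all the relevant Chevalley matrices are integral, the direct sum of these local pieces yields a standard triple with $X, Y, H \in \fg(\Q)$. A count of eigenvalues---essentially the same telescoping as in (i)---then shows that the positive eigenvalue $2j$ of $H$ appears with multiplicity $2l_j$, the positive odd eigenvalue $2j-1$ appears with multiplicity $m_j$, and the eigenvalue $0$ appears with multiplicity $2l_0$. Applying a suitable element of the Weyl group of $\Sp(n)$, which is represented by signed permutation matrices lying in $\Sp(n, \Z) \subset G(\Q)$, reorders the coordinates of $H$ into the pattern displayed in the matrix $A$.

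The final assertion that this $H$ determines the canonical parabolic of $u = \exp(X)$ is then immediate from the definition recalled just before Proposition~\ref{cz1}: $L$ is the centralizer of $H$ in $G$, and $U$ is the unipotent group whose Lie algebra is $\bigoplus_{j > 0} \fg_j$ for the $\mathrm{ad}(H)$-grading. The main technical burden will be the bookkeeping in part (ii): one has to choose a common $\Q$-rational symplectic basis of $\Q^{2n}$ compatible with all of the block-embedded $\sl_2$-triples, verify that $X$ and $Y$ preserve the symplectic form in that basis, and track the signed permutation that reorders the basis to produce the specific block pattern in $A$. Each of these steps reduces to a standard computation in the Chevalley basis of $\Sp(n)$, so no essential new difficulty should arise beyond careful combinatorial accounting of the eigenvalue multiplicities.
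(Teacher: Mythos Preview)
Your proposal is correct and follows essentially the same route as the paper, which simply cites \cite[Proposition~5.2.3]{CM} for explicit standard triples and leaves the eigenvalue bookkeeping to the reader. You have unpacked that citation in detail: the telescoping in part~(i) and the eigenvalue count in part~(ii) are exactly the computations one would carry out from the block-diagonal triples in Collingwood--McGovern, and the Weyl-group conjugation to put $H$ into the displayed form is the standard finishing move.
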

\begin{proof}
The above mentioned results for $H$, $L$, and $\fg_2$ are deduced from explicit standard triples (cf. \cite[Proposition 5.2.3]{CM}).
\end{proof}

Let $C$ be a geometric unipotent conjugacy class of $G$.
By Lemma \ref{lu2}, $C$ always has a $\Q$-rational point and we choose an element $u$ in $C(\Q)$.
We recall the notations $\fg_j$, $U_{>2}$, and so on, given in Section \ref{s26} for the element $u$.
\begin{lem}\label{lu3}
The following two conditions are equivalent.
\begin{itemize}
\item[(1)] There exists an integer $j$ such that $C$ corresponds to $(2^j,1^{2n-2j})$.
\item[(2)] $U_{>2}$ is the trivial group.
\end{itemize}
\end{lem}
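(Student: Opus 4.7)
The plan is to use Lemma \ref{lu2} to reduce the equivalence to a simple spectral calculation for $\mathrm{ad}(H)$ acting on $\fg = \fsp(n)$. Since $U_{>2} = \exp(\fu_{>2})$ with $\fu_{>2} = \bigoplus_{m>2}\fg_m$, condition (2) holds if and only if $\fg_m = 0$ for every $m > 2$, i.e., every eigenvalue of $\mathrm{ad}(H)$ on $\fg$ is $\leq 2$.

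To compute this spectrum explicitly, I would write an arbitrary element $X\in\fsp(n)$ in block form $X = \begin{pmatrix} X_1 & X_2 \\ X_3 & -{}^t\!X_1 \end{pmatrix}$ with $X_2$ and $X_3$ symmetric, and plug in $H = \mathrm{diag}(A,-A)$ from Lemma \ref{lu2} to obtain
\begin{equation*}
[H,X] = \begin{pmatrix} [A,X_1] & AX_2+X_2A \\ -AX_3-X_3A & -{}^t\![A,X_1] \end{pmatrix}.
\end{equation*}
Writing $a_1,\ldots,a_n$ for the diagonal entries of $A$, the eigenvalues of $\mathrm{ad}(H)$ on $\fg(\C)$ are thus precisely $a_i-a_j$ (from the $X_1$ block), $a_i+a_j$ (from $X_2$), and $-(a_i+a_j)$ (from $X_3$), for $1\leq i,j\leq n$.

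By Lemma \ref{lu2} all $a_i$ are non-negative integers, so the bound $\lambda\leq 2$ on every eigenvalue is equivalent to the single condition $\max_i a_i \leq 1$ (the inequality $a_i - a_j \leq 2$ is then automatic since $0\leq a_i\leq 1$). The diagonal of $A$ given in Lemma \ref{lu2} consists of the values $2j$ with multiplicity $2l_j$ for $1\leq j\leq r$, $2j-1$ with multiplicity $m_j$ for $1\leq j\leq s$, and $0$ with multiplicity $l_0$. Hence $\max_i a_i\leq 1$ forces $r = 0$ (no even entry exceeds $0$) together with $s\leq 1$ (the only admissible odd entry is $1$).

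Finally, I would read off the corresponding partition from Lemma \ref{lu2}. When $r = 0$ and $s = 0$ the constraint $l_0+2|l|+|m|=n$ gives $l_0=n$ and the partition is $(1^{2n}) = (2^0,1^{2n})$. When $r = 0$ and $s = 1$ the partition reduces to the multiset $\{1^{2l_0},2^{m_1}\}$ with $l_0+m_1=n$, i.e., $(2^{m_1},1^{2n-2m_1}) = (2^j,1^{2n-2j})$ for $j = m_1$. Conversely, every partition of this form arises in this way, establishing the equivalence of (1) and (2). The only real obstacle is the careful bookkeeping between the combinatorial data $(l_0,l,m)$ and the Jordan block format of the partition, but this is made transparent by Lemma \ref{lu2}.
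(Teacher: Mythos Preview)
Your proof is correct and is precisely the argument the paper has in mind: the paper's own proof consists of the single sentence ``One can easily show this lemma by Lemma \ref{lu2},'' and your computation of the $\mathrm{ad}(H)$-spectrum on $\fsp(n)$ together with the bookkeeping $\max_i a_i\le 1 \Leftrightarrow r=0,\ s\le 1$ is exactly how one carries this out.
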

\begin{proof}
One can easily show this lemma by Lemma \ref{lu2}.
\end{proof}

\begin{lem}\label{vl1}
Set $n_{11}(t)=\begin{pmatrix}I_n&B(t) \\ O_n&I_n \end{pmatrix}$, $B(t)=(b_{ij})_{1\leq i,j\leq n}$, $b_{11}=-t$, and $b_{ij}=0$ unless $i=j=1$.
For any $x$, $y\in G(\R)$, we have
\[
\int_\R \fs(x^{-1}n_{11}(t)y)\, \d t=0.
\]
\end{lem}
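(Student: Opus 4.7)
The plan is to compute $\Psi_{\sigma,\tau}(x^{-1} n_{11}(t) y)$ in closed form via Godement's formula (Lemma \ref{lgo}) and then to evaluate the resulting integral by contour integration in the complex $t$-plane.

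Set $g_1 = x^{-1}$ and $g_2 = y$. First I would apply the cocycle identity for $J_\rho$ together with $J_\rho(n_{11}(-t),\cdot) = I$ (since $n_{11}(-t)$ has vanishing lower-left block) and $n_{11}(-t)\cdot Z' = Z' + tE_{11}$. Combining Lemma \ref{lgo} with the transformation rule \eqref{kft2} yields
\begin{equation*}
\Psi_{\sigma,\tau}(g_1 n_{11}(t) g_2) = J_\rho(g_1^{-1}, iI_n)^{-1}\,\rho\!\left(\frac{Z + tE_{11}}{2i}\right)^{-1} \overline{{}^t J_\rho(g_2^{-1}, g_2\cdot iI_n)},
\end{equation*}
where $Z := g_1^{-1}\cdot iI_n - \overline{g_2\cdot iI_n}$ lies in $\fH_n$ as the sum of two elements of $\fH_n$. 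Since $\fs = d_\tau^{-1} d_\sigma \overline{\Tr \Psi_{\sigma,\tau}}$, cyclicity of trace and homogeneity of $\rho$ reduce the claim to the matrix identity $\int_\R \rho(Z + tE_{11})^{-1}\,\d t = 0$.

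Next I would study the meromorphic extension $w \mapsto \rho(Z + wE_{11})^{-1}$ to $w \in \C$ and locate its poles. These are exactly the zeros of the linear polynomial $\det(Z + wE_{11}) = \det(Z) + w\det(Z^{(11)})$, where $Z^{(11)}$ is obtained from $Z$ by deleting the first row and column (and $\det(Z^{(11)}) \neq 0$ since $Z^{(11)} \in \fH_{n-1}$). For $\Im(w) \geq 0$, the matrix $\Im(Z + wE_{11}) = \Im(Z) + \Im(w)\,E_{11}$ is positive definite, being the sum of a positive definite and a positive semidefinite symmetric matrix; hence $Z + wE_{11} \in \fH_n$ is invertible throughout the closed upper half-plane, and the unique pole is confined to the open lower half-plane.

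The main obstacle is obtaining enough decay of the integrand as $|w| \to \infty$ to close the contour. Here I would exploit the structure of $\rho$: since $\sigma$ is a holomorphic discrete series, $k_n > n \geq 1$, and so $\rho = \rho_0 \otimes \det^{k_n}$ where $\rho_0$ is the polynomial representation of $\GL(n,\C)$ of highest weight $(k_1 - k_n, \ldots, k_{n-1} - k_n, 0)$. Thus
\begin{equation*}
\rho(Z + wE_{11})^{-1} = \rho_0\!\bigl((Z + wE_{11})^{-1}\bigr)\cdot \det(Z + wE_{11})^{-k_n}.
\end{equation*}
The Sherman--Morrison identity $(Z + wE_{11})^{-1} = Z^{-1} - \frac{w}{1 + w(Z^{-1})_{11}}\,(Z^{-1}e_1)(Z^{-1}e_1)^T$ shows that $(Z + wE_{11})^{-1}$ remains uniformly bounded for $|w|$ large with $\Im(w) \geq 0$ (the pole is bounded away), so $\rho_0((Z + wE_{11})^{-1}) = O(1)$. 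The linear growth of $\det(Z + wE_{11})$ then yields $\rho(Z + wE_{11})^{-1} = O(|w|^{-k_n})$ with $k_n \geq 2$, ensuring both absolute convergence of $\int_\R \rho(Z + tE_{11})^{-1}\,\d t$ and vanishing of the semicircular contribution as the radius tends to infinity. Closing the contour in the upper half-plane, which encloses no poles, Cauchy's theorem gives the desired vanishing.
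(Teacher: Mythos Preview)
Your proof is correct and follows essentially the same route as the paper: both express $\psi_{\sigma,\tau}(x^{-1}n_{11}(t)y)$ as (the trace of a constant matrix times) $\rho((Z+tE_{11})/2i)^{-1}$ with $Z=x\cdot iI_n-\overline{y\cdot iI_n}\in\fH_n$, observe that this is rational in $t$ with its only pole off the real axis, and use the factorization $\rho=\rho_0\otimes\det^{m}$ (the paper takes $m=n+1$, you take $m=k_n$) to secure at least $O(|t|^{-2})$ decay. The only cosmetic difference is that the paper writes out the partial-fraction expansion $\sum_{m\ge n+1} g_m/(t+\overline{h})^m$ and integrates each term via its antiderivative, whereas you close a semicircular contour in the pole-free upper half-plane; these are equivalent.
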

\begin{proof}
Using \eqref{kft2} we get
\begin{multline*}
d_\tau d_\sigma^{-1}\overline{\fs(x^{-1}n_{11}(t)y)}\\
=\Tr\left\{ \overline{{}^t\!J_\rho(y,iI_n)^{-1}} J_\rho(x,iI_n)^{-1}\rho\left(2i\, (-B(t)+x\cdot iI_n - \overline{y\cdot iI_n})^{-1}\right)\right\}.
\end{multline*}
There exist rational functions $h(x,y)$, $g_{ij,1}(x,y)$, $g_{ij,2}(x,y)$ for $x$, $y$ such that
\[
(-B(t)+x\cdot iI_n - \overline{y\cdot iI_n})^{-1} = \left( \frac{g_{ij,1}(x,y)}{t+h(x,y)}+g_{ij,2}(x,y)\right)_{1\leq i,j\leq n} .
\]
Since there exists a polynomial representation $\rho_0$ such that $\rho=\det^{n+1}\otimes \rho_0$, there exist rational functions $g_m(x,y)$ for $x$ and a natural number $N$ such that
\[
\fs(x^{-1}n_{11}(t)y)=\sum_{m=n+1}^N \frac{g_m(x,y)}{(t+\overline{h(x,y)})^m}.
\]
Hence, we have
\[
\int_\R\fs(x^{-1}n_{11}(t)y)\, \d t=\sum_{m=n+1}^N \int_\R \frac{g_m(x,y)}{(t+\overline{h(x,y)})^m}\d t=0.
\]
Thus, the proof is completed.
\end{proof}

\begin{lem}\label{vl2}
Fix a unipotent conjugacy class $u$ in $G(\R)$.
Assume that the integral $J_G(u,\fs)$ is absolutely convergent.
Then, we have $J_G(u,\fs)=0$ if $u$ does not correspond to any partition of the form $(2^j,1^{2n-2j})$.
\end{lem}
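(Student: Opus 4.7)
The plan is to combine the identity in Lemma~\ref{vl1} with a Fubini argument on a central one-parameter subgroup of $U_{>2}(\R)$. Since $M=G$, the set $\cP(G)$ reduces to $\{G\}$ itself, so $N_{P_1}=\{1\}$ and the weight factor $w_G(1,\cdot)\equiv 1$; equation~\eqref{euw} simplifies to
\begin{equation*}
J_G(u,\fs)=\int_{\bK_\inf}\d k\int_{U_{>2}(\R)}\d y\int_{O_u(\R)}\d x\;\fs(k^{-1}\exp(x)yk)\,|\varphi(x)|^{1/2}.
\end{equation*}
By Lemma~\ref{lu3}, the hypothesis on the partition of $u$ forces $\fu_{>2}\neq 0$, so there is room for the argument to operate.

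Next I would pin down a distinguished central element of $\fu_{>2}$. Using the explicit representative from Lemma~\ref{lu2}, the first diagonal entry of $A$ equals $2r$ when $l\neq\emptyset$ and equals $2s-1$ when $l=\emptyset$ and $s\geq 2$, so in every case excluded by the hypothesis it is at least $2$. Let $E$ be the root vector in $\fg(\R)$ for the highest root $2\chi_1$ of $\Sp(n)$; in the matrix realization~\eqref{eqsp} it is simply the $(1,n+1)$ elementary matrix, and $\exp(tE)=n_{11}(-t)$ in the notation of Lemma~\ref{vl1}. A direct computation gives $[H,E]=2\chi_1(H)\,E=2A_{11}\,E$ with eigenvalue at least $4$, hence $E\in\fg_m$ for some $m>2$, i.e. $E\in\fu_{>2}(\R)$. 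Because $2\chi_1$ is the highest root, $2\chi_1+\alpha$ is never a root for any positive $\alpha$; consequently $[E,\fn_0]=0$ and $V:=\R E$ is central in $\fu_{>2}$.

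Fix an $\R$-vector space complement $V'$ to $V$ inside $\fu_{>2}(\R)$. Since $V$ is central one has $\exp(Z')\exp(tE)=\exp(Z'+tE)$ for every $Z'\in V'$, so $(Z',t)\mapsto\exp(Z'+tE)$ is a measure-preserving diffeomorphism $V'\times\R\to U_{>2}(\R)$ (Haar on the target corresponds to Lebesgue on the source under $\exp$). Under the absolute-convergence hypothesis, Fubini lets me pull the $t$-integration innermost:
\begin{equation*}
J_G(u,\fs)=\int_{\bK_\inf}\!\d k\int_{O_u(\R)}\!\d x\int_{V'}\!\d Z'\,|\varphi(x)|^{1/2}\!\int_\R\fs\bigl(k^{-1}\exp(x)\exp(Z')\,n_{11}(-t)\,k\bigr)\,\d t.
\end{equation*}
After the substitution $t\mapsto-t$, the innermost integral falls exactly under Lemma~\ref{vl1} (take $g_1^{-1}=k^{-1}\exp(x)\exp(Z')$ and $g_2=k$) and therefore vanishes identically. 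Hence $J_G(u,\fs)=0$.

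The one genuine technical point will be verifying that the highest root vector lands in $\fu_{>2}$, and this reduces to the elementary partition analysis via Lemma~\ref{lu2}. Everything else is a clean Fubini reduction to Lemma~\ref{vl1}, whose proof itself rests on Godement's rational formula for $\Psi_{\sigma,\tau}$ along the centre of the Siegel unipotent radical.
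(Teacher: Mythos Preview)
Your proof is correct and follows precisely the approach the paper sketches in one sentence (``the vanishing $J_G(u,\fs)=0$ follows from the integration over $U_{>2}(\R)$ in \eqref{euw}''): you have supplied all the details the paper omits, namely locating the highest-root vector $E$ inside $\fu_{>2}$ via the explicit $H$ of Lemma~\ref{lu2}, observing that $E$ is central so that a Fubini decomposition $U_{>2}(\R)\cong V'\times\R$ is available, and then invoking Lemma~\ref{vl1} on the inner $t$-integral. One small imprecision: when $l\neq\emptyset$ but $s>r$, the first diagonal entry of $A$ is actually $2s-1$, not $2r$; however this is still $\geq 3>2$, so your conclusion $A_{11}\geq 2$ (and hence $E\in\fg_m$ with $m\geq 4$) is unaffected.
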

\begin{proof}
By Lemmas \ref{lu2}, \ref{lu3} and \ref{vl1}, one easily finds that the vanishing $J_G(u,\fs)=0$ follows from the integration over $U_{>2}(\R)$ in \eqref{euw}.
\end{proof}

\begin{lem}\label{nr2}
Let $M$ be a Levi subgroup in $\cL$ such that $M$ is isomorphic to $\GL(1)\times \Sp(n-1)$ or $\GL(2)\times \Sp(n-2)$.
Let $u$ be a unipotent conjugacy class in $M(\R)$.
Assume that the integral $J_M^G(u,\fs)$ absolutely converges.
If $M\cong\GL(1)\times \Sp(n-1)$, then we have $J_M^G(u,\fs)=0$ unless $u=1$.
If $n>2$, $M\cong\GL(2)\times \Sp(n-2)$ and the $\GL(2)$-part of $u$ is trivial, then $J_M^G(u,\fs)=0$ for any $u$.
\end{lem}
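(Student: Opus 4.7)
The plan is to follow the strategy of Lemma \ref{vl2}: exhibit inside the integration domain of \eqref{euw} a one-parameter family of translations along which the $\fs$-factor satisfies the vanishing of Lemma \ref{vl1} (or of its natural extension to $n_X(t)$ for an arbitrary nonzero symmetric matrix $X$, obtained by the same rank-update computation). The key structural observation, valid in both claims, is that triviality of the $\GL(k)$-factor of $u$ ($k=1$ for Claim~1, $k=2$ for Claim~2) places the standard triple of $u$ inside $\fsp(n-k)\subset\fm$, so that the graded pieces $\fm_j$ for $j\geq 2$ are contained in $\Sp(n-k,\R)$, which centralizes the center of $N_{P_1}(\R)$ for the maximal parabolic $P_1\in\cP(M)$.

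For Claim~1, the center of $N_{P_1}(\R)$ is the one-parameter subgroup $\{n_{11}(t)\}_{t\in\R}$ of Lemma~\ref{vl1}. I would decompose $\nu\in N_{P_1}(\R)$ as $\nu=\nu'\,n_{11}(t)$ with $\nu'$ in a fixed transversal to the center and use the commutativity $\exp(x)y\,\nu'\,n_{11}(t)=n_{11}(t)\,\exp(x)y\,\nu'$ (here $\exp(x)y\in\Sp(n-1,\R)$ centralizes $n_{11}(t)$, and $n_{11}(t)$ is central in $N_{P_1}$) to rewrite the argument of $\fs$ as $k^{-1}n_{11}(t)\,g_0\,k$ with $g_0=\exp(x)y\nu'$. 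As a function of complex $t$, the proof of Lemma~\ref{vl1} shows $\fs(k^{-1}n_{11}(t)g_0 k)$ is rational with all poles confined to a single open half-plane, determined by the positivity of the imaginary part of Siegel coordinates; while the weight factor $w_M(1,\exp(x)y\,\nu'\,n_{11}(t))$, as given by the limit formula \eqref{ewf}, depends on $t$ only through the $\bar P_1$-piece $v_{\bar P_1}$, producing at worst logarithmic growth in $|t|$ with branch cuts that can be placed in the opposite half-plane. Contour closure then yields $\int_\R\fs(k^{-1}n_{11}(t)g_0 k)\,W(t)\,\d t=0$, and the absolute convergence of $J_M^G(u,\fs)$ assumed in the statement permits interchange with the outer $(\bK_\inf,\nu',y,x)$-integrations, giving $J_M^G(u,\fs)=0$.

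Claim~2 proceeds by the same argument with $M\cong\GL(2)\times\Sp(n-2)$ and the three-dimensional center of $N_{P_1}(\R)$ spanned by elements $n_X(t)$, where $X$ ranges over symmetric $2\times 2$ matrices placed in the upper-left of the $B$-block. Triviality of the $\GL(2)$-part of $u$ puts $\exp(x)y$ inside $\Sp(n-2,\R)$, which centralizes this three-dimensional center, and the required extension of Lemma~\ref{vl1}---namely $\int_\R\fs(x^{-1}n_X(t)y)\,\d t=0$ for every nonzero symmetric $2\times 2$ matrix $X$---follows because the rank-update of the Woodbury computation in the proof of Lemma~\ref{vl1} still produces a denominator whose roots all lie in a single half-plane, so the closed-contour argument applies verbatim (integrating over any one generator of the three-dimensional center suffices for the vanishing, and the hypothesis $n>2$ ensures that this center is actually centralized by $\exp(\fm_{\geq 2})$).

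The hard part will be the analytic compatibility step: verifying that the logarithmic branch cuts of $w_M(1,\cdot\,n_{11}(t))$ (or $w_M(1,\cdot\,n_X(t))$) arising from $v_{\bar P_1}$ in \eqref{ewf} can be placed in the open half-plane opposite to the poles of $\fs$, uniformly in the remaining integration variables, so that contour closure and the interchange of integrations are both legitimate. This reduces to an explicit Iwasawa decomposition of $n_{11}(t)$ (or $n_X(t)$) with respect to $\bar P_1$---producing growth of type $H_{\bar P_1}\sim\tfrac{1}{2}\log(1+|t|^2)$---together with a careful bookkeeping of the limit in \eqref{ewf}; the placement of the branch cuts is ultimately dictated by the same Siegel-coordinate positivity that controls the pole location of $\fs$, but writing out the compatibility explicitly, especially uniformly in $\nu'$, $y$ and $x$, is the delicate technical step of the proof.
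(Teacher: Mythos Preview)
Your commutativity observation—that the center of $N_{P_1}(\R)$ is centralized by $\exp(x)y\in\Sp(n-k,\R)$ whenever the $\GL(k)$-part of $u$ is trivial—is correct and is the structural fact that makes Lemma \ref{vl1} applicable in the final step. The paper uses exactly this: after the change of coordinates $W=\tilde W+U\,{}^t\!V$ on $\nu$, one integrates the argument $k^{-1}\exp(x)y\,\nu_0\,n_{11}(-\tilde W)\,k$ over $\tilde W$ and invokes Lemma \ref{vl1} directly.

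The genuine gap is in your treatment of the weight factor. Your contour argument requires that the branch points of $t\mapsto w_M(1,\mu\nu(t))$ lie in the half-plane opposite the poles of $\fs$, but this fails. For $M\cong\GL(1)\times\Sp(n-1)$, the $\bar P_1$-contribution to the weight is built from $\log\lVert\mathbf n^{-1}e_{n+1}\rVert$ (equation \eqref{eq3}), and from \eqref{eq1}--\eqref{eq2} one sees that $\lVert\mathbf n^{-1}e_{n+1}\rVert^2$ is a \emph{real} quadratic polynomial in $W$ whose constant term is at least $1+\lVert X\rVert^2+\lVert Y\rVert^2>0$; its two roots are therefore complex conjugates, one in each half-plane. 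No placement of branch cuts avoids both the poles of $\fs$ (in the upper half-plane, by the proof of Lemma \ref{vl1}) and one of these branch points. The Siegel positivity that governs the pole location of $\fs$ has no bearing on the real polynomial $\lVert\mathbf n^{-1}e_{n+1}\rVert^2$, so the compatibility you hope for does not hold.

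The paper resolves this not by controlling the $t$-dependence of $w_M$ but by \emph{eliminating} it. One expresses $\mathbf n^{-1}$ explicitly in terms of $(U,V,W,a)$ via the relation $\mathbf n^{-1}\tilde a\mu\mathbf n=\tilde a\mu\nu$ and compares the orders of the poles at $a=1$ of the various contributions to $\lVert\mathbf n^{-1}e_{n+1}\rVert^2$ (or to the relevant $2\times2$ minors for $m=2$, via \eqref{eq4}). The $W$-term in \eqref{eq2} carries only a simple pole $(1-a^2)^{-1}$, whereas the $(U,V)$-terms, amplified through the geometric series $(1-ag)^{-1}=\sum_k f_k(\tfrac{a}{a-1})\tilde X^k$ of \eqref{eq1}, acquire strictly higher pole order as soon as $\tilde X\neq0$. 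After the regularizing limit $a\to1$, the variable $W$ therefore drops out of $w_M(1,\mu\nu)$ altogether, and Lemma \ref{vl1} applies without any contour argument. For $m=2$ the minor-by-minor bookkeeping is more delicate (the paper separates the cases $m_C=4,2,0$), and the hypothesis $n>2$ enters not through centralization—which holds for all $n$—but because only for $n>2$ is $(U,V)$ nonempty, supplying the higher-order pole needed to dominate $W$ in the case $u=1$.
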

\begin{proof}

We choose a standard parabolic subgroup $P_1$ in $\cL(M)$ according to Section \ref{secu1}.
We will consider the weight factor $w_M(1,\mu \nu)$ in \eqref{euw} and the vanishing will follow from the integration over $N_{P_1}(\R)$.

An element $e_j$ in $\fa_0$ is defined by
\[
e_j=\diag(\overbrace{0,\dots,0}^{j-1},1,\overbrace{0,\dots,0}^{n-j},\overbrace{0,\dots,0}^{j-1},-1,\overbrace{0,\dots,0}^{n-j}).
\]
It is clear that $\{e_j\mid 1\leq j\leq n\}$ is a basis of $\fa_0$.
A bilinear form $\langle \; , \; \rangle$ on $\fa_0$ is defined by $\langle e_{j_1},e_{j_2}\rangle=\delta_{j_1,j_2}$.
By this bilinear form $\langle \; , \; \rangle$, we may identify $e_j$ with the element $[\sum_{l=1}^n a_le_l\mapsto a_j]$ $(a_l\in\R)$ in $\fa_0^*$.
So, we also denote it by $e_j$.

Let $P$ be a parabolic subgroup in $\cL(M)$.
The set $\widehat\Delta_P$ consists of the single weight $\varpi_P$.
If $M\cong\GL(1)\times \Sp(n-1)$, then 
\[
\varpi_P=e_1\quad \text{or} \quad -e_{n+1}. 
\]
If $M\cong\GL(2)\times \Sp(n-2)$, then 
\[
\varpi_P=e_1+e_2 \quad \text{or} \quad -e_{n+1}-e_{n+2}. 
\]
By \cite[p.237]{Arthur3}, for $x$ in $G(\R)$, we find
\begin{equation}\label{eq3}
v_P(x,\varpi_P)(=\exp(-\varpi_P(H_P(x))))=\|x^{-1}e_1\| \quad \text{or} \quad \|x^{-1}e_{n+1}\|  
\end{equation}
if $M\cong\GL(1)\times \Sp(n-1)$, and
\begin{equation}\label{eq4}
v_P(x,\varpi_P)=\|(x^{-1}e_1)\wedge (x^{-1}e_2)\| \quad \text{or} \quad \|(x^{-1}e_{n+1})\wedge (x^{-1}e_{n+2})\| 
\end{equation}
if $M\cong\GL(2)\times \Sp(n-2)$.

First, we will compute some matrices for $M\cong\GL(m)\times \Sp(n-m)$.
For the geometric conjugacy class $C$ containing $u$ in $M$, we have a grading $\fm=\oplus_j \fm_j$ for $\fm=\mathrm{Lie}(M)$ in the usual manner.
We may set
\[
\mathbf{n}=\begin{pmatrix}I_m&X&Z&Y \\ 0&I_{n-m}& {}^t\!Y & 0 \\ 0&0& I_m& 0 \\ 0&0&  -{}^t\!X& I_{n-m} \end{pmatrix} \in N_{P_1}(\R)
\]
($Z-X{}^t\!Y$ is a symmetric matrix),
\[
\mu=\begin{pmatrix} I_m & & & \\ & A & & B \\ & & I_m & \\ & C & & D \end{pmatrix}\in M(\R)\cap N_0(\R) ,\quad g=\begin{pmatrix}A&B \\ C&D \end{pmatrix}\in \cU_{\Sp(n-m)}(\R),
\]
\[
g=\exp(\tilde X),\;\; \tilde X=\sum_{j>1}\tilde X_j\in \bigoplus_{j>1}\fm_j(\R),\;\; \tilde X_j\in\fm_j(\R),\;\; \exp(\tilde X)=I_{2n-2m}+D,
\]
\[
\tilde a=\diag(a^{-1},\dots,a^{-1},1,\dots,1,a,\dots,a,1,\dots,1) \in A_M(\R)^0 .\]
In the above, we may assume the variable $\mu$ is included in $N_0(\R)$ by Lemma \ref{lu2}.
By direct calculation, we get
\[
\mathbf{n}^{-1}=\begin{pmatrix}I_m&-X&-Z+X{}^t\!Y-Y{}^t\!X & -Y \\ 0&I_{n-m}& -{}^t\!Y & 0 \\ 0&0& I_m& 0 \\ 0&0&  {}^t\!X& I_{n-m} \end{pmatrix},
\]
\[
\mu^{-1}\tilde a^{-1}\mathbf{n}^{-1}\tilde a \mu=\begin{pmatrix}I_m&-aX'&a^2Z'&-aY' \\ 0&I_{n-m}& -a{}^t\!Y' & 0 \\ 0&0& I_m& 0 \\ 0&0&  a{}^t\!X'& I_{n-m} \end{pmatrix}, 
\]
\[
Z'=-Z+X{}^t\!Y-Y{}^t\!X , \quad (X',Y')=(X,Y)g,
\]
\[
\mu^{-1}\tilde a^{-1}\mathbf{n}^{-1}\tilde a \mu \mathbf{n}=\begin{pmatrix}I_m&X-aX'&a^2Z''&Y-aY' \\ 0&I_{n-m}& {}^t\!Y-a{}^t\!Y' & 0 \\ 0&0& I_m& 0 \\ 0&0&  -{}^t\!X+a{}^t\!X'& I_{n-m} \end{pmatrix}, 
\]
\[
Z'' = (1-a^2)Z + a^2X{}^t\!Y - a^2Y{}^t\!X - aX'{}^t\!Y + aY'{}^t\!X.
\]
Set
\[
\mu^{-1}\tilde a^{-1}\mathbf{n}^{-1}\tilde a \mu \mathbf{n}=\nu=\begin{pmatrix}I_m&U&W&V \\ 0&I_{n-m} & {}^t\!V & 0 \\ 0&0& I_m & 0 \\ 0&0&  -{}^t\!U  & I_{n-m} \end{pmatrix}.
\]
We want to express $\mathbf{n}^{-1}$ as a rational polynomial matrix of variable $\mu$ and $\nu$, because $v_P(\mathbf{n},\varpi_P)$ is the form of \eqref{eq3} or \eqref{eq4}.
By
\[
\begin{pmatrix}U&V \end{pmatrix}=\begin{pmatrix}X&Y \end{pmatrix}-a\begin{pmatrix}X&Y \end{pmatrix}g,
\]
we have
\begin{align}\label{eq1}
\begin{pmatrix} X&Y \end{pmatrix} =& \begin{pmatrix} U&V \end{pmatrix}(1-ag)^{-1}=(1-a)^{-1}\begin{pmatrix} U&V \end{pmatrix}(1-\frac{a}{a-1}D)^{-1} \\
=&(1-a)^{-1}\begin{pmatrix}U&V \end{pmatrix}(1+\sum_{k\geq 1}(\frac{a}{a-1})^k D^k) \nonumber \\
=&(1-a)^{-1}\begin{pmatrix}U&V \end{pmatrix}(1+\sum_{k\geq 1}f_k(\frac{a}{a-1})\tilde X^k) \nonumber
\end{align}
for polynomials $f_k$, where $\deg f_k= k$ $(k\geq 1)$.
By direct calculation,
\[
W=Z'' = (1-a^2)Z - a^2 \begin{pmatrix}X&Y \end{pmatrix}J\begin{pmatrix}{}^t\!X \\{}^t\!Y \end{pmatrix} + \begin{pmatrix}X&Y \end{pmatrix}J\begin{pmatrix}{}^t\!X \\{}^t\!Y \end{pmatrix} -\begin{pmatrix}U&V \end{pmatrix}J\begin{pmatrix}{}^t\!X \\{}^t\!Y \end{pmatrix}
\]
where $J=\begin{pmatrix}O_{n-m}&-I_{n-m} \\ I_{n-m}&O_{n-m}\end{pmatrix}$.
Since $\tilde X_j J= -J{}^t\!\tilde X_j$ and $-Z+X{}^t\!Y-Y{}^t\!X=-Z-\begin{pmatrix}X&Y \end{pmatrix}J\begin{pmatrix} {}^t\!X \\ {}^t\!Y\end{pmatrix}$, we have
\begin{align}\label{eq2}
&-Z+X{}^t\!Y-Y{}^t\!X\\
&=-(1-a^2)^{-1}W-(1-a^2)^{-1}\begin{pmatrix}U&V \end{pmatrix}J\begin{pmatrix} {}^t\!X \\ {}^t\!Y\end{pmatrix} \nonumber \\
& =-(1-a^2)^{-1}W \nonumber  \\
& \quad -(1-a)^{-1}(1-a^2)^{-1}\begin{pmatrix}U&V \end{pmatrix}(1+\sum_{k\geq 1}(-1)^kf_k(\frac{a}{a-1})\tilde X^k)J\begin{pmatrix}{}^t\!U \\{}^t\!V \end{pmatrix} \nonumber \\
&= - (1-a^2)^{-1}W -(1-a)^{-1}(1-a^2)^{-1}\begin{pmatrix}U&V \end{pmatrix}J\begin{pmatrix}{}^t\!U \\{}^t\!V \end{pmatrix} \nonumber \\
&\quad - (1-a)^{-1}(1-a^2)^{-1}\begin{pmatrix}U&V \end{pmatrix}(\sum_{k\geq 1}(-1)^kf_k(\frac{a}{a-1})\tilde X^k)J\begin{pmatrix}{}^t\!U \\{}^t\!V \end{pmatrix}. \nonumber 
\end{align}
If we set
\[
D^k=(\sum_{u\geq 1}\frac{1}{u!}\tilde X^u)^k=\sum_{q\geq 1}b_{k,q} \tilde X^q\quad (b_{k,q}>0),
\]
then, by \eqref{eq1} we get
\[
f_k(\frac{a}{a-1})=\sum_{m\geq 1} b_{m,k}(\frac{a}{a-1})^m=\frac{1}{(a-1)^k}\sum_{m=1}^k b_{m,k}a^m(a-1)^{k-m}.
\]
Since the numerator of $f_k(\frac{a}{a-1})$ does not vanish at $a=1$, the order of the pole of $f_k(\frac{a}{a-1})$ at $a=1$ is just $k$.

We will prove that the variable $W$ does not appear in $\lim_{a\to 1}w_P(\lambda,a,\mu\nu)$.
Then, the weight factor $w_M(1,\mu\nu)$ is not related to $W$.
As a coordinate of $\nu$, we can choose $(\tilde W, U,V)$ such that $W=\tilde W+U{}^t\!V$ and $\tilde W$ is a symmetric matrix.
Therefore, we obtain $J_M^G(u,\fs)=0$ by Lemma \ref{vl1} and the integration for $\tilde W$.

We consider the case $m=1$, i.e., $M=\GL(1)\times \Sp(n-1)$.
Assume that $n>1$ and $\tilde X$ is not trivial.
By \eqref{eq3}, \eqref{eq1} and \eqref{eq2}, it is clear that the variable $W$ does not appear in $\lim_{a\to 1}w_P(\lambda,a,\mu\nu)$.
Therefore, $J_M^G(u,\fs)$ vanishes by Lemma \ref{vl1}.

Next, we treat the case $m=2$, i.e., $M=\GL(2)\times\Sp(n-2)$.
It is enough to see $(\mathbf{n}^{-1}e_{n+1})\wedge(\mathbf{n}^{-1}e_{n+2})$ by \eqref{eq4}.
Hence, we consider the determinant of partial $2\times 2$ matrices in the $n+1$-column and the $n+2$-column of $\mathbf{n}^{-1}$.
Let $t$ denote the maximal number among $\{k\in\Z_{\geq 0} \mid \tilde X^k\neq 0\}$.
When $n\geq 4$, the order of poles at $a=1$ for the determinant of partial matrices in $\begin{pmatrix} X&Y \end{pmatrix}$ is $(t+1)+(t+1-1)=2t+1$ at least.
Besides, the order of poles at $a=1$ for the terms involving $Z$ is equal to $1+(2+t)=t+3$.
Namely, if $t\geq 3$, then $W$ does not appear in $\lim_{a\to 1}w_P(\lambda,a,\mu\nu)$.
We write $m_C$ for the maximal number among $\{j\in\Z\mid \fm_j\neq 0\}$.
If $t< 3$, then we have $m_C\leq 4$.
When $n= 3$, all nilpotent orbits in $M(\R)$ satisfy $m_C\leq 4$.
Therefore, it is sufficient to show the assertion for the case $m_C\leq 4$.

We assume $m_C=4$ $(t=2)$.
Since $\tilde X_j J$ is a symmetric matrix, we may set
\[
-f_1(\frac{a}{a-1})\tilde XJ=\begin{pmatrix}A&B\\ {}^t\!B&O \end{pmatrix}.
\]
By $m_C=4$, we have $A\neq O$ and $B\neq O$.
Furthermore,
\[
\begin{pmatrix}U&V \end{pmatrix}\begin{pmatrix}A&B\\ {}^t\!B&O \end{pmatrix}\begin{pmatrix}{}^t\!U \\{}^t\!V \end{pmatrix}=UA{}^t\!U+UB{}^t\!V+V{}^t\!B{}^t\!U,
\]
\[
UB{}^t\!V+V{}^t\!B{}^t\!U=\begin{pmatrix}u_1B{}^t\!v_1+v_1{}^t\!B{}^t\!u_1 & u_1B{}^t\!v_2+v_1{}^t\!B{}^t\!u_2 \\ u_2B{}^t\!v_1+v_2{}^t\!B{}^t\!u_1 & u_2B{}^t\!v_2+v_2{}^t\!B{}^t\!u_2 \end{pmatrix}
\]
if $U=\begin{pmatrix}u_1\\u_2 \end{pmatrix}$, $V=\begin{pmatrix} v_1 \\ v_2\end{pmatrix}$.
We also set
\[
W=\begin{pmatrix}w_1&w_2\\ w_3&w_4 \end{pmatrix}.
\]
The order of poles at $a=1$ for the terms involving $w_1$, $w_2$, $w_3$, $w_4$ is less than $6$, but that of the products for constitutes in $u_1A{}^t\!u_1$ and $u_2B{}^t\!v_2+v_2{}^t\!B{}^t\!u_2$ is $6$.
Hence, the variable $W$ does not appear in $\lim_{a\to 1}w_P(\lambda,a,\mu\nu)$.
We assume $m_C=2$ $(t=1)$.
In the above computation, we have $B=O$, but $A$ is not $O$.
Hence, the assertion is obvious if one sees the product of constitutes in $u_1A{}^t\!u_2$.
Finally, we consider the case $m_C=0$ $(t=0)$.
Since the matrix $\begin{pmatrix}U&V \end{pmatrix}J\begin{pmatrix}{}^t\!U \\{}^t\!V \end{pmatrix}$ does not vanish in \eqref{eq2} by $n>2$, the variable $W$ does not appear in $\lim_{a\to 1}w_P(\lambda,a,\mu\nu)$.
Thus, we get all the assertions in this lemma.
\end{proof}

\begin{prop}\label{p33}
Assume that Condition \ref{c21} holds.
There exists a sufficiently large real number $\mathfrak{M}>0$ such that, if $k_n>\mathfrak{M}$, then
\[
\dim S_\rho(\Gamma)=\sum_C \lim_{T\to\inf}Z_C^T(\fs h)
\]
where $C$ runs over all the geometric unipotent conjugacy classes corresponding to $(2^j,1^{2n-2j})$ $(0\leq j\leq n)$.
\end{prop}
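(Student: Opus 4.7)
The plan is to deduce the proposition from Proposition \ref{t1}, which already gives $\dim S_\rho(\Gamma) = m(\sigma,\Gamma) = \sum_C \lim_{T\to\inf} Z_C^T(\fs h)$ with $C$ ranging over \emph{all} geometric unipotent conjugacy classes with a $\Q$-rational point, so the only task left is to show that the contribution of each class $C$ whose partition is \emph{not} of the form $(2^j,1^{2n-2j})$ vanishes. First I would verify that the hypotheses of Proposition \ref{t1} are satisfied once $k_n$ is sufficiently large: Condition \ref{c21} is assumed; $\sigma$ is integrable once the Harish-Chandra parameter lies deep enough in the positive chamber (Hecht--Schmid); Condition \ref{c2} holds for $k_n > n+1$ by Hiraga's result; and Condition \ref{cuni} follows from Proposition \ref{lac} combined with the decay estimate $|\fs(g)| \ll \|g\|_\inf^{-k_n}$ of Lemma \ref{l31}(ii), provided $k_n$ exceeds the bound supplied by Proposition \ref{lac}. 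I would take $\mathfrak{M}$ to be the maximum of all these thresholds.

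Next I would fix such a class $C$ and show that each summand in the expansion of $\lim_{T\to\inf} Z_C^T(\fs h)$ supplied by Proposition \ref{cz1} vanishes. That expansion is a finite sum indexed by pairs $(M,u)$ with $M \in \cL$ and $u \in (\cU_M(\Q))_{M,S_1}$ subject to $C_u^G = C$, each summand being proportional to $J_M^G(u,\fs h)$. By Lemma \ref{l2} we split $J_M^G(u,\fs h) = J_M^G(u,\fs)\cdot J_M^M(u,h_Q)$, and by Proposition \ref{p2} we identify $J_M^G(u,\fs) = I_M^G(u,\tfs)$, so it suffices to show $I_M^G(u,\tfs)=0$ for every such $(M,u)$. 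Lemma \ref{nr1} immediately kills every Levi outside the list $\{G,\ \GL(1)\times\Sp(n-1),\ \GL(2)\times\Sp(n-2)\}$.

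Three cases remain. When $M=G$, one has $C_u^G = C_u$, so the partition of $u$ coincides with that of $C$ and is by assumption not of the form $(2^j,1^{2n-2j})$; Lemma \ref{vl2} then gives $J_G(u,\fs)=0$. When $M=\GL(1)\times\Sp(n-1)$, Lemma \ref{nr2} forces the surviving term to satisfy $u=1_M$, in which case $C_u^G$ is the Richardson orbit attached to $M$, whose partition is $(2,1^{2n-2})$; this excludes our $C$, so no pair $(M,1_M)$ occurs in the sum. When $M=\GL(2)\times\Sp(n-2)$, Lemma \ref{nr1} already restricts to $u$ with trivial $\GL(2)$-part, after which Lemma \ref{nr2} yields $J_M^G(u,\fs)=0$ as long as $n>2$; the low-degree cases $n\leq 2$ can be settled by direct inspection, since for $n=1$ every unipotent class of $\Sp(1)$ is automatically of the allowed form, and for $n=2$ the only excluded class is the principal $(4)$, which is handled by Lemma \ref{vl2} at $M=G$.

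The main obstacle is the passage from the pseudo-coefficient $\tfs$ to the spherical trace function $\fs$: the vanishing statements in Lemmas \ref{vl1}, \ref{vl2}, and \ref{nr2} exploit the explicit Godement formula of Lemma \ref{lgo} through integrations over $U_{>2}(\R)$ and over the centre of the unipotent radical of a maximal parabolic, and these integrations are legitimate only once Fubini is justified via absolute convergence of the weighted orbital integrals for $\fs$. Securing this convergence is what dictates $k_n>\mathfrak{M}$, so the choice of the threshold $\mathfrak{M}$, rather than the combinatorial case analysis outlined above, is the genuinely delicate part of the argument.
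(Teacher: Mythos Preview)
Your proposal is correct and follows essentially the same route as the paper: apply Proposition~\ref{t1} once $k_n$ is large enough (integrability via Hecht--Schmid, Condition~\ref{cuni} via Proposition~\ref{lac} and Lemma~\ref{l31}(ii)), then use Proposition~\ref{cz1} and Lemma~\ref{l2} together with Proposition~\ref{p2} to reduce to the vanishing of $I_M^G(u,\tfs)=J_M^G(u,\fs)$, and finally invoke Lemmas~\ref{nr1}, \ref{vl2}, \ref{nr2} to eliminate all $(M,u)$ whose induced class is not of the form $(2^j,1^{2n-2j})$.

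One small point to tighten: your treatment of $n=2$ for the Levi $M\cong\GL(2)$ is slightly elliptic. After Lemma~\ref{nr1} has restricted you to $u$ with trivial $\GL(2)$-part, the remaining element is $u=1_M$, and what you need to say (as the paper does, citing \cite[Corollary~7.3.4]{CM}) is that $C_{1_M}^G$ corresponds to $(2^2)$, an allowed partition, so this pair does not occur in the sum for any excluded $C$. Your sentence ``handled by Lemma~\ref{vl2} at $M=G$'' addresses only the $M=G$ contribution to $C=(4)$ and leaves the $M=\GL(2)$ contribution implicit; the missing observation is that no such contribution arises because the induced class is already of the permitted shape.
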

\begin{proof}
Assume that $k_n$ is greater than $\mathfrak{M}$.
Hence, $\sigma$ is integrable (see \cite{HS}).
By Proposition \ref{lac} and Lemma \ref{l31} (ii), Condition \ref{cuni} holds.
Hence, it follows from Proposition \ref{p2} that $I_M^G(u,\tfs)=J_M^G(u,\fs)$ for any $u$ in $\cU_M(\R)$.
Moreover, by Lemmas \ref{nr1}, \ref{vl2}, and \ref{nr2}, we have
\[
I_M^G(u,\tfs)=J_M^G(u,\fs)=0
\]
unless the following three cases
\begin{itemize}
\item[(i)] $M\cong\GL(1)\times \Sp(n-1)$ and $u=1$,
\item[(ii)] $n=2$, $M\cong\GL(2)$, and $u=1$, 
\item[(iii)] $M=G$ and $u$ corresponds to a partition of the form $(2^j,1^{2n-2j})$.
\end{itemize}
For $M\cong \GL(1)\times \Sp(n-1)$ (resp. $M\cong \GL(2)$) and $u=1$, the unipotent conjugacy class $C_u^G$ corresponds to $(2,1,\dots,1)$ (resp. $(2,2)$) by \cite[Corollary 7.3.4]{CM}.
Therefore, for each geometric conjugacy class $C$, by Lemma \ref{l2} and Proposition \ref{cz1}, the contribution $\lim_{T\to \inf}Z_C^T(\fs h)$ of $C(\Q)$ vanishes unless $C$ corresponds to $(2^j,1^{2n-2j})$.
Hence, the proof is completed by Proposition \ref{t1}.
\end{proof}
By this proposition, one sees that a lot of unipotent contributions vanish.
Note that our proof depends on the classification of nilpotent orbits.

\subsection{Analysis on spaces of symmetric matrices}\label{s3.4}

From now on, we choose normalizations of some measures.
For all places $v$ of $\Q$, non-trivial additive characters $\tilde\psi_v$ on $\Q_v$ are defined by
\[
\tilde\psi_\inf(x)=\exp(2\pi i x) \;\; (x\in\R) \quad\text{and} \quad \tilde\psi_v(x)=\exp(-2\pi i \, [x]_p) \;\; (v=p,\;\; x\in\Q_p)
\]
where $p$ is a prime number and we set
\[
[x]_p=a_{-N}p^{-N}+a_{-N+1}p^{-N+1}+\cdots +a_{-1}p^{-1}
\]
for the $p$-adic expansion $\sum_{k=-N}^\inf a_kp^k$ of $x$ $(0\leq a_l\leq p-1)$.
We set
\[
\tilde\psi(x)=\prod_v \tilde\psi_v(x_v) \qquad (x=(x_v)\in\A).
\]
Then, $\tilde\psi$ is a non-trivial additive character on $\Q\bsl \A$.
Let $\d x_\inf$ denote the Lebesgue measure on $\R$ and let $\d x_v$ denote the Haar measure on $\Q_v$ normalized by $\int_{\Z_v}\d x_v=1$ for $v<\inf$.
Then, for each place $v$ of $\Q$, $\d x_v$ is the self-dual measure on $\Q_v$ with respect to $\tilde\psi_v$.
Furthermore, the Haar measure $\d x=\prod_v\d x_v$ on $\A$ satisfies $\int_{\Q\bsl\A}\d x=1$.
We set
\[
V_r=\{ X\in M(r) \mid {}^t\!x=x \} .
\]
For each place $v$, a Haar measure $\d x_v$ on $V_r(\Q_v)$ is defined by
\[
\d x_v=\prod_{1\leq i\leq j\leq r} \d x_{ij,v} \quad (x_v=(x_{ij,v})\in V(\Q_v))
\]
where $\d x_{ij,v}$ is the same measure on $\Q_v$ as above.
A Haar measure $\d x$ on $V_r(\A)$ is also defined by $\d x=\prod_v \d x_v$.
We set
\[
\psi_v(x_v)=\tilde\psi_v(\Tr(x_v))\;\; (x_v\in V_r(\Q_v)) \quad \text{and} \quad \psi(x)=\tilde\psi(\Tr(x))\;\; (x\in V_r(\A)).
\]
If we set $r=n$, then an invariant measure $\d z$ on $\fH_n$ is defined by 
\[
\d z=\frac{\d x_\inf \, \d y_\inf}{\det(y_\inf)^{n+1}} \qquad (z=x_\inf+iy_\inf\in\fH_r).
\]
We have already normalized the Haar measure $\d k_\inf$ on $\bK_\inf$ by $\int_{\bK_\inf}\d k_\inf=1$.
Hence, a Haar measure $\d g_\inf$ on $G(\R)$ is determined by $\d z$, $\d k_\inf$ and the isomorphism $G(\R)/\bK_\inf\cong \fH_n$.

We give some notations for partitions according to \cite[Chapter I]{Macdonald}.
For each partition $(\lambda_1,\lambda_2,\dots)$, we denote by $\lambda'=(\lambda_1',\lambda_2',\dots)$ the conjugate of $\lambda$ and we set
\[
|\lambda|=\lambda_1+\lambda_2+\cdots \quad \text{and} \quad l(\lambda)=\lambda_1'.
\]
Any partition $\lambda$ is identified with its diagram $\{(i,j)\in \Z\oplus\Z \mid 1\leq j\leq \lambda_i\}$.

Let $n$ be a natural number and assume $1\leq r\leq n$.
Let $\rho_{\bk,n}$ denote a finite dimensional irreducible polynomial representation of $\GL(n,\C)$ corresponding to a partition $\bk=(k_1,k_2,\dots)$ with $l(\bk)\leq n$.
A function $\phi^*_{\bk,n,r}$ on $V_r(\R)$ is defined by
\[
\phi^*_{\bk,n,r}(x)=\Tr\rho_{\bk,n}(I_n-i\begin{pmatrix}x&0 \\ 0&0 \end{pmatrix})^{-1} \qquad (x\in V_r(\R)).
\]
It is clear that
\[
\phi^*_{\bk,n,r}(x)=\Tr\rho_{\bk,n}(\begin{pmatrix}(I_r-ix)^{-1}&0 \\ 0& I_{n-r} \end{pmatrix}) = \sum_{l(\bb)\leq r , \; \beta_r\geq k_n } m_{\bk,\bb}^{n,r} \, \phi^*_{\bb,r,r}(x)
\]
where $\bb=(\beta_1,\beta_2,\dots)$ and $m_{\bk,\bb}^{n,r}$ denotes the multiplicity of the irreducible representation $\rho_{\bb,r}$ in the restriction of $\rho_{\bk,n}$ to $\GL(r,\C)=\{\begin{pmatrix}g&0 \\ 0& I_{n-r} \end{pmatrix}\mid g\in \GL(r,\C)\}$.
Since the branching rule is well-known (cf. \cite[Chapter 8]{GW}), the multiplicity  $m_{\bk,\bb}^{n,r}$ is computable if one fixes the parameters.

For a partition $\bm$ with $l(\bm)\leq r$, a zonal polynomial $\bz_{\bm,r}$ is defined by
\[
\bz_{\bm,r}(x)=c_{\bm,r}\cdot \int_{\SO(r,\R)}\, \Delta_\bm({}^t\!kxk) \, \d k
\]
where $\d k$ is the Haar measure on $\SO(r,\R)$ normalized by $\int_{\SO(r,\R)}\d k=1$, $\Delta_k(x)$ denotes the determinant of $(x_{ij})_{1\leq i,j\leq k}$ for $x=(x_{ij})_{1\leq i,j\leq r}$, and we set
\[
c_{\bm,r}=\prod_{(i,j)\in\bm}(r-i+2j-1)    ,
\]
\[
\Delta_{\bm,r}(x)=\Delta_1(x)^{m_1-m_2}\Delta_2(x)^{m_2-m_3}\cdots\Delta_r(x)^{m_r}
\]
(cf. \cite[p.409]{Macdonald}).
Note that the degree of $\bz_{\bm,r}$ equals $|\bm|$ and $c_{\bm,r}^{-1}\bz_{\bm,r}$ is called a spherical polynomial in the book \cite[Chapter XI]{FK}.
Since $\deg\Tr(\rho_{\bb,r}(x))=|\bb|$, the Schur polynomial $\Tr(\rho_{\bb,r}(x))$ can be expressed as a linear combination
\[
\Tr(\rho_{\bb,r}(x))=\sum_{|\bm|=|\bb|,\, l(\bm)\leq r, \, m_r\geq \beta_r} n_{\bb,\bm,r} \, \frac{\bz_{\bm,r}(x)}{c_{\bm,r}}\qquad (n_{\bb,\bm,r}\in\C)
\]
by \cite[Proposition XI.3.1]{FK} or \cite[p.405]{Macdonald}, where $\bm=(m_1,m_2,\dots)$.
If we fix parameters $\bb$ and $r$, then it is possible to calculate explicitly $n_{\bb,\bm,r}$ by using equalities in \cite[p.114]{Macdonald} and \cite[(2.16) in p.406]{Macdonald}.

Let $\Omega_r$ denote the set of all positive definite symmetric matrices of degree $r$, i.e.,
\[
\Omega_r=\{x\in V_r(\R) \mid x>0\}.
\]
The subset $\Omega_r$ is a symmetric cone in $V_r(\R)$ (cf. \cite[p.8--10]{FK}).
As in \cite[Chapter VII]{FK}, the Siegel integral over $\Omega_r$ is defined by
\[
\Gamma_{\Omega_r}(\bm)=\int_{\Omega_r}\Delta_\bm(x)\, \det(x)^{-\frac{r+1}{2}}\,\exp(-\Tr(x)) \, \d x.
\]
It follows from \cite[Theorem VII.1.1]{FK} that
\[
\Gamma_{\Omega_r}(\bm)=\pi^{\frac{r(r-1)}{4}}\prod_{j=1}^r\Gamma\left(m_j-\frac{j-1}{2}\right)
\]
where $\Gamma(s)$ denotes the Gamma function.
Note that our normalization is different from theirs in the book \cite{FK} and the factor $2^{\frac{r(r-1)}{4}}$ appeared in their description.
A function $\phi_{\bk,n,r}(x)$ on $V_r(\R)$ is defined by
\begin{multline*}
\phi_{\bk,n,r}(x)=\\
\sum_{l(\bb)\leq r , \; \beta_r\geq k_n } m_{\bk,\bb}^{n,r}\sum_{|\bm|=|\bb|,\, l(\bm)\leq r, \, m_r\geq \beta_r} \frac{(2\pi)^{|\bb|}n_{\bb,\bm,r}}{\Gamma_{\Omega_r}(\bm)\,c_{\bm,r} }\bz_{\bm,r}(x) \, \det(x)^{-\frac{r+1}{2}}\, \exp(-2\pi \Tr(x))
\end{multline*}
if $x\in \Omega_r$, and $\phi_{\bk,n,r}(x)=0$ if $x\not\in\Omega_r$.

\begin{lem}\label{l34}
Assume that $k_n>0$.

\vspace{2mm}
\noindent
(i) If $-1<\Re(s)<k_n-r$, then the integral
\[
\int_{V_r(\R)}\phi^*_{\bk,n,r}(x)\,|\det(x)|^s \d x \qquad (s\in\C)
\]
absolutely converges.

\vspace{2mm}
\noindent
(ii) If $\mathrm{Re}(s)+k_n>\frac{r-1}{2}$, then the integral
\[
\int_{V_r(\R)} \phi_{\bk,n,r}(x) \; |\det(x)|^s\,  \d x
\]
is absolute convergent.
Furthermore, if $k_n>\frac{r-1}{2}$, then we have
\[
\int_{V_r(\R)}\phi_{\bk,n,r}(x)\,\psi_\inf(xy)\, \d x=\phi_{\bk,n,r}^*(y).
\]
\end{lem}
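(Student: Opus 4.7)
The plan is to treat the two parts separately, reducing each to classical integral identities on the symmetric cone $\Omega_r$.

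For part (i), I first pass to eigenvalue coordinates. Diagonalizing $x=k\,\mathrm{diag}(\lambda_1,\ldots,\lambda_r)\,k^{-1}$ with $k\in O(r)$ gives $\phi^*_{\bk,n,r}(x)=s_{\bk}\!\left((1-i\lambda_1)^{-1},\ldots,(1-i\lambda_r)^{-1},1,\ldots,1\right)$, the Schur polynomial in $n$ variables. Factoring $\rho_{\bk,n}\cong\det^{k_n}\!\otimes\rho_{\bk',n}$ with $\bk'=(k_1-k_n,\ldots,k_{n-1}-k_n,0)$, and noting that each entry $(1-i\lambda_j)^{-1}$ lies in the closed unit disc so that the character of the polynomial representation $\rho_{\bk',n}$ at the associated diagonal matrix is bounded by $\dim\rho_{\bk',n}$, I obtain the pointwise estimate
\[
\left|\phi^*_{\bk,n,r}(x)\right|\ \leq\ C\prod_{j=1}^r(1+\lambda_j^2)^{-k_n/2}.
\]

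Applying the standard eigenvalue change of variable $\d x=c_r\prod_{i<j}|\lambda_i-\lambda_j|\,\d\lambda\,\d[k]$ on $V_r(\R)$ reduces the claim to absolute convergence of
\[
\int_{\R^r}\prod_j|\lambda_j|^{\Re(s)}(1+\lambda_j^2)^{-k_n/2}\prod_{i<j}|\lambda_i-\lambda_j|\,\d\lambda.
\]
The binding conditions arise from degenerations of a single eigenvalue: near $\lambda_1\to 0$ (the others bounded away from $0$) the integrand is $\sim|\lambda_1|^{\Re(s)}$, forcing $\Re(s)>-1$; at $\lambda_1\to\inf$ (the others fixed) the Vandermonde contributes $|\lambda_1|^{r-1}$, giving integrand $\sim|\lambda_1|^{\Re(s)+r-1-k_n}$ and forcing $\Re(s)<k_n-r$. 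Simultaneous blowups are controlled by the homogeneous scaling $\lambda=t\mu$ and give only weaker constraints, so the two single-eigenvalue bounds are optimal.

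For part (ii), the integrand is a finite linear combination of terms $\bz_{\bm,r}(x)\det(x)^{s-(r+1)/2}e^{-2\pi\Tr(x)}\mathbf{1}_{\Omega_r}(x)$ with $m_r\geq k_n$. The central tool is the Bochner-type identity for the symmetric cone $\Omega_r$ (cf.\ \cite[Ch.~VII]{FK}), obtained by $\SO(r)$-averaging the Gindikin formula for $\Delta_\bm$:
\[
\int_{\Omega_r}e^{-\Tr(\alpha x)}\bz_{\bm,r}(x)\det(x)^{s-(r+1)/2}\,\d x\ =\ \Gamma_{\Omega_r}(\bm+s)\,\det(\alpha)^{-s}\,\bz_{\bm,r}(\alpha^{-1}),
\]
valid for $\alpha\in\Omega_r$ and extended by analytic continuation to $\{\alpha\in V_r(\C):\Re\alpha\in\Omega_r\}$. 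With $\alpha=2\pi I_r$ and real $s$, the right-hand side is finite whenever $m_j+\Re(s)>(j-1)/2$ for all $j$; the binding constraint $m_r+\Re(s)>(r-1)/2$ combined with $m_r\geq k_n$ yields precisely $\Re(s)+k_n>(r-1)/2$.

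For the Fourier identity, I take $s=0$ and $\alpha=2\pi(I_r-iy)$ in the displayed formula (its real part is $2\pi I_r\in\Omega_r$): by the homogeneity of $\bz_{\bm,r}$ of degree $|\bm|$ the integral equals $\Gamma_{\Omega_r}(\bm)(2\pi)^{-|\bm|}\bz_{\bm,r}((I_r-iy)^{-1})$. Multiplying by the coefficient $m_{\bk,\bb}^{n,r}(2\pi)^{|\bb|}n_{\bb,\bm,r}/(\Gamma_{\Omega_r}(\bm)c_{\bm,r})$ from the definition of $\phi_{\bk,n,r}$ and using $|\bm|=|\bb|$, the $(2\pi)^{|\bb|}$ and $\Gamma_{\Omega_r}(\bm)$ factors cancel. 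Summing over $\bm$ and invoking the zonal-to-Schur expansion $\Tr\rho_{\bb,r}=\sum_\bm n_{\bb,\bm,r}\bz_{\bm,r}/c_{\bm,r}$, then summing over $\bb$ and invoking the branching rule $\phi^*_{\bk,n,r}=\sum_\bb m_{\bk,\bb}^{n,r}\phi^*_{\bb,r,r}$ recorded in the definition of $\phi^*_{\bk,n,r}$, reproduces $\phi^*_{\bk,n,r}(y)$ exactly. The main delicate point is justifying the Bochner-type identity and its analytic continuation in $\alpha$; this is classical and reduces, via $\SO(r)$-averaging, to Gindikin's integral identity for $\Delta_\bm$.
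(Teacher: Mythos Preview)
Your proof is correct and follows essentially the same route as the paper's. For (i) the paper invokes Lemma~\ref{l31}(i), which is exactly your reduction to the scalar weight $(k_n,\dots,k_n)$ via the factorization $\rho_{\bk,n}\cong\det^{k_n}\otimes\rho_{\bk',n}$, and then cites Shintani's Lemma~19(i), which carries out the eigenvalue analysis you wrote out; for (ii) the paper cites the same Laplace transform identity for zonal polynomials from \cite[Lemma~XI.2.3]{FK} that you call the Bochner-type identity, and unwinds the definition of $\phi_{\bk,n,r}$ in the same way. Two minor remarks: in (i) your ``simultaneous blowups give only weaker constraints'' is cleanest to justify via the crude bound $\prod_{i<j}|\lambda_i-\lambda_j|\le\prod_j(1+|\lambda_j|)^{r-1}$, which factors the integral and recovers exactly the single-eigenvalue thresholds; and in (ii) you state the Bochner identity with a general $s$ and a general $\alpha$ but only use the cases $\alpha$ scalar (for convergence) and $s=0$ (for the Fourier computation), which is all that is needed and all the paper uses.
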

\begin{proof}
The assertion (i) follows from \cite[Lemma 19 (i)]{Shintani} and Lemma \ref{l31} (i).
The absolute convergence of (ii) can be proved by the argument as in \cite[Proof of Theorem VII.1.1]{FK}.
Using \cite[Lemma XI.2.3]{FK} or \cite[p.436]{Macdonald} we obtain
\[
\int_{V_r(\R)} \frac{\bz_{\bm,r}(x)}{c_{\bm,r}}  \, \exp(-\Tr(xy)) \, \det(x)^{-\frac{r+1}{2}}\d x = \Gamma_{\Omega_r}(\bm)\, \frac{\bz_{\bm,r}(y^{-1})}{c_{\bm,r}} 
\]
for $k_n>\frac{r-1}{2}$.
From this we have the equalities in (ii).
\end{proof}

Let $\cS(V_r(\A_\fin))$ denote the space of Schwartz-Bruhat functions on $V_r(\A_\fin)$.
Take a test function $\phi_r^*$ in $\cS(V_r(\A_\fin))$ and set
\[
\Phi_{\bk,n,r}^*(x)=\phi^*_{\bk,n,r}(x_\inf)\, \phi^*_r(x_\fin) \qquad (x=(x_\inf,x_\fin)\in V_r(\A)).
\]
A test function $\phi_r$ in $\cS(V_r(\A_\fin))$ is determined by
\begin{equation}\label{e32}
\phi_r(y)=2^{-\frac{r(r-1)}{2}}\int_{V_r(\A_\fin)}\phi_r^*(x)\, \psi_\fin(-xy)\, \d x \qquad (y\in V_r(\A_\fin)) 
\end{equation}
where we set $\psi_\fin=\prod_{v<\inf}\psi_v$.
Then, we have
\[
\phi_r^*(y)=\int_{V_r(\A_\fin)}\phi_r(x)\, \psi_\fin(xy)\, \d x
\]
by the Fourier inversion formula.
Note that the factor $2^{-\frac{r(r-1)}{2}}$ comes from the fact that our measure is not self-dual for $\psi_\fin$.
Besides, the measure $\d x$ on $V_r(\A)$ is self-dual for $\psi$.
We also set
\[
\Phi_{\bk,n,r}(x)=\phi_{\bk,n,r}(x_\inf)\, \phi_r(x_\fin) \quad (x=(x_\inf,x_\fin)\in V_r(\A)).
\]
\begin{prop}\label{p35}
If $k_n>2n$, then the Poisson summation formula
\[
\sum_{x\in V_r(\Q)}\Phi^*_{\bk,n,r}(x)=\sum_{y\in V_r(\Q)\cap \Omega_r}\Phi_{\bk,n,r}(y)
\]
holds.
\end{prop}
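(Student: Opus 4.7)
The plan is to apply the adelic Poisson summation formula to the function $\Phi_{\bk,n,r}$ on $V_r(\A)$, using the additive character $\psi$ and the measure $dx$, which (as noted above) is self-dual for $\psi$. Concretely, I will show that $\Phi^*_{\bk,n,r}$ is the adelic Fourier transform $\hat\Phi_{\bk,n,r}$, so that Poisson summation yields
\[
\sum_{\gamma \in V_r(\Q)} \Phi_{\bk,n,r}(\gamma) = \sum_{\gamma \in V_r(\Q)} \Phi^*_{\bk,n,r}(\gamma),
\]
and then observe that the left-hand sum is supported on $\Omega_r$ because $\phi_{\bk,n,r}(x_\inf)$ vanishes outside $\Omega_r$.

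To identify $\Phi^*_{\bk,n,r}$ as $\hat\Phi_{\bk,n,r}$ I work place by place. At the infinite place, the hypothesis $k_n>2n$ in particular gives $k_n>(r-1)/2$, so Lemma \ref{l34}(ii) applies and yields $\int_{V_r(\R)}\phi_{\bk,n,r}(x)\,\psi_\inf(xy)\,dx=\phi^*_{\bk,n,r}(y)$. At the finite places, the definition \eqref{e32} expresses $\phi_r$ as an inverse Fourier transform of $\phi^*_r$; Fourier inversion over $V_r(\A_\fin)$ for the author's measure --- whose failure to be strictly self-dual at $p=2$ is exactly compensated by the factor $2^{-r(r-1)/2}$ in \eqref{e32}, reflecting the index $[V_r^*(\Z_2):V_r(\Z_2)]=2^{r(r-1)/2}$ for the $\Tr$-dual lattice --- gives the converse identity $\int_{V_r(\A_\fin)}\phi_r(x)\,\psi_\fin(xy)\,dx=\phi^*_r(y)$. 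Multiplying these local identities produces the global Fourier relation $\hat\Phi_{\bk,n,r}=\Phi^*_{\bk,n,r}$.

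The remaining task is to verify that Poisson summation applies. The finite components $\phi^*_r$ and $\phi_r$ lie in $\cS(V_r(\A_\fin))$, so one only has to control the archimedean factors. For $\phi_{\bk,n,r}$ this is immediate: the function is supported on $\Omega_r$ and has exponential decay $e^{-2\pi\Tr(x)}$ there, making the periodization $\sum_{\gamma\in L}|\phi_{\bk,n,r}(x+\gamma)|$ uniformly convergent on compacta. For $\phi^*_{\bk,n,r}$, Lemma \ref{l31}(ii) combined with the explicit rational form in the entries of $(I_r-ix)^{-1}$ yields the bound $|\phi^*_{\bk,n,r}(x)|\ll |\det(I_r-ix)|^{-k_n}$. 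Passing to polar coordinates on $V_r(\R)$ (Weyl integration for symmetric matrices produces the factor $\prod_{i<j}|\lambda_i-\lambda_j|$, which is controlled by $\prod_i(1+|\lambda_i|)^{r-1}$) reduces the lattice sum to $r$ one-dimensional integrals of $(1+\lambda^2)^{-k_n/2}(1+|\lambda|)^{r-1}$, all of which converge as soon as $k_n>r$, and \emph{a fortiori} under $k_n>2n$. With both periodizations convergent uniformly, Poisson summation applies, and restricting the left-hand sum to $V_r(\Q)\cap\Omega_r$ via the support of $\phi_{\bk,n,r}$ completes the proof.

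The main obstacle is the decay/convergence argument at the archimedean place: $\phi^*_{\bk,n,r}$ is only polynomially decaying (it is a matrix coefficient, not a Schwartz function) and its decay is controlled by $|\det(I_r-ix)|^{-k_n}$ rather than $\|x\|_\inf^{-k_n}$, which forces one to exploit the Weyl integration structure on $V_r(\R)$ rather than apply generic Schwartz-space Poisson summation.
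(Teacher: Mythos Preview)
Your overall strategy coincides with the paper's: identify $\Phi^*_{\bk,n,r}$ as the adelic Fourier transform of $\Phi_{\bk,n,r}$ via Lemma~\ref{l34}(ii) at the archimedean place and the inversion of \eqref{e32} at the finite places, then invoke Poisson summation and use the support of $\phi_{\bk,n,r}$ to restrict the sum to $\Omega_r$. The paper does not redo the archimedean convergence; it observes that $\phi^*_{\bk,n,r}$ is a finite linear combination of the functions $\phi^*_{\bb,r,r}$ with $\beta_r\ge k_n$ and cites Godement's Poisson summation \cite[Corollaire in 10--17]{Godement} for those, which already contains the needed estimates. You instead attempt the convergence directly.

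Your direct argument has a gap. Weyl integration is a change of variables for \emph{integrals} on $V_r(\R)$; it does not ``reduce the lattice sum to $r$ one-dimensional integrals,'' because the eigenvalues of lattice points in $V_r(\Q)$ do not form a product lattice in $\R^r$. What your argument actually establishes is that $\int_{V_r(\R)}|\det(I_r-ix)|^{-k_n}\,dx<\infty$ for $k_n>r$, not that $\sum_{\gamma}|\det(I_r-i\gamma)|^{-k_n}<\infty$. The missing step is a comparison between the lattice sum and the integral. One way to supply it: by Weyl's eigenvalue perturbation inequality, $\|y\|_{\mathrm{op}}\le 1$ forces each eigenvalue of $x+y$ to lie within $1$ of the corresponding eigenvalue of $x$, and the elementary inequality $1+(\lambda+t)^2\ge c_0(1+\lambda^2)$ for $|t|\le 1$ (with a universal $c_0>0$) then gives $\det(I_r+(x+y)^2)\ge c_0^{\,r}\det(I_r+x^2)$. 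Integrating over a fundamental domain for the lattice yields $\sum_\gamma|\det(I_r-i\gamma)|^{-k_n}\le C\int_{V_r(\R)}|\det(I_r-ix)|^{-k_n}\,dx$, after which your Weyl-integration bound on the integral finishes the job. With this comparison inserted, your argument is correct (and in fact gives the sharper threshold $k_n>r$); without it, the inference from integral convergence to lattice-sum convergence is not justified.
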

\begin{proof}
In \cite[Corollaire in 10-17]{Godement}, he proved a Poisson summation formula for $\phi^*_{\bb,r,r}$.
Hence, this formula follows from Lemma \ref{l34} (ii) and his Poisson summation formula, since we have
\[
\int_{V_r(\A)}\Phi_{\bk,n,r}(x)\,\psi(xy) \d x=\Phi^*_{\bk,n,r}(y)
\]
by Lemma \ref{l34} (ii) and \eqref{e32}.
\end{proof}

Assume that a partition $\bk=(k_1,k_2,\dots)$ satisfies $l(\bk)= n$ and $k_n>2n$.
Let $\sigma$ denote the integrable holomorphic discrete series of $G(\R)$ corresponding to $\bk$.
Under the above mentioned normalization of $\d g_\inf$, Harish-Chandra's formula for the formal degree $d_\sigma$ gives
\begin{equation}\label{e33}
d_\sigma=2^{-n} \, (4\pi)^{-n(n+1)/2} \, d_\tau \, \prod_{1\leq t\leq u\leq n}(k_t+k_u-t-u)
\end{equation}
(see, e.g., \cite[6--21]{Godement}, \cite[Theorem 10.2.4.1]{Warner}), where $d_\tau$ denotes the degree of its minimal $K_\inf$-type $\tau$, that is,
\begin{equation}\label{e34}
d_\tau=\Tr(\rho_{\bk,n}(I_n))=\frac{\prod_{1\leq t<u\leq n}(k_t-k_u+u-t) }{\prod_{1\leq t'<u'\leq n}(u'-t')}.
\end{equation}
For $1\leq r\leq n$, we set
\begin{equation}\label{e35}
\bC(\bk,n,r)=2^{(n-r)^2}\pi^{\frac{(n-r)(n-r+1)}{2}}d_\tau^{-1}\, d_\sigma\,\int_{V_r(\R)}\phi_{\bk,n,r}(x)\, \det(x)^{\frac{r-1}{2}-n}\d x.
\end{equation}
Using \cite[Proposition XI.3.1]{FK} or \cite[p.405]{Macdonald}, we have
\begin{multline}\label{e36}
\bC(\bk,n,r)= d_\tau^{-1}\, d_\sigma\,2^{n^2-nr+\frac{r(r+1)}{2}}\pi^{\frac{n(n+1)}{2}}  \\
 \sum_{l(\bb)= r , \; \beta_r\geq k_n } m_{\bk,\bb}^{n,r} \sum_{|\bm|=|\bb|,\, l(\bm)= r, \, m_r\geq \beta_r} \frac{n_{\bb,\bm,r}\Gamma_{\Omega_r}(\bm+\frac{r-1}{2}-n)}{\Gamma_{\Omega_r}(\bm)} .
\end{multline}
where $\bm+\frac{r-1}{2}-n$ means $(m_1+\frac{r-1}{2}-n,\cdots,m_r+\frac{r-1}{2}-n)$.
Hence, $\bC(\bk,n,r)$ is computable if we fix the parameters $\bk$, $n$, and $r$.
For the case $r=0$, we also set
\begin{equation}\label{e37}
\bC(\bk,n,0)=2^{n^2}\pi^{\frac{n(n+1)}{2}} d_\sigma=2^{-2n} \, d_\tau \, \prod_{1\leq t\leq u\leq n}(k_t+k_u-t-u).
\end{equation}
For the special case $r=n$, the constant $\bC(\bk,n,n)$ has a simple expression as
\[
\bC(\bk,n,n)= 2^{-n}\,\Tr(\rho_{\bk,n}(I_n)),
\]
which can be proved by \cite[Expose 6, Th\'eor\`em 6]{Godement} and the same argument as in \cite[Proof of Theorem 5.7]{Wakatsuki}.
Fix a natural number $k$ which is larger than $2n$.
We shall consider the other special case $\rho_{\bk,n}\cong\det^k$, i.e., $k_1=\cdots=k_n=k$.
In this case, it is easy to see
\begin{equation}\label{e38}
\bC(\bk,n,r)= 2^{-2n+r}\prod_{t=1}^{n-r} \prod_{u=t+r}^n(2k-t-u)
\end{equation}
for $0\leq r\leq n$, because computations of $m_{\bk,\bb}^{n,r}$ and $n_{\bb,\bm,r}$ are trivial issues.
Note that $\bC(\bk,n,n)= 2^{-n}$ in \eqref{e38} for $r=n$.

\subsection{Non-vanishing terms of unipotent elements}\label{s3ze}

Let $1\leq r\leq n$.
We choose a Haar measure $\d h_\inf$ on $\GL(r,\R)$ by $\d h_\inf=|\det(h)|^{-r}\prod_{1\leq i,j\leq r}\d h_{ij}$ for $h=(h_{ij})$.
For every $v<\inf$, a Haar measure $\d h_v$ on $\GL(r,\Q_v)$ is normalized by $\vol(\GL(r,\Z_v))=1$.
A Haar measure $\d h$ on $\GL(r,\A)$ is defined by $\d h=\prod_v \d h_v$.
The group $\GL(r)$ acts on $V_r$ via
\[
v\cdot h={}^t\!hxh \qquad (h\in \GL(r),\;\; x\in V_r).
\]
We recall the functions $\Phi_{\bk,n,r}^*$ and $\Phi_{\bk,n,r}$ on $V_r(\A)$ given in Section \ref{s3.4}.
Let $\Phi=\Phi_{\bk,n,r}^*$ or $\Phi_{\bk,n,r}$.
A zeta integral $Z_r(\Phi,s)$ is defined by
\[
Z_r(\Phi,s)=\int_{\GL(r,\Q)\bsl \GL(r,\A)}|\det(h)|^{2s} \sum_{x\in V_r^0(\Q)}\Phi(x\cdot h)\, \d h \qquad (s\in\C)
\]
where $V_r^0=\{x\in V_r\mid \det(x)\neq 0\}$.
\begin{prop}\label{p36}
(i) If $\mathrm{Re}(s)>\frac{r+1}{2}$ and $\mathrm{Re}(k_n+s)>r$, then $Z_r(\Phi_{\bk,n,r},s)$ is absolutely convergent.
The zeta integral $Z_r(\Phi_{\bk,n,r},s)$ is meromorphically continued to the whole $s$-place.

\vspace{2mm}
\noindent
(ii) If $\mathrm{Re}(s)>\frac{r-1}{2}$, $k_n>2r$ and
\[
\begin{cases}
\mathrm{Re}(s)<k_n & \text{if $r=1$}, \\
2\mathrm{Re}(s)<k_n & \text{if $r=2$}, \\
\mathrm{Re}(s)<k_n -\frac{r-1}{2}& \text{if $r\geq 3$},
\end{cases}
\]
then $Z_r(\Phi_{\bk,n,r}^*,s)$ is absolutely convergent.
The zeta integral $Z_r(\Phi_{\bk,n,r}^*,s)$ is a meromorphic function of $s$ on $\C$.

\vspace{2mm}
\noindent
(iii) The functional equation
\[
Z_r(\Phi_{\bk,n,r}^*,s)=Z_r(\Phi_{\bk,n,r},\frac{r+1}{2}-s)
\]
holds if $k_n>2r$.
\end{prop}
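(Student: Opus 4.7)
The plan is to treat $Z_r(\Phi, s)$ as an adelic zeta integral for the prehomogeneous vector space $(\GL(r), V_r)$, following Shintani's strategy for $(\GL(r,\A)/\GL(r,\Q), V_r^0)$ but with the archimedean-flavored weight functions $\Phi_{\bk,n,r}^*$ and $\Phi_{\bk,n,r}$ replacing generic Schwartz functions at infinity. I would decompose $\GL(r,\A) = \GL(r,\A)^1 \times \R_{>0}$ via $t = |\det h|_\A^{1/r}$ and write
\[
Z_r(\Phi, s) = \int_0^\infty t^{2rs}\, \Xi_r(\Phi, t)\, \frac{dt}{t}, \qquad \Xi_r(\Phi, t) = \int_{\GL(r,\Q)\bsl \GL(r,\A)^1} \sum_{x \in V_r^0(\Q)} \Phi(x \cdot (t^{1/r}h_1))\, dh_1.
\]

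For absolute convergence in (i), I would use that $\phi_{\bk,n,r}$ is supported on $\Omega_r$ at the archimedean place with exponential decay $\exp(-2\pi\Tr(x))$ at infinity and with the factor $\det(x)^{-(r+1)/2}$ balanced by the polynomial $\bz_{\bm,r}$ of degree $\geq r k_n$ vanishing at the walls of $\Omega_r$; the conditions $\Re(s) > (r+1)/2$ and $\Re(k_n+s) > r$ are precisely what is needed to ensure the resulting Gamma-type integral over $\Omega_r$ converges, which combined with the classical Shintani bounds on the theta-like sum $\sum_{x \in V_r^0(\Q)}$ yields convergence of $Z_r(\Phi_{\bk,n,r}, s)$. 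For (ii), the pointwise bound $|\phi^*_{\bk,n,r}(x)| \leq C(1+\|x\|)^{-k_n}$ coming from Lemma~\ref{l31}(ii) gives polynomial decay at the real place, and combined with the Schwartz--Bruhat behavior at finite places this reduces the convergence in the stated region to standard estimates of Shintani and Yukie.

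For the functional equation (iii), split $Z_r(\Phi^*_{\bk,n,r}, s) = Z_r^+(\Phi^*_{\bk,n,r}, s) + Z_r^-(\Phi^*_{\bk,n,r}, s)$ according to $|\det h|_\A \geq 1$ versus $|\det h|_\A \leq 1$, and similarly for $Z_r(\Phi_{\bk,n,r}, \tfrac{r+1}{2} - s)$. The $Z_r^+$ pieces are entire by the convergence estimates above. For $Z_r^-(\Phi^*_{\bk,n,r}, s)$, complete the sum from $V_r^0(\Q)$ to all of $V_r(\Q)$ by subtracting the singular contribution (rank-deficient symmetric matrices), apply the Poisson summation formula of Proposition~\ref{p35} in the translated form
\[
\sum_{x \in V_r(\Q)} \Phi^*_{\bk,n,r}(x\cdot h) = |\det h|_\A^{-(r+1)} \sum_{y \in V_r(\Q) \cap \Omega_r} \Phi_{\bk,n,r}(y \cdot_* h),
\]
using the self-duality of $\d x$ with respect to $\psi$ and the Jacobian $|\det h|^{r+1}$ of $x \mapsto {}^t\!hxh$; then change variables $h \mapsto ({}^t h)^{-1}$ so that $|\det h|_\A \leq 1$ maps to $|\det h|_\A \geq 1$ and $|\det h|^{2s - (r+1)}$ becomes $|\det h|^{2(\frac{r+1}{2} - s)}$, producing $Z_r^+(\Phi_{\bk,n,r}, \tfrac{r+1}{2} - s)$. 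Applying the symmetric manipulation to $Z_r^-(\Phi_{\bk,n,r}, \tfrac{r+1}{2} - s)$ exchanges the two halves, giving the functional equation.

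The principal obstacle is the treatment of the singular stratum $V_r(\Q) \setminus V_r^0(\Q)$ introduced when extending the sum to apply Poisson summation: these consist of symmetric matrices of each rank $0 \leq k < r$, stratified by $\GL(r,\Q)$-orbits, and their integrals over $\GL(r,\Q) \bsl \GL(r,\A)$ diverge, requiring regularization. I expect these boundary terms to organize themselves into lower-rank Shintani-type contributions and into explicit polar terms that cancel between the two sides of the functional equation, ultimately producing the simple poles at $s = 1, \tfrac{3}{2}, \ldots, \tfrac{r+1}{2}$. Establishing this cancellation carefully (via an Arthur-style truncation on $\GL(r,\A)^1$ or via Yukie's inductive treatment of the parabolic strata) is where the real technical work lies; once it is in place, the meromorphic continuation of both $Z_r(\Phi_{\bk,n,r}, s)$ and $Z_r(\Phi^*_{\bk,n,r}, s)$ to all of $\C$ follows by taking the limit of regularized expressions.
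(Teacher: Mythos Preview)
Your outline is correct and is exactly Shintani's method, which is what the paper invokes: its proof of this proposition consists of a reference to \cite[Proof of Lemma~21]{Shintani} for the scalar case $\rho_{\bk,n}=\det^k$, together with the remark that Lemmas~\ref{l31}(i), \ref{l34} and Proposition~\ref{p35} make the extension to general $\bk$ routine (with the caveat that for $r=2$ one needs the sharpened estimate of \cite[Proposition~5.5]{Wakatsuki}). So the overall architecture---split at $|\det h|_\A=1$, apply the twisted Poisson summation of Proposition~\ref{p35}, and regularize the singular strata $\mathrm{rank}<r$ via Shintani's inductive argument---is the same.

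Two small points of alignment. First, for the bound on $\phi^*_{\bk,n,r}$ you cite Lemma~\ref{l31}(ii), whereas the paper uses Lemma~\ref{l31}(i): the latter gives $|\psi_{\sigma,\tau}|\le C\,|\psi_{\sigma_0,\tau_0}|$ with $\sigma_0$ corresponding to $(k_n,\dots,k_n)$, which literally reduces every convergence question to Shintani's already-proved scalar case rather than re-deriving bounds from scratch. Second, you are right that the singular-set contribution is where the work sits; the paper does not redo this but simply inherits Shintani's treatment (and its refinement for $r=2$), so a self-contained write-up along your lines would need to reproduce that part of \cite{Shintani}.
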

\begin{proof}
In \cite[Proof of Lemma 21]{Shintani}, Shintani proved this proposition for the case $\rho_{\bk,n}=\det^k$.
For only the case $r=2$, his estimation is not accurate, but it was improved in \cite[Proposition 5.5]{Wakatsuki} and generalized to any representation $\rho_{\bk,2}$.
For the other cases, it is easy to generalize his argument to all representations $\rho_{\bk,n}$ if one uses Lemmas \ref{l31} (i) and \ref{l34} and Proposition \ref{p35}.
\end{proof}

For the case $1\leq r\leq n$, the canonical parabolic subgroups of non-trivial unipotent elements corresponding to $(2^r,1^{2n-2r})$ are maximal in $G$ (cf. Lemma \ref{lu2}).
We write $Q_r$ for the standard maximal parabolic subgroup defined by
\[
\Delta_0^{Q_r}=\{ \alpha_1,\dots,\alpha_{r-1},\alpha_{r+1},\dots,\alpha_n\}.
\]
We denote by $C_r$ the geometric unipotent conjugacy class of $G$ containing a $\Q$-rational point whose canonical parabolic subgroup is $Q_r$.
Now the Levi subgroup $M_{Q_r}$ (resp. the vector space $\fg_2$) is isomorphic to $\GL(r)\times \Sp(n-r)$ (resp.  $V_r$) over $\Q$.
Hence, we identify $M_{Q_r}$ (resp. $\fg_2$) with $\GL(r)\times \Sp(n-r)$ (resp.  $V_r$).
The adjoint action of the subgroup $\GL(r)$ on $V_r$ is the same as above and the subgroup $\Sp(n-r)$ is trivially acting on $V_r$.

For the case $r=0$, we set $C_0=\{I_{2n}\}$ and
\[
Z_r(\Phi_{\bk,n,r}^*,s)=Z_r(\Phi_{\bk,n,r},s)=d_\tau.
\]

Let $K_0$ be an open compact subgroup of $G(\A_\fin)$ and let $h_0$ denote the characteristic function of $K_0$.
Set $h=\vol(K_0)^{-1}h_0$.
Furthermore, a function $\phi^*_{0,r}$ on $V_r(\A_\fin)$ is defined by
\begin{equation}\label{s4tf}
\phi_{0,r}^*(x)=\int_{\bK_\fin}h_0(k^{-1}\exp(-2x)k)\, \d k \qquad (x\in \fg_2(\A_\fin)=V_r(\A_\fin))
\end{equation}
(this definition is the same as in \eqref{sme1}).
We also set
\[
\Phi_{0,\bk,n,r}^*(x_\inf x_\fin)=\phi_{\bk,n,r}^*(x_\inf)\phi_{0,r}^*(x_\fin),
\]
which is a special case of $\Phi_{\bk,n,r}^*$ for the substitution $\phi_r^*=\phi_{0,r}^*$ (cf. Section \ref{s3.4}).
\begin{lem}\label{l47}
Let $0\leq r\leq n$.
If $k_n>2n$, then we have
\begin{align*}
\lim_{T\to\inf} Z_{C_r}^T(\fs h)=&\vol(K_0)^{-1} \vol(\Sp(n-r)) \, d_\tau^{-1} d_\sigma \, Z_r(\Phi^*_{0,\bk,n,r},n-\frac{r-1}{2}) \\
=&\vol(K_0)^{-1} \vol(\Sp(n-r)) \, d_\tau^{-1} d_\sigma \, Z_r(\Phi_{0,\bk,n,r},r-n)
\end{align*}
where we set
\[
\vol(\Sp(n-r))=\vol(\Sp(n-r,\Q)\bsl \Sp(n-r,\A)).
\]
\end{lem}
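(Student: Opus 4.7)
The plan is to compute $Z_{C_r}^T(\fs h)$ by unfolding over the canonical (maximal) parabolic $Q_r$, then pass to $T\to\infty$ by dominated convergence and identify the limit with the zeta integral $Z_r$. Since $C_r$ corresponds to $(2^r,1^{2n-2r})$, Lemma~\ref{lu3} gives $U_{>2}=\{1\}$, so the $\nu$-sum in $Z_{C_r}^T(\fs h)$ collapses. The inclusion $[\fg_{\geq 1},\fg_2]\subset\fg_{\geq 3}=0$ shows $\exp(X_\mu)$ is central in $N_{Q_r}$, so writing $q=nm$ with $m=(h,g_s)\in\GL(r,\A)\times\Sp(n-r,\A)$, a direct block computation yields $q^{-1}\exp(X_\mu)q=\exp(X_{\mu\cdot\tilde h})$ with $\tilde h={}^th^{-1}$ and $\mu\cdot g={}^tg\mu g$. (The edge case $r=0$ is immediate: $Q_0=G$, the $\mu$-sum is the single term $\mu=0$, and both sides collapse to $\vol(\Sp(n))\,d_\sigma/\vol(K_0)$, matching the convention $Z_0=d_\tau$.)

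For the local factors at $\exp(X_x)$: Lemma~\ref{lgo} applied to the translation $\exp(X_\mu)\cdot Z=Z+\bigl(\begin{smallmatrix}-2\mu&0\\0&0\end{smallmatrix}\bigr)$ gives $\Psi_{\sigma,\tau}(\exp(X_\mu))=\rho\bigl(I_n-i\bigl(\begin{smallmatrix}\mu&0\\0&0\end{smallmatrix}\bigr)\bigr)^{-1}$, hence $\fs(\exp(X_\mu))=d_\tau^{-1}d_\sigma\,\overline{\phi^*_{\bk,n,r}(\mu)}$, while the definition \eqref{s4tf} together with the $\bK_\inf$-conjugation invariance of $\fs$ yields
\[
(\fs h)_\bK(\exp(X_x))=\vol(K_0)^{-1}d_\tau^{-1}d_\sigma\,\overline{\phi^*_{\bk,n,r}(x_\inf)}\,\phi^*_{0,r}(x_\fin).
\]
Since $\phi^*_{0,r}(-y)=\phi^*_{0,r}(y)$ (because $K_0$ is a subgroup so $h_0(g)=h_0(g^{-1})$, and $\exp(X_{-y})=\exp(X_y)^{-1}$) and $\overline{\phi^*_{\bk,n,r}(x)}=\phi^*_{\bk,n,r}(-x)$, the $\mu\leftrightarrow-\mu$ symmetry of the sum over $V_r^\reg(\Q)=V_r^0(\Q)$ removes the complex conjugate.

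Decomposing $\int_{Q_r(\Q)\bsl Q_r(\A)}\d q=\int_{N_{Q_r}}\int_{M_{Q_r}}\delta_{Q_r}(m)^{-1}\d n\,\d m$ with $\delta_{Q_r}(m)=|\det h|^{2n-r+1}$ (obtained from $2\rho_{Q_r}|_{A_{Q_r}}$), the integral over $N_{Q_r}(\Q)\bsl N_{Q_r}(\A)$ contributes $1$ and the integral over $\Sp(n-r,\Q)\bsl\Sp(n-r,\A)$ contributes $\vol(\Sp(n-r))$ (the integrand is $g_s$-independent). After the Haar-preserving substitution $h\mapsto\tilde h={}^th^{-1}$ on $\GL(r,\A)$, which converts $|\det h|^{-(2n-r+1)}$ into $|\det\tilde h|^{2n-r+1}$, the untruncated integral equals
\[
\vol(\Sp(n-r))\,d_\tau^{-1}d_\sigma\vol(K_0)^{-1}\,Z_r\bigl(\Phi^*_{0,\bk,n,r},\,n-\tfrac{r-1}{2}\bigr),
\]
and the second equality then follows from the functional equation Proposition~\ref{p36}(iii) via $\tfrac{r+1}{2}-(n-\tfrac{r-1}{2})=r-n$.

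The principal obstacle is justifying the removal of $F^G$: one has $0\leq F^G\leq 1$ and $F^G(q,T)\to 1$ pointwise as $T\to\infty$, because every $q\in Q_r(\A)\subset G(\A)$ admits a $G(\Q)$-translate in the Siegel set $\mathfrak{S}_G$ and for $T$ large enough $\widehat\tau_{P_0}(T-H_0(\gamma q))=1$. Dominated convergence applies because the same unfolding, carried out on the absolute value, dominates the integrand by an $L^1$-function with integral $\vol(\Sp(n-r))d_\tau^{-1}d_\sigma\vol(K_0)^{-1}\,Z_r(|\Phi^*_{0,\bk,n,r}|,\,n-\tfrac{r-1}{2})$, which is finite by Proposition~\ref{p36}(ii) since the hypothesis $k_n>2n$ places $s=n-(r-1)/2$ strictly inside the stated absolute-convergence range for every $r\geq 1$.
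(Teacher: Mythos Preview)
Your proposal is correct and follows essentially the same approach as the paper: unfold $Z_{C_r}^T$ over $Q_r$ using $U_{>2}=\{1\}$, pass to the limit by dominated convergence with the bound $0\leq \int_U F^G\leq 1$ and the absolute convergence supplied by Proposition~\ref{p36}(ii), and then invoke the functional equation (iii). You are actually more careful than the paper on one point: the paper writes $\fs(k^{-1}\exp(-2x)k)=d_\tau^{-1}d_\sigma\,\phi^*_{\bk,n,r}(x)$ without the complex conjugate, whereas your $\mu\leftrightarrow-\mu$ argument together with $\phi^*_{0,r}(-y)=\phi^*_{0,r}(y)$ and $\overline{\phi^*_{\bk,n,r}(x)}=\phi^*_{\bk,n,r}(-x)$ shows explicitly why this is harmless.
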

\begin{proof}
By $k_n>2n$, $\sigma$ is now integrable (cf. \cite{HS}).
Hence, it follows from Proposition \ref{cz1} that $Z_{C_r}^T(\fs h)$ absolutely converges.
By $\rho=\rho_{\bk,n}$, one sees
\[
\fs(k^{-1}\exp(-2x)k)=d_\tau^{-1}d_\sigma \, \phi^*_{\bk,n,r}(x)
\]
for $x\in V_r(\R)$ and $k\in \bK_\inf$.
Hence, we get
\begin{multline*}
Z_{C_r}^T(\fs h)=\int_{L(\Q)\bsl L(\A)}\Big( \int_{U(\Q)\bsl U(\A)}F^G(ul,T)\, \d u \Big) \\
\sum_{\mu\in\fg_2^\reg(\Q)}(\fs h)(\exp(l^{-1}\mu l)) \, e^{-\rho_U(H_L(l))}\, d l
\end{multline*}
where $\rho_U(H_L(l))=\log|\det(\mathrm{Ad}(l)|_\fu)|)$.
Since
\[
\Big|\int_{U(\Q)\bsl U(\A)}F^G(ul,T)\, \d u \Big|\leq 1 \quad \text{and} \quad \int_{U(\Q)\bsl U(\A)}F^G(ul,T)\, \d u\to 1 \;\; (T\to\inf)
\]
hold for any $l$ in $L(\Q)\bsl L(\A)$, we finds
\[
\lim_{T\to\inf} Z_{C_r}^T(\fs h)=\vol(K_0)^{-1} \vol(\Sp(n-r)) \, d_\tau^{-1} d_\sigma \, Z_r(\Phi^*_{0,\bk,n,r},n-\frac{r-1}{2})
\]
by Lebesgue's convergence theorem and Proposition \ref{p36} (ii).
Therefore, this lemma follows from Proposition \ref{p36}.
\end{proof}

\subsection{Main theorem}\label{s3.5}

We shall introduce Shintani zeta functions for $\GL(r)$ and $V_r$.
Let $1\leq r\leq n$.
Fix a test function $\phi_r$ in $\cS(V_r(\A_\fin))$.
We assume that $\phi_r$ is $\prod_{v<\inf}\GL(r,\Z_v)$-invariant, i.e. $\phi_r(x\cdot k)=\phi_r(x)$ $(k\in \prod_{v<\inf}\GL(r,\Z_v))$.
Let $U_r$ denote the support of $\phi_r$.
Note that $U_r$ is an open compact subset in $V_r(\A_\fin)$.
We set
\[
L_r=V_r(\Q)\cap(V_r(\R)U_r),\quad \tilde\Gamma_r=\GL(r,\Z) \quad \text{and} \quad \Gamma_r=\SL(r,\Z).
\]
Since $L_r$ is $\tilde\Gamma_r$-invariant, we may write $L_r\cap\Omega_r/\tilde\Gamma_r$ (resp. $L_r\cap\Omega_r/\Gamma_r$) for a complete set of representative elements of $\tilde\Gamma_r$-orbits (resp. $\Gamma_r$-orbits) in $L_r\cap\Omega_r$.
Let
\[
\tilde\varepsilon_r(x)=|\{\gamma\in\tilde\Gamma_r\mid x\cdot \gamma=x\}| \quad \text{and} \quad \varepsilon_r(x)=|\{\gamma\in\Gamma_r\mid x\cdot \gamma=x\}|.
\]
A Shintani zeta function $\zeta_\Shin(\phi_r,s)$ is defined by
\[
\zeta_\Shin(\phi_r,s)=\sum_{x\in L_r\cap\Omega_r/\tilde\Gamma_r}\frac{\phi_r(x)}{ \tilde\varepsilon_r(x)\, \det(x)^s} \qquad (s\in\C).
\]
In \cite[p.63]{Shintani}, Shintani defined the zeta function $\zeta^*_r(s)$ for $L_r\cap\Omega_r/\Gamma_r$ and $\varepsilon_r(x)$.
However, it is obvious that $\zeta_\Shin(\phi_r,s)$ is compatible with his definition, because it satisfies
\[
\zeta_\Shin(\phi_r,s)=\frac{1}{2}\sum_{x\in L_r\cap\Omega_r/\Gamma_r}\frac{\phi_r(x)}{ \varepsilon_r(x)\, \det(x)^s} .
\]
It is known that $\zeta_\Shin(\phi_r,s)$ absolutely converges for $\mathrm{Re}(s)>\frac{r+1}{2}$ and $\zeta_\Shin(\phi_r,s)$ is meromorphically continued to the whole $s$-place (cf. \cite{Shintani,Yukie}).
Especially, $\zeta_\Shin(\phi_r,s)$ is holomorphic except for possible simple poles at $s=1,\frac{3}{2},\dots,\frac{r}{2},\frac{r+1}{2}$ (cf. \cite[Theorem 5]{Shintani}).
For the case $r=0$, we set $\zeta_\Shin(\phi_r,s)=1$.

In the previous section, we have chosen the test function $\phi_{0,r}^*$ by \eqref{s4tf}.
A test function $\phi_{0,r}$ on $V_r(\A)$ is also fixed by \eqref{e32} (or \eqref{sme2}).
It is clear that $\phi_{0,r}$ is $\prod_{v<\inf}\GL(r,\Z_v)$-invariant and this zeta function $\zeta_\Shin(\phi_{0,r},s)$ is the same as that of \eqref{sme3}.
The following is our main theorem.
\begin{thm}\label{t37}
Let $\Gamma=G(\Q)\cap(G(\R)K_0)$ and let $\rho$ denote a finite dimensional irreducible polynomial representation of $\GL(n,\C)$ corresponding to $\bk=(k_1,k_2,\dots,k_n)$.
If we assume that $\Gamma$ satisfies Condition \ref{c21} and $k_n>n+1$, then we have
\begin{multline*}
\dim S_\rho(\Gamma)=\\
\frac{[\bK_\fin:K_0\cap\bK_\fin]}{[K_0:K_0\cap\bK_\fin]}\sum_{r=0}^n \zeta_\Shin(\phi_{0,r},r-n) \, \bC(\bk,n,r) \prod_{j=1}^{n-r}\frac{(-1)^j(j-1)!}{(2j-1)!}\zeta(1-2j).
\end{multline*}
Here, the factor $\bC(\bk,n,r)$ was defined by \eqref{e35} and \eqref{e37} and it has the properties \eqref{e36} and \eqref{e38}.
\end{thm}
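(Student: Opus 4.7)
The plan is to read the dimension as a sum of unipotent contributions, evaluate each one via the adelic zeta integral supplied by Lemma~\ref{l47}, and unfold that integral into the product of an archimedean factor (which will assemble $\bC(\bk,n,r)$) and a finite-adelic factor (which will be exactly $\zeta_\Shin(\phi_{0,r},r-n)$); the symplectic volume then produces the product of $\zeta(1-2j)$'s, and Ibukiyama's polynomial principle extends the weight range.

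First I assume $k_n$ larger than the constant $\mathfrak{M}$ of Proposition~\ref{p33}; this makes $\sigma$ integrable and legitimates Conditions~\ref{c21}, \ref{c2}, and~\ref{cuni}. Proposition~\ref{p33} gives
\[
\dim S_\rho(\Gamma)=\sum_{r=0}^{n}\lim_{T\to\inf}Z_{C_r}^T(\fs h),
\]
and Lemma~\ref{l47} rewrites each summand as $\vol(K_0)^{-1}\vol(\Sp(n-r))\,d_\tau^{-1}d_\sigma\,Z_r(\Phi_{0,\bk,n,r},r-n)$. Since the local measures are normalized so that $\vol(\bK_\fin)=1$, one checks with $H=K_0\cap\bK_\fin$ that $\vol(K_0)=[K_0:H]/[\bK_\fin:H]$, so the inverse volume is precisely $[\bK_\fin:K_0\cap\bK_\fin]/[K_0:K_0\cap\bK_\fin]$. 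This reduces the problem to evaluating the adelic zeta integral $Z_r(\Phi_{0,\bk,n,r},r-n)$ for each $r$.

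The key step is the unfolding. Because $\phi_{0,r}$ is $\prod_v\GL(r,\Z_v)$-invariant and $\phi_{\bk,n,r}$ is supported in $\Omega_r$, I use strong approximation for $\SL(r)$ to realize $\GL(r,\Q)\bsl\GL(r,\A)$ (up to a central part) as $\tilde\Gamma_r\bsl\GL(r,\R)\times\prod_v\GL(r,\Z_v)$ and unfold the sum over $V_r^{0}(\Q)$ against a set of $\tilde\Gamma_r$-orbit representatives in $L_r\cap\Omega_r$. After the substitution $y={}^t\!h x h$ in the archimedean integral, the variable of integration separates and, using meromorphic continuation in $s$ (Proposition~\ref{p36} and the Shintani continuation), one obtains
\[
Z_r(\Phi_{0,\bk,n,r},r-n)=c_r\,\zeta_\Shin(\phi_{0,r},r-n)\int_{V_r(\R)}\phi_{\bk,n,r}(x)\det(x)^{(r-1)/2-n}\,\d x,
\]
where $c_r$ is an elementary product of $2$'s and $\pi$'s coming from the measure normalizations. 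By definition \eqref{e35} the archimedean integral equals $2^{-(n-r)^2}\pi^{-(n-r)(n-r+1)/2}\,d_\tau\,d_\sigma^{-1}\,\bC(\bk,n,r)$, so multiplying by the external $d_\tau^{-1}d_\sigma$ from Lemma~\ref{l47} cancels the representation-theoretic constants and leaves $\bC(\bk,n,r)$.

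It then remains to identify $c_r\,\vol(\Sp(n-r))\cdot 2^{-(n-r)^2}\pi^{-(n-r)(n-r+1)/2}$ with the factor $\prod_{j=1}^{n-r}\tfrac{(-1)^j(j-1)!}{(2j-1)!}\zeta(1-2j)$. Under the chosen normalizations, strong approximation for $\Sp(m)$ gives $\vol(\Sp(m,\Q)\bsl\Sp(m,\A))=\vol(\Gamma_m(1)\bsl\Sp(m,\R))$, and Siegel's classical volume formula expresses the latter through $\prod_{j=1}^{m}\zeta(2j)$ with explicit powers of $2$ and $\pi$. Applying the functional equation $\zeta(2j)=\tfrac{(-1)^j(2\pi)^{2j}}{2(2j-1)!}\zeta(1-2j)$ to each factor converts this expression into the desired product (the empty case $r=n$ reducing correctly since both $\vol(\Sp(0))$ and the product are $1$). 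Assembling everything proves the formula under the auxiliary hypothesis $k_n>\mathfrak{M}$. Finally, both sides of the asserted identity are polynomials in $(k_1,\dots,k_n)$ on $\{k_n>n+1\}$: the left side by Proposition~\ref{s4po}, the right side by inspection (the $\zeta$-values, $\bC(\bk,n,r)$, and index factor depend polynomially on the weight). Since two polynomials agreeing on the open set $k_n>\mathfrak{M}$ agree throughout, the formula extends to $k_n>n+1$.

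The main obstacle is the bookkeeping in the unfolding and in the match with the symplectic volume: ensuring that every local measure, every factor of $2$ and $\pi$, the sign conventions from the functional equation, and the switch between $\tilde\Gamma_r$ and $\Gamma_r$ (which costs a factor $2$) all line up so that the archimedean integral condenses exactly to $\bC(\bk,n,r)$ and the volume contributes precisely $\prod_{j=1}^{n-r}\tfrac{(-1)^j(j-1)!}{(2j-1)!}\zeta(1-2j)$. The analytic continuation at the non-convergent point $s=r-n$ must also be justified, which is where Proposition~\ref{p36} (used through the functional equation that identifies $Z_r(\Phi_{0,\bk,n,r},r-n)$ with $Z_r(\Phi_{0,\bk,n,r}^{*},n-(r-1)/2)$) becomes essential.
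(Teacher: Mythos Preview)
Your approach matches the paper's: reduce to the truncated zeta integrals via Proposition~\ref{p33} and Lemma~\ref{l47}, unfold $Z_r(\Phi_{0,\bk,n,r},r-n)$ into the Shintani zeta function times the archimedean integral, absorb the symplectic volume into the product of $\zeta(1-2j)$'s via the functional equation, and then extend the weight range. The bookkeeping you flag (constants of $2$ and $\pi$, measure normalizations, the passage through analytic continuation at $s=r-n$) is handled in the paper exactly as you outline, by referring back to Shintani's change of variables and Saito's decomposition.

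There is one genuine imprecision in your extension step. You assert that the right-hand side is a polynomial in $(k_1,\dots,k_n)$ ``by inspection'', in particular that $\bC(\bk,n,r)$ depends polynomially on the weight. But from \eqref{e36}, $\bC(\bk,n,r)$ carries the factor $d_\tau^{-1}d_\sigma$ together with ratios $\Gamma_{\Omega_r}(\bm+\tfrac{r-1}{2}-n)/\Gamma_{\Omega_r}(\bm)$; writing $\bm=\bm'+(k_n,\dots,k_n)$ with $\bm'$ fixed, each such ratio is a product of terms of the form $\prod_t(k_n+c-t)^{\pm 1}$, so $\bC(\bk,n,r)$ is a priori only a \emph{rational} function of $k_n$, not a polynomial. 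The paper handles this by fixing $\bk'=(k_1-k_n,\dots,k_{n-1}-k_n,0)$, exhibiting the right-hand side as a rational function of $k_n$, and then observing that since the left-hand side is a genuine polynomial in $k_n$ (Proposition~\ref{s4po}) and the two agree for all $k_n>\mathfrak{N}$, they must agree as rational functions, hence for all $k_n>n+1$. Your argument needs this step; ``by inspection'' does not suffice.
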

\begin{proof}
Let $\sigma$ denote the holomorphic discrete series of $G(\R)$ corresponding to $\bk$.
If $k_n>2n$, $\sigma$ is integrable (cf. \cite{HS}).
By Proposition \ref{lac} and Lemma \ref{l31} (ii), we can take a sufficiently large natural number $\mathfrak{N}>2n$ such that $\fs$ satisfies Condition \ref{cuni} if $k_n>\mathfrak{N}$.

Assume that $k_n$ is greater than $\mathfrak{N}$ and $\Gamma$ satisfies Condition \ref{c21}.
Then, it follows from Proposition \ref{p33} and Lemma \ref{l47} that
\begin{multline}\label{te1}
 \dim S_\rho(\Gamma)=m(\sigma,\Gamma)=\sum_{r=0}^n\lim_{T\to 0}Z_{C_r}^T(\fs h) \\
 =\vol(K_0)^{-1} \sum_{r=0}^n \vol(\Sp(n-r)) \, d_\tau^{-1} d_\sigma \, Z_r(\Phi_{0,\bk,n,r},r-n) 
\end{multline}
Since the zeta integral $Z_r(\Phi_{0,\bk,n,r},s)$ is decomposed into the product of the zeta function $\zeta_\Shin(\phi_{0,r},s)$ and the local integral of $\phi_{\bk,n,r}(x)$ over $\R$ in the usual manner (see, e.g., \cite[p.600--601]{Saito2}), we find
\begin{multline*}
\eqref{te1}=\vol(K_0)^{-1}\sum_{r=0}^n  d_\tau^{-1}\, d_\sigma\,\int_{V_r(\R)}\phi_{\bk,n,r}(x)\, \det(x)^{\frac{r-1}{2}-n}\d x  \\
\times \zeta_\Shin(\phi_{0,r},r-n)\times \prod_{j=1}^{n-r}\frac{\zeta(2j)\, \Gamma(j)}{\pi^j}
\end{multline*}
by change of variable as in \cite[p.62--63]{Shintani} and the normalizations of measures given in Sections \ref{s3.4} and \ref{s3ze}.
By the normalization $\vol(\bK_\fin)=1$, it is obvious that
\[
\vol(K_0)=\frac{[K_0:K_0\cap\bK_\fin]}{[\bK_\fin:K_0\cap\bK_\fin]}.
\]
Using the functional equation $\Gamma(\frac{s}{2})\zeta(s)=\pi^{s-\frac{1}{2}}\Gamma(\frac{1-s}{2})\zeta(1-s)$, we get
\[
\frac{\zeta(2j)\, \Gamma(j)}{\pi^j}=2^{2j-1}\pi^j \times \frac{(-1)^j(j-1)!}{(2j-1)!}\zeta(1-2j).
\]
Therefore, the dimension formula is proved for $k_n>\mathfrak{N}$.

We will extend the range for $\bk$ using Proposition \ref{s4po}.
Set $k_j'=k_j-k_n$ $(1\leq j\leq n)$.
We have the parameter $\bk'=(k_1',k_2',\dots,k_n')\in\Z^{\oplus n}$ $(k_1'\geq k_2'\geq \cdots \geq k_{n-1}'\geq k_n'=0)$.
From now on, we fix the parameter $\bk'$.
We have $\bk=\bk'+(k_n,k_n,\dots,k_n)$ and we will move only $k_n$.
By Proposition \ref{s4po}, $\dim S_\rho(\Gamma)$ is regarded as a polynomial of $k_n$ with constant coefficients for the range $k_n>n+1$, i.e., there exists a polynomial $g_{\Gamma,\bk'}(X)$ such that $\dim S_\rho(\Gamma)=g_{\Gamma,\bk'}(k_n)$ for $k_n>n+1$.
Assume that the above dimension formula is a rational polynomial of $k_n$ with constant coefficients, i.e., there exist polynomials $f_{\Gamma,\bk',1}(X)$ and $f_{\Gamma,\bk',2}(X)$ such that $\dim S_\rho(\Gamma)=f_{\Gamma,\bk',1}(k_n)/f_{\Gamma,\bk',2}(k_n)$ for $k_n>\mathfrak{N}$.
Then, we obtain $f_{\Gamma,\bk',1}(X)/f_{\Gamma,\bk',2}(X)=g_{\Gamma,\bk'}(X)$ as polynomial  because $f_{\Gamma,\bk',1}(k_n)=g_{\Gamma,\bk'}(k_n)f_{\Gamma,\bk',2}(k_n)$ hold for any $k_n>\mathfrak{N}$.
Therefore, the dimension formula is valid for any $k_n>n+1$.
Since we may choose arbitrary $\bk'$, it is sufficient to prove $C(\bk,n,r)$ is a rational polynomial of $k_n$.
In \eqref{e36}, we can set $\bm=(m_1',m_2',\dots,m_r')+(k_n,k_n,\dots,k_n)$.
Then, we have
\[
\frac{\Gamma_{\Omega_r}(\bm+\frac{r-1}{2}-n)}{\Gamma_{\Omega_r}(\bm)}=\prod_{j=1}^r\frac{ \Gamma(k_n+m_{r-j+1}'-n+\frac{j-1}{2})}{\Gamma(k_n+m_j'-\frac{j-1}{2})},
\]
\[
\frac{\Gamma(k_n+m_{r-j+1}'-n+\frac{j-1}{2})}{\Gamma(k_n+m_j'-\frac{j-1}{2})}= \prod_{t=1}^{m_j'-m_{r-j+1}'+n-j+1} (k_n+m_j'-\frac{j-1}{2}-t)^{-1}
\]
if $m_j'-m_{r-j+1}'+n-j+1\geq 0$, and
\[
\frac{\Gamma(k_n+m_{r-j+1}'-n+\frac{j-1}{2})}{\Gamma(k_n+m_j'-\frac{j-1}{2})}=\prod_{t=1}^{m_{r-j+1}'-m_j'-n+j-1} (k_n+m_{r-j+1}'-n+\frac{j-1}{2}-t)
\]
if $m_j'-m_{r-j+1}'+n-j+1\leq 0$.
This means that $C(\bk,n,r)$ is a rational polynomial of $k_n$ by \eqref{e36}.
Thus, the proof is completed, because the factors $m_{\bk,\bb}^{n,r}$ and $n_{\bb,\bm,r}$ are independent of $k_n$.
\end{proof}

For a natural number $N$, we have defined the open compact subgroup $K(N)$ of $G(\A_\fin)$ in Section \ref{smt}.
The principal congruence subgroup $\Gamma_n(N)$ was defined by $K(N)$ and \eqref{sme4}.
It is well-known that $\Gamma_n(N)$ is neat if $N>2$ (see, e.g., \cite[p.118]{Borel1}, \cite[Lemma 2]{Morita}).
As a special case of $\zeta_\Shin(\phi_{0,r},s)$, the zeta function $\zeta_\Shin(L_r^*,s)$ was defined by \eqref{sme5}.
The following is a corollary of Theorem \ref{t37}.
It is derived by putting $K_0=K(N)$.
\begin{cor}\label{corf}
Let $\rho$ denote a finite dimensional irreducible polynomial representation of $\GL(n,\C)$ corresponding to $\bk=(k_1,k_2,\dots,k_n)$.
If $N>2$ and $k_n>n+1$, then we have
\begin{align*}
\dim S_\rho(\Gamma_n(N))=&[\Gamma_n(1):\Gamma_n(N)]\sum_{r=0}^n \zeta_\Shin(L_r^*,r-n)\,  2^{r-r^2+rn} N^{\frac{r(r-1)}{2}-rn}\\
&\times \bC(\bk,n,r) \times \prod_{l=1}^{n-r}\frac{(-1)^l (l-1)!}{(2l-1)!}\zeta(1-2l),
\end{align*}
where $\bC(\bk,n,r)$ was defined by \eqref{e35} and \eqref{e37} and it satisfies the properties \eqref{e36} and \eqref{e38}.
In particular, $\bC(\bk,n,r)$ is computable if one fixes the parameters $n$, $r$, and $(k_1-k_n, k_2-k_n,\dots,k_{n-1}-k_n,0)$.
\end{cor}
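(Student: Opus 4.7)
The plan is to derive Corollary \ref{corf} as a direct specialization of Theorem \ref{t37} by substituting $K_0 = K(N)$ and then using the two explicit identities that the excerpt has already recorded, namely
\[
\zeta_\Shin(\phi_{0,r},s) = 2^{r-sr}\,N^{-\tfrac{r(r+1)}{2}+sr}\,\zeta_\Shin(L_r^*,s)
\]
and $[\Gamma_n(1):\Gamma_n(N)] = N^{n(2n+1)}\prod_{p\mid N}\prod_{l=1}^n (1-p^{-2l})$. First I would verify that the hypotheses of Theorem \ref{t37} are satisfied: for $N>2$ the principal congruence subgroup $\Gamma_n(N)$ is neat (as noted in the excerpt, following \cite[p.118]{Borel1} and \cite[Lemma 2]{Morita}), so in particular it satisfies Condition~\ref{c21}. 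Thus Theorem \ref{t37} applies with $\Gamma=\Gamma_n(N)$ and $K_0=K(N)$, yielding
\[
\dim S_\rho(\Gamma_n(N)) = \frac{[\bK_\fin:K(N)\cap\bK_\fin]}{[K(N):K(N)\cap\bK_\fin]}\sum_{r=0}^n \zeta_\Shin(\phi_{0,r},r-n)\,\bC(\bk,n,r)\prod_{j=1}^{n-r}\frac{(-1)^j(j-1)!}{(2j-1)!}\zeta(1-2j).
\]

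Next I would identify the index factor. Since $K(N) = \prod_v K_v(N)$ is contained in $\bK_\fin = \prod_v G(\Z_v)$, we have $K(N)\cap\bK_\fin = K(N)$, so the denominator $[K(N):K(N)\cap\bK_\fin] = 1$, and the numerator reduces to $[\bK_\fin:K(N)]$. By the strong approximation property of $G=\Sp(n)$ with respect to $\inf$ (already used in Section~\ref{s2}), the natural map $\Gamma_n(1)/\Gamma_n(N)\to\bK_\fin/K(N)$ is a bijection, so $[\bK_\fin:K(N)] = [\Gamma_n(1):\Gamma_n(N)]$, which is the elementary index computation of Corollary~\ref{c12}.

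Finally, specializing the stated relation between $\zeta_\Shin(\phi_{0,r},s)$ and $\zeta_\Shin(L_r^*,s)$ at $s=r-n$ gives
\[
\zeta_\Shin(\phi_{0,r},r-n) = 2^{r-r^2+rn}\,N^{\tfrac{r(r-1)}{2}-rn}\,\zeta_\Shin(L_r^*,r-n),
\]
with the convention $\zeta_\Shin(L_0^*,s)=1$ handling the $r=0$ case. Substituting this into the expression from Theorem \ref{t37} and relabeling the index of the last product from $j$ to $l$ yields precisely the stated formula of Corollary \ref{corf}. There is no real obstacle here: the only thing to be careful about is bookkeeping of the exponent of $2$ and $N$ in $\zeta_\Shin(\phi_{0,r},r-n)$, and confirming that the identity for $\zeta_\Shin(\phi_{0,r},s)$ in terms of $\zeta_\Shin(L_r^*,s)$ (stated in Section~\ref{smt}) holds for the normalizations of measures fixed in Section~\ref{s3.4}, which follows directly from \eqref{sme1}, \eqref{sme2}, and the definitions \eqref{sme3}, \eqref{sme5}. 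The remark about computability of $\bC(\bk,n,r)$ then follows from \eqref{e36} and \eqref{e38} together with the observation (made in the proof of Theorem~\ref{t37}) that $\bC(\bk,n,r)$ depends polynomially on $k_n$ and otherwise only on the differences $k_j-k_n$.
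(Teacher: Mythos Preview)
Your proposal is correct and follows exactly the approach indicated by the paper, which simply states that the corollary is derived from Theorem~\ref{t37} by putting $K_0=K(N)$. You have merely filled in the routine details (neatness for $N>2$, the index identification $[\bK_\fin:K(N)]=[\Gamma_n(1):\Gamma_n(N)]$ via strong approximation, and the specialization of the identity $\zeta_\Shin(\phi_{0,r},s)=2^{r-sr}N^{-\frac{r(r+1)}{2}+sr}\zeta_\Shin(L_r^*,s)$ at $s=r-n$) that the paper leaves implicit.
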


In Section \ref{sef}, we have already given an explicit dimension formula for general degree $n$ and the scalar valued case $(k_1,k_2,\dots,k_n)=(k,k,\dots,k)$.
As an example of the vector valued case, we give an explicit dimension formula for degree three and the standard representation.
\begin{exa}\normalfont
Let $n=3$, $k>4$ and $N>2$.
Assume that $\rho$ corresponds to $(k+1,k,k)$.
From Theorems \ref{t13} and \ref{t14} and Corollary \ref{corf} we deduce
\begin{multline*}
\dim S_\rho(\Gamma_3(N))=[\Gamma_3(1):\Gamma_3(N)] \times \\
\left\{ \frac{3\cdot(2k)(2k-2)(2k-3)(2k-4)(2k-5)(2k-6)}{2^{16}\cdot 3^6\cdot 5^2\cdot 7}  -\frac{6k-10}{2^{10}\cdot 3^2\cdot 5\cdot N^5}+\frac{3}{2^8\cdot 3^3\cdot N^6}  \right\}.
\end{multline*}

\vspace{1mm}
\noindent
Numerical examples of $\dim S_\rho(\Gamma_3(N))$ where $\rho$ corresponds to $(k+1,k,k)$. \\
\begin{tabular}{|c@{\, \vrule width1.5pt \,\,\,}cccc|} \hline
$N$ \begin{picture}(4,4)(0,0) \put(4,-1){ \line(-3,2){6} } \end{picture} $k$ &   5 & 6 & 7 & 8    \\ \noalign{\hrule height 1.5pt}
3 & 210210 & 1178268 & 4357626 & 12622974  \\ \hline
4 & 72432640 & 395006976 & 1451584512 & 4196369408  \\ \hline
5 & 10968753250 & 59435649000 & 218097857250 & 630209284250  \\ \hline
\end{tabular}
\end{exa}

\vspace{5mm}
\noindent
{\bf Acknowledgement}.
The author thanks Prof. Tobias Finis, Prof. Kaoru Hiraga, Prof. Werner Hoffmann, Prof. Tomoyoshi Ibukiyama, Prof. Takashi Sugano, and Prof. Masao Tsuzuki for kind advice and helpful discussions.
In particular, Prof. Finis advised the author to change a pseudo-coefficient into a spherical trace function via the trace Paley-Wiener theorem.
This idea is a key point of the proof.
The author is partially supported by JSPS Grant-in-Aid for Scientific Research No.~26800006,  No.~25247001, and No.~15K04795.

\end{document}